\renewcommand{\d}{\ensuremath{\mathrm{d}}}
\newcommand{\R}{\ensuremath{\mathbb{R}}}
\newcommand{\Z}{\ensuremath{\mathbb{Z}}}
\newcommand{\N}{\ensuremath{\mathbb{N}}}
\newcommand{\II}{\ensuremath{\mathcal I}}
\newcommand{\JJ}{\ensuremath{\mathcal J}}
\newcommand{\TT}{\ensuremath{\mathcal T}}
\newcommand{\NN}{\ensuremath{\mathcal N}}
\newcommand{\MM}{\ensuremath{\mathcal M}}
\newcommand{\BB}{\ensuremath{\mathcal B}}
\newcommand{\SSS}{\ensuremath{\mathcal S}}
\newcommand{\AAA}{\ensuremath{\mathcal A}}
\newcommand{\F}{\ensuremath{\mathcal F}}
\newcommand{\eps}{\ensuremath{\varepsilon}}
\newcommand{\verti}[1]{\ensuremath{\left\lvert #1 \right\rvert}}
\newcommand{\vertii}[1]{\ensuremath{\left\lVert #1 \right\rVert}}
\newcommand{\vertiii}[1]{{\left\lvert\kern-0.25ex\left\lvert\kern-0.25ex\left\lvert #1
    \right\rvert\kern-0.25ex\right\rvert\kern-0.25ex\right\rvert}}
\renewcommand{\d}{\ensuremath{{\rm d}}}
\newcommand{\ind}{\ensuremath{\mathbbm{1}}}
\newcommand{\supp}{\ensuremath{\mathrm{supp} \,}}
\newcommand{\floor}[1]{\ensuremath{\left\lfloor #1 \right\rfloor}}
\newcommand{\ignore}[1]{}
\newtheorem{theorem}{Theorem}[section]
\newtheorem{definition}[theorem]{Definition}
\newtheorem{proposition}[theorem]{Proposition}
\newtheorem{remark}[theorem]{Remark}
\newtheorem{lemma}[theorem]{Lemma}
\newtheorem{corollary}[theorem]{Corollary}
\numberwithin{equation}{section}
\begin{document}
%
\title[Stability of receding traveling waves for a free boundary problem]{Stability of receding traveling waves for a fourth order degenerate parabolic free boundary problem}
\keywords{Thin film equation; Free boundary problems; Degenerate parabolic equations; Higher-order equations, Classical solutions; Maximal regularity; Traveling waves; Stability}
\subjclass[2010]{35K65, 35R35, 35K25, 35A09, 35B35, 76A20}
\thanks{MVG appreciates discussions with Mircea Petrache on a related problem. The authors acknowledge funding from and the kind hospitality of the Fields Institute for Research in Mathematical Sciences in Toronto and the New York University in Abu Dhabi. MVG and SI are grateful to the Courant Institute of the New York University in New York City for hosting them. MVG and NM wish to thank the University of Victoria, BC, for its kind hospitality. MVG also received funding from the University of Michigan at Ann Arbor, the National Science Foundation under Grant No.~NSF DMS-1054115, the Max Planck Institute for Mathematics in the Sciences in Leipzig, and the Deutsche Forschungsgemeinschaft (grant \# 334362478). SI is partially supported by NSERC Discovery grant \# 371637-2014, and NM  is in part supported by NSF grant DMS-1211806. The authors thank the anonymous reviewer for several suggestions that have lead to an improved presentation of this revised version.}
\date{\today}
\author{Manuel~V.~Gnann}
\address[Manuel~V.~Gnann]{Center for Mathematics\newline
Technical University of Munich\newline
Boltzmannstr.~3, 85747 Garching near Munich, Germany}
\author{Slim~Ibrahim}
\address[Slim~Ibrahim]{Department of Mathematics and Statistics\newline
University of Victoria\newline
3800 Finnerty Road, Victoria, B.C., Canada V8P 5C2}
\author{Nader~Masmoudi}
\address[Nader~Masmoudi]{Department of Mathematics\newline
Courant Institute for Research in Mathematical Sciences\newline
New York University\newline
251 Mercer St., New York, NY 10012, U.S.A.}
\begin{abstract}
Consider the thin-film equation $h_t + \left(h h_{yyy}\right)_y = 0$ with a zero contact angle at the free boundary, that is, at the triple junction where liquid, gas, and solid meet. Previous results on stability and well-posedness of this equation have focused on perturbations of equilibrium-stationary or self-similar profiles, the latter eventually wetting the whole surface. These solutions have their counterparts for the second-order porous-medium equation $h_t - (h^m)_{yy} = 0$, where $m > 1$ is a free parameter. Both porous-medium and thin-film equation degenerate as $h \searrow 0$, but the porous-medium equation additionally fulfills a comparison principle while the thin-film equation does not.

\medskip

In this note, we consider traveling waves $h = \frac V 6 x^3 + \nu x^2$ for $x \ge 0$, where $x = y-V t$ and $V, \nu \ge 0$ are free parameters. These traveling waves are receding and therefore describe de-wetting, a phenomenon genuinely linked to the fourth-order nature of the thin-film equation and not encountered in the porous-medium case as it violates the comparison principle. The linear stability analysis leads to a linear fourth-order degenerate-parabolic operator for which we prove maximal-regularity estimates to arbitrary orders of the expansion in $x$ in a right-neighborhood of the contact line $x = 0$. This leads to a well-posedness and stability result for the corresponding nonlinear equation. As the linearized evolution has different scaling as $x \searrow 0$ and $x \to \infty$, the analysis is more intricate than in related previous works. We anticipate that our approach is a natural step towards investigating other situations in which the comparison principle is violated, such as droplet rupture.
\end{abstract}
\maketitle
\tableofcontents
%

\section{Introduction and statement of results\label{sec:intro}}
\subsection{The thin-film equation formulated as a classical free boundary problem}
Consider the free boundary problem for the thin-film equation
\begin{subequations}\label{tfe_free}
\begin{align}
h_t + \left(h h_{yyy}\right)_y &= 0 \quad \mbox{for} \quad t > 0 \quad \mbox{and} \quad y > Y_0(t), \label{tfe}\\
h = h_y &= 0 \quad \mbox{for} \quad t > 0 \quad \mbox{and} \quad y = Y_0(t), \label{bc_1}\\
h_{yyy} &= \frac{\d Y_0}{\d t}(t) \quad \mbox{for} \quad t > 0 \quad \mbox{and} \quad y = Y_0(t). \label{bc_2}
\end{align}
\end{subequations}
The dependent variable $h = h(t,y)$ models the height of a two-dimensional thin viscous film on a one-dimensional flat substrate as a function of the independent variables time $t$ and base point $y$ on the substrate (cf.~\cite{d.1985,odb.1997,beimr.2009}). In the particular case \eqref{tfe_free} one may view the problem as the lubrication approximation of Darcy's flow in the Hele-Shaw cell: We refer to rigorous results in \cite{go.2003,km.2013,km.2015}. Here we assume a droplet with support $\left(Y_0(t),\infty\right)$, where $Y_0(t)$ denotes the free boundary. The boundary conditions \eqref{bc_1} determine the position of the free boundary $Y_0(t)$ (also known as \emph{contact line} or \emph{triple junction}, since liquid, gas, and solid border here) and the slope of the film at this position. We assume that the slope vanishes (\emph{zero contact angle}), which is commonly referred to as \emph{complete (perfect) wetting regime}. Finally, the third condition~\eqref{bc_2} determines how the free boundary evolves. Since \eqref{tfe_free} is in divergence form, one can read of the transport velocity $h_{yyy}$ (in lubrication theory, this is in fact the vertically-averaged horizontal velocity), which by compatibility has to be the same as the velocity of the free boundary $\frac{\d Y_0}{\d t}(t)$. As a consequence, the mass of the droplet is conserved.

\subsection{Special solutions to the thin-film and porous-medium equation}
\subsubsection{Stationary and source-type self-similar solutions to the thin-film equation}
The simplest generic solutions to \eqref{tfe_free} are equilibrium-stationary solutions. Here one may assume a \emph{time-independent} profile $h(t,y) = H(y)$ fulfilling $\frac{\d^3 H}{\d y^3} = 0$ subject to \eqref{bc_1} (the condition \eqref{bc_2} is trivially fulfilled). This leads to the solution $H(y) = C (y - Y_0)^2$ for $y \ge 0$, where $Y_0$ fixes the position of the contact line and $C > 0$ is an arbitrary constant. By shift and scaling one may without loss of generality assume
\begin{equation}\label{equi_stat}
H(y) = y^2 \quad \mbox{for} \quad y \ge 0.
\end{equation}
This is indeed the profile around which the perturbative earlier results in \cite{gko.2008,bgko.2016,j.2015} have been obtained: There unique solutions exist if the initial data is close to $H$ given through \eqref{equi_stat} in sufficiently strong norms. While the analysis in \cite{gko.2008,bgko.2016} is limited to $1+1$-dimensions and a Besov-type norm is employed, the assumptions in \cite{j.2015} are weaker and only say that $\partial_y \sqrt{h_{|t = 0}}$ has to be uniformly close to $\partial_y \sqrt{H(y)} \stackrel{\eqref{equi_stat}}{=} 1$ for $y \ge Y_0$ and the result also applies in higher dimensions.

\medskip

In the case of compactly supported droplets, generic solutions have source-type self-similar shape, i.e., they are of the form
\begin{equation}\label{sourcetype}
h(t,y) = (t+1)^{-\frac 1 5} H(x) \quad \mbox{with} \quad x = (t+1)^{-\frac 1 5} y,
\end{equation}
where the scaled variable $x$ can be read off from \eqref{tfe} under the assumption of conservation of mass. These solutions converge to $M \delta_0$ distributionally as $t \searrow -1$, where $M > 0$ denotes the mass of the droplet. Inserting \eqref{sourcetype} into \eqref{tfe}, we arrive at a fourth-order linear ordinary differential equation (ODE) with constant coefficients, giving the solution
\begin{equation}\label{smythhill}
H(x) = \frac{1}{120} (x^2-X^2)^2 \quad \mbox{for} \quad \verti{x} \le X, \quad \mbox{where} \quad X := \left(\frac{225 M}{2}\right)^{\frac 1 5},
\end{equation}
also known as \emph{Smyth-Hill solution} (cf.~\cite{sh.1988}, by scaling it is possible to assume $X = 1$). A linear stability analysis of \eqref{smythhill}, also discussing higher asymptotics, is contained in \cite{bw.2002,ms.2015} while convergence of weak solutions of \eqref{tfe} to \eqref{smythhill} is proved for instance in \cite{ct.2002,cu.2007,mms.2009,cu.2014} mainly using entropy-dissipation arguments\footnote{In \cite{ct.2002} it is stated that ``strong'' solutions to \eqref{tfe} are considered, although only two spatial derivatives are controlled and uniqueness is not known.}. In \cite{g.2015} existence and uniqueness of solutions to \eqref{tfe_free} for small perturbations of \eqref{smythhill} have been proved including refined asymptotic results.

\subsubsection{Special solutions to the porous-medium equation}
The common feature of \eqref{equi_stat} and \eqref{sourcetype} and the representative analyses presented there is that they have their counterpart in analogous special solutions and analyses for the second-order \emph{porous-medium equation}
\begin{equation}\label{pme}
\partial_t h - \partial_y^2 \left(h^m\right) = 0 \quad \mbox{in} \quad \{h > 0\}, \quad \mbox{where} \quad m > 1.
\end{equation}
Equations~\eqref{tfe} and \eqref{pme} both are degenerate-parabolic, but the second-order porous-medium equation \eqref{pme} additionally fulfills a \emph{comparison principle} while the thin-film equation \eqref{tfe} does not. Equilibrium-stationary solutions for \eqref{pme} are given by profiles of the form $H(y) = C (y-Y_0)^{\frac 1 m}$ for $y \ge Y_0$. Again, by scaling without loss of generality $C = 1$ and $Y_0 = 0$, so that
\begin{equation}\label{equi_pme}
H(y) = y^{\frac 1 m} \quad \mbox{for} \quad y \ge 0.
\end{equation}
Traveling waves to \eqref{pme} can be written as $h(t,y) = H(x)$ where $x := y - V t$ and $V \in \R$ is the velocity of the wave. They can be computed as $H(x) = \left(- V (m-1)/m\right)^{\frac{1}{m-1}} x^{\frac{1}{m-1}}$ where we have assumed $Y_{0|t = 0} = 0$ and necessarily $V < 0$ holds true. By scaling without loss of generality 
\begin{equation}\label{tw_pme}
H(x) = x^{\frac{1}{m-1}} \quad \mbox{for} \quad x \ge 0,
\end{equation}
which is a traveling-wave front. Finally, source-type self-similar solutions have the shape
\begin{subequations}\label{source_pme}
\begin{equation}
h(t,y) = (t+1)^{-\frac{1}{m+1}} H(x) \quad \mbox{with} \quad x = (t+1)^{-\frac{1}{m+1}} y,
\end{equation}
where
\begin{equation}
\begin{aligned}
H(x) &=  \left(\frac{(m-1) \left(X^2 - x^2\right)}{2 m (m+1)}\right)^{\frac{1}{m-1}} \quad \mbox{for} \quad \verti{x} \le X, \\
X &= \left(\frac{2 m (m+1)}{m-1}\right)^{\frac{1}{m+1}} \left(\frac{\Gamma\left(\frac{3m-2}{2m-2}\right)}{\sqrt{\pi} \Gamma\left(\frac{m}{m-1}\right)}\right)^{\frac{m-1}{m+1}} M^{\frac{m-1}{m+1}},
\end{aligned}
\end{equation}
\end{subequations}
and $M > 0$ denotes the mass (by scaling without loss of generality $X = 1$ can be assumed). The solutions in \eqref{source_pme} are the famous \emph{Barenblatt-Pattle} profiles (cf.~\cite{b.1952,p.1959}). Note that a well-posedness theory of perturbations of \eqref{tw_pme} or \eqref{source_pme} has been developed in \cite{k.2016,k.1999}, respectively, also covering higher dimensions, while a linear stability analysis (including higher asymptotics) is presented in \cite{bw.1998,s.2014}. Convergence of general solutions of \eqref{tfe} to \eqref{source_pme} using entropic arguments was studied in \cite{ct.2000}. In fact, the entropy studied in \cite{ct.2002} is a special case of the entropy considered in \cite{ct.2000}. The spatial part of the linear operator in \cite{j.2015} is simply the square of a special case of the corresponding linear operator considered in \cite{k.2016}, while it was observed in \cite{bw.2002,ms.2015,g.2015} that the spatial part of the linear operator is given by $\mathcal P (\mathcal P + 2)$, where $\mathcal P$ is the spatial part of a linearized porous-medium operator. Then the governing idea of the previously cited thin-film papers is to transfer as much knowledge as possible from the porous-medium case to the thin-film case. On the other hand, there are known features of \eqref{tfe} such as de-wetting phenomena or rupture of droplets which violate the comparison principle and therefore require different techniques compared to those applied to \eqref{pme}.

\subsubsection{Traveling-wave solutions to the thin-film equation}
In the present note our goal is to slightly change this perspective and to concentrate on qualitative behavior that has no counterpart in the second-order case \eqref{pme}.  To this end, it is convenient to use a traveling-wave ansatz, i.e., we assume that $h(t,y) = H(x)$, where $x := y - V t$ (the contact point is fixed to $y  = 0$ at time $t = 0$) and $V \in \R$ denotes the velocity of the wave. Using this in \eqref{tfe}, we obtain\footnote{Throughout the paper, we drop unnecessary parentheses so that differential operators act on everything on their right-hand side, whereas derivatives denoted by indices only act on the particular function.}
\[
- V \frac{\d H}{\d x} + \frac{\d}{\d x} H \frac{\d^3 H}{\d x^3} = 0 \quad \mbox{for} \quad x > 0,
\]
which we can integrate once using conditions~\eqref{bc_1}~and~\eqref{bc_2}, so that
\[
\frac{\d^3 H}{\d x^3} = V \quad \mbox{for} \quad x > 0 \quad \mbox{subject to} \quad H = \frac{\d H}{\d x} = 0 \quad \mbox{at} \quad x = 0.
\]
After three integrations we arrive at $H(x) = \frac V 6 x^3 + \nu x^2$ for $x \ge 0$, where $\nu \ge 0$ is a free integration parameter. As we need to have a non-negative and moving profile, we necessarily have $V > 0$, that is, the traveling wave is receding and the thin fluid film de-wets the surface. From now on, we assume the generic situation $\nu > 0$. Using the scaling transformations
\begin{subequations}\label{complete_rescale}
\begin{align}
h &= \frac{36 \nu^3}{V^2} \breve h \quad \mbox{and} \quad H = \frac{36 \nu^3}{V^2} \breve H \quad \mbox{with} \quad \breve H = \breve x^3 + \breve x^2, \\ 
t &= \frac{36 \nu}{V^2} \breve t, \\
y &= \frac{6 \nu}{V} \breve y \quad \mbox{and} \quad Y_0 = \frac{6 \nu}{V} \breve Y_0, \\
x &= \frac{6 \nu}{V} \breve x,
\end{align}
\end{subequations}
we see that for the rescaled quantities $\breve h$, $\breve H$, $\breve t$, $\breve Y$, $\breve Y_0$, and $\check x$ the dependence on $V$ and $\nu$ disappears. Hence, we can assume without loss of generality $V = 6$ and $\nu = 1$, so that
\begin{equation}\label{tw}
H(x) = \begin{cases} x^3 + x^2 & \mbox{for} \quad x \ge 0,\\ 0 & \mbox{for} \quad x < 0. \end{cases}
\end{equation}
The profile \eqref{tw} is one of the simplest examples of a special solution to \eqref{tfe_free} violating the comparison principle: compare to the stationary solution \eqref{equi_stat} at time $t = 0$ and for an arbitrary time $t > 0$. It also has the remarkable feature that the scaling of $H$ is different in the limiting cases $x \searrow 0$ and $x \to \infty$. This sets \eqref{tw} apart from all special solutions presented above: The equilibrium-stationary solution \eqref{equi_stat} has distinct scaling and can be expressed as a power of the corresponding equilibrium-stationary solution in the porous-medium case \eqref{equi_pme}. Obviously, a solution with distinct scaling in $x$ cannot be compactly supported, but nevertheless the Smyth-Hill profile \eqref{smythhill} is simply a power of the Barenblatt-Pattle solution \eqref{source_pme}, having the same scaling at both boundaries $x = \pm X$. The traveling wave \eqref{tw} cannot be expressed as a power of a known special solution to \eqref{pme}.

\medskip

A natural question is to ask whether the traveling wave \eqref{tw} is stable with respect to small perturbations which will be the subject that we are pursuing in what follows. We believe that this is also relevant for another striking phenomenon, that is, thin-film rupture (cf.~\cite{bp.1998,ck.2007} for examples in the case of thin films under the action of van der Waals forces). A situation in which this can happen may be modelled by two traveling waves of the form \eqref{tw} with contact lines at $y = 0$ as $t = 0$, but moving away from one another as time passes. Small perturbations of these  traveling waves should allow for a positive profile $h_{|t = 0}$, but one should expect a topological change of the support as time evolves.

\subsection{Perturbations of traveling waves and linearization}
For studying perturbations of \eqref{tfe_free} around the traveling-wave profile $H$ given in \eqref{tw}, the \emph{von Mises transform}
\begin{equation}\label{vonmises}
h(t,Y(t,x)) = x^3 + x^2 \quad \mbox{for} \quad t, x > 0
\end{equation}
is convenient. Similar transformations have been used in \cite{k.1999} in the context of the porous-medium equation, and in \cite{j.2015,ggko.2014,g.2015,g.2016,gp.2018} in the thin-film case. This transformation automatically fixes the free boundary to the point $x = 0$ in the new coordinates. Furthermore, the traveling wave is given by the (simpler) linear function $Y_{\rm TW}(t,x) = x + 6 t$. The point $Y_0 = Y_{|x =0}$ determines the position of the contact line, so that $\frac{\d Y_0}{\d t} = \partial_t Y_{|x = 0}$ is its velocity. We briefly outline how from \eqref{vonmises} a structurally simpler equation can be derived:

\medskip

First, we differentiate \eqref{vonmises} with respect to time $t$, which yields
\[
h_t + h_y Y_t = 0 \quad \mbox{for} \quad t, x > 0
\]
and upgrades to
\begin{equation}\label{diff_hod2}
h_y Y_t =  (h h_{yyy})_y \quad \mbox{for} \quad t, x > 0
\end{equation}
by making use of the evolution equation \eqref{tfe}. Derivatives transform according to $\partial_y = Y_x^{-1} \partial_x$, so that employing \eqref{vonmises} once more in \eqref{diff_hod2}, we arrive at the nonlinear equation
\begin{equation}\label{nonlinear}
(3 x^2 + 2 x) \partial_t Y = \partial_x (x^3 + x^2) \left(Y_x^{-1} \partial_x\right)^2 Y_x^{-1} (3 x^2 + 2 x) \quad \mbox{for} \quad t,x > 0.
\end{equation}
Note that problem~\eqref{nonlinear} for $Y$ subject to initial conditions $Y_{|t = 0} = Y^{(0)}$ is formally well-posed, that is, no further boundary conditions are necessary. The boundary conditions \eqref{bc_1} and \eqref{bc_2} are automatically fulfilled through the von Mises transform \eqref{vonmises}.

\medskip

For deriving a linear problem associated to \eqref{nonlinear}, we set
\begin{equation}\label{def_v}
Y =: x + 6 t + v,
\end{equation}
where $v$ denotes the perturbation of the traveling wave. Note that $v$ does \emph{not} fulfill any boundary condition as the velocity of the wave might be perturbed as well. The linearization of the left-hand side of \eqref{nonlinear} reads
\begin{equation}\label{lhs}
6 (3 x^2 + 2 x) + (3 x^2 + 2 x) \partial_t v
\end{equation}
and the right-hand side is given by
\begin{equation}\label{rhs}
6 (3 x^2 + 2 x) - \partial_x (x^3 + x^2) \left(v_x \partial_x^2 + \partial_x v_x \partial_x + \partial_x^2 v_x\right) (3 x^2 + 2 x).
\end{equation}
Apparently the first summands in expressions~\eqref{lhs}~and~\eqref{rhs} cancel. Furthermore, we can use  the operator identity
\[
v_x \partial_x^2 + \partial_x v_x \partial_x + \partial_x^2 v_x = \partial_x^3 v - v \partial_x^3,
\]
and the fact that $\partial_x^3 (3 x^3 + 2 x) \equiv 0$. Thus we arrive at the linearized problem for $v$:
\begin{equation}\label{linear_v}
(3 x^2 + 2 x) \partial_t v + \partial_x (x^3 + x^2) \partial_x^3 v (3 x^2 + 2 x) = f \quad \mbox{for} \quad t, x > 0,
\end{equation}
where $f$ denotes a general right-hand side. It is apparent that by setting
\begin{equation}\label{def_u}
u := (3 x^2 + 2 x) v
\end{equation}
and using the fact that $u_{|x = 0} = 0$ if $v$ is bounded at $x = 0$, we can study instead of \eqref{linear_v}
\begin{subequations}\label{linear_u}
\begin{align}
\partial_t u + \AAA u &= f \quad \mbox{for} \quad t, x > 0,\label{lin_pde}\\
u &= 0 \quad \mbox{for} \quad t > 0, \; x = 0,\label{bc_lin}
\end{align}
\end{subequations}
where we may introduce the linear operator
\begin{equation}\label{defa}
\AAA := \partial_x (x^3 + x^2) \partial_x^3 = x^{-1} p(D) + x^{-2} q(D),
\end{equation}
with $D := x \partial_x$ the scaling-invariant (logarithmic\footnote{Note that $D = \partial_s$ if we set $s := \ln x$.}) derivative and where $p(\zeta)$ and $q(\zeta)$ are fourth-order polynomials. As $p(D) = x \partial_x x^3 \partial_x^3$ vanishes on $\left\{x^0, x^0 \ln x, x^1, x^2\right\}$ and $q(D) = x^2 \partial_x x^2 \partial_x^3$ vanishes on $\left\{x^0, x^1, x^1 \ln x, x^2\right\}$, we infer that
\begin{equation}\label{def_pq}
p(\zeta) = \zeta^2 (\zeta-1) (\zeta-2) \qquad \mbox{and} \qquad q(\zeta) = \zeta (\zeta-1)^2 (\zeta-2).
\end{equation}
Notice that differentiating \eqref{bc_lin} with respect to $t$, we get $\partial_t u_{|x = 0} = 0$ and that for sufficiently smooth $u$ with $u_{|x = 0} = 0$ we also have $\AAA u_{|x = 0} = 0$. This makes the condition
\begin{equation}\label{bc_rhs}
f = 0 \quad \mbox{for} \quad t > 0, \quad x = 0
\end{equation}
for the right-hand side $f$ necessary.

\medskip

In the first part of this paper (cf.~\S\ref{sec:lin}), we will concentrate on the study of the linear problem \eqref{linear_u} for given right-hand sides $f : \, (0,\infty)^2 \to \R$ fulfilling \eqref{bc_rhs} and initial data $u_{|t = 0} = u^{(0)} : \, (0,\infty) \to \R$ meeting \eqref{bc_lin}. More precisely, we are going to prove existence and uniqueness of solutions $u$ for every $f$ and $u^{(0)}$ in suitable function spaces. Moreover, $u$ fulfills maximal-regularity estimates in terms of $f$ and the initial data $u^{(0)}$.

\medskip

Note that in the afore-mentioned previous approaches in weighted Hilbert-Sobolev spaces (cf.~\cite{gko.2008,bgko.2016}) and (weighted) $L^p$-spaces (cf.~\cite{j.2015}) the linearization around the equilibrium stationary profile \eqref{equi_stat} (having distinct scaling in $x$) leads to a scaling invariant spatial part of the linear operator. The fact that in \eqref{tw} the term $x^2$ is dominant as $x \searrow 0$ and $x^3$ is dominant as $x \to \infty$ is reflected by the feature that the corresponding linear operators $\partial_x x^3 \partial_x^3 = x^{-1} p(D)$ and $\partial_x x^2 \partial_x^3 = x^{-2} q(D)$ (cf.~\eqref{defa} and \eqref{def_pq}) dominate in the respective limits. For the contribution $x^{-2} q(D)$, originating from the addend $x^2$ in the traveling wave \eqref{tw}, we find the special structure
\[
x^{-2} q(D) = x^2 \partial_x^4 + 2 x \partial_x^3 = (x \partial_x^2)^2,
\]
where $x \partial_x^2$ can be interpreted as a linearized porous-medium operator. This is not surprising, as the addend $x^2$ corresponds exactly to the equilibrium profile \eqref{equi_stat} for which this structure is known from \cite{gko.2008,bgko.2016,j.2015}. However, the contribution $x^{-1} p(D)$, even after multiplication with an arbitrary power of $x$, cannot be written as the square of a degenerate second-order operator of the form $x^{-\alpha} \partial_x x^{\alpha+\beta} \partial x$ with real constants $\alpha$ and $\beta$, so that the strong analogy to the porous-medium equation \eqref{pme} is lost. The core of our linear analysis lies in balancing contributions coming from these two operators.

\subsection{Comparison to works on the thin-film equation with general mobility}
We remark that a more general version of the thin-film equation \eqref{tfe} exists that reads
\begin{equation}\label{tfe_general}
\partial_t h + \partial_y \left(h^n \partial_y^3 h\right) = 0 \quad \mbox{in} \quad \{h > 0\},
\end{equation}
where $n \in (0,3)$ is the exponent of the now nonlinear mobility $h^n$. The restrictions on $n$ come from the fact that for $n \le 0$ solutions to \eqref{tfe_general} can be non-positive and the speed of propagation is infinite, while for $n = 3$, coming from a no-slip condition at the liquid-solid interface, a singularity of $h$ at the free boundary occurs which cannot move unless the dissipation is allowed to be infinite (\emph{no-slip paradox}, cf.~\cite{m.1964,hs.1971,dd.1974}). Of particular interest is the case $n = 2$ (quadratic mobility), which can be derived by means of formal asymptotic expansions from the Navier-Stokes system with Navier-slip at the substrate and film heights $h$ that are small compared to the slip length (cf.~\cite{d.1985,odb.1997,beimr.2009}). A stability analysis for perturbations of traveling waves has been carried out in \cite{ggko.2014,g.2016,gp.2018}, while the linear stationary profile with \emph{partial-wetting} boundary conditions (nonzero equilibrium contact angle) was considered in \cite{k.2011} and for mobility exponents $n \in \left(0,\frac{14}{5}\right) \setminus \left\{2,\frac 5 2, \frac 8 3, \frac{11}{4}\right\}$ in \cite{km.2013,km.2015,k.2015,k.2017}. The regularity of the source-type self-similar solution was discussed in \cite{bpw.1992,ggo.2013,bgk.2016} covering all possible mobility exponents and zero as well as nonzero dynamic contact angles. While the analysis in these situations is more delicate than in the case of linear mobility in \cite{gko.2008,bgko.2016,gk.2010,j.2015,g.2015} due to singular terms appearing for the solution at the contact line, the traveling wave for quadratic mobility is $H(x) = x^{\frac 3 2}$, where the velocity has been normalized to $V = - \frac 3 8$. Although comparison with \eqref{equi_stat} demonstrates that the comparison principle for $H(x) = x^{\frac 3 2}$ is violated as well, this traveling wave still has distinct scaling in $x$ and is wetting the whole surface (as do the source-type self-similar solutions discussed in \cite{bpw.1992,ggo.2013,bgk.2016}). This leads to a linear operator having the same scaling as $x \searrow 0$ and $x \to \infty$ and thus the coercivity and elliptic regularity estimates are simpler compared to the present work.

\medskip

Finally, we anticipate that the techniques developed in this paper are not only applicable to the situation at hand, but that a similar reasoning may be applied to the thin-film equation
\[
\partial_t h + \partial_y \left(\left(h^3 + \lambda h^2\right) \partial_y^3 h\right) = 0 \quad \mbox{in} \quad \{h > 0\}
\]
with partial-wetting boundary conditions (nonzero equilibrium contact angle), where $\lambda > 0$ is the slip length. The latter equation can be derived from the Navier-Stokes system with Navier slip at the liquid-solid interface (cf.~\cite{beimr.2009,d.1985,odb.1997}). In view of \cite{k.2011}, where a quadratic mobility and partial-wetting boundary conditions have been employed, we expect, however, logarithmic corrections to occur, which are not present in the setting considered here.

\subsection{The nonlinear equation\label{sec:nonlin_intro}}
Let us now formulate the nonlinear problem associated to \eqref{linear_u}. We start by writing the nonlinear equation \eqref{nonlinear} in terms of $v = Y - x - 6 t$ (cf.~\eqref{def_v}):
\begin{equation}\label{nonlinear_v}
(3 x^2 + 2 x) \partial_t v + 6 (3 x^2 + 2 x) = \partial_x (x^3 + x^2) \left((1+v_x)^{-1} \partial_x\right)^2 (1+v_x)^{-1} (3 x^2 + 2 x)
\end{equation}
for $t,x > 0$. Using $u = (3 x^2 + 2 x) v$ (cf.~\eqref{def_u}) and equations~\eqref{lin_pde} and \eqref{defa}, we separate into linear and nonlinear parts:
\begin{subequations}\label{nonlinear_u}
\begin{align}
\partial_t u + \AAA u &= \NN(u) \quad \mbox{for} \quad t, x > 0, \label{nonlinear_pde}\\
u &= 0 \quad \mbox{for} \quad t > 0, \, x = 0, \label{bc_nonlin}
\end{align}
\end{subequations}
where the nonlinearity $\NN(u)$ is given by\footnote{Again note that derivatives $\partial_x$ act on \emph{everything} on their right-hand sides.}
\begin{equation}\label{def_nu}
\NN(u) := \partial_x (x^3 + x^2) \left(\left((1+v_x)^{-1} \partial_x\right)^2 (1+v_x)^{-1} (3 x^2 + 2 x) - 6 + \partial_x^3 \left(3 x^2 + 2 x\right) v\right).
\end{equation}
From \eqref{def_nu} we infer that the nonlinearity $\NN(u)$ is a linear combination (with constant coefficients) of terms of the form\footnote{Here, the derivatives $\partial_x$ only act on the factors separated by $\times$.}
\begin{subequations}\label{term_cond_non}
\begin{equation}\label{term_non}
(1+v_x)^{-3-s^\prime} \times \partial_x^{s_0^\prime+1} \left(x^3 + x^2\right) \times \partial_x^{s_0} \left(x^3 + x^2\right) \times \bigtimes_{j = 1}^n \partial_x^{s_j+1} v,
\end{equation}
with
\begin{equation}\label{cond_coeff_non}
s_0^\prime + s_0 + s_1 + \ldots + s_n = 3, \quad s_0 \le 1, \quad n \in \{2,\ldots,6\}, \quad s^\prime := \# \{s_j : \, j \ge 1 \mbox{ and } s_j \ge 1\},
\end{equation}
\end{subequations}
where $\# M$ denotes the cardinality of a finite set $M$ and where $u$ and $v$ are related through \eqref{def_u}. Indeed, in \eqref{def_nu} three derivatives $\partial_x$ need to be distributed on the individual factors, leading to $s_0^\prime + s_0 + s_1 + \ldots + s_n = 3$, at most one can act on the first factor $x^3+x^2$, so that $s_0 \le 1$, and at least one acts on the other factor $x^3 + x^2$ to give $3 x^2 + 2 x$, leading to the expression $\partial_x^{s_0^\prime+1} \left(x^3 + x^2\right)$. Since $\NN(u)$ has no contribution that is constant or linear in $v$, we have $n \ge 2$. The constant $s^\prime$ simply counts the number of times a derivative $\partial_x$ acts on a factor $(1+v_x)^{-j}$, where $j \ge 3$, since only then a new factor $v_{xx}$ is generated and the exponent $-j$ decreases by $1$. Because we have $3 + s^\prime \le 6$, we obtain the upper bound $n \le 6$ after subtracting the contributions that are constant and linear in $v$. 

\medskip

The nonlinear problem \eqref{nonlinear_u} and in particular estimates on the nonlinearity \eqref{def_nu} will be the subject of \S\ref{sec:nonlin}.
\subsection{Outline and statement of results\label{sec:out}}
The paper is structured as follows:

\medskip

The linear theory is contained in \S\ref{sec:lin}. In \S\ref{sec:coerc} we first discuss the coercivity properties of the linear operator $\AAA$. This requires joint coercivity of its summands in suitably chosen weighted inner products
\begin{subequations}\label{def_in_norm}
\begin{equation}\label{def_base_in}
(u,v)_\alpha := \int_0^\infty x^{-2 \alpha} u v \frac{\d x}{x} \quad \mbox{and} \quad (u,v)_{k,\alpha} := \sum_{j = 0}^k \left(D^j u, D^j v\right)_\alpha,
\end{equation}
with corresponding norms
\begin{equation}\label{def_base_norm}
\verti{u}_\alpha := \sqrt{(u,u)_\alpha} \quad  \mbox{and} \quad |u|_{k,\alpha} := \sqrt{(u,u)_{k,\alpha}},
\end{equation}
\end{subequations}
where $k \in \N_0$ and $\alpha \in \R$. Note that increasing the number $k$ in \eqref{def_in_norm} does not lead to more regularity of $u$ at $x = 0$ since every additional derivative $\partial_x$ is multiplied with a factor $x$. However, $\verti{u}_{1,\alpha} < \infty$ implies $u = o\left(x^\alpha\right)$ as $x \searrow 0$, that is, the larger $\alpha$, the stronger the decay of $u$ as $x \searrow 0$. Loosely speaking, the constant $k$ determines the regularity in the interior, while $\alpha$ measures the regularity at the boundary $x = 0$.

\medskip

In fact, we will not show coercivity estimates directly for $\AAA$ but for differential operators $\tilde \AAA$ and $\check \AAA$ fulfilling the commutation relations
\begin{equation}\label{commute_a_op}
(D-1) \AAA = \tilde \AAA (D-1) \quad \mbox{and} \quad (D-2) \tilde \AAA = \check \AAA (D-2).
\end{equation}
Indeed, for right-hand sides $f$ that are smooth in $x$ for positive times, the solution $u$ to \eqref{linear_u} turns out to be a smooth function in $x$ for positive times, that is,
\begin{equation}\label{formal_exp_u}
u(t,x) = u_1(t) x + u_2(t) x^2 + u_3(t) x^3 + \ldots \quad \mbox{as} \quad x \searrow 0.
\end{equation}
The formal expansion \eqref{formal_exp_u} suggests $\verti{u}_\alpha < \infty$ for $\alpha < 1$ which automatically restricts the set of admissible exponents $\alpha$ in order to have coercivity of $\AAA$, the \emph{coercivity range} of $\AAA$, to be empty.  Applying the operators $(D-1)$ and $(D-2) (D-1)$ to \eqref{lin_pde}, respectively, we obtain
\begin{subequations}\label{eq_ta_ca}
\begin{align}
\partial_t \tilde u + \tilde \AAA \tilde u &= \tilde f \quad \mbox{for} \quad t, x > 0, \label{eq_ta}\\
\partial_t \check u + \check \AAA \check u &= \check f \quad \mbox{for} \quad t, x > 0, \label{eq_ca}
\end{align}
\end{subequations}
where we have set
\begin{equation}\label{def_tc}
\tilde w := (D-1) w, \quad \check w := (D-2) \tilde w = (D-2) (D-1) w
\end{equation}
for a locally integrable function $w : (0,\infty) \to \R$. Thus, in view of \eqref{formal_exp_u} we have
\begin{subequations}\label{formal_exp_tu_cu}
\begin{align}
\tilde u(t,x) &= u_2(t) x^2 + 2 u_3(t) x^3 + 3 u_4(t) x^4 + \ldots \quad \mbox{as} \quad x \searrow 0, \label{formal_exp_tu}\\
\check u(t,x) &= 2 u_3(t) x^3 + 6 u_4(t) x^4 + 12 u_5(t) x^5 + \ldots \quad \mbox{as} \quad x \searrow 0, \label{formal_exp_cu}
\end{align}
\end{subequations}
that is, norms $\verti{\tilde u}_{\tilde \alpha}$ with $\tilde \alpha < 2$ and $\verti{\check u}_{\check \alpha}$ with $\check \alpha < 3$ are finite, so that not surprisingly coercivity estimates for larger weights for the operators $\tilde \AAA$ and $\check \AAA$ hold true (cf.~Lemma~\ref{lem:coerc_ta} and Lemma~\ref{lem:coerc_ca}).

\medskip

By accessible arguments, which mainly use the additive structure of $\AAA$ given by \eqref{defa}, coercivity of $\tilde \AAA$ or $\check \AAA$ implies parabolic maximal regularity of \eqref{eq_ta_ca}, respectively. This is discussed in \S\ref{sec:maxreg} (cf.~Proposition~\ref{prop:maxreg1}). In \S\ref{sec:elliptic} we additionally prove that corresponding estimates for $\tilde u$ and $\check u$ imply estimates for $u$ or $u - u_1 x$ or $u - u_1 x - u_2 x^2$, respectively. This is a consequence of Hardy's inequality (cf.~Proposition~\ref{prop:ell_reg}). The resulting estimates control the expansion of the solution $u$ to \eqref{linear_u} in the sense of
\[
u(t,x) = u_1(t) x + u_2(t) x^2 + o(x^2) \quad \mbox{as} \quad x \searrow 0,
\]
where $u_1\in BC^0([0,\infty))$, and $u_2 \in L^2((0,\infty))$. Due to \eqref{def_u} the corresponding parabolic estimates only imply control of the norm $\sup_{t,x > 0} \verti{v(t,x)}$. However, such an estimate appears \emph{not} to be sufficient in order to treat the corresponding nonlinear problem for $v$ as the validity of transformations~\eqref{vonmises} and \eqref{def_v} can only be ensured for functions $v$ with small Lipschitz norm. This can be also seen by taking the structure of the nonlinearity as in \eqref{term_non} into account since factors $(1+v_x)^{-1}$ cannot be controlled otherwise. Hence, we require in particular to control the expansion \eqref{formal_exp_u} with $u_2 \in BC^0([0,\infty))$ which corresponds to $u_3 \in L^2((0,\infty))$. While one may expect to achieve this by applying the operator $D-3$ to \eqref{eq_ca}, corresponding manipulations as in \eqref{commute_a_op} and \eqref{def_tc} do \emph{not} yield a coercive operator. Observe that unlike $u_1 x$ and $u_2 x^2$, the monomial $u_3 x^3$ is not contained in the kernel of $\AAA$ (cf.~\eqref{defa} and \eqref{def_pq}) and has to be controlled using a different reasoning. Notice that applying the operator $\tilde \AAA$ to \eqref{eq_ta}, we get
\[
\partial_t \left(\tilde \AAA \tilde u\right) + \tilde \AAA \left(\tilde \AAA \tilde u\right) = \left(\tilde \AAA \tilde f\right) \quad \mbox{for} \quad t, x > 0,
\]
that is, the tuple $\left(\tilde \AAA \tilde u, \tilde \AAA \tilde f\right)$ fulfills the same equation as $\left(\tilde u, \tilde f\right)$ and since $\tilde u_{x = 0} = 0$, also $\tilde \AAA \tilde u_{|x = 0} = 0$ in view of \eqref{defa} and \eqref{commute_a_op}. Therefore, corresponding estimates for $\left(\tilde \AAA \tilde u, \tilde \AAA \tilde f\right)$ hold true. However, this again requires to pass from norms in $\tilde \AAA \tilde u$ to estimates in $u$, i.e., we require knowledge about the elliptic regularity of $\tilde \AAA$. Note that because of \eqref{defa} and \eqref{def_base_norm} the operator $\tilde\AAA$ scales like $x^{-2}$ when $x \searrow 0$, so that by this method indeed stronger control of the solution $u$ to \eqref{linear_u} at $x = 0$ can be expected. We conclude this part by discussing the resulting maximal-regularity estimates of \eqref{linear_u} that appear to be sufficient for the treatment of the full problem for $v$ in \S\ref{sec:maxreg2} (cf.~Proposition~\ref{prop:mr_full}). Applying elliptic regularity here also requires arguments relying on the polynomial equation originating from inserting a power series of $u$ and $f$ in form of \eqref{formal_exp_u} into the linear equation \eqref{lin_pde}. In \S\ref{sec:norms}--\S\ref{sec:lin_rig} we finally demonstrate how the presented arguments can be made rigorous using the resolvent equation (cf.~Proposition~\ref{prop:resolvent}) and a time-discretization argument (cf.~Proposition~\ref{prop:linear_existence} and Proposition~\ref{prop:linear_uniqueness}). We remark that \S\ref{sec:lin_res}--\S\ref{sec:lin_rig} do not contain new methods and are included in the paper for the sake of completeness. Furthermore, we also remark that a semi-group approach in \S\ref{sec:lin_rig} is equally-well possible.

\medskip

In \S\ref{sec:nonlin} we present the nonlinear estimates connected to \eqref{nonlinear_u}. In \S~\ref{sec:nonlin_main} we give an overview of the main results, in particular the main nonlinear estimates leading to a well-posedness result for \eqref{nonlinear_u}. This is based on structural observations on the nonlinearity (cf.~\S\ref{sec:nonlin_struct}) and control of $v$ in $C^0$-based norms (cf.~\S\ref{sec:lip_control}, Lemma~\ref{lem:lipschitz} and Lemma~\ref{lem:sup_control}). These arguments are put together in \S\ref{sec:nonlin_main_est} thus concluding the paper and leading to our main result, Theorem~\ref{th:main}, for which we state a simplified version of it already at this stage:
\begin{theorem}\label{th:main_simple}
For any $0 < \delta < \frac 1 2$ there exists $\eps > 0$ such that for all locally integrable initial data $u^{(0)} : \, (0,\infty) \to \R$ with
\begin{equation}\label{norm_init_main}
\vertiii{u^{(0)}}_{\mathrm{init}}^2 := \verti{u^{(0)}}_{8,\delta}^2 + \verti{u^{(0)} - u^{(0)}_1 x}_{8,1+\delta}^2 + \verti{u^{(0)} - u^{(0)}_1 x - u^{(0)}_2 x^2}_{8,2+\delta}^2 \le \eps^2,
\end{equation}
the nonlinear problem \eqref{nonlinear_u} with initial condition $u_{|t = 0} = u^{(0)}$ has exactly one locally integrable solution in a suitably chosen norm (cf.~\eqref{norm_sol} with $N = 1$ and $k = 3$). This solution fulfills the a-priori estimate $\sup_{t \ge 0} \vertiii{u(t)}_{\mathrm{init}} \lesssim_\delta \vertiii{u^{(0)}}_{\mathrm{init}}$ and meets the expansion
\[
u(t,x) = u_1(t) x + u_2(t) x^2 + R(t,x) x^3 \quad \mbox{as} \quad x \searrow 0.
\]
This also implies
\begin{enumerate}[(a)]
\item $u_1 \in BC^1\left([0,\infty)\right)$ with $\sup_{t \ge 0} \verti{\frac{\d^\ell u_1}{\d t^\ell}} \lesssim_\delta \vertiii{u^{(0)}}_{\mathrm{init}}$ for $\ell = 0, 1$,
\item $u_2 \in BC^0\left([0,\infty)\right)$ with $\sup_{t \ge 0} \verti{u_2} \lesssim_\delta \vertiii{u^{(0)}}_{\mathrm{init}}$,
\item $R = R(t,x) \in L^2\left((0,\infty);BC^0([0,\infty))\right)$ with $\int_0^\infty \sup_{x \ge 0} \verti{R(t,x)}^2 \d t\lesssim_\delta \vertiii{u^{(0)}}_{\mathrm{init}}^2$.
\end{enumerate}
Furthermore, we have $\vertiii{u(t)}_{\mathrm{init}} \to 0$ as $t \to \infty$, that is, the traveling wave \eqref{tw} is asymptotically stable.
\end{theorem}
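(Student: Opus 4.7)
The plan is to recast \eqref{nonlinear_u} as a fixed-point equation in a Banach space $X$ calibrated to the linear maximal-regularity theory. Let $\vertiii{\cdot}_{\mathrm{sol}}$ denote the solution norm alluded to in the statement (the norm from \eqref{norm_sol} with $N=1$ and $k=3$) and $\vertiii{\cdot}_{\mathrm{rhs}}$ the matching right-hand-side norm. For $u^{(0)}$ with $\vertiii{u^{(0)}}_{\mathrm{init}} \le \eps$ and $u \in X$ in a closed ball $B_\rho$, define $\Phi(u)$ as the unique solution furnished by Proposition~\ref{prop:mr_full} of the \emph{linear} problem $\partial_t w + \AAA w = \NN(u)$, $w_{|x=0} = 0$, $w_{|t=0} = u^{(0)}$, which satisfies $\vertiii{\Phi(u)}_{\mathrm{sol}} \lesssim_\delta \vertiii{u^{(0)}}_{\mathrm{init}} + \vertiii{\NN(u)}_{\mathrm{rhs}}$. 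The goal is to show that $\Phi$ is a strict contraction on $B_\rho$ for an appropriate choice of $\rho$ of order $\eps$, and then to invoke the Banach fixed-point theorem.

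The heart of the argument is therefore the pair of nonlinear estimates $\vertiii{\NN(u)}_{\mathrm{rhs}} \lesssim \vertiii{u}_{\mathrm{sol}}^2$ and $\vertiii{\NN(u_1)-\NN(u_2)}_{\mathrm{rhs}} \lesssim \bigl(\vertiii{u_1}_{\mathrm{sol}} + \vertiii{u_2}_{\mathrm{sol}}\bigr) \vertiii{u_1-u_2}_{\mathrm{sol}}$, proved by summing term-by-term bounds on the summands~\eqref{term_non} subject to~\eqref{cond_coeff_non}. The constraint $n \ge 2$ forces at least two factors $\partial_x^{s_j+1} v$, giving the required quadratic structure; the singular factors $(1+v_x)^{-3-s^\prime}$ are harmless once $\|v_x\|_\infty \le \tfrac{1}{2}$ is secured via Lemma~\ref{lem:lipschitz} and Lemma~\ref{lem:sup_control} for $u$ small. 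Each resulting product is placed in the appropriate one of the three weighted norms $\verti{\cdot}_{k,\alpha}$ with $\alpha \in \{\delta,\,1+\delta,\,2+\delta\}$ after tracking how the three derivatives in~\eqref{def_nu} and the two powers of $v$ interact with the prefactors $x^3+x^2$ and $3x^2+2x$: contributions stemming from $u_1 x$ and $u_2 x^2$ gain one or two extra powers of $x$ and are absorbed via the $BC^0$-in-time control on $u_1$, $u_2$, while contributions involving the residual $R x^3$ are absorbed via its $L^2$-in-time sup bound.

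Once these estimates are in place, a fixed point $u$ exists in $B_\rho$; uniqueness in the full solution class follows by applying the Lipschitz estimate to the difference of any two solutions, and the a priori bound $\sup_{t\ge0} \vertiii{u(t)}_{\mathrm{init}} \lesssim_\delta \vertiii{u^{(0)}}_{\mathrm{init}}$ is the linear estimate with the quadratic right-hand side absorbed. The expansion properties (a)--(c) are read directly from the definition of $\vertiii{\cdot}_{\mathrm{sol}}$ through the Hardy-type embeddings of Proposition~\ref{prop:ell_reg}, which convert the weighted control of $(D-1) u$ and $(D-2)(D-1) u$ into pointwise control of $u_1 \in BC^1$, $u_2 \in BC^0$, and of the residual $R \in L^2_t BC^0_x$. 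Asymptotic stability follows by combining the additional $L^2_t$-integrability of the higher-order quantities built into $\vertiii{\cdot}_{\mathrm{sol}}$ with absolute continuity in $t$ provided by the parabolic time regularity: the part of $\vertiii{u(t)}_{\mathrm{init}}^2$ beyond the $BC^0$-components belongs to $L^1_t$, and decay of $u_1(t)$ and $u_2(t)$ themselves is extracted from the reduced ODEs obtained by matching powers of $x$ in~\eqref{lin_pde}, combined with the fact that $\partial_t u_1, \partial_t u_2$ inherit $L^1_t$ integrability from $\NN(u)$.

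The main obstacle is closing the quadratic nonlinear estimate in precisely the norm used on the left-hand side: the mismatch of the scalings of $H$ at $x \searrow 0$ (dominated by $x^2$) and at $x \to \infty$ (dominated by $x^3$) forces one to track the three weighted norms in parallel, and the nonlinearity~\eqref{def_nu} redistributes derivatives across the two factors of $v$ in ways that cross scaling regimes. Matching the resulting weights requires control of the residual $R$ (i.e.\ the $k=3$ term in the initial norm), which in turn explains why the linear theory of Proposition~\ref{prop:mr_full} is needed at exactly this level of regularity. This tight matching between the linear and nonlinear layers of the argument — rather than any single computation — is what drives the shape of the statement.
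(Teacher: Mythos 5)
Your proposal follows essentially the same route as the paper's proof: Proposition~\ref{prop:mr_full} supplies the linear solution operator, Proposition~\ref{prop:non_est} (quadratic and Lipschitz nonlinear estimates in $\vertiii{\cdot}_{\mathrm{rhs}}$, proved termwise via Lemmas~\ref{lem:lipschitz} and \ref{lem:sup_control}) closes the fixed-point map in a small ball, and the expansion properties in (a)--(c) are read off from the structure of $\vertiii{\cdot}_{\mathrm{sol}}$, exactly as in Theorem~\ref{th:main} and Corollary~\ref{cor:main}. That match includes the observation that $n\geq 2$ in \eqref{cond_coeff_non} forces the quadratic structure and that Lipschitz control of $v_x$ is required to handle $(1+v_x)^{-3-s'}$.

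One step is stated a bit too quickly: the claim that ``uniqueness in the full solution class follows by applying the Lipschitz estimate to the difference of any two solutions'' does not immediately close, because the Lipschitz bound of Proposition~\ref{prop:non_est} is only available when both arguments are small in $\vertiii{\cdot}_{\mathrm{sol}}$, whereas a competing solution with merely finite $\vertiii{\cdot}_{\mathrm{sol}}$ need not lie in the small ball $B_\rho$. The paper fixes this by restricting to a short time interval $[0,T]$: by approximation $\vertiii{u}_{\mathrm{sol},[0,T]}\to\vertiii{u^{(0)}}_{\mathrm{init}}$ as $T\searrow 0$, so both solutions become small for $T$ small, the contraction estimate applies there, and a continuation argument upgrades local to global uniqueness. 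You should include this local-in-time reduction. As a minor point, the pointwise/coefficient bounds in (a)--(c) come from the elementary trace-type estimates of Lemma~\ref{lem:est_coeff} and \eqref{est_rn}--\eqref{est_rn_2}, while Proposition~\ref{prop:ell_reg} is what builds the norm $\vertiii{\cdot}_{\mathrm{sol}}$ to begin with; your decay mechanism for $u_1,u_2$ via the coefficient ODE \eqref{poly_eq} is sound but differs slightly from the paper's shorter density argument via Lemma~\ref{lem:trace}.
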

Note that the norm \eqref{norm_init_main} is quasi-minimal in the sense that taking weights up to $2+\delta$ just gives control of the expansion of $u$ up to $u_2$ in $BC^0$ in \eqref{formal_exp_u} which is the critical scaling such that Lipschitz control of $v$ (cf.~\eqref{def_v}) can be obtained.

\medskip

Higher-regularity results and estimates on the coefficients $u_j$ will be presented later on as well (cf.~Theorem~\ref{th:main} and Corollary~\ref{cor:main}). We also note that by parabolic regularity theory, the function $u$ is smooth in the interior $\{(t,x): \, t > 0 \mbox{ and } x > 0\}$. Furthermore, smallness of $\sup_{t \ge 0} \vertiii{u(t)}_{\mathrm{init}}$ implies by virtue of \eqref{vonmises}, \eqref{def_v}, and \eqref{def_u} that such defined $h$ solves \eqref{tfe_free} and is smooth in $\{h > 0\}$. Additionally, we obtain from \eqref{vonmises}, \eqref{def_v}, and \eqref{def_u} that almost everywhere in time $t > 0$
\begin{align*}
& x + \frac 1 2 \frac{u_1 + u_2 x + R x^2}{1 + \frac 3 2 x} = y - 6 t \quad \mbox{as} \quad x \searrow 0\\
&\Rightarrow \quad x = \frac{y - 6 t - \frac 1 2 u_1}{1 + \frac 1 2 u_2 - \frac 3 4 u_1} \left(1 - \frac{\frac 1 2 \bar R - \frac 3 4 u_2 + \frac 9 8 u_1}{\left(1 + \frac 1 2 u_2 - \frac 3 4 u_1\right)^2} \left(y - 6 t - \frac 1 2 u_1\right)\right) \quad \mbox{as} \quad y \searrow 6 t + \frac 1 2 u_1,
\end{align*}
where $\bar R = \bar R\left(t,y -6 t - \frac 1 2 u_1\right)$ and $\bar R = \bar R\left(t,\bar y\right) \in L^2\left((0,\infty);BC^0([0,\infty))\right)$ fulfills an a-priori estimate in terms of $\vertiii{u^{(0)}}_{\mathrm{init}}$. Inserting this into \eqref{vonmises}, we infer that $h$ meets the expansion
\[
h = \left(\frac{y - 6 t - \frac 1 2 u_1}{1 + \frac 1 2 u_2 - \frac 3 4 u_1}\right)^2 + \frac{1 - \bar R + 2 u_2 - 4 u_1}{1 + \frac 1 2 u_2 - \frac 3 4 u_1} \left(\frac{y - 6 t - \frac 1 2 u_1}{1 + \frac 1 2 u_2 - \frac 3 4 u_1}\right)^3 \quad \mbox{as} \quad y \searrow 6 t + \frac 1 2 u_1
\]
almost everywhere in $t > 0$.

\begin{remark}
Scaling back $V$ and $\nu$ according to \eqref{complete_rescale}, we see that because of \eqref{def_v} and \eqref{def_u} we have
\begin{equation}\label{rescale_v_u}
\breve v = \frac{V}{6 \nu} v \quad \mbox{and} \quad \breve u = \frac{V^2}{36 \nu^3} u, \quad \mbox{where} \quad u := \left(3 \frac V 6 x^2 + 2 \nu x\right) v.
\end{equation}
As Theorem~\ref{th:main_simple} is formulated for the rescaled quantities $\breve u$ and $\breve x$, the condition \eqref{norm_init_main} on the initial data reads in the original (unrescaled) variables
\begin{equation}\label{cond_init_un}
\left(\frac{V}{6 \nu}\right)^4 \verti{u^{(0)}}_{8,\delta}^2 + \left(\frac{V}{6 \nu}\right)^2 \verti{u^{(0)} - u_1^{(0)} x}_{8,1+\delta}^2 + \verti{u^{(0)} - u_1^{(0)} x - u_2^{(0)} x^2}_{8,2+\delta}^2 \le \nu^2 \left(\frac{V}{6 \nu}\right)^{2\delta} \eps^2.
\end{equation}
%
\begin{enumerate}[(a)]
\item\label{num:q1} The limit $V \searrow 0$ corresponds to the equilibrium-stationary solution \eqref{equi_stat}, which was treated already in \cite{gko.2008,j.2015,bgko.2016}. From \eqref{cond_init_un} we see that the initial data has to be of order $O\left(V^{2 \delta}\right)$ as $V \searrow 0$, where $\delta > 0$ can be chosen arbitrarily small. Note, however, that $\eps$ is a function of $\delta$ as well and that $\eps \searrow 0$ as $\delta \searrow 0$, because coercivity of the linear operator ceases to hold for $\delta = 0$ (cf.~\eqref{coercivity_a}). An in $V$ (quasi-)uniform result would therefore require to explicitly characterize the dependence of $\eps$ on $\delta$.
\item\label{num:q2} The limit $\nu \searrow 0$ corresponds to a non-generic situation in which instead of \eqref{tw} the traveling wave
\begin{equation}\label{tw_alt}
\breve H = \begin{cases} \breve x^3 & \mbox{ for } \breve x \ge 0,\\ 0 & \mbox{ for } \breve x < 0 \end{cases}
\end{equation}
is considered. Using instead of \eqref{vonmises} the transform
\[
\breve h\left(\breve t,\breve Y\left(\breve t,\breve x\right)\right) = \breve x^3 \quad \mbox{for} \quad \breve t, \breve x > 0
\]
and defining $\breve v$ as in \eqref{def_v}, we need to set $\breve u := 3 \breve x^2 \breve v$ instead of using the definition \eqref{def_u}. This gives corresponding linear and nonlinear equations \eqref{lin_pde} and \eqref{nonlinear_pde}, where the operator $\breve \AAA$ in \eqref{defa} reduces to $\breve \AAA = \breve x^{-1} p\left(\breve x \partial_{\breve x}\right)$ and the nonlinearity \eqref{def_nu} attains the simpler form
\[
\breve \NN\left(\breve u\right) := \partial_{\breve x} \breve x^3 \left(\left(\left(1+\breve v_{\breve x}\right)^{-1} \partial_{\breve x}\right)^2 \left(1+\breve v_{\breve x}\right)^{-1} 3 \breve x^2 - 6 + \partial_{\breve x}^3 3 \breve x^2 \breve v\right).
\]
Because of $\breve u = 3 \breve x^2 \breve v$, the boundary conditions \eqref{bc_lin} and \eqref{bc_nonlin} need to be replaced by
\[
\breve u = \partial_{\breve x} \breve u = 0 \quad \mbox{for} \quad \breve t > 0, \, \breve x = 0.
\]
It is an open problem whether this problem (including the additional boundary condition) is well-posed. However, we notice that the right-hand side of \eqref{cond_init_un} vanishes as $\nu \searrow 0$. This is not surprising, as because of $\breve u = 3 x^2 \breve v$ boundedness of $\vertii{\breve u}_{\mathrm{init}}$ only implies control of $\vertii{\breve v}_{L^\infty_{\breve x}}$ but not $\vertii{\breve v_{\breve x}}_{L^\infty_{\breve x}}$. And while Theorem~\ref{th:main} would allow for larger weights that remove this issue, we would still need $\verti{u^{(0)}}_{8,\delta} = O\left(\nu^{3-\delta}\right)$, $\verti{u^{(0)} - u_1^{(0)} x}_{8,1+\delta} = O\left(\nu^{2-\delta}\right)$, and $\verti{u^{(0)} - u_1^{(0)} x - u_2^{(0)} x^2}_{8,2+\delta} = O\left(\nu^{1-\delta}\right)$ as $\nu \searrow 0$, making the choice of the norm in \eqref{norm_init_main} or \eqref{norm_init} unnatural for this situation. In fact our analysis relies in part on a gain of regularity due to the operator $\breve x^{-2} q\left(\breve x \partial_{\breve x}\right)$, which is precisely the addend in $\breve \AAA$ (cf.~\eqref{defa}) that is not present anymore for $\nu = 0$. 
\end{enumerate}
We believe that the questions raised in \eqref{num:q1} and \eqref{num:q2} are interesting directions to pursue in future research. In particular, we expect that the methods developed in this work can be adapted to address these scaling limits by employing a suitable interpolation norm built from the norms used here. This is in fact also a motivation why in the present work we do not rely on control coming from the operator $x^{-2} q(D)$ only and treating $x^{-1} p(D)$ as a perturbation, which would be more in line with the approaches in \cite{gko.2008,j.2015,bgko.2016}.
\end{remark}

\subsection{Notation}
For $f, g \ge 0$ and a finite set $A$, we write $f \lesssim_A g$ or  equivalently $g \gtrsim_A f$, whenever a constant $C = C(A)$, only depending on $A$, exists such that $f \le C g$. In this case, we say that $f$ can be estimated (or bounded) by $g$. If $f \lesssim_A g$ and $g \lesssim_A f$, we write $f \sim_A g$. For a finite set $A$ and a real variable $x$, we say that a property $P(x)$ is true for $x \gg_A 1$ (or $x \ll_A 1$) if where exists a constant $C = C(A) > 0$ such that $P(x)$ is true for $x \ge C$ (or $x \le C^{-1}$). In this case we say that $P(x)$ is true for sufficiently large (small) $x$. Furthermore, if the constant $C$ is universal or its dependence is specified in the context, we simply write $f \lesssim g$ etc. For any function $w\in C^N([0,\infty))$, we write
\[
w(x)=w_0+w_1x+w_2x^2+\ldots+w_Nx^N+o\left(x^N\right) \quad \mbox{as} \quad x \searrow 0 \quad \Leftrightarrow \quad w_j = \frac{1}{j!} \frac{\d^j w}{\d x^j}(0),
\]
where $j = 0,\ldots,N$. As noted in the context of \eqref{term_cond_non}, we write $E_1 \times \ldots \times E_M$ for a product, where the $E_j$ are expressions of the form $E_j = \prod_{\ell = 1}^{m} D^{s_{j,\ell}} w_{j,\ell}$ with sufficiently regular $w_{j,\ell} = w_{j,\ell}(x)$ and where differential operators $D$ act on everything to their right-hand side within $E_j$.
\section{Linear theory\label{sec:lin}}
\subsection{Coercivity\label{sec:coerc}}
We begin by introducing some terminology and making preparatory observations in \S\ref{sec:coerc}. Coercivity estimates to be used at later stages are the subject of \S\ref{sec:coerc_comm}.
\subsubsection{Definition of norms and preliminary remarks}
In this section, we repeat some of the notions introduced in \cite[\S4 and \S5]{ggko.2014}. First, we introduce a scale of inner products and norms given by
\begin{eqnarray}\nonumber
\verti{u}_{k,\alpha}^2 := (u,u)_{k,\alpha}, \quad \mbox{where} \quad (u,v)_{k,\alpha} &:=& \sum_{j = 0}^k \int_0^\infty x^{-2 \alpha} (D^j u) (D^j v) \frac{\d x}{x} \\
&\stackrel{\eqref{def_base_in}}{=}& \sum_{j = 0}^k \left(D^j u, D^j v\right)_\alpha, \label{def_prod}
\end{eqnarray}
where $u, v \colon (0,\infty) \to \R$ are locally integrable such that the expressions in \eqref{def_prod} are finite. We remark that using the transformation $s:=\ln x$, we have
\[
(u,v)_{k,\alpha} = \sum_{j = 0}^k \int_\R e^{-2 \alpha s} (\partial_s^j u) (\partial_s^j v) \, \d s \sim \left(\breve u, \breve v\right)_{W^{k,2}(\R)},
\]
where $\breve u := e^{-\alpha s} u$, $\breve v := e^{-\alpha s} v$ and $W^{k,2}(\R)$ is the standard unweighted Hilbert-Sobolev space on $\R$.

\medskip

We are interested in the coercivity of the operator $\AAA$, that is, we would like to know if
\begin{equation}\label{coercivity}
\left<u, \AAA u\right>_1 \gtrsim \left<u,u\right>_2 \quad \mbox{for all locally integrable } \, u \, \mbox{ with } \, \left<u,u\right>_2 < \infty,
\end{equation}
where $\langle\cdot,\cdot\rangle_j$ are scalar products to be specified in what follows. Therefore, we may use the structure of $\AAA$ given by its definition in \eqref{defa} as a sum of two operators $x^{-1} p(D)$ and $x^{-2} q(D)$ (again, note that these operators have a distinct scaling in $x$, $\sim x^{-1}$ or $\sim x^{-2}$, respectively). For $u$ locally integrable such that $\verti{u}_{2,\alpha} < \infty$ we have
\begin{equation}\label{sum_coerc}
\left(u,\AAA u\right)_\alpha = \left(u, p(D) u\right)_{\alpha+\frac 1 2} +  \left(u, q(D) u\right)_{\alpha+1}.
\end{equation}
Equation~\eqref{sum_coerc} shows that the study of coercivity of the linear operator $\AAA$ can be reduced to the study of coercivity of an operator $P(D)$, where $P(\zeta)$ is a fourth-order polynomial
\begin{equation}\label{def_p_op}
P(\zeta) = \prod_{j = 1}^4 (\zeta-\gamma_j) \quad \mbox{with} \quad \gamma_1 \le \gamma_2 \le \gamma_3 \le \gamma_4.
\end{equation}
Observe that by employing $\breve u := e^{-\alpha s} u$ and the Fourier transform $\F \breve u$ of $\breve u$, we obtain by Plancherel's theorem
\begin{equation}\label{fourier}
\left(u, P(D) u\right)_\alpha = \int_\R e^{-2\alpha s} u P(\partial_s) u \, \d s = \int_\R \breve u P(\partial_s+\alpha) \breve u \, \d s = \int_\R \Re P(i \xi+\alpha) \verti{\F \breve u}^2 \, \d \xi.
\end{equation}
In the last equality we have used that the integral $(u, P(D) u)_\alpha$ is real. Now the operator $P(D)$ is coercive in the sense of
\begin{equation}\label{coerc_p}
\left(u,P(D) u\right)_\alpha \gtrsim_\alpha \verti{u}_{2,\alpha}^2 \quad \mbox{for all locally integrable } \, u \, \mbox{ with } \, \verti{u}_{2,\alpha} < \infty
\end{equation}
if and only if we can bound the fourth-order polynomial $\Re P(i \xi+\alpha)$ by a positive constant from below. The range of $\alpha$ for which \eqref{coerc_p} is satisfied will be called the \emph{coercivity range} of $P(D)$. In \cite[Prop.~5.3]{ggko.2014} by explicit computation a criterion for coercivity was derived:
\begin{lemma}\label{lem:coer_p}
The differential operator $P(D)$ given in \eqref{def_p_op} is coercive with respect to $(\cdot,\cdot)_\alpha$ in the sense of \eqref{coerc_p} if the weight $\alpha$ fulfills
\begin{subequations}\label{coer_cond}
\begin{align}
\alpha & \in (-\infty,\gamma_1) \cup (\gamma_2, \gamma_3) \cup (\gamma_4, \infty), \\
\alpha & \in \left(m(\gamma)-\frac{\sigma(\gamma)}{\sqrt 3}, m(\gamma) + \frac{\sigma(\gamma)}{\sqrt 3}\right),
\end{align}
\end{subequations}
where
\[
m(\gamma) := \frac 1 4 \sum_{j = 1}^4 \gamma_j \quad \mbox{and} \quad \sigma(\gamma) = \sqrt{\frac 1 4 \sum_{j = 1}^4 (\gamma_j - m(\gamma))^2}
\]
are mean and square root of the variance of the zeros $\gamma_j$, respectively.
\end{lemma}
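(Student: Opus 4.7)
The plan is to reduce coercivity to the positivity of a single biquadratic polynomial via the Fourier identity \eqref{fourier}, and then to show that the two conditions in \eqref{coer_cond} are precisely the positivity conditions for that biquadratic. Using the substitution $s := \ln x$ and $\breve u := e^{-\alpha s} u$ from the remarks after \eqref{def_prod}, we already know that $\verti{u}_{2,\alpha}^2 \sim \int_\R (1+\xi^2)^2 \verti{\F\breve u(\xi)}^2 \, \d\xi$. By \eqref{fourier}, coercivity \eqref{coerc_p} therefore follows once we establish
\[
\Re P(i\xi + \alpha) \gtrsim_\alpha (1+\xi^2)^2 \quad \text{for all} \quad \xi \in \R.
\]

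First, I would explicitly compute $\Re P(i\xi+\alpha)$. Setting $a_j := \alpha - \gamma_j$ and expanding $P(i\xi+\alpha) = \prod_{j=1}^4 (a_j + i\xi) = (i\xi)^4 + e_1 (i\xi)^3 + e_2 (i\xi)^2 + e_3 (i\xi) + e_4$ in terms of the elementary symmetric polynomials $e_k$ in $a_1,\ldots,a_4$ and taking the real part yields the biquadratic
\[
\Re P(i\xi + \alpha) = \xi^4 - e_2 \xi^2 + e_4.
\]
The odd symmetric polynomials drop out, as the integral in \eqref{fourier} is already known to be real.

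Next, I would translate the two geometric conditions \eqref{coer_cond} into inequalities on $e_2$ and $e_4$. For condition on $e_4 = \prod_{j=1}^4 (\alpha - \gamma_j)$: counting sign changes along $\alpha \in \R$ with $\gamma_1 \le \gamma_2 \le \gamma_3 \le \gamma_4$, one sees immediately that $e_4 > 0$ is equivalent to the first condition in \eqref{coer_cond}. For $e_2$, use $e_1^2 = \sum_j a_j^2 + 2 e_2$ together with
\[
e_1 = \sum_j (\alpha - \gamma_j) = 4(\alpha - m(\gamma)), \qquad \sum_j a_j^2 = 4(\alpha - m(\gamma))^2 + 4 \sigma(\gamma)^2,
\]
(the last identity follows from $\sum_j (\gamma_j - m(\gamma)) = 0$). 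A short algebraic manipulation gives the key identity
\[
e_2 = 6 (\alpha - m(\gamma))^2 - 2 \sigma(\gamma)^2,
\]
so that the second condition $\verti{\alpha - m(\gamma)} < \sigma(\gamma)/\sqrt{3}$ in \eqref{coer_cond} is exactly $e_2 < 0$.

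Assuming both conditions, we now have $e_2 < 0$ and $e_4 > 0$, so that
\[
\Re P(i\xi + \alpha) = \xi^4 + (-e_2) \xi^2 + e_4 \ge \xi^4 + e_4 > 0 \quad \text{for every} \quad \xi \in \R.
\]
The matching bound from below by $c(1+\xi^2)^2$ with some $c = c(\alpha) > 0$ is then an elementary exercise: writing $\Re P(i\xi+\alpha) - c (1+\xi^2)^2 = (1-c)\xi^4 + (-e_2 - 2c)\xi^2 + (e_4 - c)$, each coefficient is non-negative once $c \le \min(1, -e_2/2, e_4)$. Feeding this into \eqref{fourier} and using the Sobolev characterization of $\verti{\cdot}_{2,\alpha}$ recalled above completes the proof.

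I expect no serious obstacle: the entire argument is bookkeeping around the two identities $e_4 = \prod_j (\alpha - \gamma_j)$ and $e_2 = 6(\alpha-m)^2 - 2\sigma^2$. The only mildly delicate point is making sure to extract the full Sobolev-type lower bound $(1+\xi^2)^2$ rather than just a positive constant, which is needed because $\verti{\cdot}_{2,\alpha}$ controls two logarithmic derivatives; this is handled by the elementary polynomial comparison above.
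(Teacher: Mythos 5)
Your proof is correct, and it follows exactly the route the paper sets up: the paper reduces coercivity to positivity of $\Re P(i\xi+\alpha)$ via \eqref{fourier}--\eqref{coerc_p} and then simply cites \cite[Prop.~5.3]{ggko.2014} for the ``explicit computation''; your argument supplies that computation. The use of elementary symmetric polynomials $e_k$ in $a_j = \alpha-\gamma_j$, giving $\Re P(i\xi+\alpha) = \xi^4 - e_2\xi^2 + e_4$, and the identifications $e_4 > 0 \Leftrightarrow$ the interval condition and $e_2 = 6(\alpha-m)^2 - 2\sigma^2 < 0 \Leftrightarrow$ the variance condition, are clean and verify correctly; the final polynomial comparison to $c(1+\xi^2)^2$ closes the argument and is the right thing to note, since bare positivity of $\Re P(i\xi+\alpha)$ alone would not give the $\verti{u}_{2,\alpha}$ lower bound.
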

We conclude from \eqref{def_pq} and \eqref{coer_cond} that the operator $p(D)$ is coercive for all $\alpha_1$ satisfying
\begin{equation}\label{alpha1}
\alpha_1 \in \left(-\frac 1 4 \sqrt{\frac{11}{3}} + \frac 3 4, + \frac 3 4 + \frac 1 4 \sqrt{\frac{11}{3}}\right) \cap (0,1) = \left(\frac 3 4 - \frac 1 4 \sqrt{\frac{11}{3}}, 1\right).
\end{equation}
The criterion \eqref{coer_cond} on the other hand does not yield coercivity for $q(D)$. However, we obtain that $q(D)$ is a non-negative operator with respect to $\left(\cdot,\cdot\right)_{\alpha_2}$ provided that we choose $\alpha_2 := 1$. We can even quantify non-negativity in this case:
\begin{equation}\label{q_nonneg}
\begin{aligned}
\left(u, q(D) u\right)_1 &\stackrel{\eqref{fourier}}{=} \int_\R q(i\xi+1) \verti{\F \breve u}^2 \d\xi = \int_\R (\xi^2 + \xi^4) \verti{\F \breve u}^2 \d \xi = \int_\R \left((\partial_s \breve u)^2 + (\partial_s^2 \breve u)^2\right) \d s \\
&\; = \verti{(D-1) u}_1^2 + \verti{(D-1)^2 u}_1^2.
\end{aligned}
\end{equation}
Due to \eqref{sum_coerc} and \eqref{q_nonneg} we are forced to take $\alpha = 0$, so that $\alpha_1 = \frac 1 2$ (one may verify by using \eqref{alpha1} that indeed $\alpha_1 = \frac 1 2$ is admissible) and \eqref{sum_coerc} yields
\begin{equation}\label{coercivity_a}
\begin{aligned}
(u,\AAA u)_0 &\; = (u,p(D)u)_{\frac 1 2} +  (u,q(D) u)_1 \\
&\stackrel{\eqref{coercivity}}{\sim} 
\underbrace{\verti{u}_{\frac 1 2}^2 + \verti{D u}_{\frac 1 2}^2 + \verti{D^2 u}_{\frac 1 2}^2}_{= \verti{u}_{2,\frac 1 2}^2} +  \left(\verti{(D-1) u}_1^2 + \verti{(D-1)^2 u}_1^2\right).
\end{aligned}
\end{equation}
The constant in estimate~\eqref{coercivity_a} is universal. Note that estimate~\eqref{coercivity_a} is inconvenient for all subsequent arguments. To see this, observe
\[
\verti{(D-1) u}_1^2 = \int_0^\infty x^{-2} ((D-1) u)^2 \frac{\d x}{x} = \int_0^\infty x^2 \left(\partial_x (x^{-1} u)\right)^2 \frac{\d x}{x}.
\]
While it is true that $\partial_x (x^{-1} u) \stackrel{\eqref{formal_exp_u}}{=} \partial_x \left(x^{-1} u - u_1\right) = u_2 x + u_3 x^2 + \cdots$ as $x \searrow 0$ and the integrals $\int_0^1 \left(x^{-1} u - u_1\right)^2 \frac{\d x}{x}$ and $\int_1^\infty \left(x^{-1} u\right)^2 \frac{\d x}{x}$ are finite, the estimates
\begin{subequations}\label{hardy_crit}
\begin{align}
\int_0^\infty x^2 \left(\partial_x (x^{-1} u - u_1)\right)^2 \frac{\d x}{x} &\gtrsim \int_0^\infty \left(x^{-1} u - u_1\right)^2 \frac{\d x}{x} = \verti{u - u_1 x}_1^2, \label{hardy_crit_1}\\
\int_0^\infty x^2 \left(\partial_x (x^{-1} u - u_1)\right)^2 \frac{\d x}{x} &\gtrsim \int_0^\infty \left(x^{-1} u\right)^2 \frac{\d x}{x} = \verti{u}_1^2 \label{hardy_crit_2}
\end{align}
\end{subequations}
correspond to the critical-case Hardy inequalities and are known to fail: A detailed study of the resolvent equation (cf.~\cite[\S6]{ggko.2014}) shows that $u = u(x)$ can be assumed to be smooth on $[0,\infty)$ and rapidly decaying as $x \to \infty$ and therefore a logarithmic divergence of the right-hand side in \eqref{hardy_crit_1} occurs at $x = \infty$ and of the right-hand side in \eqref{hardy_crit_2} at $x = 0$. This is a quite general feature making weight exponents $\alpha \notin \Z$ for the norms $\verti{\cdot}_{k,\alpha}$ and inner products $(\cdot,\cdot)_{k,\alpha}$ more convenient. Since taking the time trace later on will shift the weight $\alpha$ by $-\frac 1 2$, we assume $\alpha \notin \frac 1 2 \Z$ in what follows.

\medskip

We further note that unlike in \cite{gko.2008,bgko.2016} it is not convenient to study derivatives $\partial_x^k u$ with $k \ge  1$. In order to recognize this, observe that if $u$ solves \eqref{lin_pde}, then $\partial_x^k u$ solves
\begin{equation}\label{der_lin}
\partial_t (\partial_x^k u) + \AAA_k (\partial_x^k u) = \partial_x^k f \quad \mbox{for } \, t, x > 0,
\end{equation}
where the operator $\AAA_k$ is defined by the identity $\partial_x^k \AAA = \AAA_k \partial_x^k$. Using \eqref{defa} and \eqref{def_pq} we have
\begin{align*}
\AAA &= x^{-1} p(D) +  x^{-2} q(D) \\
&= x^{-1} D^2 (D-1) (D-2) +  x^{-1} D x^{-1} D (D-1) (D-2) \\
&= \partial_x D (D-1) (D-2) +  \partial_x^2 (D-1) (D-2).
\end{align*}
Hence, an easy induction shows that we have
\begin{equation}\label{ak0}
\AAA_k = \partial_x (D+k) (D-(1-k)) (D-(2-k)) +  \partial_x^2 (D-(1-k)) (D-(2-k)).
\end{equation}
From \eqref{ak0} we infer that
\begin{equation}\label{ak}
\AAA_k = x^{-1} p_k(D) +  x^{-2} q_k(D),
\end{equation}
where
\begin{equation}\label{def_pkqk}
\begin{aligned}
p_k(D) &= D (D+k) (D-(1-k)) (D-(2-k)),\\
q_k(D) &= D (D-1) (D-(1-k)) (D-(2-k)).
\end{aligned}
\end{equation}
The equality \eqref{sum_coerc} can be generalized to
\begin{equation}\label{sum_pkqk}
\left(v,\AAA_k v\right)_\alpha = \left(v, p_k(D) v\right)_{\alpha+\frac 1 2} +  \left(v,q_k(D)v\right)_{\alpha+1},
\end{equation}
where $v = \partial_x^k u$. Now we can study the coercivity of $p_k(D)$ and $q_k(D)$ separately using the coercivity result \eqref{coer_cond} and obtain that $p_k(D)$ ($q_k(D)$) is coercive with respect to $(\cdot,\cdot)_{\alpha_1}$ ($(\cdot,\cdot)_{\alpha_2}$) if
\begin{subequations}\label{coer_pkqk}
\begin{align}
\alpha_1 &= 0 \quad \mbox{for } \, k = 1 \, \mbox{ (only non-negative)}, \\
\begin{split}
\alpha_1 &\in \left(\frac{3 (1-k) - \sqrt{k^2 - 2k + \frac{11}{3}}}{4}, \frac{3 (1-k) + \sqrt{k^2 - 2k + \frac{11}{3}}}{4}\right) \\
& \qquad \cap \left(1-k,2-k\right) \quad \mbox{for } \, k \ge 2,
\end{split}
\\
\alpha_2 &\in \left(0,\frac{1}{\sqrt 6}\right) \quad \mbox{for } \, k = 1, \\
\alpha_2 &= 0 \quad \mbox{for } \, k = 2 \, \mbox{ (only non-negative)}, \\
\begin{split}
\alpha_2 &\in \left(\frac{2-k}2 - \frac{\sqrt{k^2-2k+3}}{2 \sqrt 3},\frac{2-k}2 + \frac{\sqrt{k^2-2k+3}}{2 \sqrt 3}\right) \\
& \qquad \cap \left(2-k,0\right) \quad \mbox{for } \, k \ge 2.
\end{split}
\end{align}
\end{subequations}
In view of \eqref{sum_pkqk}, \eqref{coer_pkqk} leads to restrictions on $\alpha$ (that is, coercivity constraints on $\AAA_k$), so that a non-negative operator $\AAA_k$ is obtained only for $k = 2$ and coercitivity does not hold for any $k \ge 0$.

\subsubsection{Coercivity estimates for commutated linear operators\label{sec:coerc_comm}}
In the previous section we have recognized that coercivity of the operator $\AAA$ in our scale of weighted inner products and norms \eqref{def_in_norm} requires joint coercivity (non-negativity) of the operators $p(D)$ and $q(D)$ with respective weights. Thus the coercivity range shrinks to a single value for which the operator $q(D)$ is only non-negative. This is due to the double ``middle'' root of $q(D)$ (cf.~\eqref{def_pq} and \eqref{coer_cond}). Here we show how we can shift this root so that the intersection of the coercivity ranges of the appropriately shifted operators is indeed a non-empty interval. In fact, having the possibility to slightly shift the weight is convenient for applying Hardy's inequality (see below). 

\medskip

As outlined in \S\ref{sec:out}, we can obtain better control on $u$ and allow for a larger range of weights if we apply $(D-1)$ to equation~\eqref{lin_pde}:
\[
\partial_t \tilde u + \tilde \AAA \tilde u = \tilde f \quad \mbox{for } \, t, x > 0,
\]
where $\tilde u = (D-1) u$ and $\tilde f = (D-1) f$ have been defined in \eqref{def_tc}. Note that this operation in particular preserves the boundary conditions \eqref{bc_lin} and \eqref{bc_rhs}. In view of \eqref{defa} and \eqref{def_pq}, the commutation of $(D-1)$ with the operator $\AAA$ yields
\begin{subequations}\label{comm_D1A}
\begin{equation}
x^{-1} (D-2) p(D) +  x^{-2} (D-3) q(D) \stackrel{\eqref{defa}}{=} (D-1) \AAA \stackrel{\eqref{commute_a_op}}{=} \tilde \AAA (D-1)
\end{equation}
with
\begin{equation}\label{def_ta}
\tilde \AAA = x^{-1} \tilde p(D) +  x^{-2} \tilde q(D) \quad \mbox{and } \, \begin{cases} \tilde p(\zeta) \stackrel{\eqref{def_pq}}{=} \zeta^2 (\zeta-2)^2, \\ \tilde q(\zeta) \stackrel{\eqref{def_pq}}{=} \zeta (\zeta-1) (\zeta-2) (\zeta-3).\end{cases}
\end{equation}
\end{subequations}
We obtain the following coercivity result:
\begin{lemma}\label{lem:coerc_ta}
The operator $\tilde \AAA$ (cf.~\eqref{def_ta}) fulfills coercivity in the sense of
\begin{equation}\label{coercivity_ta}
\left(\tilde \AAA \tilde u, \tilde u\right)_{\tilde \alpha} \gtrsim_{\tilde\alpha} \verti{\tilde u}_{2, \tilde \alpha + \frac 1 2}^2 +  \verti{\tilde u}_{2,\tilde \alpha+1}^2
\end{equation}
for all locally integrable $\tilde u$ with $\verti{\tilde u}_{2,\tilde \alpha+\frac 1 2} < \infty$ and $\verti{\tilde u}_{2,\tilde \alpha+1} < \infty$ if
\begin{equation}\label{coer_ta}
\tilde \alpha \in \left(0,1\right).
\end{equation}
\end{lemma}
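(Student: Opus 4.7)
The plan is to mimic the structure used for $\AAA$ in the preceding discussion: split the quadratic form associated to $\tilde \AAA$ into the two scaling pieces coming from $\tilde p(D)$ and $\tilde q(D)$, and then apply the criterion of Lemma~\ref{lem:coer_p} separately to each factor, choosing the weight $\tilde\alpha$ in the intersection of the two coercivity ranges.

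First, in complete analogy with \eqref{sum_coerc}, and using \eqref{def_ta} together with the identity $x^{-2\tilde\alpha} x^{-1} = x^{-2(\tilde\alpha+1/2)}$ and $x^{-2\tilde\alpha}x^{-2} = x^{-2(\tilde\alpha+1)}$, I would note that any locally integrable $\tilde u$ with $|\tilde u|_{2,\tilde\alpha+1/2} + |\tilde u|_{2,\tilde\alpha+1}<\infty$ satisfies
\begin{equation*}
(\tilde\AAA\tilde u,\tilde u)_{\tilde\alpha} \;=\; (\tilde p(D)\tilde u,\tilde u)_{\tilde\alpha+\tfrac 1 2} \;+\; (\tilde q(D)\tilde u,\tilde u)_{\tilde\alpha+1}.
\end{equation*}
(Formally this is justified as in the unperturbed case; rigor can be obtained via the $\breve u=e^{-\alpha s}u$, $s=\ln x$ substitution and a density argument, exactly as for \eqref{fourier}.)

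Next I would compute the zeros, means, and variances of $\tilde p$ and $\tilde q$ required to invoke Lemma~\ref{lem:coer_p}. For $\tilde p(\zeta)=\zeta^2(\zeta-2)^2$ the zeros are $\{0,0,2,2\}$ with $m=1$ and $\sigma=1$, so \eqref{coer_cond} gives coercivity on $\bigl(1-\tfrac 1{\sqrt 3},1+\tfrac 1{\sqrt 3}\bigr)\cap(0,2)=\bigl(1-\tfrac 1{\sqrt 3},1+\tfrac 1{\sqrt 3}\bigr)$. For $\tilde q(\zeta)=\zeta(\zeta-1)(\zeta-2)(\zeta-3)$ the zeros are $\{0,1,2,3\}$ with $m=\tfrac 3 2$ and $\sigma=\tfrac{\sqrt 5}{2}$, so \eqref{coer_cond} gives coercivity on $\bigl(\tfrac 3 2-\tfrac{1}{2}\sqrt{5/3},\tfrac 3 2+\tfrac{1}{2}\sqrt{5/3}\bigr)\cap(1,2)=(1,2)$. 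Setting $\alpha_1=\tilde\alpha+\tfrac 1 2$ and $\alpha_2=\tilde\alpha+1$, the two conditions become $\tilde\alpha\in\bigl(\tfrac 1 2-\tfrac 1{\sqrt 3},\tfrac 1 2+\tfrac 1{\sqrt 3}\bigr)$ and $\tilde\alpha\in(0,1)$, whose intersection is exactly $\tilde\alpha\in(0,1)$, which is the hypothesis of the lemma. Crucially, unlike for $\AAA$ itself, the shift from $q$ to $\tilde q$ has separated the double middle root, so that the inner coercivity interval for $\tilde q(D)$ has nonempty interior rather than collapsing to a point.

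Combining the two pieces then yields
\begin{equation*}
(\tilde\AAA\tilde u,\tilde u)_{\tilde\alpha}\;\gtrsim_{\tilde\alpha}\;|\tilde u|_{2,\tilde\alpha+\frac 1 2}^2+|\tilde u|_{2,\tilde\alpha+1}^2,
\end{equation*}
which is \eqref{coercivity_ta}. The main (and essentially only) obstacle is the routine bookkeeping: one must verify that the intersection of the two $\alpha$-ranges is precisely the interval $(0,1)$, and one must handle the non-integer weight issue flagged after \eqref{hardy_crit} to ensure no critical-case Hardy borderline is hit; the openness of $(0,1)$ and the strict positivity of $\Re\tilde p(i\xi+\alpha_1)$ and $\Re\tilde q(i\xi+\alpha_2)$ for every $\tilde\alpha$ in the interior take care of this.
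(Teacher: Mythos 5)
Your proof is correct and follows essentially the same route as the paper: split $(\tilde\AAA\tilde u,\tilde u)_{\tilde\alpha}$ into the $\tilde p(D)$ and $\tilde q(D)$ pieces, apply Lemma~\ref{lem:coer_p} to each, and intersect the resulting shifted ranges to land on $\tilde\alpha\in(0,1)$; your arithmetic for the means, variances, and intersections matches \eqref{coer_pq}. The closing aside about the critical-case Hardy inequality is a slight conflation (that issue concerns the subsequent elliptic step of passing from $(D-1)u$ to $u$, not the coercivity criterion itself, which is governed purely by positivity of $\Re\tilde p(i\xi+\alpha_1)$ and $\Re\tilde q(i\xi+\alpha_2)$), but it does not affect the validity of the argument.
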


\begin{proof}
Using the criterion \eqref{coer_cond} of Lemma~\ref{lem:coer_p}, we can explicitly calculate that $\tilde p(D)$ and $\tilde q(D)$ are coercive in the sense of \eqref{coerc_p} with respect to $(\cdot,\cdot)_{\alpha_1}$ or $(\cdot,\cdot)_{\alpha_2}$, respectively, if
\begin{equation}\label{coer_pq}
\alpha_1 \in \left(1 - \frac{1}{ \sqrt 3}, 1 + \frac{1}{ \sqrt 3}\right) \quad \mbox{and} \quad \alpha_2 \in \left(1,2\right).
\end{equation}
Since by testing we have
\[
\left(\tilde \AAA \tilde u, \tilde u\right)_{\tilde \alpha} = \left(\tilde p(D) \tilde u, \tilde u\right)_{\tilde \alpha + \frac 1 2} +  \left(\tilde q(D) \tilde u, \tilde u\right)_{\tilde \alpha + 1},
\]
we obtain $\tilde \alpha + 1/2 = \alpha_1$ and $\tilde \alpha + 1 = \alpha_2$ and due to \eqref{coer_pq} estimate~\eqref{coercivity_ta} under the constraint \eqref{coer_ta} follows.
\end{proof}
Due to \eqref{coercivity_ta} and since we can choose $\tilde \alpha > 0$, we expect control of the solution $u$ to \eqref{linear_u} in form of $u(t,x) = u_1(t) x + o(x)$ as $x \searrow 0$. It appears to be convenient to take in what follows $\tilde \alpha = \delta$, where $0 < \delta < \frac 1 2$, as then $\tilde \alpha$ is subcritical with respect to $u_1 x$ and $\tilde\alpha+1$ is supercritical with respect to $u_1 x$.

\medskip

For obtaining control up to $u_2 x^2$, we may consider the function $\check u := (D-1) (D-2) u$ (cf.~\eqref{def_tc}). Indeed, applying $(D-1) (D-2)$ cancels the expansion of $u$ (cf.~\eqref{formal_exp_cu}) so that we have $\check u = O(x^3)$ as $x \searrow 0$ and in particular the boundary conditions \eqref{bc_lin} and \eqref{bc_rhs} are preserved. Hence, we can expect higher-regularity estimates for $\check u$ that include weights larger than $2$. Using \eqref{commute_a_op} and \eqref{def_ta}, we can compute
\begin{equation}\label{check_a}
\check \AAA = x^{-1} \check p(D) +  x^{-2} \check q(D) \quad \mbox{with } \, \begin{cases} \check p(\zeta) \stackrel{\eqref{def_pq}}{=} \zeta^2 (\zeta-2) (\zeta-3), \\ \check q(\zeta) \stackrel{\eqref{def_pq}}{=} \zeta (\zeta-1) (\zeta-3) (\zeta-4). \end{cases}
\end{equation}
By the same reasoning a coercivity result analogous to Lemma~\ref{lem:coerc_ta} holds:
\begin{lemma}\label{lem:coerc_ca}
The operator $\check \AAA$ (cf.~\eqref{check_a}) fulfills coercivity in the sense of
\begin{equation}\label{coercivity_ca}
\left(\check \AAA \check u, \check u\right)_{\check \alpha} \gtrsim \verti{\check u}_{2, \check \alpha + \frac 1 2}^2 +  \verti{\check u}_{2,\check \alpha+1}^2
\end{equation}
for all locally integrable $\check u$ with $\verti{\check u}_{2,\check \alpha+\frac 1 2} < \infty$ and $\verti{\check u}_{2,\check \alpha+1} < \infty$ if
\begin{equation}\label{coer_ca}
\check \alpha \in \left(1-\sqrt{\frac 5 6}, \frac 3 2\right).
\end{equation}
\end{lemma}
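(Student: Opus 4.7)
The proof will mirror the one of Lemma~\ref{lem:coerc_ta}, using the additive decomposition \eqref{check_a} of $\check\AAA$ into the two scale-homogeneous pieces $x^{-1}\check p(D)$ and $x^{-2}\check q(D)$. By the same testing identity as in \eqref{sum_coerc}, for any $\check u$ with finite norms on both sides one has
\[
\bigl(\check\AAA\check u,\check u\bigr)_{\check\alpha}
= \bigl(\check p(D)\check u,\check u\bigr)_{\check\alpha+\frac12}
+ \bigl(\check q(D)\check u,\check u\bigr)_{\check\alpha+1},
\]
so it suffices to establish coercivity of $\check p(D)$ with respect to $(\cdot,\cdot)_{\check\alpha+\frac12}$ and of $\check q(D)$ with respect to $(\cdot,\cdot)_{\check\alpha+1}$, and then intersect the resulting ranges for $\check\alpha$.

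For each factor I invoke the explicit Fourier criterion of Lemma~\ref{lem:coer_p}. For $\check p(\zeta)=\zeta^2(\zeta-2)(\zeta-3)$ the roots are $(0,0,2,3)$, giving mean $m=\tfrac54$ and standard deviation $\sigma=\tfrac{3\sqrt3}{4}$, hence $\sigma/\sqrt3=\tfrac34$. The symmetric variance condition yields the interval $(\tfrac12,2)$, which is automatically contained in the admissible component $(0,2)$ of the root condition. For $\check q(\zeta)=\zeta(\zeta-1)(\zeta-3)(\zeta-4)$ the roots are $(0,1,3,4)$, giving $m=2$ and $\sigma=\sqrt{5/2}$, hence $\sigma/\sqrt3=\sqrt{5/6}$. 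The symmetric interval $(2-\sqrt{5/6},\,2+\sqrt{5/6})$ is contained in $(1,3)$ and therefore compatible with the root condition. Thus $\check p(D)$ is coercive with respect to $(\cdot,\cdot)_{\alpha_1}$ for $\alpha_1\in(\tfrac12,2)$ and $\check q(D)$ is coercive with respect to $(\cdot,\cdot)_{\alpha_2}$ for $\alpha_2\in(2-\sqrt{5/6},\,2+\sqrt{5/6})$.

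Setting $\alpha_1=\check\alpha+\tfrac12$ and $\alpha_2=\check\alpha+1$ translates these windows into
\[
\check\alpha\in(0,\tfrac32)\cap\bigl(1-\sqrt{\tfrac56},\,1+\sqrt{\tfrac56}\bigr)=\bigl(1-\sqrt{\tfrac56},\,\tfrac32\bigr),
\]
where the intersection uses $1-\sqrt{5/6}>0$ and $\tfrac32<1+\sqrt{5/6}$. On this range, the Fourier bounds furnished by Lemma~\ref{lem:coer_p} give
\[
\bigl(\check p(D)\check u,\check u\bigr)_{\check\alpha+\frac12}\gtrsim|\check u|_{2,\check\alpha+\frac12}^2,
\qquad
\bigl(\check q(D)\check u,\check u\bigr)_{\check\alpha+1}\gtrsim|\check u|_{2,\check\alpha+1}^2,
\]
which combined yield exactly the estimate \eqref{coercivity_ca}. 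The only nontrivial step is verifying that the two weight windows overlap after the relative shift of $\tfrac12$; no additional Hardy-type arguments are required since the operator $\check q(D)$ is now genuinely coercive (its double root from $q(D)$ has been split by the two commutations with $D-1$ and $D-2$, which is precisely the gain exploited here compared with \eqref{coercivity_a}).
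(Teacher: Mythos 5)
Your proof is correct and takes precisely the route the paper intends: after the testing identity splits $(\check\AAA\check u,\check u)_{\check\alpha}$ into the $\check p(D)$ and $\check q(D)$ contributions at shifted weights, Lemma~\ref{lem:coer_p} is applied to each factor, and the two resulting windows for $\check\alpha$ are intersected. Your computations of the means, standard deviations, and root conditions for $\check p$ and $\check q$ are all accurate, and the intersection $(0,\tfrac32)\cap(1-\sqrt{5/6},1+\sqrt{5/6})=(1-\sqrt{5/6},\tfrac32)$ reproduces \eqref{coer_ca} exactly; this is the ``same reasoning'' the paper invokes by analogy with Lemma~\ref{lem:coerc_ta}.
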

In particular $\check \alpha = 1 + \delta$ for $0 < \delta < \frac 1 2$ is an admissible weight exponent, so that $\check\alpha$ is supercritical with respect to the addend $u_1 x$ and $\check \alpha + 1$ is supercritical with respect to the addend $u_2 x^2$ in the respective cases.

\subsection{Parabolic maximal regularity I: heuristics\label{sec:maxreg}}
In this section we make use of the coercivity estimates \eqref{coercivity_ta} and \eqref{coercivity_ca} of Lemmas~\ref{lem:coerc_ta} and \ref{lem:coerc_ca}, respectively, in order to obtain maximal regularity for the operators $\tilde \AAA$ and $\check \AAA$ (cf.~\eqref{def_ta} and \eqref{check_a}), respectively. A rigorous justification of the subsequent arguments is not difficult and can be carried out by a time-discretization argument contained in \S\ref{sec:lin_res}--\S\ref{sec:lin_rig}. However, since the arguments are relatively technical and hide the simplicity of the reasoning, we stick to the time-continuous formulation for the time being and concentrate on deriving estimates assuming existence of sufficiently regular solutions from the outset.

\medskip

Throughout the paper, we use the notation
\begin{equation}\label{decomp_w}
\underline w := \frac{w}{x+1}
\end{equation}
for a function $w: (0,\infty) \to \R$. Note that for $w: [0,\infty) \to \R$ smooth, we have by power series expansion for all $N \in \N$
\[
\underline w = \sum_{j = 0}^\infty (-1)^j x^j \left(\sum_{k = 0}^N w_k x^k + O\left(x^{N+1}\right)\right) = \sum_{s = 0}^N \underline w_s x^s + O\left(x^{N+1}\right),
\]
where for $s \ge 0$
\begin{equation}\label{coeff_uw}
\underline w_s = \sum_{k = 0}^s (-1)^{s-k} w_k.
\end{equation}
From \eqref{coeff_uw} we see that indeed
\begin{equation}\label{sum_w1w2}
\underline w_s + \underline{x w}_s = w_s.
\end{equation}
We begin by testing equation~\eqref{eq_ta} with $\tilde u$ in the inner product $\left(\cdot,\cdot\right)_{\tilde \alpha}$ and obtain
\begin{equation}\label{test_ta}
\left(\partial_t \tilde u, \tilde u\right)_{\tilde \alpha} + \left(\tilde \AAA \tilde u, \tilde u\right)_{\tilde \alpha} = \left(\tilde f, \tilde u\right)_{\tilde \alpha}.
\end{equation}
Observe that $\left(\partial_t \tilde u, \tilde u\right)_{\tilde \alpha} = \frac 1 2 \frac{\d}{\d t} \verti{\tilde u}_{\tilde \alpha}^2$ and that by Young's inequality\footnote{Note that the operations $\widetilde{(\ldots)}$ and $\underline{{(\ldots)}}$ do not commute. We first apply $\underline{{(\ldots)}}$ and afterwards $\widetilde{{(\ldots)}}$.},
\[
\left(\tilde f, \tilde u\right)_{\tilde \alpha} = \left(\widetilde{x \underline f}, \tilde u\right)_{\tilde \alpha} + \left(\widetilde{\underline f},  \tilde u\right)_{\tilde \alpha} \le \frac{1}{2 c} \left(\verti{\widetilde{x \underline f}}_{\tilde \alpha - \frac 1 2}^2 +  \verti{\widetilde{\underline f}}_{\tilde \alpha -1}^2\right) + \frac c 2 \left(\verti{\tilde u}_{\tilde \alpha + \frac 1 2}^2 +  \verti{\tilde u}_{\tilde \alpha + 1}^2\right)
\]
for any $c > 0$. Assuming $\tilde \alpha$ as in \eqref{coer_ta} and employing coercivity in form of \eqref{coercivity_ta}, after adjusting $c$, the tested equation \eqref{test_ta} upgrades to
\begin{equation}\label{basic_ta}
\frac{\d}{\d t} \verti{\tilde u}_{\tilde \alpha}^2 + \verti{\tilde u}_{2,\tilde \alpha + \frac 1 2}^2 +  \verti{\tilde u}_{2,\tilde \alpha + 1}^2 \lesssim_{\tilde\alpha}  \left(\verti{\widetilde{x \underline f}}_{\tilde \alpha - \frac 1 2}^2 +  \verti{\widetilde{\underline{f}}}_{\tilde \alpha -1}^2\right).
\end{equation}
This is a weak estimate since only two spatial derivatives $D$ are controlled although the operator $\tilde \AAA$ is of order four. It can be upgraded to a higher-regularity estimate by applying $D^k$ with $k \ge 2$ to \eqref{eq_ta}, that is,
\[
\partial_t D^k \tilde u + D^k \tilde \AAA \tilde u = D^k \tilde f \quad \mbox{for} \quad t, x > 0.
\]
Testing this equation with $D^k \tilde u$ in the inner product $\left(\cdot,\cdot\right)_{\tilde \alpha}$, we arrive after some elementary manipulations (using \eqref{def_ta}) at
\begin{equation}\label{test_lin2}
\begin{aligned}
& \left(\partial_t D^k \tilde u, D^k \tilde u\right)_{\tilde \alpha} + \left(\tilde p(D) (D-1)^k \tilde u, D^k \tilde u\right)_{\tilde \alpha + \frac 1 2} +  \left(\tilde q(D) (D-2)^k \tilde u, D^k \tilde u\right)_{\tilde \alpha + 1} \\
& \quad = \left(D^k \tilde f, D^k \tilde u\right)_{\tilde \alpha}.
\end{aligned}
\end{equation}
Apparently $\left(\partial_t D^k \tilde u, D^k \tilde u\right)_{\tilde\alpha} = \frac 1 2 \frac{\d}{\d t} \verti{D^k \tilde u}_{\tilde\alpha}^2$ for the first term in \eqref{test_lin2}. Furthermore, since $\tilde p(D)$ and $\tilde q(D)$ are fourth-order operators, integration by parts and a standard interpolation estimate show that
\begin{align*}
\left(\tilde p(D) (D-1)^k \tilde u, D^k \tilde u\right)_{\tilde \alpha + \frac 1 2} &\ge \frac 1 2 \verti{\tilde u}_{k+2,\tilde \alpha + \frac 1 2}^2 - \tilde C \verti{\tilde u}_{\tilde \alpha + \frac 1 2}^2, \\
\left(\tilde q(D) (D-1)^k \tilde u, D^k \tilde u\right)_{\tilde \alpha + 1} &\ge \frac 1 2 \verti{\tilde u}_{k+2,\tilde \alpha + 1}^2 - \tilde C \verti{\tilde u}_{\tilde \alpha + 1}^2,
\end{align*}
where $\tilde C > 0$ is chosen sufficiently large. Finally, skew-symmetry of $D$  with respect to $\left(\cdot,\cdot\right)_0$ in conjunction with $x^{-\tilde\alpha} D = \left(D + \tilde\alpha\right) x^{-\tilde\alpha}$ yields
\begin{align*}
\left(D^k \tilde f, D^k \tilde u\right)_{\tilde\alpha} &= \left(D^{k-2} \tilde f, \left(D - 2 \tilde \alpha\right)^2 D^k \tilde u\right)_{\tilde\alpha} \\
&\lesssim_{\tilde\alpha} c^{-1} \left(\verti{\widetilde{x \underline f}}_{k-2,\tilde \alpha - \frac 1 2}^2 +  \verti{\widetilde{\underline f}}_{k-2,\tilde \alpha - 1}^2\right) + c \left(\verti{\tilde u}_{k+2,\tilde \alpha + \frac 1 2}^2 +  \verti{\tilde u}_{k+2,\tilde \alpha + 1}^2\right)
\end{align*}
for any $c > 0$, so that by enlarging $\tilde C$, estimate~\eqref{test_lin2} turns into
\begin{equation}\label{higher_ta}
\begin{aligned}
& \frac{\d}{\d t} \verti{D^k \tilde u}_{\tilde \alpha}^2 + \verti{D^{k+2} \tilde u}_{\tilde \alpha + \frac 1 2}^2 - \tilde C \verti{\tilde u}_{\tilde \alpha + \frac 1 2}^2 + \verti{D^{k+2} \tilde u}_{\tilde \alpha + 1}^2 - \tilde C \verti{\tilde u}_{\tilde \alpha + 1}^2 \\
& \quad \lesssim_{k,\tilde\alpha} \verti{\widetilde{x \underline f}}_{k-2,\tilde \alpha - \frac 1 2}^2 +  \verti{\widetilde{\underline{f}}}_{k-2,\tilde \alpha -1}^2.
\end{aligned}
\end{equation}
The combination of \eqref{basic_ta} with \eqref{higher_ta} yields
\begin{equation}\label{mr_ta_0}
\frac{\d}{\d t} \left(\verti{\tilde u}_{\tilde \alpha}^2 + \tilde C \verti{D^k \tilde u}_{\tilde \alpha}^2\right) + \verti{\tilde u}_{k+2,\tilde \alpha + \frac 1 2}^2 +  \verti{\tilde u}_{k+2,\tilde \alpha + 1}^2 \lesssim_{k,\tilde\alpha} \verti{\widetilde{x \underline f}}_{k-2,\tilde \alpha - \frac 1 2}^2 +  \verti{\widetilde{\underline f}}_{k-2,\tilde \alpha -1}^2.
\end{equation}
In order to obtain control on $\partial_t u$ as well, observe that with help of \eqref{decomp_w} we have
\begin{equation}\label{time_control}
\partial_t \widetilde{x \underline u} \stackrel{\eqref{lin_pde}}{=} \widetilde{x \underline f} - \widetilde{x \underline{\AAA u}}.
\end{equation}
In conjunction with the commutator $D x^{-1} = x^{-1} (D-1)$ and $\AAA \stackrel{\eqref{defa}}{=} x^{-1} p(D) + x^{-2} q(D)$, where both $p(D)$ and $q(D)$ have the linear factor $D-1$ (cf.~\eqref{def_pq}), we arrive at
\begin{equation}\label{time_tu1}
\verti{\partial_t \widetilde{x \underline u}}_{k-2,\tilde\alpha-\frac 1 2} \lesssim_k \verti{\widetilde{x \underline f}}_{k-2,\tilde\alpha-\frac 1 2} + \verti{\tilde u}_{k+2,\tilde\alpha+\frac 12}.
\end{equation}
Indeed, from \eqref{time_control} the first term on the right-hand side of \eqref{time_tu1} is trivial and we only need to treat the term
\[
\verti{\widetilde{x \underline{ \AAA u}}}_{k-2,\tilde\alpha-\frac 1 2} \stackrel{\eqref{defa}}{\le} \verti{(D-1) (x+1)^{-1} p(D) u}_{k-2,\tilde\alpha-\frac 1 2} + \verti{(D-1) x^{-1} (x+1)^{-1} q(D) u}_{k-2,\tilde\alpha-\frac 1 2}.
\]
Now observe that
\[
(D-1) (x+1)^{-1} p(D) u = (D-1) (x+1)^{-1} D^2 (D-2) \tilde u.
\]
Using
\begin{enumerate}[(a)]
\item $D (x+1)^{-\gamma} = -\gamma x (x+1)^{-\gamma-1}$ for $\gamma \in \R$,
\item the bound $(x+1)^{-\gamma} \le \min\{1,x\}$ for any $\gamma \ge 0$,
\end{enumerate}
we infer
\[
\verti{(D-1) (x+1)^{-1} p(D) u}_{k-2,\tilde\alpha-\frac 1 2} \lesssim_k \verti{\tilde u}_{k+2,\tilde\alpha + \frac 1 2}.
\]
We skip the details for the term $\verti{(D-1) x^{-1} (x+1)^{-1} q(D) u}_{k-2,\tilde\alpha-\frac 1 2}$ as there are no material differences to the one just treated.

\medskip

With an analogous reasoning also
\begin{equation}\label{time_tu2}
\verti{\partial_t \widetilde{\underline u}}_{k-2,\tilde\alpha-1} \lesssim_{k,\tilde\alpha}  \verti{\widetilde{\underline f}}_{k-2,\tilde\alpha-1} + \verti{\tilde u}_{k+2,\tilde\alpha+1}.
\end{equation}
Using \eqref{time_tu1} and \eqref{time_tu2} in \eqref{higher_ta} leads to an upgraded version of estimate~\eqref{mr_ta_0} that reads
\begin{equation}\label{mr_ta}
\begin{aligned}
&\frac{\d}{\d t} \left(\verti{\tilde u}_{\tilde \alpha}^2 + \tilde C \verti{D^k \tilde u}_{\tilde \alpha}^2\right) + \verti{\partial_t \widetilde{x \underline u}}_{k-2,\tilde\alpha-\frac 1 2}^2 +  \verti{\partial_t \widetilde{\underline u}}_{k-2,\tilde\alpha-1}^2 + \verti{\tilde u}_{k+2,\tilde \alpha + \frac 1 2}^2 +  \verti{\tilde u}_{k+2,\tilde \alpha + 1}^2 \\
& \quad \lesssim_{k,\tilde\alpha} \verti{\widetilde{x \underline f}}_{k-2,\tilde \alpha - \frac 1 2}^2 +  \verti{\widetilde{\underline f}}_{k-2,\tilde \alpha -1}^2.
\end{aligned}
\end{equation}
In integrated form we obtain
\begin{equation}\label{maxreg_ta}
\begin{aligned}
&\sup_{t \ge 0} \verti{\tilde u}_{k,\tilde \alpha}^2 + \int_0^\infty \left(\verti{\partial_t \widetilde{x \underline u}}_{k-2,\tilde\alpha-\frac 1 2}^2 +  \verti{\partial_t \widetilde{\underline u}}_{k-2,\tilde\alpha-1}^2 + \verti{\tilde u}_{k+2,\tilde \alpha + \frac 1 2}^2 +  \verti{\tilde u}_{k+2,\tilde \alpha + 1}^2\right) \d t  \\
& \quad \lesssim_{k,\tilde\alpha} \verti{\tilde u_{|t = 0}}_{k, \tilde \alpha}^2 + \int_0^\infty \left(\verti{\widetilde{x \underline f}}_{k-2,\tilde \alpha - \frac 1 2}^2 +  \verti{\widetilde{\underline f}}_{k-2,\tilde \alpha -1}^2\right) \d t.
\end{aligned}
\end{equation}
Indeed, on the left-hand side, four spatial derivatives and one time derivative more on $\tilde u$ (with appropriate weights) are controlled than for $\tilde f$ on the right-hand side, which is the maximal regularity gain possible.

\medskip

Since equation~\eqref{eq_ca} is structurally the same as \eqref{eq_ta} and the above reasoning only required coercivity of $\tilde \AAA$ in form of \eqref{coercivity_ta} and some extra effort to derive \eqref{time_tu1} and \eqref{time_tu2} which can be easily adapted, under the coercivity assumption \eqref{coer_ca} for $\check \alpha$, we also have
\begin{equation}\label{maxreg_ca}
\begin{aligned}
&\sup_{t \ge 0} \verti{\check u}_{k,\check \alpha}^2 + \int_0^\infty \left(\verti{\partial_t \widecheck{x \underline u}}_{k-2,\check\alpha-\frac 1 2}^2 +  \verti{\partial_t \widecheck{\underline u}}_{k-2,\check\alpha-1}^2 + \verti{\check u}_{k+2,\check \alpha + \frac 1 2}^2 +  \verti{\check u}_{k+2,\check \alpha + 1}^2\right) \d t \\
&\quad \lesssim_{k,\check\alpha}  \verti{\check u_{|t = 0}}_{k, \check \alpha}^2 + \int_0^\infty \left(\verti{\widecheck{x \underline f}}_{k-2,\check \alpha - \frac 1 2}^2 +  \verti{\widecheck{\underline f}}_{k-2,\check \alpha -1}^2\right) \d t.
\end{aligned}
\end{equation}
We additionally notice that by applying $\ell$ time derivatives and by applying the operator $\tilde \AAA$ or $\check \AAA$ $m$-times to \eqref{eq_ta} or \eqref{eq_ca}, respectively, we obtain
\begin{subequations}\label{eq_ta_ca_lm}
\begin{align}
\partial_t \left(\partial_t^\ell \tilde \AAA^m \tilde u\right) + \tilde \AAA \left(\partial_t^\ell \tilde \AAA^m \tilde u\right) &= \left(\partial_t^\ell \tilde \AAA^m \tilde f\right) \quad \mbox{for} \quad t,x > 0,\\
\partial_t \left(\partial_t^\ell \check \AAA^m \check u\right) + \check \AAA \left(\partial_t^\ell \check \AAA^m \check u\right) &= \left(\partial_t^\ell \check \AAA^m \check f\right) \quad \mbox{for} \quad t,x > 0.
\end{align}
\end{subequations}
This implies that the tuples $\left(\partial_t^\ell \tilde \AAA^m \tilde u, \partial_t^\ell \tilde \AAA^m \tilde f\right)$ or $\left(\partial_t^\ell \check \AAA^m \check u, \partial_t^\ell \check \AAA^m \check f\right)$ fulfill exactly the same equations as $\left(\tilde u, \tilde f\right)$ or $\left(\check u, \check f\right)$, respectively. Furthermore, if $u: [0,\infty) \to \R$ is smooth with $u_{x = 0} = 0$, then $\tilde u_{x = 0} = \partial_x \tilde u_{|x = 0} = 0$ and thus $\tilde p(D) \tilde u \stackrel{\eqref{def_ta}}{=} O\left(x^3\right)$ as $x \searrow 0$ and $\tilde q(D) \tilde u \stackrel{\eqref{def_ta}}{=} O\left(x^4\right)$ as $x \searrow 0$, so that $\tilde \AAA \tilde u = O\left(x^2\right)$ as $x \searrow 0$. In particular, the boundary condition \eqref{bc_lin} remains valid for $u$ replaced by $\tilde u$ and by the same argumentation also \eqref{bc_rhs} remains satisfied for $f$ replaced by $\tilde f$. An analogous reasoning also applies for the tuple $\left(\check \AAA \check u, \check \AAA \check f\right)$ and inductively we infer that $\left(\partial_t^\ell \tilde \AAA^m \tilde u, \partial_t^\ell \tilde \AAA^m \tilde f\right)$ and $\left(\partial_t^\ell \check \AAA^m \check u, \partial_t^\ell \check \AAA^m \check f\right)$ meet the same boundary conditions as $(u,f)$. Therefore, maximal-regularity estimates in the form of
\begin{subequations}\label{maxreg_tc}
\begin{equation}\label{maxreg_ta_lm}
\begin{aligned}
& \sup_{t \ge 0} \verti{\partial_t^\ell \tilde \AAA^m \tilde u}_{k,\tilde \alpha}^2 + \int_0^\infty \left(\verti{\partial_t^{\ell+1} \tilde \AAA^m \widetilde{x \underline u}}_{k-2,\tilde \alpha - \frac 1 2}^2 +  \verti{\partial_t^{\ell+1} \tilde \AAA^m \widetilde{\underline u}}_{k-2,\tilde \alpha - 1}^2\right) \d t \\
& + \int_0^\infty \left(\verti{\partial_t^\ell \tilde \AAA^m \tilde u}_{k+2,\tilde \alpha + \frac 1 2}^2 +  \verti{\partial_t^\ell \tilde \AAA^m \tilde u}_{k+2,\tilde \alpha + 1}^2\right) \d t \\
& \quad \lesssim_{k,\tilde\alpha} \verti{\left(\partial_t^\ell \tilde \AAA^m \tilde u\right)_{|t = 0}}_{k, \tilde \alpha}^2 + \int_0^\infty \left(\verti{\partial_t^\ell \tilde \AAA^m \widetilde{x \underline f}}_{k-2,\tilde \alpha - \frac 1 2}^2 +  \verti{\partial_t^\ell \tilde \AAA^m \widetilde{\underline f}}_{k-2,\tilde \alpha -1}^2\right) \d t
\end{aligned}
\end{equation}
and
\begin{equation} \label{maxreg_ca_lm}
\begin{aligned}
& \sup_{t \ge 0} \verti{\partial_t^\ell \check \AAA^m \check u}_{k,\check \alpha}^2 + \int_0^\infty \left(\verti{\partial_t^{\ell+1} \check \AAA^m \widecheck{x \underline u}}_{k-2,\check \alpha - \frac 1 2}^2 +  \verti{\partial_t^{\ell+1} \check \AAA^m \widecheck{\underline u}}_{k-2,\check \alpha - 1}^2\right) \d t \\
& + \int_0^\infty \left(\verti{\partial_t^\ell \check \AAA^m \check u}_{k+2,\check \alpha + \frac 1 2}^2 +  \verti{\partial_t^\ell \check \AAA^m \check u}_{k+2,\check \alpha + 1}^2\right) \d t \\
& \quad \lesssim_{k,\check\alpha} \verti{\left(\partial_t^\ell \check \AAA^m \check u\right)_{|t = 0}}_{k, \check \alpha}^2 + \int_0^\infty \left(\verti{\partial_t^\ell \check \AAA^m \widecheck{x \underline f}}_{k-2,\check \alpha - \frac 1 2}^2 +  \verti{\partial_t^\ell \check \AAA^m \widecheck{\underline f}}_{k-2,\check \alpha -1}^2\right) \d t
\end{aligned}
\end{equation}
\end{subequations}
for all $k \ge 2$, $\ell \ge 0$, and $m \ge 0$ need to be satisfied. We additionally observe furthermore that inductively from \eqref{eq_ta_ca_lm} it follows
\begin{align*}
\partial_t^\ell \tilde \AAA^m \tilde u &= (-1)^\ell \tilde \AAA^{m+\ell} \tilde u + \sum_{\ell^\prime = 0}^{\ell-1} (-1)^{\ell-1-\ell^\prime} \partial_t^{\ell^\prime} \tilde \AAA^{m+\ell-1-\ell^\prime} \tilde f, \\
\partial_t^\ell \check \AAA^m \check u &= (-1)^\ell \check \AAA^{m+\ell} \check u + \sum_{\ell^\prime = 0}^{\ell-1} (-1)^{\ell-1-\ell^\prime} \partial_t^{\ell^\prime} \check \AAA^{m+\ell-1-\ell^\prime} \check f,
\end{align*}
so that for higher-regularity estimates by taking the boundary value at time $t = 0$ the following \emph{time-trace identities}
\begin{subequations}\label{comp_mr}
\begin{align}
\left(\partial_t^\ell \tilde \AAA^m \tilde u\right)_{|t = 0} &= (-1)^\ell \tilde \AAA^{m+\ell} \widetilde{u^{(0)}} + \sum_{\ell^\prime = 0}^{\ell-1} (-1)^{\ell^\prime} \left(\partial_t^{\ell-1-\ell^\prime} \tilde \AAA^{m+\ell^\prime} \tilde f\right)_{|t = 0}, \\
\left(\partial_t^\ell \check \AAA^m \check u\right)_{|t = 0} &= (-1)^\ell \check \AAA^{m+\ell} \widecheck{u^{(0)}} + \sum_{\ell^\prime = 0}^{\ell-1} (-1)^{\ell^\prime} \left(\partial_t^{\ell-1-\ell^\prime} \check \AAA^{m+\ell^\prime} \check f\right)_{|t = 0},
\end{align}
\end{subequations}
are fulfilled almost everywhere. A rigorous statement reads as follows:
\begin{proposition}\label{prop:maxreg1}
For locally integrable functions $f: \, (0,\infty)^2 \to \R$ and $u^{(0)}: \, (0,\infty) \to \R$ such that the right-hand sides of \eqref{maxreg_tc} are finite for all $k \ge 2$, $\ell \ge 0$, and $m \ge 0$, problem~\eqref{linear_u} subject to $u_{|t = 0} = u^{(0)}$ has exactly one locally integrable solution $u: (0,\infty)^2 \to \R$ with finite left-hand sides of \eqref{maxreg_tc} for all $k \ge 2$, $\ell \ge 0$, and $m \ge 0$. Furthermore, the maximal-regularity estimates \eqref{maxreg_tc} are satisfied for $k \ge 2$, $\ell \ge 0$, and $m \ge 0$, where $\tilde \alpha$ fulfills \eqref{coer_ta}, $\check \alpha$ meets \eqref{coer_ca}, and the compatibility conditions \eqref{comp_mr} are satisfied almost everywhere. Here, all estimates only depend on $k$, $\ell$, $m$, and $\tilde\alpha$ or $\check\alpha$, respectively. Furthermore, uniqueness holds under the weaker assumption that $u: (0,\infty)^2 \to \R$ is locally integrable with
\[
\int_0^\infty \left(\verti{\tilde u}_{4,\tilde \alpha + \frac 1 2}^2 +  \verti{\tilde u}_{4,\tilde \alpha + 1}^2\right) \d t < \infty \quad \mbox{or} \quad \int_0^\infty \left(\verti{\check u}_{4,\check \alpha + \frac 1 2}^2 +  \verti{\check u}_{4,\check \alpha + 1}^2\right) \d t < \infty.
\]
\end{proposition}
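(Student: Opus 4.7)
The strategy is to render rigorous the formal energy estimates derived in the heuristic discussion leading to \eqref{mr_ta} and \eqref{maxreg_ca} via an implicit time-discretization scheme combined with an elliptic (resolvent) theory for $\AAA$. The argument unfolds in four main steps.

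\textbf{Step 1 (Resolvent theory).} I would first establish, for $\lambda > 0$, well-posedness of the resolvent equation $\lambda u + \AAA u = g$ together with all-order elliptic regularity estimates in the scale $\verti{\cdot}_{k,\alpha}$. By passing to the commutated variables $\tilde u = (D-1) u$ and $\check u = (D-1)(D-2) u$ one obtains the resolvent problems $\lambda \tilde u + \tilde\AAA \tilde u = \tilde g$ and $\lambda \check u + \check \AAA \check u = \check g$, for which the coercivity estimates \eqref{coercivity_ta} and \eqref{coercivity_ca} combined with Lax--Milgram in the Hilbert spaces induced by $\left(\cdot,\cdot\right)_{\tilde\alpha+\frac12}+\left(\cdot,\cdot\right)_{\tilde\alpha+1}$ (resp.~$\left(\cdot,\cdot\right)_{\check\alpha+\frac12}+\left(\cdot,\cdot\right)_{\check\alpha+1}$) yield existence, uniqueness and the estimate $\verti{\tilde u}_{2,\tilde\alpha+\frac12}+\verti{\tilde u}_{2,\tilde\alpha+1} \lesssim \verti{\widetilde{x\underline g}}_{\tilde\alpha-\frac12}+\verti{\widetilde{\underline g}}_{\tilde\alpha-1}$. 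Higher-regularity bounds in $\verti{\cdot}_{k,\cdot}$ are obtained iteratively by applying $D^j$ and exploiting $x^{-\alpha} D = (D+\alpha) x^{-\alpha}$ together with the interpolation arguments already used to pass from \eqref{basic_ta} to \eqref{higher_ta}.

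\textbf{Step 2 (Time discretization).} For a uniform step $\tau > 0$, define $u^n$ recursively by $(I+\tau\AAA) u^n = u^{n-1}+\tau f^n$, with $u^0 = u^{(0)}$ and $f^n$ a suitable averaging of $f$ on $[(n-1)\tau, n\tau]$. Each step is solvable by Step 1, and the formal testing procedure that produced \eqref{basic_ta}, \eqref{higher_ta}, \eqref{mr_ta}, and the derivation of \eqref{time_tu1}--\eqref{time_tu2} can be carried out in the discrete setting verbatim, replacing $\frac{\d}{\d t}\verti{\tilde u}_{\tilde\alpha}^2$ by $\frac{1}{\tau}(\verti{\tilde u^n}_{\tilde\alpha}^2 - \verti{\tilde u^{n-1}}_{\tilde\alpha}^2)$ and using that $(a-b)a \ge \tfrac12(a^2-b^2)$. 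Summation in $n$ produces the discrete analogue of \eqref{maxreg_ta} uniformly in $\tau$, and the analogous argument for $\check u$ gives the discrete \eqref{maxreg_ca}. Interpolating the $u^n$ piecewise-constantly (or linearly) in time yields an approximate solution $u^\tau$ with uniform bounds on the left-hand side of \eqref{maxreg_tc} for $\ell=m=0$.

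\textbf{Step 3 (Passage to the limit and higher regularity).} Weak compactness in the relevant Hilbert spaces allows extraction of a subsequence $u^\tau \rightharpoonup u$, with $u$ satisfying \eqref{linear_u} distributionally; the boundary condition \eqref{bc_lin} is preserved because every approximation satisfies $\widetilde{u^\tau}_{|x=0} = 0$, which follows from the expansion discussion around \eqref{formal_exp_tu_cu}. Weak lower semicontinuity then yields the estimate \eqref{maxreg_ta} and \eqref{maxreg_ca} for the limit. Higher indices $(k,\ell,m)$ are handled by the observation in \eqref{eq_ta_ca_lm}: the tuple $(\partial_t^\ell\tilde\AAA^m\tilde u,\partial_t^\ell\tilde\AAA^m\tilde f)$ solves the same equation as $(\tilde u,\tilde f)$ and meets the same boundary conditions in view of the expansion $\tilde\AAA\tilde u=O(x^2)$ as $x\searrow 0$ noted in the text. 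Consequently, the estimates of Step 2 applied to these tuples yield \eqref{maxreg_ta_lm}--\eqref{maxreg_ca_lm}, provided the initial data are well-defined, which is exactly where the time-trace identities \eqref{comp_mr} enter: they are established first in the discrete setting (where they hold trivially by induction on $\ell$) and then pass to the limit.

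\textbf{Step 4 (Uniqueness).} The weaker uniqueness statement follows by testing the homogeneous commutated equation with $\tilde w$ or $\check w$ in $\left(\cdot,\cdot\right)_{\tilde\alpha}$ or $\left(\cdot,\cdot\right)_{\check\alpha}$, respectively; coercivity yields $\frac{\d}{\d t}\verti{\tilde w}_{\tilde\alpha}^2 \le 0$, and since $w_{|t=0}=0$ by hypothesis one concludes $\tilde w \equiv 0$, hence $w = w_1 x$ for some $w_1=w_1(t)$; the spatial decay enforced by $\verti{\tilde w}_{4,\tilde\alpha+\frac12}<\infty$ together with $\tilde\alpha > 0$ then forces $w_1 \equiv 0$.

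The main technical obstacle, as the authors emphasize, is that $\AAA$ has two genuinely different scalings ($x^{-1}$ and $x^{-2}$) whose coercivity ranges on the base norm $\verti{\cdot}_\alpha$ intersect only at a single point. Thus the energy method cannot be closed directly on $u$; the commutation trick yielding $\tilde\AAA$ and $\check\AAA$, together with the careful split of the right-hand side via $f = \widetilde{x\underline f}+\widetilde{\underline f}$ as in the derivation of \eqref{basic_ta}, is what ultimately balances the two contributions and forces the intersection of coercivity ranges to be a genuine interval. All other steps are essentially bookkeeping around this central balance.
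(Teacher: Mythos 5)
Your overall scaffolding (resolvent theory, implicit time discretization, passage to the limit, higher indices via the commutation with $\partial_t^\ell\tilde\AAA^m$, and a separate uniqueness argument) matches the paper's strategy in \S\ref{sec:lin_res}--\S\ref{sec:lin_rig}, and Steps 2 and 3 are essentially correct. Two points deserve comment, one a genuine difference of route and one a genuine error.

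\emph{Step 1 (resolvent) is a different route, and it needs more work than stated.} The paper does not use Lax--Milgram: Proposition~\ref{prop:resolvent} is proved by an ODE matching argument, constructing a two-parameter solution family near $x=0$ by a Picard iteration based on the explicit inverse $\SSS$ of the factorized operator $\AAA = x^{-2}(D-1)^2(D-2)\BB$ (cf.~\eqref{rep_s}), constructing a second two-parameter family with super-algebraic decay as $x\to\infty$ via a fundamental solution of $\lambda+\partial_r^4$ after the substitution $r=4\sqrt[4]{x}$, and then matching the two families using coercivity for uniqueness. The advantage of that construction is that it directly delivers $u\in C^\infty([0,\infty))$ with the decay \eqref{decay_x_gg_1}, which is exactly what is needed to run the discrete higher-order energy estimates (applying $D^k$ and testing) without a separate elliptic regularity bootstrap. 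Your Lax--Milgram variant is not obviously wrong, but note that the coercivity estimate \eqref{coercivity_ta} is in tested form $(\tilde\AAA\tilde u,\tilde u)_{\tilde\alpha}$, and $\tilde\AAA$ is fourth order: to phrase this as a Lax--Milgram problem you must integrate by parts to obtain a bounded symmetric form on an $H^2$-type weighted space, verify boundedness (not just coercivity) of that form uniformly down to the degenerate endpoint $x=0$, and then establish higher regularity and decay of the weak solution separately. That is a non-trivial additional programme; it should at least be acknowledged.

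\emph{Step 4 (uniqueness) contains an error.} After concluding $\tilde w\equiv 0$ and hence $w=w_1(t)\,x$, you claim that finiteness of $\verti{\tilde w}_{4,\tilde\alpha+\frac12}$ forces $w_1\equiv 0$. This cannot work: for $w = w_1 x$ one has $\tilde w = (D-1)(w_1 x) = 0$ identically, so \emph{every} choice of $w_1(t)$ makes the quantity $\verti{\tilde w}_{4,\tilde\alpha+\frac12}$ vanish and the hypothesis gives no information about $w_1$. The correct argument is dynamical: substituting $w = w_1(t)\,x$ into the homogeneous equation \eqref{lin_pde} and using $\AAA x = 0$ (cf.~\eqref{defa},~\eqref{def_pq}) gives $\frac{\d w_1}{\d t}\,x = 0$, hence $\frac{\d w_1}{\d t}=0$, and the initial condition $w_1(0)=0$ then yields $w_1\equiv 0$. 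A smaller omission in the same step: the testing procedure that gives $\frac{\d}{\d t}\verti{\tilde w}_{\tilde\alpha}^2 \le 0$ must be justified under the rather weak assumption $\int_0^T\left(\verti{\tilde w}_{4,\tilde\alpha+\frac12}^2 + \verti{\tilde w}_{4,\tilde\alpha+1}^2\right)\d t < \infty$; the paper does this by inserting a cutoff $\eta_n$ in $s=\ln x$, controlling the commutator $[\eta_n,\tilde\AAA]$, and passing $n\to\infty$ by dominated convergence (cf.~the proof of Proposition~\ref{prop:linear_uniqueness}), which your proposal does not address.
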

We will rigorously prove Proposition~\ref{prop:maxreg1} in \S\ref{sec:lin_res} and \S\ref{sec:lin_rig}.

\subsection{Elliptic maximal regularity\label{sec:elliptic}}

In this section, we discuss a main ingredient in how the maximal-regularity estimates of Proposition~\ref{prop:maxreg1} yield higher-regularity estimates for the solution $u$ to \eqref{linear_u} at the boundary $x = 0$. This is formulated in the following statement:
\begin{proposition}\label{prop:ell_reg}
For $\gamma \in (-2,\infty) \setminus \Z$ and $k,m \in \N_0$ we have
\begin{subequations}\label{ell_main}
\begin{align}
\sum_{r = 0}^m  \verti{w - \sum_{j = 1}^{\floor{\gamma}+m+r} w_j x^j}_{k+4m+1,\gamma+m+r} &\sim  \verti{\tilde \AAA^m \tilde w}_{k,\gamma} + \sum_{r = 1}^m  \verti{w_{\floor\gamma+m+r}}, \label{ell_main_1}\\
\sum_{r = 0}^m  \verti{w - \sum_{j = 1}^{\floor{\gamma}+m+r} w_j x^j}_{k+4m+2,\gamma+m+r} &\sim  \verti{\check \AAA^m \check w}_{k,\gamma} + \sum_{r = 1}^m  \verti{w_{\floor\gamma+m+r}}, \label{ell_main_2}
\end{align}
\end{subequations}
where $w : \, [0,\infty) \to \R$ is smooth with $w = 0$ at $x = 0$ and the constants in \eqref{ell_main} only depend on $k$, $m$, and $\gamma$.
\end{proposition}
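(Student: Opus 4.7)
The plan is to proceed by induction on $m \in \N_0$, handling \eqref{ell_main_1} and \eqref{ell_main_2} in parallel. First I would settle the base case $m = 0$ of \eqref{ell_main_1}, which is the Hardy-type equivalence
$$\verti{w - P_\gamma(w)}_{k+1,\gamma} \sim \verti{\tilde w}_{k,\gamma}, \quad P_\gamma(w) := \sum_{j=1}^{\floor{\gamma}} w_j x^j,$$
with no Taylor-coefficient correction (the sum $\sum_{r=1}^{0}$ on the right-hand side being empty). Setting $v := w - P_\gamma(w)$ and using $(D-1)(w_j x^j) = (j-1) w_j x^j$, one computes $\tilde v = \tilde w - \sum_{j=2}^{\floor{\gamma}}(j-1) w_j x^j$; moreover, finiteness of $\verti{\tilde w}_{k,\gamma}$ together with the smoothness of $w$ at $x = 0$ forces $w_j = 0$ for $j = 2,\dots,\floor{\gamma}$ (the polynomial monomials have divergent $\verti{\cdot}_\gamma$-norm and cannot cancel the smooth tail of $\tilde w$), so in fact $\tilde v = \tilde w$. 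It then remains to control $v$ by $\tilde v$: passing to $s := \ln x$ and conjugating by $e^{-\gamma s}$ as in \eqref{fourier}, the operator $(D-1)$ becomes the Fourier multiplier $i\xi + (\gamma - 1)$, whose modulus is bounded below by $|\gamma - 1| > 0$ since $\gamma \notin \Z$. Plancherel then yields the one-sided Hardy bound $\verti{v}_\gamma \lesssim_\gamma \verti{\tilde v}_\gamma$, and commuting $D$ through $(D-1)$ extends this to each derivative level $j = 0, \dots, k$. The reverse inequality is immediate by the boundedness of $(D-1)$ on the weighted scale. The base case of \eqref{ell_main_2} follows identically using $\check w = (D-2)(D-1) w$ and applying Hardy successively through the nondegenerate factors $(D-1)$ and $(D-2)$ (nondegenerate since $\gamma \notin \{1,2\}$), which accounts for the extra logarithmic derivative on the left.

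The inductive step from $m$ to $m+1$ rests on a single-step elliptic inversion of the form
$$\verti{z - R_\gamma(z)}_{k+4,\gamma+1} + \verti{z - R_\gamma(z)}_{k+4,\gamma+2} \lesssim \verti{\tilde\AAA z}_{k,\gamma} + \mbox{(extracted Taylor coefficients of } z\mbox{)},$$
where $R_\gamma(z)$ is an appropriate Taylor truncation of $z$. The two weighted norms on the left reflect the additive decomposition $\tilde\AAA = x^{-1}\tilde p(D) + x^{-2}\tilde q(D)$ from \eqref{def_ta}: the summand $x^{-1}\tilde p(D)$ realises the weight shift $\gamma \mapsto \gamma + 1$, while $x^{-2}\tilde q(D)$ realises $\gamma \mapsto \gamma + 2$, so both shifts appear simultaneously whenever $\tilde\AAA$ is inverted. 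Since $\tilde p(\zeta) = \zeta^2(\zeta-2)^2$ and $\tilde q(\zeta) = \zeta(\zeta-1)(\zeta-2)(\zeta-3)$ are nondegenerate as Fourier multipliers away from their integer roots, and $\gamma + 1, \gamma + 2 \notin \Z$ stay clear of those roots, the Plancherel argument of the base case applies to each summand and gains four logarithmic derivatives. Applying this inversion to $z = \tilde\AAA^m \tilde w$ and combining with the induction hypothesis produces the estimate at level $m+1$, the enlarged weight range $\gamma + m + 1 + r$ for $r = 0, \dots, m+1$ arising binomially from the superposition of the two weight shifts at each elliptic step. The case of \eqref{ell_main_2} proceeds verbatim via the factorisation \eqref{check_a} of $\check\AAA$.

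The main obstacle, foreshadowed at the end of \S\ref{sec:out}, is the systematic accounting of polynomial ambiguities generated by each inversion. The kernel of $\tilde\AAA$ is spanned by monomials $x^j$ corresponding to the integer roots of $\tilde p$ and $\tilde q$, and analogously for $\check\AAA$; hence each elliptic inversion is determined only up to a finite-dimensional polynomial. These indeterminacies must be identified with specific Taylor coefficients of $w$ in order to produce the correction terms $\verti{w_{\floor{\gamma}+m+r}}$ for $r = 1,\dots,m$ on the right-hand side of \eqref{ell_main}, while the coefficient $w_{\floor{\gamma}+m}$ corresponding to $r = 0$ is absorbed directly into the subtracted Taylor polynomial on the left. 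I would carry out this bookkeeping by inserting the formal power series $w = \sum_{j \ge 1} w_j x^j$ into $\tilde\AAA^m \tilde w$ and comparing coefficient by coefficient which $w_j$ are constrained by the finiteness of $\verti{\tilde\AAA^m \tilde w}_{k,\gamma}$ and which survive as free boundary parameters; this is precisely the ``polynomial equation'' mechanism alluded to after \eqref{formal_exp_u}.
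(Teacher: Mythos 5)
Your outer structure (base case via a Hardy-type estimate, a one-step elliptic inversion, iterate) matches the paper's, but the mechanism you propose for the one-step inversion does not work. You claim to invert $\tilde\AAA$ by treating the two summands of the additive decomposition $\tilde\AAA = x^{-1}\tilde p(D) + x^{-2}\tilde q(D)$ separately and applying the conjugation-and-Plancherel argument to each. The problem is that the quantity you actually control, $\verti{\tilde\AAA z}_{k,\gamma}$, controls only the \emph{sum} $x^{-1}\tilde p(D)z + x^{-2}\tilde q(D)z$, not each piece individually; elliptic regularity gives you no a priori reason the two pieces cannot cancel against one another. The coercivity estimates of \S\ref{sec:coerc} do separate the two summands, but that is a quadratic-form argument exploiting the special weights in \eqref{sum_coerc}, not an elliptic estimate, and it is not available once $\tilde\AAA z$ is a given right-hand side rather than being paired against $z$. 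So the inductive step does not get off the ground as stated.

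The paper avoids this by switching from the additive to the multiplicative decomposition $\AAA = x^{-2}(D-1)^2(D-2)\BB$, where $\BB = x(D-2) + D = (x+1)^3 D(x+1)^{-2}$ (cf.~\eqref{factor_a}). Three of the four factors are constant-coefficient in $s = \ln x$, and for those your conjugation/Plancherel reasoning does work (this is exactly Lemma~\ref{lem:elliptic}). But the remaining factor $\BB$ has genuinely $x$-dependent coefficients and is \emph{not} a Fourier multiplier; it is precisely the operator that simultaneously produces the weight shifts $\gamma\mapsto\gamma+1$ and $\gamma\mapsto\gamma+2$ you identify, and its elliptic regularity (Lemma~\ref{lem:estinvB}) requires a dedicated quantitative Hardy argument: expand $\verti{\BB g}_\gamma^2$, invoke Hardy with explicit constants on each square, and absorb the cross term by verifying $(\gamma-1)^2(\gamma-4)^2 > 1$. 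No Plancherel argument replaces this. A secondary gap is that the bookkeeping of polynomial ambiguities is left at ``compare coefficients''; the paper makes it concrete via the identities \eqref{identity_b}, \eqref{identity_b_2} and the explicit estimates \eqref{b_mon_est_2}, \eqref{b_mon_est_3} for $\BB^{-1}x^{\floor\gamma+2}$ and $\BB^{-1}x^{\floor\gamma+3}$, which lie outside the scope of Lemma~\ref{lem:estinvB} and are what produce the correction term $\verti{w_{\floor\gamma+2}}$ in Lemma~\ref{lem:elliptic_a} and hence the sums $\sum_{r=1}^m\verti{w_{\floor\gamma+m+r}}$ in the statement.
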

Indeed, the terms appearing in Proposition~\ref{prop:maxreg1} are exactly of the form $\verti{\tilde \AAA^m \tilde w}_{k,\gamma}$ or $\verti{\check \AAA^m \check w}_{k,\gamma}$. The remaining coefficients on the right-hand sides of \eqref{ell_main} will make additional considerations necessary, which will be addressed in \S\ref{sec:maxreg2}.

\medskip

First, observe that by the commutation relations \eqref{commute_a_op} and by \eqref{def_tc} we have
\begin{equation}\label{commute_tc_au}
\tilde \AAA^m \tilde u = (D-1) \AAA^m u = \widetilde{\AAA^m u} \quad \mbox{and} \quad \check \AAA^m \check u = (D-2) (D-1) \AAA^m u = \widecheck{\AAA^m u}.
\end{equation}
and analogous expressions for $\tilde \AAA^k \tilde f$ and $\check \AAA^k \check f$. Furthermore, we observe that the operator $\AAA$ factorizes (by commuting $D$-derivatives with $x$, cf.~\eqref{defa} and \eqref{def_pq}):
\begin{align}\nonumber
\AAA &= x^{-2} x D^2 (D-1) (D-2) +  x^{-2} D (D-1)^2 (D-2) \\
&= x^{-2} (D-1)^2 (D-2) \left(x (D-2) +  D\right). \label{factor_a_0}
\end{align}
The last factor in \eqref{factor_a_0} vanishes on the function $(x+1)^2$, so that \eqref{factor_a_0} can be rewritten as
\begin{equation}\label{factor_a}
\AAA = x^{-2} (D-1)^2 (D-2) \BB, \quad \mbox{where} \quad \BB := x (D-2) +  D = (x+1)^3 D (x+1)^{-2}.
\end{equation}
Note that on smooth functions $f: (0,\infty) \to \R$ such that $\verti{f}_\gamma < \infty$ for some $\gamma > 0$, we can invert $\BB$ and have
\begin{equation}\label{rep_w}
\BB^{-1} f(x) = (x+1)^2 \int_0^x \left(x^\prime+1\right)^{-3} f\left(x^\prime\right) \frac{\d x^\prime}{x^\prime}.
\end{equation}
In view of \eqref{commute_tc_au} and \eqref{factor_a}, we need to study the elliptic regularity of
\begin{enumerate}[(a)]
\item a polynomial operator $P(D) = \prod_{j = 1}^N (D-\gamma_j)$ with $N \in \N$ and $\gamma_j \in \R$ for all $j = 1,\ldots,N$,
\item the operator $\BB \stackrel{\eqref{factor_a}}{=} (x+1)^3 D (x+1)^{-2}$.
\end{enumerate}
In fact, the elliptic regularity of a polynomial operator $P(D)$ follows by a straightforward application of Hardy's inequality (cf.~\cite[Lem.~7.2,~Lem.~7.4]{ggko.2014} and \cite[Lem.~A.1]{gko.2008} for similar statements):
\begin{lemma}\label{lem:elliptic}
Suppose that $w \in C^\infty([0,\infty))$, $k \in \N_0$, $P(D) = \prod_{j = 1}^N (D-\gamma_j)$, $\gamma_j,\varrho \in \R$, $\verti{w}_{N+k,\varrho} < \infty$, and
\[
D^{k^\prime} w(x) = o(x^\varrho) \quad \mbox{as } \, x \searrow 0 \, \mbox{ for } \, k^\prime = 0, \cdots, N+k-1.
\]
Then
\begin{equation}\label{elliptic_p}
\verti{w}_{k+N,\varrho} \sim_{\gamma_j,\varrho} \verti{P(D) w}_{k,\varrho}.
\end{equation}
\end{lemma}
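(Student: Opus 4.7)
The plan is to induct on $N$, after dispensing with the trivial direction. The upper bound $\verti{P(D) w}_{k,\varrho} \lesssim_{\gamma_j} \verti{w}_{N+k,\varrho}$ is immediate, since $P(D)$ is a polynomial of degree $N$ in $D$, so $D^i P(D) w$ with $i \le k$ is a fixed linear combination of $D^\ell w$ with $\ell \le N+k$.

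For the reverse inequality I would first treat the base case $N = 1$, $P(D) = D - \gamma$, via the logarithmic change of variables $s := \ln x$ already employed in the paper. Setting $\breve w(s) := e^{-\varrho s} w(e^s)$, the identities
\[
\verti{w}_{k,\varrho}^2 \sim_{k,\varrho} \vertii{\breve w}_{W^{k,2}(\R)}^2
\qquad \mbox{and} \qquad
e^{-\varrho s} (D - \gamma) w = \bigl(\partial_s + \varrho - \gamma\bigr) \breve w
\]
reduce the claim to $\vertii{\breve w}_{W^{k+1,2}(\R)}^2 \lesssim_{\gamma,\varrho} \vertii{(\partial_s + \varrho - \gamma) \breve w}_{W^{k,2}(\R)}^2$. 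By Plancherel's theorem this is equivalent to the pointwise symbol inequality $(1+\xi^2)^{k+1} \lesssim_{\gamma,\varrho} (1+\xi^2)^k \bigl(\xi^2 + (\varrho - \gamma)^2\bigr)$ on $\R$, which is obvious as $\verti{\xi} \to \infty$ and holds at $\xi = 0$ provided $\varrho \neq \gamma$. The two hypotheses $D^{k'} w = o(x^\varrho)$ as $x \searrow 0$ for $k' \le k$ and $\verti{w}_{k+1,\varrho} < \infty$ precisely encode the vanishing of $\breve w$ together with its derivatives as $s \to -\infty$ and $s \to +\infty$ respectively, which legitimises the passage to the Sobolev formulation on all of $\R$ (and, equivalently, the integrations by parts underlying it).

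The inductive step then proceeds by peeling off a single factor. Write $P(D) = (D - \gamma_1) \tilde P(D)$ with $\tilde P(D) := \prod_{j=2}^N (D - \gamma_j)$, and set $v := \tilde P(D) w$. Smoothness of $v$ is immediate; since $D^{k'} v$ is a linear combination of $D^{k''} w$ with $k'' \le k' + N - 1$, the hypothesis on $w$ gives $D^{k'} v = o(x^\varrho)$ for $k' \le k$; and the already-established upper bound applied to $\tilde P$ yields $\verti{v}_{k+1,\varrho} \lesssim \verti{w}_{N+k,\varrho} < \infty$. The base case therefore applies to $v$ and produces
\[
\verti{v}_{k+1,\varrho} \lesssim_{\gamma_1,\varrho} \verti{(D-\gamma_1) v}_{k,\varrho} = \verti{P(D) w}_{k,\varrho}.
\]
On the other hand, invoking the inductive hypothesis for the $(N-1)$-factor operator $\tilde P$ applied to $w$ with the integer parameter replaced by $k+1$ (whose hypotheses are verified since $(N-1)+(k+1)-1 = N+k-1$), one obtains $\verti{w}_{k+N,\varrho} \lesssim_{\gamma_j,\varrho} \verti{\tilde P(D) w}_{k+1,\varrho} = \verti{v}_{k+1,\varrho}$. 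Composing these two estimates yields the desired lower bound.

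The main obstacle is the critical case where $\varrho$ coincides with some root $\gamma_j$: then the symbol $\prod_j \bigl(\xi^2 + (\varrho - \gamma_j)^2\bigr)$ vanishes at $\xi = 0$, no uniform multiplier bound is available on $\R$, and the claimed equivalence fails, matching the critical Hardy phenomenon already discussed around \eqref{hardy_crit}. This is precisely why the paper has committed throughout to weight exponents $\alpha \notin \frac 1 2 \Z$; away from this discrete set the symbol inequality is uniform and the induction closes cleanly without further subtleties.
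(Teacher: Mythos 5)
Your proof is correct. The paper itself provides no proof, stating only that the lemma ``follows by a straightforward application of Hardy's inequality'' and citing \cite{ggko.2014,gko.2008}; you instead pass to $s = \ln x$ and invoke Plancherel, mirroring the Fourier computation the paper performs for its coercivity criterion in \eqref{fourier}. These are really the same estimate viewed from two sides: the weighted one-dimensional Hardy inequality $\verti{g}_\varrho \lesssim_{\gamma,\varrho} \verti{(D-\gamma)g}_\varrho$, valid for $\varrho \neq \gamma$, is precisely the $k=0$ case of your symbol bound $1+\xi^2 \lesssim_{\gamma,\varrho} \xi^2 + (\varrho-\gamma)^2$. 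So this is a different and arguably tidier presentation of a standard argument rather than a genuinely new idea. Your induction that peels a single linear factor off $P(D)$ is the expected scheme; the bookkeeping $(N-1)+(k+1)=N+k$ closes correctly, and you are careful to verify smoothness, finiteness of the relevant norm, and the $o(x^\varrho)$ condition before each inductive appeal, which is exactly where a careless write-up could slip.

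Two small remarks. First, the role you assign to the $o(x^\varrho)$ hypothesis is slightly overstated: once $\verti{w}_{N+k,\varrho}<\infty$, the function $\breve w$ already lies in $W^{N+k,2}(\R)$, which is all Plancherel needs; in fact, by Sobolev embedding on the line, this finiteness forces $\partial_s^{k'}\breve w \to 0$ as $s\to\pm\infty$ for $k'\le N+k-1$, so the $o(x^\varrho)$ behaviour is a \emph{consequence} rather than an independent input in your argument. The remark is harmless but could mislead. Second, you rightly flag that the equivalence fails in the critical case $\varrho=\gamma_j$, where the symbol degenerates at $\xi=0$. The lemma as printed does not state $\varrho\notin\{\gamma_1,\ldots,\gamma_N\}$ explicitly, but it is always satisfied in the paper's applications, since the $\gamma_j$ there are integers while the weight exponents are chosen in $\R\setminus\tfrac12\Z$; your Plancherel formulation makes the necessity of this restriction transparent, in agreement with the critical-Hardy discussion surrounding \eqref{hardy_crit}.
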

The elliptic regularity of the operator $\BB$ requires special consideration:
\begin{lemma}\label{lem:estinvB}
The operator $\BB^{-1}$ defined in \eqref{rep_w} satisfies for any $k \in \N_0$ and  $\gamma>0$
\begin{equation}\label{estinvB}
 \verti{\BB^{-1} f}_{k+1,\gamma} + \verti{\BB^{-1} f}_{k+1,\gamma-1} \lesssim_{k,\gamma} \verti{f}_{k,\gamma},
\end{equation}
where $f : (0,\infty) \to \R$ is locally integrable and the constant in \eqref{estinvB} is independent of $w$.
\end{lemma}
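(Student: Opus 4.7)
\textbf{Proof plan for Lemma~\ref{lem:estinvB}.}

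The plan is to reduce the estimate on $u = \BB^{-1} f$ to an estimate on the simpler function $v := (x+1)^{-2} u$. Indeed, the definition \eqref{rep_w} together with the factorization $\BB = (x+1)^3 D (x+1)^{-2}$ shows that $v$ satisfies the first-order equation
\[
D v = (x+1)^{-3} f, \qquad v(0) = 0.
\]
The bound on $u$ will then follow by the Leibniz rule for $D$ combined with Hardy's inequality for $v$.

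Concretely, I would first observe that both $|D^s((x+1)^2)| \lesssim_s (x+1)^2$ and $|D^s((x+1)^{-3})| \lesssim_s (x+1)^{-3}$ hold for every $s \ge 0$, as follows by induction using $D\phi = x \phi'$. Applying Leibniz to $u = (x+1)^2 v$ gives $|D^j u| \lesssim_j (x+1)^2 \sum_{i=0}^j |D^i v|$, and for $i \ge 1$ the identity $D^i v = D^{i-1}((x+1)^{-3} f)$ together with Leibniz yields $|D^i v| \lesssim_i (x+1)^{-3} \sum_{l=0}^{i-1} |D^l f|$. Combining these two bounds produces, for $0 \le j \le k+1$,
\[
(D^j u)^2 \lesssim_{j} (x+1)^4 v^2 + (x+1)^{-2} \sum_{l=0}^{k} (D^l f)^2.
\]
The second group of terms is directly controlled: using $(x+1)^{-2} \le 1$ bounds them against $|f|_{k,\gamma}^2$ for the $\gamma$-weighted norm, and using $x^2 (x+1)^{-2} \le 1$ does the same for the $(\gamma-1)$-weighted norm. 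It therefore remains to control the single scalar integrals $\int_0^\infty x^{-2\gamma}(x+1)^4 v^2 \frac{\d x}{x}$ and its analog with weight $x^{-2(\gamma-1)}$ by $|f|_{k,\gamma}^2$.

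For this final piece I would invoke the weighted Hardy inequality applied to $v$ in the form $|v|_{\alpha}^2 \lesssim_\alpha |D v|_\alpha^2$, valid for $\alpha > 0$ whenever $v(0)=0$ (proved by integration by parts after the substitution $s = \ln x$). Since $D v = (x+1)^{-3} f$, for any admissible $\alpha > 0$ we get
\[
|v|_\alpha^2 \lesssim_\alpha \int_0^\infty x^{-2\alpha}(x+1)^{-6} f^2 \frac{\d x}{x},
\]
and the $(x+1)^{-6}$ factor provides exactly the extra room needed at infinity to absorb the growing factor $(x+1)^4$ when converting back to a bound on $u$. The plan is to split the $v$-integrals into $x \le 1$ and $x \ge 1$: on $\{x \le 1\}$ one has $(x+1)^4 \sim 1$ and Hardy with $\alpha = \gamma > 0$ (respectively $\alpha = \gamma$ absorbing the $(x+1)^4$ after noticing $(x+1)^{-6} \sim 1$) immediately yields the desired bound by $|f|_{0,\gamma}^2$; on $\{x \ge 1\}$ one uses Hardy with a weight shifted so that the combination $x^{-2\alpha}(x+1)^{-6} \cdot (x+1)^4 \lesssim x^{-2\gamma}$ holds, which is the content of the trade-off between the growth of $(x+1)^2$ in $u$ and the decay of $(x+1)^{-3}$ in $Dv$.

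\textbf{Main obstacle.} The technical heart of the argument is the zeroth-order piece $\int x^{-2\gamma}(x+1)^4 v^2 \frac{\d x}{x}$, because here the factor $(x+1)^2$ coming from $u = (x+1)^2 v$ competes directly with the weight $x^{-2\gamma}$. The Leibniz-rule reduction handles every higher $D$-derivative automatically, but at the level of $v$ itself one is forced to use Hardy's inequality in a form that is simultaneously compatible with the $x^{-2\gamma}$ weight near the origin (requiring $\gamma > 0$ so that the standard Hardy with boundary condition at $0$ applies) and with the compensation provided by $(x+1)^{-6}$ in $|Dv|$ at infinity. Balancing these two sides is the only genuinely nontrivial step; the remainder of the proof is a bookkeeping argument based on Leibniz and the two pointwise bounds on the derivatives of $(x+1)^{\pm 2}$ and $(x+1)^{-3}$.
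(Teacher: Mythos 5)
Your set-up --- introducing $v := (x+1)^{-2}\BB^{-1}f$, so that $Dv = (x+1)^{-3}f$, $v(0)=0$, and $\BB^{-1}f = (x+1)^2 v$ --- together with the Leibniz bookkeeping isolating $\int_0^\infty x^{-2\gamma}(x+1)^4 v^2\,\frac{\d x}{x}$ (and its $(\gamma-1)$-weighted analogue) as the only remaining obstacle, is reasonable. The gap is in the last step, namely the claim that Hardy's inequality $\verti{v}_\alpha^2 \lesssim_\alpha \verti{Dv}_\alpha^2$, combined with the factor $(x+1)^{-6}$ sitting inside $(Dv)^2 = (x+1)^{-6}f^2$, ``provides exactly the extra room needed at infinity to absorb the growing factor $(x+1)^4$.'' It does not: Hardy carries the \emph{same} weight $x^{-2\alpha}$ on both sides and cannot convert decay of $Dv$ into decay of $v$. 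Concretely, take $f$ smooth and compactly supported in $(0,\infty)$ with $v_\infty := \int_0^\infty (x'+1)^{-3}f\,\frac{\d x'}{x'} \neq 0$; then $v \equiv v_\infty$ outside $\supp f$, $\verti{f}_\gamma < \infty$, yet $\int_1^\infty x^{-2\gamma}(x+1)^4 v^2\,\frac{\d x}{x}$ diverges whenever $\gamma \le 2$ (and the $(\gamma-1)$-weighted integral whenever $\gamma \le 3$). No choice of Hardy weight $\alpha$ in your split at $x=1$ repairs this: to dominate $x^{-2\gamma+4}$ on $\{x \ge 1\}$ by $x^{-2\alpha}$ you must take $\alpha \le \gamma - 2 \le 0$, but then $\verti{v}_\alpha = \infty$ for such $v$ and Hardy yields nothing. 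Equivalently, the inhomogeneous Hardy inequality
\[
\int_0^\infty x^{-2\gamma}(x+1)^4\, v^2\,\frac{\d x}{x} \;\lesssim\; \int_0^\infty x^{-2\gamma}(x+1)^6\,(Dv)^2\,\frac{\d x}{x}
\]
that your argument tacitly requires fails Muckenhoupt's criterion for $\gamma \le 2$. Your plan therefore delivers \eqref{estinvB} only for $\gamma > 3$, not for the asserted range $\gamma > 0$.

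The paper's proof is structurally different and sidesteps the problem. It works with $g := \BB^{-1}f$ directly and exploits the additive splitting $\BB g = x(D-2)g + Dg$ at the level of the quadratic form: expanding
\[
\verti{f}_\gamma^2 = \verti{\BB g}_\gamma^2 = \verti{(D-2)g}_{\gamma-1}^2 + 2\,\verti{(D-1)g}_{\gamma-\frac12}^2 + \verti{Dg}_\gamma^2 - 2\,\verti{g}_{\gamma-\frac12}^2,
\]
obtained from the telescoping identity $\left((D-2)g,Dg\right)_{\gamma-\frac12} = \verti{(D-1)g}_{\gamma-\frac12}^2 - \verti{g}_{\gamma-\frac12}^2$, the single negative term is absorbed into the positive ones by Hardy (with a Young-inequality case distinction for the intermediate range of $\gamma$), after which $\verti{g}_{1,\gamma}$ and $\verti{g}_{1,\gamma-1}$ are read off from the positive terms by Hardy once more; higher $k$ follows by induction on the commutator $\BB D = D\BB - x(D-2)$. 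What your reduction loses is precisely the term $\verti{(D-2)g}_{\gamma-1}^2$: this $(\gamma-1)$-weighted control at $x\to\infty$ comes \emph{directly} from the addend $x(D-2)$ of $\BB$, and no Hardy inequality applied to $v$ alone can reproduce it. If you want to salvage your scheme, you must retain the full operator structure of $\BB$ at the quadratic-form level rather than pass to the pointwise scalar equation $Dv = (x+1)^{-3}f$ and estimate from there.
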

\begin{proof}
We may assume without loss of generality $\verti{f}_{k,\gamma} < \infty$ and start by proving \eqref{estinvB} for $k = 0$. We set $g := \BB^{-1} f$ and notice that
\begin{equation}\label{b_k_0}
\begin{aligned}
\verti{f}_\gamma^2 &= \verti{\BB g}_\gamma^2 \stackrel{\eqref{factor_a}}{=} \left(x (D-2) g + D g, x (D-2) g + D g\right)_\gamma \\
&= \verti{(D-2) g}_{\gamma-1}^2 + \verti{D g}_\gamma^2 + 2 \left((D-2) g, D g\right)_{\gamma-\frac 1 2} \\
&= \verti{(D-2) g}_{\gamma-1}^2 + \verti{D g}_\gamma^2 + 2 \left((D-1) g - g, (D-1) g + g\right)_{\gamma-\frac 1 2} \\
&= \verti{(D-2) g}_{\gamma-1}^2 + 2 \verti{(D-1) g}_{\gamma-\frac 1 2}^2 + \verti{D g}_\gamma^2 - 2 \verti{g}_{\gamma-\frac 1 2}^2.
\end{aligned}
\end{equation}
Now observe that by Hardy's inequality (see for instance \cite[Lem.~A.1]{gko.2008}) we have because of $g(x) = \BB^{-1} f(x) \stackrel{\eqref{rep_w}}{=} o\left(x^\gamma\right)$ as $x \searrow 0$
\begin{subequations}\label{hardy_quant}
\begin{align}
\verti{D g}_\gamma^2 &= \int_0^\infty x^{2 - 2 \gamma} \left(\frac{\d}{\d x} g\right)^2 \frac{\d x}{x} \ge (\gamma-1)^2 \verti{g}_\gamma^2, \label{hardy_quant_1} \\
\verti{(D-1) g}_{\gamma-\frac 1 2}^2 &= \int_0^\infty x^{5 - 2 \gamma} \left(\frac{\d}{\d x} x^{-1} g\right)^2 \frac{\d x}{x} \ge \left(\gamma - \frac 5 2\right)^2 \verti{g}_{\gamma-\frac 1 2}^2, \label{hardy_quant_2} \\
\verti{(D-2) g}_{\gamma-1}^2 &= \int_0^\infty x^{8-2\gamma} \left(\frac{\d}{\d x} x^{-2} g\right)^2 \frac{\d x}{x} \ge \left(\gamma-4\right)^2 \verti{g}_{\gamma-1}^2. \label{hardy_quant_3}
\end{align}
\end{subequations}
Using \eqref{hardy_quant_2}, we recognize that the last term $- 2 \verti{g}_{\gamma-\frac 12}^2$ in the last line of \eqref{b_k_0} can be absorbed by the second term $2 \verti{(D-1) g}_{\gamma-\frac 1 2}^2$ of that line for $\gamma \in \left(0,\frac 3 2\right) \cup \left(\frac 7 2, \infty\right)$. For $\gamma \in \left[\frac 3 2, \frac 7 2\right]$, we may estimate by using Young's inequality
\begin{equation}\label{b_young}
2 \verti{g}_{\gamma-\frac 1 2}^2 \le c \verti{g}_{\gamma-1}^2 + c^{-1} \verti{g}_\gamma^2
\end{equation}
for some $c > 0$. Absorbing the right-hand side of \eqref{b_young} with the first and third term of the last line of \eqref{b_k_0} using \eqref{hardy_quant_1} and \eqref{hardy_quant_3}, we only need to fulfill the constraints
\[
(\gamma-1)^2 > c \quad \mbox{and} \quad (\gamma-4)^2 > c^{-1},
\]
which can be fulfilled if $L(\gamma) := (\gamma-1)^2 (\gamma-4)^2 > 1$. Note that this holds true since $L$ has its maximum at $\gamma = \frac 5 2$ and takes on its minima at $\gamma \in \left\{\frac 3 2, \frac 7 2\right\}$, where $L(\gamma) = (5/4)^2 > 1$.

\medskip

Because of $g = \BB^{-1} f$ we have proved \eqref{estinvB} for $k = 0$. The general case follows by an induction argument:

\medskip

Indeed, we may assume that \eqref{estinvB} holds for some $k \in \N_0$. Since we know from \eqref{estinvB} applied to $\BB D \BB^{-1} f$ that
\begin{equation}\label{ind_b_1}
 \verti{D \BB^{-1} f}_{k+1,\gamma} + \verti{D \BB^{-1} f}_{k+1,\gamma-1} \lesssim_{k,\gamma} \verti{\BB D \BB^{-1} f}_{k,\gamma},
\end{equation}
we need to understand the commutation properties between $\BB$ and the scaling-invariant derivative $D$. Observe that due to \eqref{factor_a} the operator identity
\[
\BB D = \left(x (D-2) +  D\right) D = (D-1) x (D-2) +  D^2 = D \BB - x (D-2)
\]
holds true. Thus we may conclude
\begin{equation}\label{ind_b_2}
\begin{aligned}
\verti{\BB D \BB^{-1} f}_{k,\gamma} &\lesssim_{k,\gamma} \verti{D f}_{k,\gamma} + \verti{(D-2) \BB^{-1} f}_{k,\gamma-1} \lesssim \verti{f}_{k+1,\gamma} + \verti{\BB^{-1} f}_{k+1,\gamma-1} \\
&\lesssim_{k,\gamma} \verti{f}_{k+1,\gamma},
\end{aligned}
\end{equation}
where the last estimate follows by the induction assumption. The combination of \eqref{ind_b_1}, \eqref{ind_b_2}, and the induction assumption finishes the proof.
\end{proof}

Now, we demonstrate how estimates~\eqref{elliptic_p} and \eqref{estinvB} of Lemmas~\ref{lem:elliptic} and \ref{lem:estinvB} can be used to prove Proposition~\ref{prop:ell_reg}: Suppose that $w$, $k$, $m$, and $\gamma$ are chosen as stated there. First observe that
\begin{equation}\label{appl_ell_t}
\verti{\tilde \AAA^m \tilde w}_{k,\gamma} \stackrel{\eqref{commute_a_op},\eqref{def_tc}}{=} \verti{(D-1) \AAA^m w}_{k,\gamma} \stackrel{\eqref{elliptic_p}}{\sim_{k,\gamma}} \begin{cases} \verti{\AAA^m w - \left(\AAA^m w\right)_1 x}_{k+1,\gamma} & \mbox{ for } \, \gamma > 1,\\ \verti{\AAA^m w}_{k+1,\gamma} & \mbox{ for } \, \gamma < 1,\end{cases}
\end{equation}
and
\begin{eqnarray}\nonumber
\verti{\check \AAA^m \check w}_{k,\gamma} &\stackrel{\eqref{commute_a_op},\eqref{def_tc}}{=}& \verti{(D-1) (D-2) \AAA^m w}_{k,\gamma} \\
&\stackrel{\eqref{elliptic_p}}{\sim_{k,\gamma}}& \begin{cases} \verti{\AAA^m w - \left(\AAA^m w\right)_1 x - \left(\AAA^m w\right)_2 x^2}_{k+2,\gamma} & \mbox{ for } \, \gamma > 2. \\ \verti{\AAA^m w - \left(\AAA^m w\right)_1 x}_{k+2,\gamma} & \mbox{ for } \, 2 > \gamma > 1. \\ \verti{\AAA^m w}_{k+2,\gamma} & \mbox{ for } \, \gamma < 1. \end{cases} \quad\qquad \label{appl_ell_c}
\end{eqnarray}
By an induction argument, we may without loss of generality study the following case:
\begin{lemma}\label{lem:elliptic_a}
For $k \in \N_0$ and $\gamma \in (-2,\infty) \setminus \Z$ we have
\begin{eqnarray}\nonumber
\lefteqn{ \verti{w - \sum_{j = 1}^{\floor{\gamma}+1} w_j x^j}_{k+4,\gamma+1} +  \verti{w - \sum_{j = 1}^{\floor{\gamma}+2} w_j x^j}_{k+4,\gamma+2}} \\
&\sim_{k,\gamma}&  \verti{\AAA w - \sum_{j = 1}^{\floor{\gamma}} (\AAA w)_j x^j}_{k,\gamma} +   \verti{w_{\floor\gamma+2}}, \label{elliptic_est_a}
\end{eqnarray}
where $w : [0,\infty) \to \R$ is a smooth function with $w = 0$ at $x = 0$.
\end{lemma}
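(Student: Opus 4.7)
The plan is to exploit the factorization $\AAA = x^{-2}(D-1)^2(D-2)\BB$ from \eqref{factor_a}: setting $g := \BB w$ gives $x^2 \AAA w = P(D)\, g$ with $P(\zeta) := (\zeta-1)^2(\zeta-2)$, so that the elliptic regularity of $\AAA$ reduces to the polynomial-operator estimate of Lemma~\ref{lem:elliptic} composed with the inversion of the first-order operator $\BB$ from Lemma~\ref{lem:estinvB}. The polynomial identity underpinning the whole argument is
\[
P(D)\,\mathrm{Taylor}_{\floor{\gamma}+2}(g) \;=\; x^2\,\mathrm{Taylor}_{\floor{\gamma}}(\AAA w),
\]
verified by substituting the Taylor recursions $g_j = j w_j + (j-3) w_{j-1}$ (from $\BB = x(D-2)+D$) and $(\AAA w)_i = (i+1)^2 i\bigl((i+2) w_{i+2} + (i-1) w_{i+1}\bigr)$ (from $\AAA(x^j) = j(j-1)(j-2)\bigl(j x^{j-1}+(j-1) x^{j-2}\bigr)$) and checking $(j-1)^2(j-2)\, g_j = (\AAA w)_{j-2}$ for $j\ge 3$.

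Applying Lemma~\ref{lem:elliptic} with polynomial $P$ to $g - \mathrm{Taylor}_{\floor{\gamma}+2}(g)$, which is $o(x^{\gamma+2})$ at $x\searrow 0$ because $\gamma\notin\Z$, and combining with the elementary scaling $\verti{x^2 h}_{k,\gamma+2} \sim_k \verti{h}_{k,\gamma}$ (coming from $D^j(x^2 h) = x^2(D+2)^j h$) together with the polynomial identity above, I obtain
\[
\verti{g - \mathrm{Taylor}_{\floor{\gamma}+2}(g)}_{k+3,\gamma+2} \;\sim_{k,\gamma}\; \verti{\AAA w - \mathrm{Taylor}_{\floor{\gamma}}(\AAA w)}_{k,\gamma}.
\]
Next, setting $\bar w := w - \mathrm{Taylor}_{\floor{\gamma}+2}(w)$, a direct computation using $\BB(x^j) = j x^j + (j-2) x^{j+1}$ produces the key identity
\[
\BB \bar w \;=\; \bigl(g - \mathrm{Taylor}_{\floor{\gamma}+2}(g)\bigr) \;-\; \floor{\gamma}\cdot w_{\floor{\gamma}+2}\, x^{\floor{\gamma}+3},
\]
where the monomial correction arises because $\BB$ raises polynomial degree by one. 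Applying Lemma~\ref{lem:estinvB}, whose conclusion controls \emph{both} weights $\gamma+2$ and $\gamma+1$ starting from weight $\gamma+2$ on the input, and noting that $\BB^{-1}(x^{\floor{\gamma}+3})$ is an explicit function whose $\verti{\cdot}_{k+4,\gamma+2}$ and $\verti{\cdot}_{k+4,\gamma+1}$ norms are finite (here $\gamma\notin\Z$ is essential for integrability at $x\to\infty$), gives
\[
\verti{\bar w}_{k+4,\gamma+2} + \verti{\bar w}_{k+4,\gamma+1} \;\lesssim_{k,\gamma}\; \verti{\AAA w - \mathrm{Taylor}_{\floor{\gamma}}(\AAA w)}_{k,\gamma} + \verti{w_{\floor{\gamma}+2}}.
\]
Since $w - \mathrm{Taylor}_{\floor{\gamma}+1}(w) = \bar w + w_{\floor{\gamma}+2}\, x^{\floor{\gamma}+2}$, the $\lesssim$ direction of \eqref{elliptic_est_a} follows.

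For the converse $\gtrsim$ direction, set $\tilde w := w - \mathrm{Taylor}_{\floor{\gamma}+1}(w)$; since $\AAA\,\mathrm{Taylor}_{\floor{\gamma}+1}(w)$ is a polynomial of degree $\le \floor{\gamma}$, it subtracts cleanly to give $\AAA w - \mathrm{Taylor}_{\floor{\gamma}}(\AAA w) = \AAA \tilde w - \mathrm{Taylor}_{\floor{\gamma}}(\AAA \tilde w)$, so it suffices to bound the latter using the definition \eqref{defa} of $\AAA$; this is direct since $x^{-1} p(D)$ costs one weight (controlled by the $\gamma+1$ norm of $\tilde w$, dominant as $x\to\infty$) and $x^{-2} q(D)$ costs two weights (controlled by the $\gamma+2$ norm of $\bar w$, dominant as $x\searrow 0$), while $\verti{w_{\floor{\gamma}+2}}$ is recovered as the leading Taylor coefficient of $\tilde w = \bar w + w_{\floor{\gamma}+2} x^{\floor{\gamma}+2}$. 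The main obstacle is the precise Taylor bookkeeping: three distinct sources of mismatch---the three-dimensional kernel $\{x,x\ln x,x^2\}$ of $P(D)$, the degree-raising nature of $\BB$, and the two weights $\gamma+1,\gamma+2$ reflecting the distinct $x\searrow 0$ and $x\to\infty$ scalings of $\AAA$---combine to leave exactly one coefficient $w_{\floor{\gamma}+2}$ undetermined by $\AAA w - \mathrm{Taylor}_{\floor{\gamma}}(\AAA w)$ alone, which is precisely the explicit extra term on the right-hand side of \eqref{elliptic_est_a}.
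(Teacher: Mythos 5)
Your factorization approach mirrors the paper's proof exactly: both use $\AAA = x^{-2}(D-1)^2(D-2)\BB$ to reduce the problem to Lemma~\ref{lem:elliptic} for the cubic polynomial $(D-1)^2(D-2)$ plus the $\BB$-inversion of Lemma~\ref{lem:estinvB}, and your polynomial identity $(j-1)^2(j-2)\,g_j=(\AAA w)_{j-2}$ together with the monomial-correction identity $\BB\bar w = \bigl(g - \mathrm{Taylor}_{\floor\gamma+2}(g)\bigr) - \floor\gamma\,w_{\floor\gamma+2}\,x^{\floor\gamma+3}$ are precisely the paper's \eqref{est_a_b} and \eqref{identity_b_0}. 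Your $\gamma+2$ estimate is therefore sound, and your $\gtrsim$ direction matches the paper's in substance.

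The gap is in the $\gamma+1$ part of the $\lesssim$ direction. You claim that $\verti{\BB^{-1}x^{\floor\gamma+3}}_{k+4,\gamma+1}$ is finite and hence that $\verti{\bar w}_{k+4,\gamma+1}$ can be bounded, with the extra monomial $w_{\floor\gamma+2}x^{\floor\gamma+2}$ added back at the end. Both of these intermediate quantities are in fact \emph{infinite}. From \eqref{rep_w} one sees that $\BB^{-1}x^{\floor\gamma+3}$ grows like $x^{\floor\gamma+2}$ as $x\to\infty$ (or $x^2\log x$ when $\floor\gamma=0$), so $\int^\infty x^{-2\gamma-2}\bigl(\BB^{-1}x^{\floor\gamma+3}\bigr)^2\,\frac{\d x}{x}$ has integrand of order $x^{2-2\{\gamma\}}\,\frac{\d x}{x}$ with $\{\gamma\}:=\gamma-\floor\gamma\in(0,1)$, which diverges at infinity; the same divergence affects $\verti{x^{\floor\gamma+2}}_{\gamma+1}$. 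Contrary to what you write, $\gamma\notin\Z$ does not rescue integrability here --- it is precisely what makes the exponent strictly positive. The sum $w-\mathrm{Taylor}_{\floor\gamma+1}(w)=\bar w + w_{\floor\gamma+2}x^{\floor\gamma+2}$ has a finite $\gamma+1$ norm only because the two infinite tails cancel, so one must combine \emph{before} taking norms. The paper does this via the exact identity $x^{\floor\gamma+2}-\floor\gamma\,\BB^{-1}x^{\floor\gamma+3}=(\floor\gamma+2)\,\BB^{-1}x^{\floor\gamma+2}$, leading to \eqref{identity_b_2}, and then estimates $\verti{\BB^{-1}x^{\floor\gamma+2}}_{k+4,\gamma+1}\sim 1$ via \eqref{b_mon_est_3} (i.e.\ \eqref{b_mon_est_2} with $\gamma$ shifted to $\gamma-1$). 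To repair your argument, replace the separate treatment of $\bar w$ and the monomial at weight $\gamma+1$ by this combined identity.
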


\begin{proof}[Proof of Proposition~\ref{prop:ell_reg}]
Indeed, by noticing that
\[
\AAA x^j \stackrel{\eqref{defa}}{=} p(j) x^{j-1} +  q(j) x^{j-2}, \quad \mbox{that is,} \quad (\AAA w)_j = p(j+1) w_{j+1} +  q(j+2) w_{j+2},
\]
replacing $w$ with $\AAA w$ in \eqref{elliptic_est_a}, and using the original estimate \eqref{elliptic_est_a} twice afterwards, we get
\begin{eqnarray*}
\lefteqn{ \verti{w - \sum_{j = 1}^{\floor{\gamma}+2} w_j x^j}_{k+8,\gamma+2} +  \verti{w - \sum_{j = 1}^{\floor{\gamma}+3} w_j x^j}_{k+8,\gamma+3} +  \verti{w - \sum_{j = 1}^{\floor{\gamma}+4} w_j x^j}_{k+8,\gamma+4}} \\
&\sim_{k,\gamma}&  \verti{\AAA^2 w - \sum_{j = 1}^{\floor{\gamma}} (\AAA^2 w)_j x^j}_{k,\gamma} +  \verti{w_{\floor\gamma+3}} +  \verti{w_{\floor\gamma+4}}. \qquad\qquad
\end{eqnarray*}
Iterating this procedure yields
\begin{eqnarray}\nonumber
\lefteqn{\sum_{r = 0}^m  \verti{w - \sum_{j = 1}^{\floor{\gamma}+m+r} w_j x^j}_{k+4m,\gamma+m+r}} \\
&\sim_{k,m,\gamma}&  \verti{\AAA^m w - \sum_{j = 1}^{\floor{\gamma}} (\AAA^m w)_j x^j}_{k,\gamma} + \sum_{r = 1}^m  \verti{w_{\floor\gamma+m+r}}. \label{elliptic_est_a_m}
\end{eqnarray}
Now combining \eqref{appl_ell_t} or \eqref{appl_ell_c}, respectively, with \eqref{elliptic_est_a_m}, we arrive at estimates~\eqref{ell_main_1} and \eqref{ell_main_2}.
\end{proof}

\begin{proof}[Proof of Lemma~\ref{lem:elliptic_a}]
Observe
\begin{eqnarray}\nonumber
\verti{\AAA w - \sum_{j = 1}^{\floor{\gamma}} (\AAA w)_j x^j}_{k,\gamma} &\stackrel{\eqref{factor_a}}{=}& \verti{x^{-2} (D-1)^2 (D-2) \left(\BB w - \sum_{j = 3}^{\floor{\gamma}+2} (\BB w)_j x^j\right)}_{k,\gamma} \\
&\sim_k& \verti{(D-1)^2 (D-2) \left(\BB w - \sum_{j = 1}^{\floor{\gamma}+2} (\BB w)_j x^j\right)}_{k,\gamma+2} \nonumber\\
&\stackrel{\eqref{elliptic_p}}{\sim_\gamma}& \verti{\BB w - \sum_{j = 1}^{\floor{\gamma}+2} (\BB w)_j x^j}_{k+3,\gamma+2}, \label{est_a_b}
\end{eqnarray}
where we have used Lemma~\ref{lem:elliptic}. Now we may use
\begin{equation}\label{b_monom}
\BB x^j \stackrel{\eqref{factor_a_0},\eqref{factor_a}}{=} (j-2) x^{j+1} + j x^j, \quad \mbox{that is}, \quad (\BB w)_j = (j-3) w_{j-1} +  j w_j \quad \mbox{for} \quad j \in \N,
\end{equation}
where by assumption $w_0 = 0$. This leads to the identity
\begin{equation}\label{identity_b_0}
\BB w - \sum_{j = 1}^{\floor{\gamma}+2} (\BB w)_j x^j = \BB \left(w - \sum_{j = 1}^{\floor{\gamma}+2} w_j x^j\right) + w_{\floor\gamma+2} \floor\gamma x^{\floor\gamma+3},
\end{equation}
which can be rephrased as
\begin{equation}\label{identity_b}
w - \sum_{j = 1}^{\floor{\gamma}+2} w_j x^j = \BB^{-1} \left(\BB w - \sum_{j = 1}^{\floor{\gamma}+2} (\BB w)_j x^j\right) - w_{\floor\gamma+2} \floor\gamma \BB^{-1} x^{\floor\gamma+3}.
\end{equation}
We note that the term $\BB^{-1} x^{\floor\gamma+3}$ cannot be estimated using Lemma~\ref{lem:estinvB} as inserting any power of $x$ into the right-hand side of \eqref{estinvB} produces infinity. However, in this case we may use the explicit representation of $\BB^{-1}$ given by \eqref{rep_w}, so that we can infer
\begin{equation}\label{inv_b_mon}
\BB^{-1} x^{\floor\gamma+3} = (x+1)^2 \int_0^x \left(x^\prime+1\right)^{-3} (x^\prime)^{\floor\gamma+3} \, \frac{\d x^\prime}{x^\prime}
\end{equation}
and thus
\begin{eqnarray}\nonumber
\verti{\BB^{-1} x^{\floor\gamma+3}}_{\gamma+2}^2 &=& \int_0^\infty x^{-4-2\gamma} (x+1)^4 \left(\int_0^x \left(x^\prime+1\right)^{-3} (x^\prime)^{\floor\gamma+3} \, \frac{\d x^\prime}{x^\prime}\right)^2 \frac{\d x}{x} \\
&=&  \int_0^1 O\left(x^{2 - 2\gamma + 2 \floor\gamma}\right) \frac{\d x}{x} +  \int_1^\infty O\left(x^{- 2\gamma + 2 \floor\gamma}\right) \frac{\d x}{x} \nonumber \\
&\sim_\gamma& 1. \label{b_mon_est}
\end{eqnarray}
Furthermore,
\begin{equation}\label{b_mon_ind}
D \BB^{-1} x^{\floor\gamma+3} \stackrel{\eqref{inv_b_mon}}{=} (x+1)^{-1} \left(2 x \BB^{-1} + 1\right) x^{\floor\gamma+3}
\end{equation}
and since $\vertii{D^j x (x+1)^{-1}}_{BC^0\left([0,\infty)\right)} \lesssim_j 1$ and
\begin{eqnarray*}
\verti{(x+1)^{-1} x^{\floor\gamma+3}}_{\gamma+2}^2 &=& \int_0^\infty x^{2 - 2 \gamma + 2 \floor\gamma} (x+1)^{-2} \frac{\d x}{x} \\
&\sim& \int_0^1 x^{2-2\gamma+2\floor\gamma} \frac{\d x}{x} + \int_1^\infty x^{-2\gamma+2\floor\gamma} \frac{\d x}{x} \\
&\sim_\gamma& 1,
\end{eqnarray*}
an induction argument using \eqref{b_mon_est} and \eqref{b_mon_ind} shows
\begin{equation}\label{b_mon_est_2}
\verti{\BB^{-1} x^{\floor\gamma+3}}_{k+4,\gamma+2} \sim_{k,\gamma} 1.
\end{equation}
The combination of \eqref{est_a_b}, \eqref{identity_b}, and \eqref{b_mon_est_2} implies with help of Lemma~\ref{lem:estinvB}:
\begin{eqnarray*}
\lefteqn{ \verti{w - \sum_{j = 1}^{\floor{\gamma}+2} w_j x^j}_{k+4,\gamma+2}} \\
&\stackrel{\eqref{identity_b}}{\lesssim_\gamma}&  \verti{\BB^{-1} \left(\BB w - \sum_{j = 1}^{\floor{\gamma}+2} (\BB w)_j x^j\right)}_{k+4,\gamma+2} +  \verti{w_{\floor\gamma+2}} \verti{\BB^{-1} x^{\floor\gamma+3}}_{k+4,\gamma+2} \\
&\stackrel{\eqref{estinvB},\eqref{b_mon_est_2}}{\lesssim_{k,\gamma}}& \verti{\BB w - \sum_{j = 1}^{\floor{\gamma}+2} (\BB w)_j x^j}_{k+3,\gamma+2} +  \verti{w_{\floor\gamma+2}} \\
&\stackrel{\eqref{est_a_b}}{\lesssim_\gamma}& \verti{\AAA w - \sum_{j = 1}^{\floor{\gamma}} (\AAA w)_j x^j}_{k,\gamma} + \verti{w_{\floor\gamma+2}}.
\end{eqnarray*}

\medskip

Next, we repeat the same steps for the norm $\verti{w - \sum_{j = 1}^{\floor{\gamma}+1} w_j x^j}_{k+4,\gamma+1}$. Using \eqref{b_monom} and \eqref{identity_b_0}, we have
\begin{equation}\label{identity_b_2}
w - \sum_{j = 1}^{\floor{\gamma}+1} w_j x^j = \BB^{-1} \left(\BB w - \sum_{j = 1}^{\floor{\gamma}+2} (\BB w)_j x^j\right) +  \left(\floor\gamma+2\right) w_{\floor\gamma+2} \BB^{-1} x^{\floor\gamma+2}.
\end{equation}
Employing \eqref{b_mon_est_2} with $\gamma$ replaced by $\gamma-1$, we have
\begin{equation}\label{b_mon_est_3}
\verti{\BB^{-1} x^{\floor\gamma+2}}_{k+4,\gamma+1} \sim_{k,\gamma} 1.
\end{equation}
Hence, \eqref{est_a_b}, \eqref{identity_b_2}, and \eqref{b_mon_est_3} give
\begin{eqnarray*}
\lefteqn{\verti{w - \sum_{j = 1}^{\floor{\gamma}+1} w_j x^j}_{k+4,\gamma+1}} \\
&\stackrel{\eqref{identity_b_2}}{\lesssim_\gamma}& \verti{\BB^{-1} \left(\BB w - \sum_{j = 1}^{\floor{\gamma}+2} (\BB w)_j x^j\right)}_{k+4,\gamma+1} +  \verti{w_{\floor\gamma+2}} \verti{\BB^{-1} x^{\floor\gamma+2}}_{k+4,\gamma+1} \\
&\stackrel{\eqref{estinvB},\eqref{b_mon_est_3}}{\lesssim_{k,\gamma}}& \verti{\BB w - \sum_{j = 1}^{\floor{\gamma}+2} (\BB w)_j x^j}_{k+3,\gamma+2} +  \verti{w_{\floor\gamma+2}} \\
&\stackrel{\eqref{est_a_b}}{\lesssim_\gamma}& \verti{\AAA w - \sum_{j = 1}^{\floor{\gamma}} (\AAA w)_j x^j}_{k,\gamma} +  \verti{w_{\floor\gamma+2}},\end{eqnarray*}
thus finishing the proof of one direction of estimate~\eqref{elliptic_est_a}.

\medskip

For proving the other direction of estimate~\eqref{elliptic_est_a}, we first observe that \eqref{factor_a} and \eqref{b_monom} imply
\begin{align*}
\BB w - \sum_{j = 1}^{\floor\gamma+2} (\BB w)_j x^j &= x (D-2) w +  D w - \sum_{j = 2}^{\floor\gamma+2} \left((j-3) w_{j-1} +  j w_j\right) x^j \\
&= x (D-2) \left(w - \sum_{j = 1}^{\floor\gamma+1} w_j x^j\right) +  D \left(w - \sum_{j = 1}^{\floor\gamma+2} w_j x^j\right),
\end{align*}
so that \eqref{est_a_b} yields
\begin{align*}
\verti{\AAA w - \sum_{j = 1}^{\floor{\gamma}} (\AAA w)_j x^j}_{k,\gamma} &\lesssim_\gamma \verti{\BB w - \sum_{j = 1}^{\floor{\gamma}+2} (\BB w)_j x^j}_{k+3,\gamma+2} \\
&\lesssim_k \verti{w - \sum_{j = 1}^{\floor\gamma+1} w_j x^j}_{k+4,\gamma+1} +  \verti{w - \sum_{j = 1}^{\floor\gamma+2} w_j x^j}_{k+4,\gamma+2}.
\end{align*}
Hence, it remains to bound $ \verti{w_{\floor\gamma+2}}$: Observe that by quite elementary arguments we have
\begin{align*}
 \verti{w_{\floor\gamma+2}}^2 &\lesssim \int_1^2 \left(w(x) - \sum_{j = 1}^{\floor\gamma+1} w_j x^j\right)^2 \d x + \int_1^2 \left(w(x) - \sum_{j = 1}^{\floor\gamma+2} w_j x^j\right)^2 \d x \\
&\lesssim_\gamma \int_0^\infty x^{-2\gamma-2} \left(w(x) - \sum_{j = 1}^{\floor\gamma+1} w_j x^j\right)^2 \frac{\d x}{x} + \int_0^\infty x^{-2\gamma-4} \left(w(x) - \sum_{j = 1}^{\floor\gamma+2} w_j x^j\right)^2 \frac{\d x}{x} \\
&=  \verti{w - \sum_{j = 1}^{\floor\gamma+1} w_j x^j}_{\gamma+1}^2 +  \verti{w - \sum_{j = 1}^{\floor\gamma+2} w_j x^j}_{\gamma+2}^2,
\end{align*}
thus completing the proof of Lemma~\ref{lem:elliptic_a}.
\end{proof}

\subsection{Parabolic maximal regularity II\label{sec:maxreg2}}
\subsubsection{Definition of norms}
Our aim is to show maximal-regularity estimates for the solution $u$ to arbitrary orders of the expansion given in \eqref{formal_exp_u}. This can be achieved by combining the parabolic estimates \eqref{maxreg_tc} in conjunction with \eqref{comp_mr} (cf.~Proposition~\ref{prop:maxreg1}) and \eqref{ell_main} (cf.~Proposition~\ref{prop:ell_reg}), where $k$ is replaced by $k + 4 (N - m - \ell)$ in \eqref{maxreg_ta_lm} and $k + 4 (N - m - \ell) - 1$ in \eqref{maxreg_ca_lm}. The resulting parabolic estimate reads
\begin{equation}\label{maxreg_t_ell}
\vertiii{u}_{\mathrm{sol}} \lesssim_{k,N,\delta} \vertiii{u^{(0)}}_{\mathrm{init}} + \vertiii{f}_{\mathrm{rhs}} + R
\end{equation}
with norms $\vertiii{\cdot}_{\mathrm{sol}}$, $\vertiii{\cdot}_{\mathrm{init}}$, and $\vertiii{\cdot}_{\mathrm{rhs}}$ for the solution $u$, the initial data $u^{(0)}$ and the right-hand side $f$, respectively, and a remainder term $R$ due to the summed absolute values of the coefficients on the right-hand sides of \eqref{ell_main}. Therefore, we introduce the index sets
\begin{subequations}\label{index_sets}
\begin{equation}\label{index_sets_a}
\II_{N,\delta} := \left\{\left(\alpha, \ell, m\right) : \, \alpha \in \{\delta,1+\delta\} \, \mbox{ and } \, \ell, m \in \N_0 \, \mbox{ with } \, 0 \le \ell + m \le N - \floor\alpha\right\}
\end{equation}
and
\begin{equation}
\JJ_{N,\delta} := \II_{N,\delta} \cup \left\{\left(\alpha,\ell,m\right): \, \left(\alpha + \frac 1 2, \ell, m\right) \in \II_{N,\delta}\right\}.
\end{equation}
\end{subequations}
Then we can define our norms in compact form:

\medskip

The norm for the solution $u$ is defined through
\begin{eqnarray}\nonumber
\vertiii{u}^2_{\mathrm{sol}} &:=& \sum_{\left(\alpha,\ell,m\right) \in \II_{N,\delta}} \sum_{r = 0}^m  \sup_{t \ge 0} \verti{\partial_t^\ell u - \sum_{j = 1}^{\floor{\alpha}+m+r} \frac{\d^\ell u_j}{\d t^\ell} x^j}_{k+4 (N - \ell)+1,\alpha+m+r}^2 \\
&&+ \sum_{\left(\alpha,\ell,m\right) \in \JJ_{N,\delta}} \sum_{r = 0}^m  \int_0^\infty \verti{\partial_t^{\ell+1} \underline u - \sum_{j = 1}^{\floor{\alpha}+m+r-1} \frac{\d^{\ell+1} \underline u_j}{\d t^{\ell+1}} x^j}_{k+4 (N - \ell)-1,\alpha + m + r - 1}^2 \d t \nonumber\\
&&+ \sum_{\left(\alpha,\ell,m\right) \in \JJ_{N,\delta}} \sum_{r = 0}^m  \int_0^\infty \verti{\partial_t^\ell u - \sum_{j = 1}^{\floor{\alpha}+m+r+1} \frac{\d^\ell u_j}{\d t^\ell} x^j}_{k+4 (N - \ell)+3,\alpha + m + r + 1}^2 \d t. \nonumber\\
\label{norm_sol}
\end{eqnarray}
Here, $N \in \N_0$ and the choice of $k \ge 3$ will be addressed later. For the right-hand side $f$ we are first led to choose instead of $\vertiii{\cdot}_{\mathrm{rhs}}$ the squared norm
\begin{eqnarray}\nonumber
\vertiii{f}_{\mathrm{rhs},*}^2 &:=& \sum_{\left(\alpha,\ell,m\right) \in \II_{N,\delta}} \sum_{\ell^\prime=0}^{\ell-1} \sup_{t \ge 0} \sum_{r = 0}^{m+\ell^\prime} \Bigg|\partial_t^{\ell-1-\ell^\prime} f_{|t=0} \\
&& - \sum_{j = 1}^{\floor\alpha+m+\ell^\prime+r} \left(\frac{\d^{\ell-1-\ell^\prime} f_j}{\d t^{\ell-1-\ell^\prime}}\right)_{|t=0} x^j \Bigg|_{k+4 \left(N+\ell^\prime-\ell \right)+1,\alpha+m+\ell^\prime+r}^2 \nonumber \\
&& + \sum_{\left(\alpha,\ell,m\right) \in \JJ_{N,\delta}} \int_0^\infty \sum_{r = 0}^m  \verti{\partial_t^\ell \underline f - \sum_{j = 1}^{\floor{\alpha}+m+r-1} \frac{\d^\ell \underline f_j}{\d t^\ell} x^j}_{k+4 (N - \ell)-1,\alpha+m+r -1}^2 \d t. \nonumber\\
\label{norm_rhs_0}
\end{eqnarray}
Notice that the first two lines in \eqref{norm_rhs_0} originate from inserting the time-trace identities \eqref{comp_mr} into \eqref{maxreg_tc}. The expression in \eqref{norm_rhs_0} can be simplified by setting $\tilde \ell = \ell-1-\ell^\prime$, $\tilde m = m + \ell^\prime = m + \ell - 1 - \tilde \ell$, so that
\begin{align*}
&\sum_{\left(\alpha,\ell,m\right) \in \II_{N,\delta}} \sum_{\ell^\prime=0}^{\ell-1}  \sum_{r = 0}^{m+\ell^\prime} \verti{\partial_t^{\ell-1-\ell^\prime} f_{|t=0} - \sum_{j = 1}^{\floor\alpha+m+\ell^\prime+r} \left(\frac{\d^{\ell-1-\ell^\prime} f_j}{\d t^{\ell-1-\ell^\prime}}\right)_{|t=0} x^j}_{k+4\left(N+\ell^\prime-\ell\right)+1,\alpha+m+\ell^\prime+r}^2 \\
&\quad \sim_N \sum_{\left(\alpha,\tilde\ell,\tilde m\right) \in \II_{N-1,\delta}} \sum_{\tilde r = 0}^{\tilde m} \verti{\partial_t^{\tilde \ell} f_{|t=0} - \sum_{j = 1}^{\floor\alpha+\tilde m+\tilde r} \left(\frac{\d^{\tilde \ell} f_j}{\d t^{\tilde \ell}}\right)_{|t=0} x^j}_{k+ 4 \left(N - \tilde \ell\right) - 3,\alpha+ \tilde m + \tilde r}^2.
\end{align*}
Hence, we may choose instead of \eqref{norm_rhs_0} the slightly stronger norm
\begin{eqnarray}\nonumber
\vertiii{f}_{\mathrm{rhs}}^2 &:=& \sum_{\left(\alpha,\tilde\ell,\tilde m\right) \in \II_{N-1,\delta}} \sum_{\tilde r = 0}^{\tilde m} \sup_{t \ge 0} \verti{\partial_t^{\tilde \ell} f - \sum_{j = 1}^{\floor\alpha+\tilde m+\tilde r} \frac{\d^{\tilde \ell} f_j}{\d t^{\tilde \ell}} x^j}_{k+ 4 \left(N - \tilde \ell\right) -3,\alpha+ \tilde m + \tilde r}^2 \nonumber \\
&& + \sum_{\left(\alpha,\ell,m\right) \in \JJ_{N,\delta}} \sum_{r = 0}^m \int_0^\infty \verti{\partial_t^\ell \underline f - \sum_{j = 1}^{\floor{\alpha}+m+r-1} \frac{\d^\ell \underline f_j}{\d t^\ell} x^j}_{k+4 (N - \ell)-1,\alpha+m+r -1}^2 \d t. \nonumber\\
\label{norm_rhs}
\end{eqnarray}

\medskip

Because we have used the time-trace identities \eqref{comp_mr}, the initial data norm reads
\begin{equation}\label{norm_init}
\vertiii{u^{(0)}}_{\mathrm{init}}^2 := \sum_{\left(\alpha,\cdot,m\right) \in \II_{N,\delta}} \sum_{r = 0}^m \verti{u^{(0)} - \sum_{j = 1}^{\floor{\alpha}+m+r} u^{(0)}_j x^j}_{k+4 N+1,\alpha+m+r}^2.
\end{equation}
As for $m \ge 1$ in \eqref{ell_main} remnant coefficient terms appear, our considerations up to now lead   to a remainder of the form
\begin{eqnarray}\nonumber
R^2 &=& \sum_{\left(\alpha,\ell,m\right) \in \II_{N,\delta}} \sum_{r = 1}^m \sup_{t \ge 0} \verti{\frac{\d^\ell u_{\floor{\alpha}+m+r}}{\d t^\ell}}^2 \\
&&+ \sum_{\left(\alpha,\ell,m\right) \in \JJ_{N,\delta}} \sum_{r = 1}^m  \int_0^\infty \verti{\frac{\d^{\ell+1} \underline u_{\floor{\alpha}+m+r-1}}{\d t^{\ell+1}}}^2 \d t \nonumber \\
&&+ \sum_{\left(\alpha,\ell,m\right) \in \JJ_{N,\delta}} \sum_{r = 1}^m  \int_0^\infty \verti{\frac{\d^\ell u_{\floor{\alpha}+m+r+1}}{\d t^\ell}}^2 \d t . \nonumber
\end{eqnarray}
In view of the definitions of $\II_{N,\delta}$ and $\JJ_{N,\delta}$ in \eqref{index_sets}, we can deduce the bound
\begin{eqnarray}\nonumber
R^2 &\lesssim_N& \sum_{j = 1}^{2N} \sum_{\ell = 0}^{\floor{\frac{2 N-j}{2}}} \sup_{t \ge 0}  \verti{\frac{\d^\ell u_j}{\d t^\ell}}^2 + \sum_{j = 1}^{2N-1} \sum_{\ell = 0}^{\floor{\frac{2 N - 1 -j}{2}}} \int_0^\infty \verti{\frac{\d^{\ell+1} \underline u_j}{\d t^{\ell+1}}}^2 \d t \\
&& + \sum_{j = 1}^{2N+1} \sum_{\ell = 0}^{\floor{\frac{2 N + 1 -j}{2}}}\int_0^\infty \verti{\frac{\d^\ell u_j}{\d t^\ell}}^2 \d t \nonumber\\
&\stackrel{\eqref{decomp_w},\eqref{coeff_uw}}{\lesssim_N}& \sum_{j = 1}^{2N} \sum_{\ell = 0}^{\floor{\frac{2 N -j}{2}}}\sup_{t \ge 0}  \verti{\frac{\d^\ell u_j}{\d t^\ell}}^2 + \sum_{j = 1}^{2 N - 1} \sum_{\ell = 0}^{\floor{\frac{2 N - 1 -j}{2}}} \int_0^\infty \verti{\frac{\d^{\ell+1} u_j}{\d t^{\ell+1}}}^2 \d t \nonumber\\
&& + \sum_{j = 1}^{2N + 1} \sum_{\ell = 0}^{\floor{\frac{2 N + 1 -j}{2}}} \int_0^\infty \verti{\frac{\d^\ell u_j}{\d t^\ell}}^2 \d t \nonumber \\
&\lesssim& \sum_{j = 1}^{2N} \sum_{\ell = 0}^{\floor{\frac{2 N -j}{2}}}\sup_{t \ge 0}  \verti{\frac{\d^\ell u_j}{\d t^\ell}}^2 + \sum_{j = 1}^{2 N + 1} \sum_{\ell = 0}^{\floor{\frac{2 N + 1 -j}{2}}} \int_0^\infty \verti{\frac{\d^\ell u_j}{\d t^\ell}}^2 \d t. \label{bd_remnant}
\end{eqnarray}
The fact that coefficients of the solution $u$ appear on the right-hand side of \eqref{maxreg_t_ell} is inconvenient. Furthermore, highest-order terms like $\sup_{t \ge 0} \verti{u_{2 N}}^2$, appearing in the second but last sum in \eqref{bd_remnant}, cannot be controlled with a trace estimate using the $L^2$-parts in the last term only. In what follows, we first demonstrate how these remnant terms can be estimated and thus simplified using the fact that expansions of $u$ and $f$ have to meet equation~\eqref{lin_pde}, leading to an infinite-dimensional system of ODEs.

\subsubsection{The polynomial equation}
We start by noting that
\[
\AAA x^j \stackrel{\eqref{defa}}{=} p(j) x^{j-1} +  q(j) x^{j-2} \qquad \Rightarrow \qquad (\AAA u)_j = p(j+1) u_{j+1} +  q(j+2) u_{j+2}
\]
and therefore, if $u, f \in C^\infty([0,\infty))$ meet equation~\eqref{lin_pde}, then the polynomial equation
\begin{equation}\label{poly_eq}
\frac{\d u_j}{\d t} + p(j+1) u_{j+1} +  q(j+2) u_{j+2} = f_j \quad \mbox{for} \quad t > 0 \quad \mbox{and} \quad j \in \N
\end{equation}
is satisfied. Notice that \eqref{poly_eq} is an infinite-dimensional first-order ODE for $(u_1, u_2, u_3, \ldots)$ with right-hand side $(f_1, f_2, f_3, \ldots)$ and initial data $\left(u^{(0)}_1, u^{(0)}_2, u^{(0)}_3, \ldots\right)$. We can prove:
\begin{lemma}\label{lem:poly_eq}
Suppose that $u,f \in C^\infty([0,\infty))$ meet equation~\eqref{lin_pde}, then we have
\begin{equation}\label{poly_coeff}
\sum_{j = 1}^M \sum_{\ell = 0}^{\floor{\frac{M-j}{2}}}  \verti{\frac{\d^\ell u_j}{\d t^\ell}} \lesssim_M \sum_{j = 1}^{M-2} \sum_{\ell = 0}^{\floor{\frac{M-2-j}{2}}}  \verti{\frac{\d^\ell f_j}{\d t^\ell}} + \sum_{\ell = 0}^{\floor{\frac{M-1}{2}}}  \verti{\frac{\d^\ell u_1}{\d t^\ell}} + \sum_{\ell = 0}^{\floor{\frac{M-2}{2}}}  \verti{\frac{\d^\ell u_2}{\d t^\ell}}
\end{equation}
for any $M \in \N$ with $M \ge 1$, where the constant in \eqref{poly_coeff} only depends on $M$.
\end{lemma}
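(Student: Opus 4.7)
The plan is to treat \eqref{poly_eq} as an explicit recursion for $(u_j)_{j \ge 3}$ in terms of $u_1$, $u_2$ and the $f_j$. Since the quartic $q(\zeta) = \zeta(\zeta-1)^2(\zeta-2)$ vanishes only at $\zeta \in \{0, 1, 2\}$, the coefficient $q(j+2) = (j+2)(j+1)^2 j$ is non-zero for every $j \ge 1$, so \eqref{poly_eq} can be solved for the top index,
\[
u_{j+2} = \frac{1}{q(j+2)}\left(f_j - \frac{\d u_j}{\d t} - p(j+1)\, u_{j+1}\right) \quad \mbox{for } j \ge 1,
\]
and differentiating $\ell$ times in $t$ yields
\[
\frac{\d^\ell u_{j+2}}{\d t^\ell} = \frac{1}{q(j+2)}\left(\frac{\d^\ell f_j}{\d t^\ell} - \frac{\d^{\ell+1} u_j}{\d t^{\ell+1}} - p(j+1)\, \frac{\d^\ell u_{j+1}}{\d t^\ell}\right).
\]

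With this recursion in hand I will argue by strong induction on $j$ that $\verti{\d^\ell u_j/\d t^\ell}$ for $0 \le \ell \le L_j := \lfloor (M-j)/2 \rfloor$ is bounded by the right-hand side of \eqref{poly_coeff} up to a constant depending only on $M$. The base cases $j \in \{1, 2\}$ are tautological, since the terms $\verti{\d^\ell u_1/\d t^\ell}$ for $\ell \le L_1$ and $\verti{\d^\ell u_2/\d t^\ell}$ for $\ell \le L_2$ appear explicitly in the last two sums on the right-hand side of \eqref{poly_coeff}. For the inductive step I apply the recursion shifted down by $2$: for $j \ge 3$,
\[
\frac{\d^\ell u_j}{\d t^\ell} = \frac{1}{q(j)}\left(\frac{\d^\ell f_{j-2}}{\d t^\ell} - \frac{\d^{\ell+1} u_{j-2}}{\d t^{\ell+1}} - p(j-1)\, \frac{\d^\ell u_{j-1}}{\d t^\ell}\right),
\]
and verify that the three terms on the right fall within the admissible ranges. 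For $\ell \le L_j$: the term $\d^\ell f_{j-2}/\d t^\ell$ is controlled by the first sum on the right-hand side of \eqref{poly_coeff} (with running index $j' = j-2$, since $\ell \le L_j = \lfloor (M-2-j')/2 \rfloor$); the term $\d^{\ell+1} u_{j-2}/\d t^{\ell+1}$ has derivative order $\ell + 1 \le L_j + 1 = L_{j-2}$ and is covered by the induction hypothesis applied to $j-2$; the term $\d^\ell u_{j-1}/\d t^\ell$ satisfies $\ell \le L_j \le L_{j-1}$ and is covered by the induction hypothesis applied to $j-1$. Summing over $1 \le j \le M$ and $0 \le \ell \le L_j$ and absorbing the constants (which depend only on $M$) then yields \eqref{poly_coeff}.

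The main subtlety is the bookkeeping of the derivative orders, which relies on the elementary identity $\lfloor n/2 \rfloor + 1 = \lfloor (n+2)/2 \rfloor$ (with $n = M - j$) producing the exact matching $L_j + 1 = L_{j-2}$. This identity is precisely what makes the ``gain'' of one time derivative coming from $\partial_t u_{j-2}$ in the recursion be perfectly offset by the two-index drop in $j$ that is built into $L_j$. Apart from this compatibility check and the non-vanishing of $q(j+2)$ for $j \ge 1$, the proof is a straightforward recursive reduction, and I do not anticipate any genuine obstacle.
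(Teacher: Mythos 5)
Your proof is correct, and the underlying engine is identical to the paper's: both arguments hinge on solving \eqref{poly_eq} for the top index (exploiting that $q(j+2)=(j+2)(j+1)^2 j\neq 0$ for $j\ge 1$) and on the floor-function identity that makes the time-derivative gain exactly compensate the drop of two in the spatial index. The organization, however, differs: you run a single strong induction on $j$ for a fixed $M$, with the terms in $u_1,u_2$ on the right-hand side of \eqref{poly_coeff} serving as tautological base cases, whereas the paper inducts on $M$ itself and inside each step performs an iterated reduction of the index $j$ via \eqref{ind_coeff}. Your version is arguably cleaner and removes the nested loop, while the paper's version has the feature that the statement for smaller $M$ is available as an induction hypothesis (which is why the sum $\sum_{j=2}^M\sum_{\ell\le\lfloor (M-j)/2\rfloor}\lvert\d^\ell u_j/\d t^\ell\rvert$ can be absorbed in one stroke rather than term by term). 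Both yield the same $M$-dependent constants, and both are complete.
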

\begin{proof}
The proof is a simple induction argument employing \eqref{poly_eq}. For $M = 1$ and $M = 2$ there is nothing to show.

\medskip

Now suppose that \eqref{poly_coeff} holds up to some fixed $M \ge 2$.  Differentiating \eqref{poly_eq} in time, we have
\begin{equation}\label{ind_coeff}
\verti{\frac{\d^\ell u_{j+2}}{\d t^\ell}} \lesssim_j \verti{\frac{\d^\ell f_j}{\d t^\ell}} + \verti{\frac{\d^\ell u_{j+1}}{\d t^\ell}} + \verti{\frac{\d^{\ell+1} u_j}{\d t^{\ell+1}}} \quad \mbox{for} \quad j \in \N.
\end{equation}
Now consider the term on the left-hand side of \eqref{poly_coeff} with $M$ replaced by $M+1$ and observe
\[
\sum_{j = 1}^{M+1} \sum_{\ell = 0}^{\floor{\frac{M+1-j}{2}}} \verti{\frac{\d^\ell u_j}{\d t^\ell}} = \sum_{\ell = 0}^{\floor{\frac M 2}} \verti{\frac{\d^\ell u_1}{\d t^\ell}} + \sum_{\ell = 0}^{\floor{\frac{M-1}{2}}} \verti{\frac{\d^\ell u_2}{\d t^\ell}} + \sum_{j = 3}^{M+1} \sum_{\ell = 0}^{\floor{\frac{M+1-j}{2}}} \verti{\frac{\d^\ell u_j}{\d t^\ell}},
\]
Since the first two sums on the right-hand side appear in \eqref{poly_coeff} with $M$ replaced by $M+1$, it remains to estimate the last term. Using \eqref{ind_coeff}, we get
\begin{equation}\label{coeff_ind_1}
\sum_{j = 3}^{M+1} \sum_{\ell = 0}^{\floor{\frac{M+1-j}{2}}} \verti{\frac{\d^\ell u_j}{\d t^\ell}} \lesssim_M \sum_{j = 1}^{M-1} \sum_{\ell = 0}^{\floor{\frac{M-1-j}{2}}} \verti{\frac{\d^\ell f_j}{\d t^\ell}} + \sum_{j = 2}^M \sum_{\ell = 0}^{\floor{\frac{M-j}{2}}} \verti{\frac{\d^\ell u_j}{\d t^\ell}} + \sum_{j = 1}^{M-1} \sum_{\ell = 1}^{\floor{\frac{M+1-j}{2}}} \verti{\frac{\d^\ell u_j}{\d t^\ell}}.
\end{equation}
The first term on the right-hand side of this estimate is bounded by the first term on the right-hand side of \eqref{poly_coeff} with $M$ replaced by $M+1$, while the second is controlled by the left-hand side of the original estimate \eqref{poly_coeff}, that is, it is treated by the induction assumption. The addends with $j \in \{1,2\}$ in the last term of \eqref{coeff_ind_1} appear on the right-hand side of \eqref{poly_coeff} with $M$ replaced by $M+1$ as well, so that it remains to bound the sum $\sum_{j = 3}^{M-1} \sum_{\ell = 1}^{\floor{\frac{M+1-j}{2}}} \verti{\frac{\d^\ell u_j}{\d t^\ell}}$. Applying \eqref{ind_coeff}, we have:
\[
\sum_{j = 3}^{M-1} \sum_{\ell = 1}^{\floor{\frac{M+1-j}{2}}} \verti{\frac{\d^\ell u_j}{\d t^\ell}} \lesssim_M \sum_{j = 1}^{M-3} \sum_{\ell = 0}^{\floor{\frac{M-1-j}{2}}} \verti{\frac{\d^\ell f_j}{\d t^\ell}} + \sum_{j = 2}^{M-2} \sum_{\ell = 0}^{\floor{\frac{M-j}{2}}} \verti{\frac{\d^\ell u_j}{\d t^\ell}} + \sum_{j = 1}^{M-3} \sum_{\ell = 0}^{\floor{\frac{M+1-j}{2}}} \verti{\frac{\d^\ell u_j}{\d t^\ell}},
\]
where the first two terms on the left-hand side are treated as before. The last term is the same as in \eqref{coeff_ind_1} except that $j \le M-3$ instead of $j \le m-1$. Iterating this procedure reduces this last term to $j \in \{1,2\}$ only, which appear on the right-hand side of \eqref{poly_coeff} with $M$ replaced by $M+1$. This concludes the induction step.
\end{proof}
Lemma~\ref{lem:poly_eq} implies the bound
\begin{eqnarray}\nonumber
R^2 &\stackrel{\eqref{bd_remnant}}{\lesssim_N}& \sum_{j = 1}^{2 N - 2} \sum_{\ell = 0}^{\floor{\frac{2 N - 2 - j}{2}}} \sup_{t \ge 0} \verti{\frac{\d^\ell f_j}{\d t^\ell}}^2 + \sum_{j = 1}^{2 N - 1} \sum_{\ell = 0}^{\floor{\frac{2 N - 1 - j}{2}}} \int_0^\infty \verti{\frac{\d^\ell f_j}{\d t^\ell}}^2 \d t  \\
&& + \sum_{\ell = 0}^{N-1} \sup_{t \ge 0} \verti{\frac{\d^\ell u_1}{\d t^\ell}}^2 + \sum_{\ell = 0}^N \int_0^\infty \verti{\frac{\d^\ell u_1}{\d t^\ell}}^2 \d t + \sum_{\ell = 0}^{N-1} \sup_{t \ge 0} \verti{\frac{\d^\ell u_2}{\d t^\ell}}^2 + \sum_{\ell = 0}^{N-1} \int_0^\infty \verti{\frac{\d^\ell u_2}{\d t^\ell}}^2 \d t. \nonumber\\
\label{bound_r_poly}
\end{eqnarray}
For treating the terms in the first line of \eqref{bound_r_poly}, observe that by the first line of \eqref{norm_rhs} we have
\[
\vertiii{f}_{\mathrm{rhs}}^2 \gtrsim_N \sum_{\left(\alpha,\tilde\ell,\tilde m\right) \in \II_{N-1,\delta}} \sum_{\tilde r = 0}^{\tilde m} \sup_{t \ge 0} \verti{\frac{\d^{\tilde\ell} f_{\floor{\alpha}+\tilde m+\tilde r}}{\d t^{\tilde \ell}}}^2 \gtrsim_N \sum_{j = 1}^{2 N - 2} \sum_{\ell = 0}^{\floor{\frac{2 N-2-j}{2}}} \sup_{t \ge 0} \verti{\frac{\d^\ell f_j}{\d t^\ell}}^2,
\]
while the second line of \eqref{norm_rhs} yields
\begin{eqnarray*}
\vertiii{f}_{\mathrm{rhs}}^2 &\gtrsim_N& \sum_{\left(\alpha,\ell,m\right) \in \II_{N,\delta}} \sum_{r = 0}^m \int_0^\infty \verti{\frac{\d^\ell \underline f_{\floor{\alpha}+m+r-1}}{\d t^\ell}}^2 \d t \gtrsim_N \sum_{j = 1}^{2 N - 1} \sum_{\ell = 0}^{\floor{\frac{2 N - 1 - j}{2}}} \int_0^\infty \verti{\frac{\d^\ell \underline f_j}{\d t^\ell}}^2 \d t \\
&\stackrel{\eqref{decomp_w},\eqref{coeff_uw}}{\gtrsim_N}& \sum_{j = 1}^{2 N - 1} \sum_{\ell = 0}^{\floor{\frac{2 N - 1 - j}{2}}} \int_0^\infty \verti{\frac{\d^\ell f_j}{\d t^\ell}}^2 \d t.
\end{eqnarray*}
Together with \eqref{bound_r_poly} this implies
\begin{eqnarray}\nonumber
R^2 &\lesssim_N& \vertiii{f}_{\mathrm{rhs}}^2 \\
&& + \sum_{\ell = 0}^{N-1} \sup_{t \ge 0} \verti{\frac{\d^\ell u_1}{\d t^\ell}}^2 + \sum_{\ell = 0}^N \int_0^\infty \verti{\frac{\d^\ell u_1}{\d t^\ell}}^2 \d t + \sum_{\ell = 0}^{N-1} \sup_{t \ge 0} \verti{\frac{\d^\ell u_2}{\d t^\ell}}^2 + \sum_{\ell = 0}^{N-1} \int_0^\infty \verti{\frac{\d^\ell u_2}{\d t^\ell}}^2 \d t. \nonumber\\
\label{bound_r_poly_2}
\end{eqnarray}
For treating the remaining terms in $\frac{\d^\ell u_1}{\d t^\ell}$ and $\frac{\d^\ell u_2}{\d t^\ell}$ observe that using Lemma~\ref{lem:elliptic} in Proposition~\ref{prop:maxreg1} with $m = 0$, Proposition~\ref{prop:ell_reg} is not applied a single time and we obtain \eqref{maxreg_t_ell} with $R \equiv 0$ if we take $m = 0$ in all the sums $\sum_{\left(\alpha,\ell,m\right) \in \II_{N,\delta}} (\ldots)$ and $\sum_{\left(\alpha,\ell,m\right) \in \JJ_{N,\delta}} (\ldots)$, and  $\tilde m = 0$ in all the sums $\sum_{\left(\alpha,\tilde\ell,\tilde m\right) \in \II_{N-1,\delta}} (\ldots)$ in the definition of the norms \eqref{norm_sol}, \eqref{norm_rhs}, and \eqref{norm_init}. On the other hand, the resulting norm replacing $\vertiii{u}$ controls the remnant terms in the second line of \eqref{bound_r_poly_2}, because
\begin{equation}\label{est_coeff_u1}
\verti{\frac{\d^\ell u_1}{\d t^\ell}}^2 \lesssim \int_{\frac 1 2}^2 \left(\partial_t^\ell u - \frac{\d^\ell u_1}{\d t^\ell} x\right)^2 \d x + \int_{\frac 1 2}^2 \left(\partial_t^\ell u\right)^2 \d x \lesssim \verti{\partial_t^\ell u - \frac{\d^\ell u_1}{\d t^\ell} x}_{1+\delta}^2 + \verti{\partial_t^\ell u}_\delta^2
\end{equation}
and
\begin{eqnarray}\nonumber
\verti{\frac{\d^\ell u_2}{\d t^\ell}}^2 &\lesssim& \int_{\frac 1 2}^2 \left(\partial_t^\ell u - \frac{\d^\ell u_1}{\d t^\ell} x - \frac{\d^\ell u_2}{\d t^\ell} x^2\right)^2 \d x + \int_{\frac 1 2}^2 \left(\partial_t^\ell u - \frac{\d^\ell u_1}{\d t^\ell} x\right)^2 \d x \\
&\lesssim& \verti{\partial_t^\ell u - \frac{\d^\ell u_1}{\d t^\ell} x - \frac{\d^\ell u_2}{\d t^\ell} x^2}_{2+\delta}^2 + \verti{\partial_t^\ell u - \frac{\d^\ell u_1}{\d t^\ell} x}_{1+\delta}^2. \label{est_coeff_u2}
\end{eqnarray}
Thus we have concluded all arguments leading to
\begin{equation}\label{maxreg_full}
\vertiii{u}_{\mathrm{sol}} \lesssim_{k,N,\delta} \vertiii{u^{(0)}}_{\mathrm{init}} + \vertiii{f}_{\mathrm{rhs}}
\end{equation}
instead of \eqref{maxreg_t_ell} upon increasing the constant in the estimate. This is the desired maximal-regularity estimate with arbitrary regularity of the solution close to the free boundary $\{x = 0\}$. We summarize our findings in the following statement:
\begin{proposition}\label{prop:mr_full}
Take $N \in \N_0$, $k \ge 2$, and $0 < \delta < \frac 1 2$, and suppose that $f: (0,\infty)^2 \to \R$ and $u^{(0)} : (0,\infty) \to \R$ are locally integrable with $\vertiii{f}_{\mathrm{rhs}} < \infty$ and $\vertiii{u^{(0)}}_{\mathrm{init}} < \infty$. Then problem \eqref{linear_u} has exactly one locally integrable solution $u: (0,\infty)^2 \to \R$ with $\vertiii{u}_{\mathrm{sol}} < \infty$. This solution fulfills the maximal-regularity estimate \eqref{maxreg_full}.
\end{proposition}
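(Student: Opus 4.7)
The plan is to assemble the ingredients prepared earlier in this section: the commutator maximal-regularity estimates \eqref{maxreg_tc} of Proposition~\ref{prop:maxreg1}, the elliptic-regularity equivalences of Proposition~\ref{prop:ell_reg}, the polynomial-equation bound of Lemma~\ref{lem:poly_eq}, and the time-trace identities \eqref{comp_mr}. Existence and uniqueness of a locally integrable solution $u$ with $\vertiii{u}_{\mathrm{sol}} < \infty$ follow directly from Proposition~\ref{prop:maxreg1}: the hypothesis $\vertiii{f}_{\mathrm{rhs}} + \vertiii{u^{(0)}}_{\mathrm{init}} < \infty$ dominates the right-hand sides of \eqref{maxreg_tc} for every $(\ell,m)$ relevant to $(\alpha,\ell,m) \in \II_{N,\delta} \cup \JJ_{N,\delta}$, and $\vertiii{u}_{\mathrm{sol}} < \infty$ dominates the weaker hypothesis needed for uniqueness there. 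Thus the substantive content is the estimate \eqref{maxreg_full}.

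To prove it, I apply, for every $(\alpha,\ell,m) \in \II_{N,\delta} \cup \JJ_{N,\delta}$, either \eqref{maxreg_ta_lm} with $\tilde\alpha = \delta$ (admissible by \eqref{coer_ta}) when $\alpha = \delta$, or \eqref{maxreg_ca_lm} with $\check\alpha = 1+\delta$ (admissible by \eqref{coer_ca}) when $\alpha = 1+\delta$, in each case with the spatial-regularity index $k$ of \eqref{maxreg_tc} replaced by $k + 4(N-m-\ell)$ in the $\tilde\AAA$-case and by $k + 4(N-m-\ell) - 1$ in the $\check\AAA$-case. Invoking Proposition~\ref{prop:ell_reg} (with $w$ equal to $\partial_t^\ell u$ or $\partial_t^{\ell+1} \underline u$) on the left-hand side converts every norm of the form $\verti{\partial_t^\ell \tilde\AAA^m \tilde u}_{\cdot,\alpha}$ or $\verti{\partial_t^\ell \check\AAA^m \check u}_{\cdot,\alpha}$ into the $r$-sums $\verti{\partial_t^\ell u - \sum_j \tfrac{\d^\ell u_j}{\d t^\ell} x^j}_{\cdot,\alpha+m+r}$ that appear in \eqref{norm_sol}, at the price of the coefficient remainders $\sum_{r=1}^m |\tfrac{\d^\ell u_{\floor\alpha+m+r}}{\d t^\ell}|$. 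On the initial-data side, the time-trace identities \eqref{comp_mr} recast $(\partial_t^\ell \tilde\AAA^m \tilde u)_{|t=0}$ and $(\partial_t^\ell \check\AAA^m \check u)_{|t=0}$ as $\tilde\AAA^{m+\ell}\widetilde{u^{(0)}}$ or $\check\AAA^{m+\ell}\widecheck{u^{(0)}}$ plus time derivatives of $\tilde\AAA^{m+\ell'}\tilde f$ or $\check\AAA^{m+\ell'}\check f$ at $t=0$; a further application of Proposition~\ref{prop:ell_reg} identifies these with contributions to $\vertiii{u^{(0)}}_{\mathrm{init}}$ and to the first-line $\sup$-part of $\vertiii{f}_{\mathrm{rhs}}$ after the reindexing $(\tilde\ell,\tilde m) = (\ell-1-\ell',m+\ell')$ already carried out before \eqref{norm_rhs}. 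The integral $f$-terms on the right-hand side of \eqref{maxreg_tc} are analogously absorbed into the second-line part of $\vertiii{f}_{\mathrm{rhs}}$. The net result is an estimate of the form \eqref{maxreg_t_ell} with remainder $R$ bounded as in \eqref{bd_remnant}.

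It remains to eliminate $R$. Lemma~\ref{lem:poly_eq}, applied to the coefficients $u_j$ with $j \ge 3$, reduces $R$ to norms of $\tfrac{\d^{\ell'} u_1}{\d t^{\ell'}}$, $\tfrac{\d^{\ell'} u_2}{\d t^{\ell'}}$ and of $\tfrac{\d^{\ell'} f_j}{\d t^{\ell'}}$, as displayed in \eqref{bound_r_poly_2}; the $f$-contributions are dominated by $\vertiii{f}_{\mathrm{rhs}}$. The remaining $u_1$- and $u_2$-terms are controlled via the elementary trace bounds \eqref{est_coeff_u1}, \eqref{est_coeff_u2} by norms that appear at the $m=0$ layer of $\vertiii{u}_{\mathrm{sol}}$. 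The point, stressed already in the discussion preceding the proposition, is that when $m=0$ Proposition~\ref{prop:ell_reg} is not invoked at all, so the $m=0$ estimate is self-contained with $R \equiv 0$ and already controls exactly these coefficient traces. Consequently $R$ can be absorbed into the left-hand side of \eqref{maxreg_t_ell} at the expense of possibly enlarging the constant, which yields \eqref{maxreg_full}.

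The main obstacle is the index bookkeeping: one must verify that the shifts coming from \eqref{maxreg_tc} (losing two spatial derivatives while elevating one time derivative), from \eqref{ell_main} (gaining $4m+1$ or $4m+2$ spatial derivatives), and from \eqref{comp_mr} (mixing $(\ell,m)$ across $u^{(0)}$ and $f$) line up consistently with the definitions \eqref{norm_sol}--\eqref{norm_init} for every admissible $(\alpha,\ell,m)$. The constraint $\ell + m \le N - \floor\alpha$ in the definitions of $\II_{N,\delta}$ and $\JJ_{N,\delta}$, together with the baseline $k \ge 2$ and the $+1,+2$ gains in \eqref{ell_main_1}, \eqref{ell_main_2}, are tailored precisely to make these identifications fit, so once the alignment is verified, the remaining work is a careful matching of norms rather than any new analytic input.
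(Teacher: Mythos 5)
Your proposal is correct and takes essentially the same route as the paper: you combine Proposition~\ref{prop:maxreg1} (with $k$ replaced by $k+4(N-m-\ell)$ or $k+4(N-m-\ell)-1$) with Proposition~\ref{prop:ell_reg} and the time-trace identities~\eqref{comp_mr} to arrive at~\eqref{maxreg_t_ell}, then eliminate the remainder $R$ via Lemma~\ref{lem:poly_eq} and the self-contained $m=0$ layer controlling $u_1$, $u_2$ through~\eqref{est_coeff_u1}--\eqref{est_coeff_u2}. This matches the paper's argument in \S\ref{sec:maxreg2} step by step, including the crucial observation that the $m=0$ estimate carries no elliptic-regularity remainder and hence can absorb the leftover coefficient traces.
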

A rigorous justification of Proposition~\ref{prop:mr_full} only requires to make the heuristic arguments in \S\ref{sec:maxreg} mathematically precise. The respective reasoning will be detailed in \S\ref{sec:lin_res} and \S\ref{sec:lin_rig}.

\subsection{Properties of the norms\label{sec:norms}}
The following estimates for the coefficients follow by an elementary reasoning as in \eqref{est_coeff_u1} or \eqref{est_coeff_u2} (see also \cite[Lem.~4.3]{ggko.2014} or \cite[Lem.~3.3]{g.2016}):
\begin{lemma}\label{lem:est_coeff}
Suppose that $k, N \in \N_0$ and $0 < \delta < \frac 1 2$. Further suppose that $u: (0,\infty)^2 \to \R$, $f: (0,\infty)^2 \to \R$ and $u^{(0)} : (0,\infty) \to \R$ are locally integrable with finite norms $\vertiii{u}_{\mathrm{sol}}$, $\vertiii{f}_{\mathrm{rhs}}$, and $\vertiii{u^{(0)}}_{\mathrm{init}}$. Then the following estimates with constants independent of $u$, $f$, and $u^{(0)}$ hold true:
\begin{subequations}\label{coeff_est}
\begin{align}
\sup_{t \ge 0} \verti{\frac{\d^\ell u_j}{\d t^\ell}} & \lesssim_{k,N,\delta} \vertiii{u}_{\mathrm{sol}} \quad \mbox{for} \quad j = 1,\ldots,2 (N-\ell), \quad \ell = 0,\ldots,N, \label{coeff_est_u_sup}\\
\int_0^\infty \verti{\frac{\d^\ell u_j}{\d t^\ell}}^2 \d t & \lesssim_{k,N,\delta} \vertiii{u}_{\mathrm{sol}}^2 \quad \mbox{for} \quad j = 1,\ldots,2(N-\ell)+1, \quad \ell = 0,\ldots,N, \label{coeff_est_u_l2} \\
\verti{u_j^{(0)}} &\lesssim_{k,N,\delta} \vertiii{u^{(0)}}_{\mathrm{init}} \quad \mbox{for} \quad j = 1,\ldots,2N, \label{coeff_est_u0}\\
\sup_{t \ge 0} \verti{\frac{\d^{\tilde\ell} f_j}{\d t^{\tilde\ell}}} & \lesssim_{k,N,\delta} \vertiii{f}_{\mathrm{rhs}} \quad \mbox{for} \quad j = 1,\ldots,2 \left(N-1-\tilde\ell\right), \quad \tilde\ell = 0,\ldots,N-1, \\
\int_0^\infty \verti{\frac{\d^\ell f_j}{\d t^\ell}}^2 \d t & \lesssim_{k,N,\delta} \vertiii{f}_{\mathrm{rhs}}^2 \quad \mbox{for} \quad j = 1,\ldots,2\left(N-\ell\right)-1, \quad \ell = 0,\ldots,N.
\end{align}
\end{subequations}
\end{lemma}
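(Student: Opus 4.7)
The plan is to adapt the elementary trace argument already used in \eqref{est_coeff_u1} and \eqref{est_coeff_u2}, and to apply it systematically across the three norms. The starting point is the trivial lower bound
\[
\verti{w}_{k,\alpha}^2 \ge \int_0^\infty x^{-2\alpha} w^2 \, \frac{\d x}{x} \gtrsim_\alpha \int_{1/2}^{2} w(x)^2 \, \d x \quad \mbox{for any } k \in \N_0, \; \alpha \in \R,
\]
which holds simply because $x^{-2\alpha-1}$ is bounded below by a positive constant on the compact interval $[1/2,2]$. Applied to $w = \partial_t^\ell u(t,\cdot) - \sum_{j=1}^{J}\tfrac{\d^\ell u_j(t)}{\d t^\ell} x^j$, this reduces each weighted norm of a Taylor remainder that appears inside $\vertiii{u}_{\mathrm{sol}}$ to an $L^2([1/2,2])$-bound on the same remainder at every time $t$.

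To extract the individual coefficient $\tfrac{\d^\ell u_J}{\d t^\ell}$ we write
\[
\tfrac{\d^\ell u_J}{\d t^\ell} x^J = \left(\partial_t^\ell u - \sum_{j=1}^{J-1}\tfrac{\d^\ell u_j}{\d t^\ell}x^j\right) - \left(\partial_t^\ell u - \sum_{j=1}^{J}\tfrac{\d^\ell u_j}{\d t^\ell}x^j\right)
\]
and integrate in $x$ over $[1/2,2]$: the left-hand side gives $\left|\tfrac{\d^\ell u_J}{\d t^\ell}\right|^2 \int_{1/2}^2 x^{2J} \, \d x \gtrsim \left|\tfrac{\d^\ell u_J}{\d t^\ell}\right|^2$, while the right-hand side is controlled, via the universal trace inequality above, by two terms contributing to $\vertiii{u}_{\mathrm{sol}}^2$. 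Hence, to prove \eqref{coeff_est_u_sup} for a given $j \in \{1,\ldots,2(N-\ell)\}$, it suffices to verify that both truncation levels $J = j-1$ and $J = j$ appear in the first sum of \eqref{norm_sol}; the choice $\alpha = \delta$ (so $\floor\alpha = 0$), $m \in \{0,\ldots,N-\ell\}$, $r \in \{0,\ldots,m\}$ realizes every $J = \floor\alpha + m + r \in \{0,1,\ldots,2(N-\ell)\}$.

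The $L^2$-in-time bound \eqref{coeff_est_u_l2} is obtained identically, but using the third sum in \eqref{norm_sol}, which carries truncation at level $\floor\alpha + m + r + 1$ for $(\alpha,\ell,m) \in \JJ_{N,\delta}$; the two admissible choices $\alpha = \delta$ (via $\II_{N,\delta}\subset\JJ_{N,\delta}$) and $\alpha = \delta - \tfrac 1 2$ (which lies in $\JJ_{N,\delta}$ since $\alpha + \tfrac 1 2 = \delta$, and satisfies $\floor{\delta - \tfrac 1 2} = -1$ because $0 < \delta < \tfrac 1 2$) together span all truncation levels $J \in \{0,1,\ldots,2(N-\ell)+1\}$. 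The remaining estimates \eqref{coeff_est_u0} and those on $f_j$ follow by the exact same procedure applied to the initial-data norm \eqref{norm_init} and to the two sums of \eqref{norm_rhs}, respectively. No genuine obstacle arises: every step reduces to the one-interval Taylor trace argument above, and the only work is the bookkeeping of checking that the index sets $\II_{N,\delta}$, $\JJ_{N,\delta}$, and $\II_{N-1,\delta}$ provide truncation levels spanning the claimed $j$-ranges in each case, which is a direct consequence of the definitions in \eqref{index_sets}.
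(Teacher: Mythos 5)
Your proposal is correct and follows the same approach the paper indicates (the paper itself does not spell out a proof but simply refers the reader to the elementary compact-interval trace argument of \eqref{est_coeff_u1}--\eqref{est_coeff_u2}, which is exactly what you reproduce and then carry through the necessary index bookkeeping). The verification that the index sets $\II_{N,\delta}$, $\JJ_{N,\delta}$, and $\II_{N-1,\delta}$ supply consecutive truncation levels spanning the claimed $j$-ranges is correct, including the observation that $\floor{\delta-\tfrac12}=-1$ unlocks the extra truncation level needed for the $L^2$-in-time bounds.
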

Lemma~\ref{lem:est_coeff} will be useful for the treatment of the nonlinear problem \eqref{nonlinear_u} in \S\ref{sec:nonlin}. Furthermore, it enables us to properly define suitable function spaces for our solution $u$, the initial data $u^{(0)}$, and the right-hand side $f$:
\begin{definition}\label{def:function_spaces}
For $k \ge 0$, $N \ge 1$, and $0 < \delta < \frac 1 2$ we define the spaces $U = U(k,N,\delta)$, $U_0 = U_0(k,N,\delta)$, and $F = F(k,N,\delta)$ as the closure of all smooth $u: \, [0,\infty)^2 \to \R$, $u^{(0)}: \, [0,\infty) \to \R$, or $f: \, [0,\infty)^2 \to \R$ with $\vertiii{u}_{\mathrm{sol}} < \infty$, $\vertiii{u^{(0)}}_{\mathrm{init}} < \infty$, or $\vertiii{f}_{\mathrm{rhs}} < \infty$, respectively.
\end{definition}
Note that in view of Lemma~\ref{lem:est_coeff} the coefficients of the expansion of $u \in U$, $u^{(0)} \in U_0$, or $f \in F$ at $x = 0$ is still defined, so that the representation of the norms in \eqref{norm_sol}, \eqref{norm_rhs}, and \eqref{norm_init} remain valid. Also note that a theorem in the sense of ``$H = W$'' holds, that is, we obtain the same space if we define our function spaces in the sense of Definition~\ref{def:function_spaces} or as the space of all locally integrable $u: (0,\infty)^2 \to \R$ for which \eqref{formal_exp_u} holds true locally almost everywhere to order $O(x^{2N+1})$ and the norm $\vertiii{u}_{\mathrm{sol}}$ is finite. This is also the case for the spaces $U_0$ and $F$. We refer to \cite[Lem.~B.3, Lem.~B.4]{ggko.2014} for details in an analogous situation.

\subsection{The resolvent equation\label{sec:lin_res}}
In this section, we start making some of the arguments in \S\ref{sec:maxreg}--\S\ref{sec:maxreg2} rigorous. At the core of our reasoning is a solid understanding of the resolvent problem to \eqref{lin_pde}. Suppose we are given initial data $u_{|t = 0} = u^{(0)}$ satisfying the boundary condition \eqref{bc_lin}. Using the method of lines, at time $\delta t > 0$ sufficiently small we obtain an approximate solution $u^{(\delta t)}$ to \eqref{linear_u} by solving the ordinary differential equation (ODE)
\begin{equation}\label{evo_u_discrete}
\frac{u^{(\delta t)} - u^{(0)}}{\delta t} + \AAA u^{(\delta t)} = f^{(\delta t)} \quad \mbox{for} \quad x > 0,
\end{equation}
where the right-hand side is averaged over the interval $(0,\delta t)$, i.e., $f^{(\delta t)} := \frac{1}{\delta t} \int_0^{\delta t} f(t) \, \d t$. Setting $g := f^{(\delta t)} + \frac{1}{\delta t} u^{(0)}$ and writing $u := u^{(\delta t)}$ and $\lambda := \frac{1}{\delta t}$, we arrive at the resolvent equation
\begin{equation}\label{resolvent_u}
\lambda u + \AAA u = g \quad \mbox{for} \quad x > 0,
\end{equation}
where $\AAA \stackrel{\eqref{defa}}{=} \partial_x \left(x^3 + x^2\right) \partial_x^3 = x^{-1} p(D) + x^{-2} q(D)$ with $D = x \partial_x$, and $p(\zeta)$ and $q(\zeta)$ are as in \eqref{def_pq}. Additionally observe that $g_{|x = 0} = 0$ by compatibility with \eqref{bc_lin}. We note that, since \eqref{resolvent_u} is not time-dependent anymore, we cannot find a rescaling as in the time-dependent setting \eqref{linear_u} to eliminate the small constant $\delta t$, or the large constant $\lambda$ in \eqref{resolvent_u}, respectively.

\medskip

Our aim is to construct solutions to the ODE \eqref{resolvent_u}, having suitable asymptotic properties as $x \searrow 0$ and $x \nearrow \infty$. The basic idea is to construct two-parameter solution families in a right-neighborhood of $x = 0$ and for $x \gg 1$ and to match these solution families using the coercivity of $\AAA$ in form of \eqref{coercivity_a}:
\begin{proposition}\label{prop:resolvent}
Suppose $\lambda > 0$ and $g \in C^\infty\left([0,\infty)\right)$ with $g_{|x = 0} = 0$ satisfies
\begin{equation}\label{decay_x_gg_1}
\limsup_{x \to \infty} \verti{e^{\nu 4 \sqrt[4]{\lambda x}} \frac{\d^j g}{\d x^j}(x)} < \infty \quad \mbox{for all} \quad j \in \N_0 \quad \mbox{and} \quad \nu \in \left[0,\frac{1}{\sqrt 2}\right).
\end{equation}
Then the resolvent equation \eqref{resolvent_u} has exactly one locally integrable solution $u : (0,\infty) \to \R$ with $\verti{u}_{2,\frac 1 2} < \infty$, $\verti{(D-1) u}_1 < \infty$, $\verti{(D-1)^2 u}_1 < \infty$, and $u_{|x = 0} = 0$. This solution obeys $u \in C^\infty\left([0,\infty)\right)$ and satisfies \eqref{decay_x_gg_1}.
\end{proposition}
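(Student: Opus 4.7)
The plan is to treat the resolvent equation \eqref{resolvent_u} as a fourth-order linear ODE with a regular-singular point at $x=0$ and an irregular singular point at $x=\infty$, to construct candidate solutions by matching two-parameter families from each endpoint, and to obtain uniqueness (and invertibility of the matching system) from the coercivity estimate \eqref{coercivity_a}.

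First I would carry out a Frobenius-type analysis near $x=0$. The leading part of $\AAA$ as $x\searrow 0$ is $x^{-2}q(D)$ with $q(\zeta)=\zeta(\zeta-1)^2(\zeta-2)$, so the indicial exponents are $\{0,1,1,2\}$ with one logarithmic resonance. The condition $u_{|x=0}=0$ rules out the constant branch, while the finiteness of $\verti{u}_{2,\frac{1}{2}}$, $\verti{(D-1)u}_1$, and $\verti{(D-1)^2 u}_1$ forces the coefficient of the $x\log x$ branch to vanish (this branch generates a logarithmically divergent tail near $0$ in each of these norms). A contraction argument, viewing the full operator $\lambda+\AAA$ as a perturbation of the explicitly invertible Euler operator $x^{-2}q(D)$, yields on every $[0,X_0]$ a two-parameter family of admissible smooth homogeneous solutions, parametrized by the Taylor coefficients $u_1,u_2$, together with an explicit particular solution driven by $g$.

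Next I would analyze the regime $x\gg 1$, where $\AAA\sim\partial_x x^3\partial_x^3$. A WKB ansatz $u\sim e^{-\mu x^{1/4}}$ applied to $\lambda u+\partial_x x^3\partial_x^3 u=0$ gives the characteristic equation $\mu^4=-256\lambda$, whose four roots $\mu_k=4\sqrt[4]{\lambda}\,e^{i(\pi/4+k\pi/2)}$, $k=0,1,2,3$, split into two decaying and two growing modes, each with $\verti{\mathrm{Re}\,\mu_k}=4\sqrt[4]{\lambda}/\sqrt 2$. Using Levinson's asymptotic theorem on $[X_0,\infty)$ I would produce the two decaying homogeneous solutions and, by variation of parameters against $g$, a particular outer solution. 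The hypothesis \eqref{decay_x_gg_1} with $\nu<1/\sqrt 2$ is precisely calibrated so that this outer particular solution, and all its $x$-derivatives, inherit the same super-exponential decay rate as $g$.

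Matching the inner and outer families at an intermediate point $X_0$ in value and first three derivatives gives a $4\times 4$ linear system. Its invertibility is equivalent to uniqueness for the homogeneous equation, which I would establish by testing the homogeneous version of \eqref{resolvent_u} in the inner product $(\cdot,\cdot)_0$ against $u$: one obtains $\lambda\verti{u}_0^2+(u,\AAA u)_0=0$, and the coercivity estimate \eqref{coercivity_a} together with $\lambda>0$ forces $u\equiv 0$ (the integrability $\verti{u}_0<\infty$ needed to justify the test follows from $u_{|x=0}=0$ and the indicial analysis at $0$, together with the WKB decay at $\infty$). Smoothness of $u$ on $[0,\infty)$ is built into the Frobenius construction at $0$ and is extended to the interior by standard ODE regularity, while propagation of the bound \eqref{decay_x_gg_1} to all derivatives follows by differentiating \eqref{resolvent_u} in $x$ and reapplying Levinson's argument inductively. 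The main technical obstacle I anticipate is making the outer WKB/Levinson construction rigorous while preserving the \emph{sharp} rate constant $1/\sqrt 2$: any slack in the remainder estimate at this stage would force $\nu<1/\sqrt 2$ to be replaced by something strictly smaller and would not suffice to transport the decay from $g$ to $u$.
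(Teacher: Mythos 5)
Your approach is correct in outline and uses the same structural ingredients as the paper --- a two-parameter smooth family near $x=0$, a two-parameter decaying family near $x=\infty$, and coercivity \eqref{coercivity_a} for uniqueness --- but the technical machinery differs. Near $x=0$ the paper does not run a Frobenius perturbation off the Euler part $x^{-2}q(D)$; it constructs an explicit inverse $\SSS$ of the full operator $\AAA$ via the factorization $\AAA = x^{-2}(D-1)^2(D-2)\BB$ with $\BB=(x+1)^3 D(x+1)^{-2}$, normalizes $\SSS g = O(x^3)$, and runs a contraction for $u=\SSS(g-\lambda u)+a_1x+a_2x^2$ on a short interval, so that only the $\lambda$-term is perturbative (contraction factor $\lambda\eps^2$). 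Your indicial reading $\{0,1,1,2\}$ with the $x\log x$ branch killed by $\verti{(D-1)u}_1<\infty$ (indeed $(D-1)(x\log x)=x$, which makes that norm diverge logarithmically at $x=0$) is correct and parametrizes the same family, but inverting only the Euler part means controlling the additional perturbation $x^{-1}p(D)$ alongside $\lambda$. Near $x=\infty$ the paper again uses a contraction, convolving against the explicit fundamental solution $G$ of $\lambda+\partial_r^4$ after the substitution $r=4\sqrt[4]{x}$; the two decaying homogeneous modes need a separate substitution $u=r^\beta e^{\mu r}\tilde u$, because the naive fixed point with $g\equiv 0$ only returns $u\equiv 0$. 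Your Levinson-type route handles the homogeneous and forced problems in one stroke, which is cleaner, but since the perturbation in $r$ is only $O(r^{-1})$ and hence not $L^1$, you would need the Hartman--Wintner dichotomy refinement rather than the integrable-perturbation version of Levinson; that refinement produces precisely the polynomial prefactor $r^\beta$ that the paper inserts by hand and does preserve the sharp rate constant $1/\sqrt{2}$ you flagged as the main hazard, so the concern is manageable. For the gluing, the paper does not write a $4\times 4$ matching system at a finite $X_0$: it uses the coercivity-based uniqueness to show $u^{(1)},\dots,u^{(4)}$ are linearly independent and then expands the difference $\TT_\gg g - \TT_\ll g$ of the two particular solutions in that basis; as you observe, this is logically equivalent to invertibility of your matching matrix, but avoids distinguishing a matching point.
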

Here, we give a summary of the arguments based on a similar reasoning contained in \cite[\S6]{ggko.2014}.
\begin{proof}[Proof of Proposition~\ref{prop:resolvent}]$\mbox{}$
\subsubsection*{Step 1: A solution family close to the contact line}
As $x \searrow 0$ and for $\lambda > 0$ fixed, the term $\sim x^{-2}$ on the left-hand side of \eqref{resolvent_u} is dominant. First, we may reformulate \eqref{resolvent_u} in form of a fixed-point problem
\begin{equation}\label{fixed_x_small}
u = \TT[u] := \SSS (g - \lambda u) + a_1 x + a_2 x^2 \quad \mbox{for} \quad x \ll_\lambda 1,
\end{equation}
where $a_1, a_2 \in \R$ are free parameters and $\SSS$ is the inverse of $\AAA$ such that $\frac{\d^j}{\d x^j} \SSS g = 0$ for $j = 0, 1, 2$. Note that an explicit representation of $\SSS$ in terms of (singular) integrals is straight-forward by using the factorization of $\AAA$ as in \eqref{factor_a}:
\[
\AAA = x^{-2} (D-1)^2 (D-2) \BB, \quad \mbox{where} \quad \BB:= x (D-2) + D = (x+1)^3 D (x+1)^{-2}.
\]
Further note that from \eqref{ak} and \eqref{def_pkqk}, i.e., $\frac{\d^k}{\d x^k} \AAA = \AAA_k \frac{\d^k}{\d x^k}$, where
\[
\AAA_k = x^{-1} p_k(D) + x^{-2} q_k(D)
\]
with
\begin{align*}
p_k(D) &= D (D+k) (D-(1-k)) (D-(2-k)), \\
q_k(D) &= D (D-1) (D-(1-k)) (D-(2-k)),
\end{align*}
we can derive
\begin{equation}\label{ak_bk}
\begin{aligned}
& \AAA_k = x^{-2} (D-1) (D+k-1) (D+k-2) \BB_k \\
& \mbox{with} \quad \BB_k := x (D+k-2) + D = (x+1)^{3-k} D (x+1)^{k-2},
\end{aligned}
\end{equation}
which generalizes \eqref{factor_a}. Since $D-\gamma = x^{1+\gamma} \frac{\d}{\d x} x^{-\gamma}$ and due to uniqueness of $\SSS$ if $\frac{\d \SSS g}{\d x}(0) = \frac{\d^2 \SSS g}{\d x^2} (0) = 0$, we have after four integrations from $x = 0$
\begin{equation}\label{rep_s}
\begin{aligned}
& \frac{\d^k \left(\SSS g\right)}{\d x^k}(x) \\
& \quad = (x+1)^{2-k} \int_0^x \int_0^{x_1} \int_0^{x_2} \int_0^{x_3} (x_1+1)^{k-3} x_1^{2-k} x_2^{-1} x_3^k x_4 \left(\frac{\d^k g}{\d x^k}\right)(x_4) \frac{\d x_4}{x_4} \frac{\d x_3}{x_3} \frac{\d x_2}{x_2} \frac{\d x_1}{x_1} \\
& \quad = x^2 (x+1)^{2-k} \iiiint_{[0,1]^4} (r_1 x + 1)^{k-3} r_1^2 r_2^k r_3^{k+1} r_4 \left(\frac{\d^k g}{\d x^k}\right)(r_1 r_2 r_3 r_4 x) \frac{\d r_4}{r_4} \frac{\d r_3}{r_3} \frac{\d r_2}{r_2} \frac{\d r_1}{r_1}.
\end{aligned}
\end{equation}
Note that $\SSS$ is defined through \eqref{rep_s} with $k = 0$, so that by uniqueness of $\SSS$ under the constraint $\frac{\d^j}{\d x^j} \SSS g_{|x = 0} = 0$ for $j = 0, 1, 2$, the general formula \eqref{rep_s} is valid for all $k \in \N_0$. Then, one may verify that a fixed-point argument can be carried out in a right-neighborhood of the origin $x = 0$ using $\vertii{\cdot}_\infty$-based norms:

\medskip

Take $\vertii{g} := \max_{k = 1,\ldots,K} \max_{0 \le x \le \eps} \verti{\frac{\d^k g}{\d x^k}(x)}$ with $K \in \N$ arbitrary. Then from \eqref{rep_s} for $0 \le x \le \eps \le 1$ and $k \ge 1$:
\begin{eqnarray*}
\verti{\frac{\d^k \SSS g}{\d x^k}(x)} &\le& x^2 \max\left\{(x+1)^{2-k},(x+1)^{-1}\right\} \iiiint_{[0,1]^4} r_1^2 r_2^k r_3^{k+1} r_4 \frac{\d r_4}{r_4} \frac{\d r_3}{r_3} \frac{\d r_2}{r_2} \frac{\d r_1}{r_1} \\
&& \times \sup_{0 \le x \le \eps} \verti{\frac{\d^k g}{\d x^k}(x)} \\
&\le& \frac{2 x^2}{k (k+1)} \verti{\frac{\d^k g}{\d x^k}(x)},
\end{eqnarray*}
whence
\begin{equation}\label{est_lin_fixed_0}
\vertii{\SSS g} \le \eps^2 \vertii{g}.
\end{equation}
We apply estimate~\eqref{est_lin_fixed_0} to \eqref{fixed_x_small} for fixed $a_1, a_2 \in \R$ and note that
\begin{itemize}
\item $\TT$ maps the space $\{u: \, u(0) = 0 \; \mbox{and} \; \vertii{u} < \infty\}$ into itself,
\item for $u^{(1)}$ and $u^{(2)}$ with $\vertii{u^{(1)}} < \infty$ and $\vertii{u^{(2)}} < \infty$ we have
\[
\vertii{\TT\left[u^{(1)}\right] - \TT\left[u^{(2)}\right]} \le \lambda \eps^2 \vertii{u^{(1)} - u^{(2)}}, 
\]
that is, $\TT$ is a self map for $\eps < \frac{1}{\sqrt\lambda} = \sqrt{\delta t}$.
\end{itemize}
Thus, the contraction-mapping theorem yields for every fixed $(a_1,a_2) \in \R^2$ a unique fixed-point $u$ in $\{u: \, \vertii{u} < \infty\}$ provided $\vertii{g} < \infty$. Due to the choice of the norm $\vertii{\cdot}$ and the fact that $K$ can be chosen arbitrarily large, we have $u \in C^\infty\left([0,1]\right)$ and inverting $\SSS$, we see that $u$ is a solution to the resolvent problem \eqref{resolvent_u}.

\medskip

Denote by $u^{(1)}$ and $u^{(2)}$ the solutions $u$ to \eqref{fixed_x_small} with $g \equiv 0$ and $(a_1,a_2) = (1,0)$ or $(a_1,a_2) = (0,1)$, respectively. We also write $u =: \TT_\ll g$ for the solution $u$ to \eqref{fixed_x_small} with $(a_1,a_2) = (0,0)$. These solutions can be extended to the whole interval $(0,\infty)$ by standard theory.

\subsubsection*{Step 2: A solution family as $x \to \infty$}
The above reasoning shows that we have a two-parameter family of solutions that are well-behaved as $x \searrow 0$. For constructing solutions with analogous features as $x \to \infty$, note that now the terms $\sim x^{-1}$ and $\sim x^{-2}$ are small compared to the addend $\lambda u$ in \eqref{resolvent_u}, so that we need to adapt our arguments.

\medskip

First take $r := 4 \sqrt[4]{x}$ with $r \ge 0$, so that equation~\eqref{resolvent_u} changes to
\begin{equation}\label{res_large_x}
\left(\lambda + \frac{\d^4}{\d r^4}\right) u + r^{-1} Q\left(r^{-1},\frac{\d}{\d r}\right) u = g \quad \mbox{for} \quad r > 0,
\end{equation}
where $Q\left(r^{-1},\frac{\d}{\d r}\right)$ is a fourth-order linear operator in $\partial_r$ with coefficients that are bounded for $r \ge 1$. The operator $\lambda + \frac{\d^4}{\d r^4}$ is simple and a fundamental solution $G$, fulfilling
\begin{equation}\label{fundamental}
\left(\lambda + \frac{\d^4}{\d r^4}\right) G = \delta_0 \quad \mbox{with} \quad \lim_{r \to \pm \infty} G(r) = 0,
\end{equation}
can be found using the fundamental system of $\lambda + \partial_r^4$, i.e., by looking for the fourth roots of $-\lambda$. Note that with $\breve r := \sqrt[4]{\lambda} r$ and $\breve G := \lambda G$, the dependence on $\lambda$ in \eqref{fundamental} disappears and we are in the same situation as in \cite[Eq.~(6.25)]{ggko.2014}, that is, we have
\begin{equation}\label{fundamental_sol}
G(r) =
\begin{cases}
\frac{1}{\lambda \sqrt 2} \sin \left(\frac{\sqrt[4]{\lambda} r}{\sqrt 2}\right) e^{\frac{\sqrt[4]{\lambda}  r}{\sqrt 2}} - \frac{1}{\lambda \sqrt 2} \cos \left(\frac{\sqrt[4]{\lambda} r}{\sqrt 2}\right) e^{\frac{\sqrt[4]{\lambda}  r}{\sqrt 2}} & \mbox{ for } \, r < 0, \\
- \frac{1}{\lambda \sqrt 2} \sin \left(\frac{\sqrt[4]{\lambda} r}{\sqrt 2}\right) e^{- \frac{\sqrt[4]{\lambda}  r}{\sqrt 2}} - \frac{1}{\lambda \sqrt 2} \cos \left(\frac{\sqrt[4]{\lambda} r}{\sqrt 2}\right) e^{- \frac{\sqrt[4]{\lambda}  r}{\sqrt 2}} & \mbox{ for } \, r < 0,
\end{cases}
\end{equation}
Cutting off with $\eta_R$, where $\eta_R(r) := \eta(r/R)$ and $\eta \in C^\infty(\R)$ fulfills $\eta_{|\left(-\infty,\frac 1 2\right]} \equiv 0$ and $\eta_{|\left[1,\infty\right)} \equiv 1$, we can then express $u$ as a fixed point
\begin{equation}\label{fixed_large_x}
u = G * \left(\eta_R g - r^{-1} Q\left(r,\frac{\d}{\d r}\right) u\right) \quad \mbox{for} \quad r \ge 2,
\end{equation}
which is the same problem as the one treated in \cite[\S6.3]{ggko.2014} giving a solution $\TT_\gg g := u \in C^\infty\left((R,\infty)\right)$, where $R \gg 1$, with decay \eqref{decay_x_gg_1} (coming from the decay of $g$, cf.~\eqref{fundamental_sol}, and the transformation $r = 4 \sqrt[4]{x}$) provided $g \in C^\infty\left((R,\infty)\right)$ fulfills \eqref{decay_x_gg_1} as well (cf.~\cite[Def.~6.2, Lem.~6.6]{ggko.2014} for details). Note, however, that this only gave us a particular solution to \eqref{resolvent_u}. The more subtle part is to find two linearly independent solutions to the homogeneous version of \eqref{resolvent_u}. Here, the problem is that setting $g \equiv 0$ in \eqref{fixed_large_x}, our fixed-point iteration will merely select the trivial solution $u \equiv 0$. This requires a more involved change of variables, also taking terms $\sim r^{-1}$ into account. This is done by substituting $u =: r^{\beta r} e^{\mu r} \tilde u$ and $g =: r^{\beta r} e^{\mu r} \tilde g$ in \eqref{res_large_x}, where $\mu = \frac{-1\pm i}{2}$ and $\beta \in \R$ is chosen suitably such that
\[
\left(\left(\mu+\frac{\d}{\d r}\right)^4 + 1\right) \tilde u + r^{-2} \tilde Q\left(r^{-1},\frac{\d}{\d r}\right) \tilde u = \tilde g \quad \mbox{for} \quad r > 0,
\]
where $\tilde Q\left(r^{-1},\frac{\d}{\d r}\right)$ is a fourth-order linear operator in $\frac{\d}{\d r}$ with coefficients that are bounded for $r \ge 1$. A solution can be found by a contraction principle through inverting the linear operator $\left(\mu+\frac{\d}{\d r}\right)^4 + 1$. We refer to \cite[\S6.3]{ggko.2014} for details in an analogous case. Take real and imaginary parts and denote these smooth and in the sense of \eqref{decay_x_gg_1} decaying real-valued solutions by $u^{(3)}$ and $u^{(4)}$. Again, we can extend $u^{(3)}$ and $u^{(4)}$ to the whole interval $(0,\infty)$ by standard theory.

\subsubsection*{Step 3: Global solutions to the resolvent problem}
In summary, we have obtained a two-dimensional solution manifold with good regularity properties as $x \searrow 0$ and a two-dimensional solution manifold with good decay properties as $x \to \infty$. Our aim is to construct a solution to \eqref{resolvent_u} with good (decay/regularity) properties as $x \searrow 0$ and $x \nearrow \infty$. This can be achieved by employing a uniqueness result for the resolvent equation \eqref{resolvent_u}. The latter can be obtained using the (partial) coercivity of the operator $\AAA$ given through \eqref{coercivity_a}: Test the homogeneous equation \eqref{resolvent_u} with $u$ in the inner product $\left(\cdot,\cdot\right)_0$ assuming that $\verti{u}_{2,\frac 1 2}$, $\verti{(D-1) u}_1$, and $\verti{(D-1)^2 u}_1$ are finite. This gives
\[
0 = \left(\lambda u + \AAA u, u\right)_0 \ge \lambda \verti{u}_0^2 + c \left(\verti{u}_{2,\frac 1 2}^2 + \verti{(D-1) u}_1^2 + \verti{(D-1)^2 u}_1^2\right)
\]
for some $c > 0$. Then we obtain in particular $\verti{u}_{2,\frac 1 2} = 0$ and therefore $u \equiv 0$.

\medskip

Let us first demonstrate that the above sketched uniqueness result for \eqref{resolvent_u} in particular implies that the functions $u^{(1)}, u^{(2)}, u^{(3)}, u^{(4)}$ are linearly independent and thus form a fundamental system of solutions to \eqref{resolvent_u}: Suppose that there are $a_1, a_2, a_3, a_4 \in \R$ with
\[
a_1 u^{(1)} + \ldots + a_4 u^{(4)} = 0 \quad \mbox{for all} \quad x > 0.
\]
Then in particular $u := a_1 u^{(1)} + a_2 u^{(2)} = - a_3 u^{(3)} - a_4 u^{(4)}$ is a solution to the homogeneous resolvent equation \eqref{resolvent_u} that (including derivatives) decays super-algebraically as $x \to \infty$ in the sense of \eqref{decay_x_gg_1}, and is smooth in $x = 0$ with $u_{|x = 0} = 0$. This implies in particular that the norms $\verti{u}_{2,\frac 1 2}$, $\verti{(D-1) u}_1$, and $\verti{(D-1)^2 u}_1$ are finite, and so by uniqueness necessarily $u \equiv 0$. Then linear independence of $(u^{(1)},u^{(2)})$ and $(u^{(3)},u^{(4)})$, respectively, also implies $a_1 = \ldots = a_4 = 0$.

\medskip

Now take a smooth $g$ with $g_{|x = 0}$ and sufficient decay as $x \to \infty$. Then we know that $\TT_\gg g - \TT_\ll g$ is a solution to the homogeneous resolvent equation \eqref{resolvent_u} and therefore can be written as a linear combination of $u^{(1)},\ldots,u^{(4)}$, i.e.,
\[
\TT_\gg g - \TT_\ll g = a_1 u^{(1)} + a_2 u^{(2)} - a_3 u^{(3)} - a_4 u^{(4)} \quad \mbox{for some} \quad a_1,\ldots,a_4 \in \R.
\]
Define $u := \TT_\ll g + a_1 u^{(1)} + a_2 u^{(2)} = \TT_\gg g + a_3 u^{(3)} + a_4 u^{(4)}$. From the first equality we learn that $u$ is smooth in $x = 0$ with $u_{|x = 0} = 0$ and from the second equality we know that $u$ decays in the sense of \eqref{decay_x_gg_1}. This implies finiteness of $\verti{u}_{2,\frac 1 2}$, $\verti{(D-1) u}_1$, and $\verti{(D-1)^2 u}_1$.

\medskip

Hence, we have proved existence and uniqueness of solutions to \eqref{resolvent_u} for $g$ smooth with $g_{|x = 0} = 0$ and super-algebraic decay as $x \to \infty$ in the sense of \eqref{decay_x_gg_1} under the assumption $\verti{u}_{2,\frac 1 2} < \infty$, $\verti{(D-1) u}_1 < \infty$, and $\verti{(D-1)^2 u}_1 < \infty$. This solution fulfills $u \in C^\infty\left([0,\infty)\right)$, $u_{|x = 0} = 0$, and meets the decay estimates \eqref{decay_x_gg_1}.
\end{proof}

\subsection{Rigorous treatment of the linear equation\label{sec:lin_rig}}
\subsubsection{Statement of results}
The goal of this section is to prove Propositions~\ref{prop:maxreg1}, from which Proposition~\ref{prop:mr_full} follows immediately, since all arguments in \S\ref{sec:elliptic} and \S\ref{sec:maxreg2} have been rigorous. To this end, consider again the time-discrete problem \eqref{evo_u_discrete} for which we can prove the analogue of the differential estimate \eqref{mr_ta_0}:

\begin{lemma}\label{lem:discrete_step}
Suppose that $k \ge 2$, $\delta t > 0$, and $f, u^{(0)} \in C^\infty\left([0,\infty)\right)$, with $f_{|x = 0} = 0$ and $u^{(0)}_{|x = 0} = 0$, satisfy the decay properties \eqref{decay_x_gg_1}. Then the solution $u \in C^\infty\left([0,\infty)\right)$, with $u_{|x = 0}$ meeting \eqref{decay_x_gg_1}, satisfies
\begin{subequations}\label{discrete_step}
\begin{align}
\begin{split}
&\frac{1}{\delta t} \left( \verti{D^k \widetilde{u^{(\delta t)}}}_{\tilde\alpha}^2 + \tilde C \verti{\widetilde{u^{(\delta t)}}}_{\tilde\alpha}^2 - \verti{D^k \widetilde{u^{(0)}}}_{\tilde\alpha}^2 - \tilde C \verti{\widetilde{u^{(0)}}}_{\tilde\alpha}^2\right) + \verti{\widetilde{u^{(\delta t)}}}_{k+2,\tilde\alpha+\frac 1 2}^2 + \verti{\widetilde{u^{(\delta t)}}}_{k+2,\tilde\alpha+1}^2 \\
&\quad \lesssim_{k,\tilde\alpha} \verti{\widetilde{x \underline f^{(\delta t)}}}_{k-2,\tilde\alpha-\frac 1 2}^2 + \verti{\widetilde{\underline f^{(\delta t)}}}_{k-2,\tilde\alpha-1}^2
\end{split}
\label{discrete_step_t}\\
\begin{split}
& \frac{1}{\delta t} \left(\verti{D^k \widecheck{u^{(\delta t)}}}_{\check\alpha}^2 + \check C \verti{\widecheck{u^{(\delta t)}}}_{\check\alpha}^2 - \verti{D^k \widecheck{u^{(0)}}}_{\check\alpha}^2 - \check C \verti{\widecheck{u^{(0)}}}_{\check\alpha}^2\right) + \verti{\widecheck{u^{(\delta t)}}}_{k+2,\check\alpha+\frac 1 2}^2 + \verti{\widecheck{u^{(\delta t)}}}_{k+2,\check\alpha+1}^2 \\
&\quad \lesssim_{k,\check\alpha} \verti{\widecheck{x \underline f^{(\delta t)}}}_{k-2,\check\alpha-\frac 1 2}^2 + \verti{\widecheck{\underline f^{(\delta t)}}}_{k-2,\check\alpha-1}^2
\end{split}
\label{discrete_step_c}
\end{align}
\end{subequations}
where $\tilde\alpha$ and $\check\alpha$ are in the coercivity ranges of $\tilde\AAA$ and $\check\AAA$, respectively (cf.~Lemma~\ref{lem:coerc_ta} and Lemma~\ref{lem:coerc_ca}), and $\tilde C = \tilde C\left(k,\tilde\alpha\right) > 0$ and $\check C = \check C\left(k,\check\alpha\right) > 0$ ($j = 1,2$) only depend on $k$ and $\tilde \alpha$ or $\check \alpha$, respectively.
\end{lemma}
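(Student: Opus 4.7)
The plan is to mimic the time-continuous derivations of \eqref{basic_ta} and \eqref{higher_ta}, replacing $\partial_t$ by the difference quotient $\frac{1}{\delta t}\bigl(\cdot - (\cdot)^{(0)}\bigr)$ and using the elementary pointwise inequality $(a-b)\,a \ge \tfrac{1}{2}(a^2 - b^2)$ in the weighted inner product in place of the identity $\partial_t a \cdot a = \tfrac{1}{2}\partial_t a^2$. I treat only \eqref{discrete_step_t}; the proof of \eqref{discrete_step_c} is identical after swapping $\tilde\AAA,\tilde\alpha$ with $\check\AAA,\check\alpha$ and invoking Lemma~\ref{lem:coerc_ca} instead of Lemma~\ref{lem:coerc_ta}. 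Applying $(D-1)$ to \eqref{evo_u_discrete} and using \eqref{commute_a_op} produces
\begin{equation*}
\frac{\widetilde{u^{(\delta t)}} - \widetilde{u^{(0)}}}{\delta t} + \tilde\AAA\,\widetilde{u^{(\delta t)}} = \widetilde{f^{(\delta t)}} \quad \text{for } x > 0.
\end{equation*}
Proposition~\ref{prop:resolvent} (after setting $\lambda := (\delta t)^{-1}$ and $g := f^{(\delta t)} + \lambda u^{(0)}$) supplies $u^{(\delta t)} \in C^\infty([0,\infty))$ with $u^{(\delta t)}_{|x=0} = 0$ and the decay~\eqref{decay_x_gg_1}, which legitimizes all integrations by parts below and ensures that the boundary terms at $x = 0$ and $x = \infty$ vanish.

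For the basic estimate, test the commutated equation with $\widetilde{u^{(\delta t)}}$ in $(\cdot,\cdot)_{\tilde\alpha}$. The discrete inequality $(\widetilde{u^{(\delta t)}} - \widetilde{u^{(0)}}, \widetilde{u^{(\delta t)}})_{\tilde\alpha} \ge \tfrac{1}{2}(|\widetilde{u^{(\delta t)}}|_{\tilde\alpha}^2 - |\widetilde{u^{(0)}}|_{\tilde\alpha}^2)$ replaces the $\tfrac{1}{2}\frac{\d}{\d t}|\tilde u|_{\tilde\alpha}^2$ term from the continuous case, while coercivity \eqref{coercivity_ta} bounds $(\tilde\AAA\widetilde{u^{(\delta t)}}, \widetilde{u^{(\delta t)}})_{\tilde\alpha}$ from below. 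Splitting the right-hand side as $\widetilde{f^{(\delta t)}} = \widetilde{x \underline{f^{(\delta t)}}} + \widetilde{\underline{f^{(\delta t)}}}$ and applying Young's inequality exactly as in the derivation of \eqref{basic_ta} yields the discrete analogue of \eqref{basic_ta}, namely
\begin{equation*}
\frac{1}{\delta t}\bigl(|\widetilde{u^{(\delta t)}}|_{\tilde\alpha}^2 - |\widetilde{u^{(0)}}|_{\tilde\alpha}^2\bigr) + |\widetilde{u^{(\delta t)}}|_{2,\tilde\alpha+\frac 1 2}^2 + |\widetilde{u^{(\delta t)}}|_{2,\tilde\alpha+1}^2 \lesssim_{\tilde\alpha} |\widetilde{x\underline{f^{(\delta t)}}}|_{\tilde\alpha-\frac 1 2}^2 + |\widetilde{\underline{f^{(\delta t)}}}|_{\tilde\alpha-1}^2.
\end{equation*}

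To upgrade to the higher-order estimate, apply $D^k$ to the commutated equation and test with $D^k \widetilde{u^{(\delta t)}}$ in $(\cdot,\cdot)_{\tilde\alpha}$. The time-difference term again gives $\tfrac{1}{2\delta t}(|D^k\widetilde{u^{(\delta t)}}|_{\tilde\alpha}^2 - |D^k\widetilde{u^{(0)}}|_{\tilde\alpha}^2)$ by the same pointwise inequality. The spatial term $(D^k \tilde\AAA \widetilde{u^{(\delta t)}}, D^k \widetilde{u^{(\delta t)}})_{\tilde\alpha}$ is analyzed verbatim as in the passage leading to \eqref{test_lin2}--\eqref{higher_ta}: using the additive decomposition \eqref{def_ta} together with integration by parts via the skew-symmetry of $D$ with respect to $(\cdot,\cdot)_0$ and the commutation $x^{-\tilde\alpha} D = (D+\tilde\alpha) x^{-\tilde\alpha}$, it dominates $\tfrac{1}{2}(|\widetilde{u^{(\delta t)}}|_{k+2,\tilde\alpha+\frac 1 2}^2 + |\widetilde{u^{(\delta t)}}|_{k+2,\tilde\alpha+1}^2)$ up to a lower-order remainder $\tilde C(|\widetilde{u^{(\delta t)}}|_{\tilde\alpha+\frac 1 2}^2 + |\widetilde{u^{(\delta t)}}|_{\tilde\alpha+1}^2)$ for $\tilde C = \tilde C(k,\tilde\alpha)$ large enough. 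The right-hand side $(D^k \widetilde{f^{(\delta t)}}, D^k \widetilde{u^{(\delta t)}})_{\tilde\alpha}$ is rewritten by moving two $D$'s back through integration by parts, splitting via $\underline{(\cdot)}$ and $x\underline{(\cdot)}$, and absorbing with Young's inequality into the coercive bulk. Adding $\tilde C$ times the basic estimate absorbs the lower-order remainder and produces~\eqref{discrete_step_t}.

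The one genuinely new ingredient compared to \S\ref{sec:maxreg} is the pointwise inequality $(a-b)\,a \ge \tfrac{1}{2}(a^2 - b^2)$; beyond that, every manipulation (coercivity, integration by parts, interpolation, Young) is already available, so I expect no serious obstacle. The only subtlety is to verify at each step that the smoothness and decay guaranteed by Proposition~\ref{prop:resolvent} propagate under $(D-1)$ and $(D-2)(D-1)$, which is automatic since both operators preserve $C^\infty([0,\infty))$ and the decay class~\eqref{decay_x_gg_1}, and under our hypotheses $\widetilde{u^{(0)}},\widetilde{f^{(\delta t)}},\widecheck{u^{(0)}},\widecheck{f^{(\delta t)}}$ inherit the vanishing at $x=0$ required for all boundary terms arising in integration by parts to vanish.
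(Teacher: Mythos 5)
Your proposal is correct and takes essentially the same approach as the paper: apply $(D-1)$ (resp.~$(D-2)(D-1)$) to the time-discretized equation, test with $\widetilde{u^{(\delta t)}}$ and with $D^k\widetilde{u^{(\delta t)}}$, use coercivity plus Young to absorb the right-hand side, and add a multiple of the basic estimate to swallow the lower-order remainder; your inequality $(a-b,a)\ge\tfrac12(|a|^2-|b|^2)$ is literally the paper's Cauchy--Schwarz bound $\bigl(\widetilde{u^{(0)}},\widetilde{u^{(\delta t)}}\bigr)_{\tilde\alpha}\le\tfrac12|\widetilde{u^{(0)}}|_{\tilde\alpha}^2+\tfrac12|\widetilde{u^{(\delta t)}}|_{\tilde\alpha}^2$ rearranged. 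Your extra remark that Proposition~\ref{prop:resolvent} is what supplies the regularity and decay legitimizing the integrations by parts is implicit in the paper but is a helpful clarification.
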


The following interpolation estimate is valid, which is crucial for being able to give the initial value at $\{t = 0\}$ (the trace in time) a precise notion:
\begin{lemma}\label{lem:trace}
For $T \in (0,\infty]$ and any locally integrable $w, w^{(i)}, w^{(ii)}: (0,T) \times (0,\infty) \to \R$ with $w = w^{(i)} + w^{(ii)}$ we have
\begin{eqnarray}\nonumber
\sup_{t \ge 0} \verti{w}_{k,\gamma}^2 &\lesssim_{k,\gamma}& \verti{w_{|t = 0}}_{k,\gamma}^2 + \int_0^T \left(\verti{\partial_t w^{(i)}}_{k-2,\gamma-\frac 1 2}^2 +  \verti{\partial_t w^{(ii)}}_{k-2,\gamma-1}^2\right) \d t \\
&& + \int_0^T \left(\verti{w}_{k+2,\gamma+\frac 1 2}^2 +  \verti{w}_{k+2,\gamma+1}^2\right) \d t, \label{trace_est}
\end{eqnarray}
where $\gamma \in \R$ and $k \ge 2$.
\end{lemma}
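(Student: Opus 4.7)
The natural starting point is the fundamental theorem of calculus applied to $t \mapsto \verti{w(t)}_{k,\gamma}^2 = \sum_{j=0}^k \left(D^j w, D^j w\right)_\gamma$. Under an a priori smoothness assumption---to be removed at the end by a density argument that is admissible since otherwise the right-hand side of \eqref{trace_est} is infinite and there is nothing to prove---I would write
\begin{equation*}
\verti{w(t)}_{k,\gamma}^2 = \verti{w_{|t=0}}_{k,\gamma}^2 + 2 \sum_{j=0}^k \int_0^t \left[\left(D^j \partial_s w^{(i)}, D^j w\right)_\gamma + \left(D^j \partial_s w^{(ii)}, D^j w\right)_\gamma\right] \d s,
\end{equation*}
so that the proof is reduced to a spatial estimate on each cross term.

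For $0 \le j \le k-2$ the estimate is immediate: split the weight as $x^{-2\gamma} = x^{-(2\gamma-1)} \cdot x^{-1}$ (respectively $x^{-2\gamma} = x^{-2(\gamma-1)} \cdot x^{-2}$ for the $w^{(ii)}$ contribution) and apply Cauchy--Schwarz in $(\cdot,\cdot)_0$ to obtain
\begin{equation*}
\verti{\left(D^j \partial_s w^{(i)}, D^j w\right)_\gamma} \le \verti{\partial_s w^{(i)}}_{k-2,\gamma-\frac 1 2} \verti{w}_{k+2,\gamma+\frac 1 2},
\end{equation*}
and analogously with weights shifted by $1$ for $w^{(ii)}$. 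Young's inequality then converts each product into a sum of squares of the form appearing on the right-hand side of \eqref{trace_est}.

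The cases $j \in \{k-1, k\}$ require first shifting $m := j-(k-2) \in \{1,2\}$ derivatives off the $\partial_s w$-factor and onto $w$; the tool is the adjoint identity
\begin{equation*}
\left(Du, v\right)_\gamma = 2\gamma \left(u, v\right)_\gamma - \left(u, Dv\right)_\gamma,
\end{equation*}
which follows from integration by parts in $x$ once one checks that the boundary contributions at $x = 0$ and $x = \infty$ vanish under the standing finite-norm hypothesis. Applying it $m$ times expresses $\left(D^j \partial_s w^{(\bullet)}, D^j w\right)_\gamma$ as a linear combination (with coefficients depending only on $k$ and $\gamma$) of terms $\left(D^{j-m} \partial_s w^{(\bullet)}, D^{j+m-\ell} w\right)_\gamma$ for $\ell = 0, \ldots, m$, to which the previous argument applies verbatim and produces the same structural bound.

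Integrating over $s \in (0,t)$, summing over $j \in \{0,\ldots,k\}$, and taking the supremum in $t \in [0,T]$ yields \eqref{trace_est}. The one place that really requires care is the justification of the integration by parts together with the removal of the a priori smoothness assumption; both are handled by a standard density argument in the logarithmic variable $s = \ln x$, in which the norms $\verti{\cdot}_{k,\gamma}$ become equivalent to exponentially weighted $W^{k,2}(\R)$-norms and Schwartz functions form a dense class that annihilates the boundary contributions.
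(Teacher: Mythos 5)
Your proof follows the same strategy as the paper's: assume smoothness by density, differentiate $\verti{w}_{k,\gamma}^2$ in time via $w = w^{(i)} + w^{(ii)}$ to obtain the cross terms $2(\partial_t w^{(i)},w)_{k,\gamma} + 2(\partial_t w^{(ii)},w)_{k,\gamma}$, bound these, and integrate. The only difference is that the paper simply invokes a ``standard interpolation estimate'' (cited as Lem.~B.1 in \cite{ggko.2014}) for the cross-term bound, whereas you prove it in full by splitting the weight $x^{-2\gamma}$ asymmetrically in a Cauchy--Schwarz step and using the adjoint identity $(Du,v)_\gamma = 2\gamma(u,v)_\gamma - (u,Dv)_\gamma$ to move the one or two excess $D$-derivatives off the $\partial_t w^{(\bullet)}$ factor when $j \in \{k-1,k\}$; both the weight-splitting and the derivative-shifting steps are correctly executed and the exponent bookkeeping ($j-m=k-2$, $j+m \le k+2$) checks out.
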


Lemma~\ref{lem:discrete_step} can be used to construct solutions to the linear problem \eqref{linear_u} and to rigorously derive the parabolic maximal-regularity estimates \eqref{maxreg_ta} and \eqref{maxreg_ca}:
\begin{proposition}\label{prop:linear_existence}
Suppose $k \ge 2$ and $\tilde \alpha$ and $\check\alpha$ are in the coercivity ranges of the operators $\tilde\AAA$ and $\check\AAA$ ($\tilde \alpha \in (0,1)$ and $\check\alpha \in \left(1-\sqrt{\frac 5 6}, \frac 3 2\right)$ are sufficient, cf.~\eqref{coer_ta} and \eqref{coer_ca} of Lemma~\ref{lem:coerc_ta} and Lemma~\ref{lem:coerc_ca}). Assume $T \in (0,\infty]$ and suppose that $u^{(0)} : \, (0,\infty) \to \R$ and $f: \, (0,T) \times (0,\infty) \to \R$ are locally integrable with
\[
\verti{\widetilde{u^{(0)}}}_{k,\tilde\alpha} < \infty, \quad \verti{\widetilde{u^{(0)}}}_{k,\check\alpha} < \infty
\]
as well as
\[
 \int_0^T \left(\verti{\widetilde{x \underline f}}_{k-2,\tilde \alpha - \frac 1 2}^2 +  \verti{\widetilde{\underline f}}_{k-2,\tilde \alpha -1}^2\right) \d t < \infty, \quad  \int_0^T \left(\verti{\widecheck{x \underline f}}_{k-2,\check\alpha - \frac 1 2}^2 +  \verti{\widecheck{\underline f}}_{k-2,\check \alpha -1}^2\right) \d t < \infty.
\]
Then there exists a locally integrable solution $u : \, (0,T) \times (0,\infty) \to \R$ to \eqref{linear_u} fulfilling the parabolic maximal-regularity estimates
\begin{subequations}\label{maxreg_taca_t}
\begin{equation}\label{maxreg_ta_t}
\begin{aligned}
&\sup_{t \in [0,T)} \verti{\tilde u}_{k,\tilde \alpha}^2 + \int_0^T \left(\verti{\partial_t \widetilde{x \underline u}}_{k-2,\tilde\alpha-\frac 1 2}^2 +  \verti{\partial_t \widetilde{\underline u}}_{k-2,\tilde\alpha-1}^2 + \verti{\tilde u}_{k+2,\tilde \alpha + \frac 1 2}^2 +  \verti{\tilde u}_{k+2,\tilde \alpha + 1}^2\right) \d t  \\
& \quad\lesssim_{k,\tilde\alpha} \verti{\tilde u_{|t = 0}}_{k, \tilde \alpha}^2 + \int_0^T \left(\verti{\widetilde{x \underline f}}_{k-2,\tilde \alpha - \frac 1 2}^2 +  \verti{\widetilde{\underline f}}_{k-2,\tilde \alpha -1}^2\right) \d t
\end{aligned}
\end{equation}
and
\begin{equation}\label{maxreg_ca_t}
\begin{aligned}
&\sup_{t \in [0,T)} \verti{\check u}_{k,\check \alpha}^2 + \int_0^T \left(\verti{\partial_t \widecheck{x \underline u}}_{k-2,\check\alpha-\frac 1 2}^2 +  \verti{\partial_t \widecheck{\underline u}}_{k-2,\check\alpha-1}^2 + \verti{\check u}_{k+2,\check \alpha + \frac 1 2}^2 +  \verti{\check u}_{k+2,\check \alpha + 1}^2\right) \d t \\
&\quad \lesssim_{k,\check\alpha} \verti{\check u_{|t = 0}}_{k, \check \alpha}^2 + \int_0^T \left(\verti{\widecheck{x \underline f}}_{k-2,\check \alpha - \frac 1 2}^2 +  \verti{\widecheck{\underline f}}_{k-2,\check \alpha -1}^2\right) \d t.
\end{aligned}
\end{equation}
\end{subequations}
\end{proposition}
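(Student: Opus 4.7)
The plan is to implement a \emph{Rothe time-discretization scheme} based on the resolvent theory of Proposition~\ref{prop:resolvent} and the single-step energy estimate of Lemma~\ref{lem:discrete_step}, and to pass to the limit using the trace interpolation of Lemma~\ref{lem:trace}. A preliminary step is a standard approximation argument: we replace $u^{(0)}$ and $f$ by smooth functions obeying the decay condition~\eqref{decay_x_gg_1} and vanishing at $x=0$, using linearity of the problem together with the fact that the target estimates~\eqref{maxreg_taca_t} are closed under the natural Hilbert-norm limits.

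Next, fix $N\in\N$ large, set $\delta t:=T/N$ (with $T$ first truncated if $T=\infty$) and $\lambda:=1/\delta t$. Define recursively $u^{(0)}_N:=u^{(0)}$ and, for $n\ge 1$, let $u^{(n)}_N$ be the solution of the resolvent equation
\begin{equation*}
\lambda u^{(n)}_N + \AAA u^{(n)}_N = \lambda u^{(n-1)}_N + f^{(n)}_N, \qquad f^{(n)}_N:=\frac{1}{\delta t}\int_{(n-1)\delta t}^{n\delta t}f(s,\cdot)\,\d s,
\end{equation*}
furnished by Proposition~\ref{prop:resolvent}. A short induction shows that $u^{(n)}_N\in C^\infty([0,\infty))$ vanishes at $x=0$ and satisfies the decay~\eqref{decay_x_gg_1}, so that the iteration is well-defined. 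Applying~\eqref{discrete_step_t} with $u^{(0)}$ replaced by $u^{(n-1)}_N$ and $u^{(\delta t)}$ by $u^{(n)}_N$, multiplying by $\delta t$, summing over $n=1,\ldots,M$, and taking the supremum over $M\in\{0,\ldots,N\}$, the telescoping cancellation yields
\begin{align*}
&\max_{0\le M\le N}\Bigl(\verti{D^k \widetilde{u^{(M)}_N}}_{\tilde\alpha}^2 + \tilde C\verti{\widetilde{u^{(M)}_N}}_{\tilde\alpha}^2\Bigr) + \sum_{n=1}^N\Bigl(\verti{\widetilde{u^{(n)}_N}}_{k+2,\tilde\alpha+\tfrac12}^2+\verti{\widetilde{u^{(n)}_N}}_{k+2,\tilde\alpha+1}^2\Bigr)\delta t \\
&\quad\lesssim_{k,\tilde\alpha} \verti{\widetilde{u^{(0)}}}_{k,\tilde\alpha}^2 + \int_0^T\Bigl(\verti{\widetilde{x\underline f}}_{k-2,\tilde\alpha-\tfrac12}^2+\verti{\widetilde{\underline f}}_{k-2,\tilde\alpha-1}^2\Bigr)\d t,
\end{align*}
where Jensen's inequality passes the discrete averages $f^{(n)}_N$ to integrals of $f$. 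The analogous bound for $\widecheck{u^{(n)}_N}$ follows verbatim from~\eqref{discrete_step_c}.

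To pass to the limit, introduce the piecewise-constant and piecewise-linear temporal interpolants $\bar u_N(t):=u^{(n)}_N$ and $u_N(t)$ on $t\in[(n-1)\delta t,n\delta t)$, so that the scheme rewrites as $\partial_t u_N + \AAA\bar u_N = \bar f_N$ pointwise in $t$. The discrete bounds above, combined with using the equation itself to control $\partial_t u_N$ via $\bar f_N$ and $\AAA\bar u_N$ (proceeding exactly as in the derivation of~\eqref{time_tu1}--\eqref{time_tu2}), yield uniform bounds in the Hilbert spaces appearing on the left-hand side of~\eqref{maxreg_taca_t}. Weak and weak-$\ast$ compactness produces a limit $u$, and since $\bar u_N - u_N\to 0$ in the relevant $L^2$-norms (controlled by $\delta t$ times the norm of $\partial_t u_N$), linearity of $\AAA$ ensures that~\eqref{linear_u} is satisfied in the distributional sense by $u$ in the limit. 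The homogeneous boundary condition at $x=0$ is preserved at every step and hence in the limit. The attainment of the initial datum $u_{|t=0}=u^{(0)}$ follows from the interpolation inequality~\eqref{trace_est} applied to $u-u^{(0)}$, with the decomposition~\eqref{decomp_w} producing the splitting into the weights $\tilde\alpha-\tfrac12$ and $\tilde\alpha-1$ that appear in~\eqref{trace_est}. The estimates~\eqref{maxreg_ta_t}~and~\eqref{maxreg_ca_t} then follow from the discrete bounds and lower semi-continuity of the norms under weak convergence.

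The main technical obstacle is not the energy estimate itself, which telescopes transparently, but closing the iteration within the scope of Proposition~\ref{prop:resolvent}: one must verify by induction that each $u^{(n)}_N$ retains smoothness up to $x=0$, the homogeneous boundary condition, and the decay~\eqref{decay_x_gg_1}, since only then does the following resolvent step fall within the hypotheses of Proposition~\ref{prop:resolvent}. The case $T=\infty$ and the removal of the preliminary mollifications require in addition a diagonal extraction coupled with a uniqueness argument for the limit problem, which is the content of the uniqueness clause of Proposition~\ref{prop:maxreg1}.
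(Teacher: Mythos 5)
Your proof is correct and follows essentially the same Rothe time-discretization strategy as the paper: the implicit scheme built from the resolvent of Proposition~\ref{prop:resolvent}, the telescoping of the single-step energy estimate of Lemma~\ref{lem:discrete_step}, Jensen's inequality to pass from the time-averaged $f^{(n)}_N$ to $f$, piecewise-constant/piecewise-linear interpolants and weak compactness, and the trace inequality of Lemma~\ref{lem:trace} to recover the initial datum. The only cosmetic discrepancy is that your final appeal should go to Proposition~\ref{prop:linear_uniqueness} directly rather than to the uniqueness clause of Proposition~\ref{prop:maxreg1}, since the latter is itself established via Propositions~\ref{prop:linear_existence} and~\ref{prop:linear_uniqueness}.
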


A uniqueness result holds under weaker conditions
\begin{proposition}\label{prop:linear_uniqueness}
For $T \in (0,\infty]$ suppose that $u: \, (0,T) \times (0,\infty) \to \R$ is locally integrable and solves \eqref{linear_u} distributionally on the time interval $(0,T)$ with homogeneous initial data $u^{(0)} \equiv 0$ and right-hand side $f \equiv 0$ such that one of the following conditions holds true
\begin{enumerate}[(a)]
\item\label{num:case_t} We have $\int_0^T \left(\verti{\tilde u}_{4,\tilde\alpha+\frac 1 2}^2 + \verti{\tilde u}_{4,\tilde\alpha+1}^2\right) < \infty$, where $\tilde \alpha$ is in the coercivity range of $\tilde \AAA$ ($\tilde \alpha \in (0,1)$ is sufficient, cf.~\eqref{coer_ta} of Lemma~\ref{lem:coerc_ta}).
\item\label{num:case_c} We have $\int_0^T \left(\verti{\check u}_{4,\check\alpha+\frac 1 2}^2 + \verti{\check u}_{4,\check\alpha+1}^2\right) < \infty$, where $\check \alpha$ is in the coercivity range of $\check \AAA$ ($\check \alpha \in \left(1-\sqrt{\frac 5 6}, \frac 3 2\right)$ is sufficient, cf.~\eqref{coer_ca} of Lemma~\ref{lem:coerc_ca}). 
\end{enumerate}
Then $u \equiv 0$. In particular the solution $u$ constructed in Proposition~\ref{prop:linear_existence} is unique.
\end{proposition}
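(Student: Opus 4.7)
The strategy in both cases is an energy identity applied to the commutated unknown. For case (a), set $\tilde u := (D-1) u$: by the commutation relation \eqref{commute_a_op}, the homogeneous equation $\partial_t u + \AAA u = 0$ forces $\partial_t \tilde u + \tilde \AAA \tilde u = 0$ distributionally on $(0,T)\times(0,\infty)$, with $\tilde u_{|t=0} = 0$ inherited from $u^{(0)} \equiv 0$. Case (b) proceeds identically with $\check u := (D-2)(D-1) u$ and $\check \AAA$ in place of $\tilde \AAA$.

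Testing formally with $\tilde u$ in the inner product $(\cdot,\cdot)_{\tilde\alpha}$ (respectively $\check u$ in $(\cdot,\cdot)_{\check\alpha}$) gives
\begin{equation*}
\tfrac{1}{2}\, \tfrac{\d}{\d t}\verti{\tilde u}_{\tilde\alpha}^2 + \left(\tilde \AAA \tilde u, \tilde u\right)_{\tilde\alpha} = 0.
\end{equation*}
By coercivity in the form of Lemma~\ref{lem:coerc_ta} (respectively Lemma~\ref{lem:coerc_ca}) the second term is bounded below by a positive multiple of $\verti{\tilde u}_{2,\tilde\alpha+\frac 12}^2 + \verti{\tilde u}_{2,\tilde\alpha+1}^2$ and in particular is nonnegative. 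Integrating in time from $0$ to $t$ and using the zero initial trace yields $\verti{\tilde u(t)}_{\tilde\alpha}^2 \leq 0$ for a.e.\ $t \in (0,T)$, hence $\tilde u \equiv 0$ (respectively $\check u \equiv 0$).

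The main obstacle is to make this identity rigorous under the weak integrability hypothesis, which does not by itself give finiteness of $\verti{\tilde u}_{\tilde\alpha}$. Using the explicit form $\tilde \AAA \stackrel{\eqref{def_ta}}{=} x^{-1}\tilde p(D) + x^{-2}\tilde q(D)$ with fourth-order polynomials $\tilde p, \tilde q$, one reads off $\verti{\tilde \AAA \tilde u}_{\tilde\alpha} \lesssim \verti{\tilde u}_{4,\tilde\alpha+\frac 12} + \verti{\tilde u}_{4,\tilde\alpha+1}$, so that via the equation $\int_0^T \verti{\partial_t \tilde u}_{\tilde\alpha}^2\,\d t < \infty$. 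A Steklov average $\tilde u_h(t,x) := h^{-1}\int_t^{t+h}\tilde u(s,x)\,\d s$ preserves the equation, is smooth in $t$, and converges to $\tilde u$ in the ambient norm as $h \searrow 0$. Combined with a spatial cutoff to handle the behavior at $x \searrow 0$ and $x \to \infty$, the energy identity becomes classical on $\tilde u_h$, and a Lions-Magenes-type trace argument transfers the identity and the vanishing initial value back to $\tilde u$ in the limit. No new ideas beyond those of \S\ref{sec:lin_res} are required.

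Once $\tilde u \equiv 0$ is established, the ODE $(D-1) u = 0$ forces $u(t,x) = c(t)\,x$ for some measurable $c$; reinserting into $\partial_t u + \AAA u = 0$ and using $\AAA x \equiv 0$ (cf.~\eqref{defa} and \eqref{def_pq}) shows $\dot c(t)\,x \equiv 0$, so $c$ is constant in $t$, and $u^{(0)} \equiv 0$ forces $c \equiv 0$. In case (b), analogously $u(t,x) = c_1(t)\,x + c_2(t)\,x^2$; $\AAA x = \AAA x^2 = 0$ together with $u^{(0)} \equiv 0$ yields $c_1 \equiv c_2 \equiv 0$. The uniqueness assertion for the solution produced in Proposition~\ref{prop:linear_existence} is then automatic, since the maximal-regularity estimates \eqref{maxreg_taca_t} of that proposition imply in particular the integrability hypothesis required here.
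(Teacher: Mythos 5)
Your proposal is correct and follows essentially the same route as the paper: commutate by $D-1$ (respectively $(D-2)(D-1)$), test the resulting equation in $(\cdot,\cdot)_{\tilde\alpha}$ (resp.\ $(\cdot,\cdot)_{\check\alpha}$), use coercivity from Lemma~\ref{lem:coerc_ta} (resp.\ Lemma~\ref{lem:coerc_ca}), and then reduce the problem on $\ker(D-1)$ (resp.\ $\ker((D-2)(D-1))$) to a trivial ODE for the coefficients by using that $\AAA$ annihilates $x$ and $x^2$. Where you invoke Steklov averaging and a Lions--Magenes trace argument, the paper instead inserts a logarithmic spatial cutoff $\eta_n(\ln x)$, estimates the commutator $[\eta_n,\tilde\AAA]$ (which carries factors of $n^{-1}$), and derives $\tilde u\in W^{1,2}((0,T);L^2_{\mathrm{loc}})$ directly from the equation to justify the time trace, but these are interchangeable technical devices for the same purpose.
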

As a corollary of Propositions~\ref{prop:linear_existence} and \ref{prop:linear_uniqueness} we can infer that Proposition~\ref{prop:maxreg1} and Propostion~\ref{prop:mr_full} have to hold true as well.

\subsubsection{Proofs}

\begin{proof}[Proof of Lemma~\ref{lem:discrete_step}]
Since the proof in essence only uses coercivity of $\tilde\AAA$ or $\check\AAA$ and the fact that the operators are fourth order, we restrict ourselves to proving estimate~\eqref{discrete_step_t}.

\medskip

We first apply $D-1$ to equation~\eqref{evo_u_discrete} and get (cf.~\eqref{def_tc})
\begin{equation}\label{res_t}
\frac{1}{\delta t} \left(\widetilde{u^{(\delta t)}} - \widetilde{u^{(0)}}\right) + \tilde\AAA \widetilde{u^{(\delta t)}} = \widetilde{f^{(\delta t)}}
\end{equation}
Next, we test \eqref{res_t} with $\widetilde{u^{(\delta t)}}$ in the inner product $\left(\cdot,\cdot\right)_{\tilde\alpha}$, so that
\begin{equation}\label{test_discrete_1}
\frac{1}{\delta t} \left(\verti{\widetilde{u^{(\delta t)}}}_{\tilde\alpha}^2 - \left(\widetilde{u^{(0)}}, \widetilde{u^{(\delta t)}}\right)_{\tilde\alpha}\right) + \left(\tilde\AAA \widetilde{u^{(\delta t)}}, \widetilde{u^{(\delta t)}}\right)_{\tilde\alpha} = \left(\widetilde{f^{(\delta t)}}, \widetilde{u^{(\delta t)}}\right)_{\tilde\alpha}.
\end{equation}
Using the elementary estimate
\[
\left(\widetilde{u^{(0)}}, \widetilde{u^{(\delta t)}}\right)_{\tilde\alpha} \le \frac 1 2 \verti{\widetilde{u^{(0)}}}_{\tilde\alpha}^2 + \frac 1 2 \verti{\widetilde{u^{(\delta t)}}}_{\tilde\alpha}^2,
\]
coercivity of $\tilde\AAA$ (cf.~Lemma~\ref{lem:coerc_ta}) and Young's inequality for the right-hand side as for the arguments leading from \eqref{test_ta} to \eqref{basic_ta}, we obtain
\begin{equation}\label{discrete_basic}
\frac{1}{\delta t} \left(\verti{\widetilde{u^{(\delta t)}}}_{\tilde\alpha}^2 - \verti{\widetilde{u^{(0)}}}_{\tilde\alpha}^2\right) + \left(\verti{\widetilde{u^{(\delta t)}}}_{2,\tilde\alpha+\frac 1 2}^2 + \verti{\widetilde{u^{(\delta t)}}}_{2,\tilde\alpha+1}^2\right) \lesssim_{\tilde\alpha} \left(\verti{\widetilde{x \underline f^{(\delta t)}}}_{\tilde\alpha-\frac 1 2}^2 + \verti{\widetilde{\underline f^{(\delta t)}}}_{\tilde\alpha-1}^2\right).
\end{equation}
Estimate~\eqref{discrete_basic} is the discrete analogue of the differential estimate \eqref{basic_ta}. For upgrading it to a strong estimate, apply $D^k$ to \eqref{res_t} and test the resulting equation with $D^k \widetilde{u^{(\delta t)}}$ in the inner product $\left(\cdot,\cdot\right)_{\tilde\alpha}$. This gives
\begin{equation}\label{discrete_test_higher}
\frac{1}{\delta t} \left(\verti{D^k \widetilde{u^{(\delta t)}}}_{\tilde\alpha}^2 - \left(D^k \widetilde{u^{(0)}}, D^k \widetilde{u^{(\delta t)}}\right)_{\tilde\alpha}\right) + \left(D^k \widetilde{u^{(\delta t)}}, D^k \tilde\AAA \widetilde{u^{(\delta t)}}\right)_{\tilde\alpha} = \left(D^k \widetilde{f^{(\delta t)}}, D^k \widetilde{u^{(\delta t)}}\right)_{\tilde\alpha}.
\end{equation}
Again we have
\[
\left(D^k \widetilde{u^{(\delta t)}}, D^k \widetilde{u^{(0)}}\right)_{\tilde\alpha} \le \frac 1 2 \verti{D^k \widetilde{u^{(\delta t)}}}_{\tilde\alpha}^2 + \frac 1 2 \verti{D^k \widetilde{u^{(0)}}}_{\tilde\alpha}^2
\]
and we may treat the other terms in \eqref{discrete_test_higher} as done in the context of going from \eqref{test_lin2} to \eqref{higher_ta}, so that
\begin{equation}\label{discrete_strong_-}
\begin{aligned}
&\frac{1}{\delta t} \left(\verti{D^k \widetilde{u^{(\delta t)}}}_{\tilde\alpha}^2 - \verti{D^k \widetilde{u^{(0)}}}_{\tilde\alpha}^2\right) + \verti{\widetilde{u^{(\delta t)}}}_{k+2,\tilde\alpha+\frac 1 2}^2 - \tilde C \verti{\widetilde{u^{(\delta t)}}}_{\tilde\alpha+\frac 1 2}^2 + \verti{\widetilde{u^{(\delta t)}}}_{k+2,\tilde\alpha+1}^2 - \tilde C \verti{\widetilde{u^{(\delta t)}}}_{\tilde\alpha+1}^2 \\
&\quad \lesssim_{k,\tilde\alpha} \verti{\widetilde{x \underline f^{(\delta t)}}}_{k-2,\tilde\alpha-\frac 1 2}^2 + \verti{\widetilde{\underline f^{(\delta t)}}}_{k-2,\tilde\alpha-1}^2.
\end{aligned}
\end{equation}
Adding a multiple of \eqref{discrete_basic} to \eqref{discrete_strong_-}, we arrive at \eqref{discrete_step_t}.
\end{proof}

\begin{proof}[Proof of Lemma~\ref{lem:trace}]
By approximation, we may assume without loss of generality that $w$ is smooth and continuous up to the boundary. Then we can compute, using a standard interpolation estimate (cf.~\cite[Lem.~B.1]{ggko.2014}),
\begin{eqnarray*}
\frac{\d}{\d t} \verti{\tilde w}_{k,\gamma}^2 &=& 2 \left(\partial_t w^{(i)},w\right)_{k,\gamma} + 2 \left(\partial_t w^{(ii)},w\right)_{k,\gamma} \\
&\lesssim_{k,\gamma}& \verti{\partial_t w^{(i)}}_{k-2,\gamma-\frac 1 2}^2 + \verti{w}_{k+2,\gamma+\frac 1 2}^2 +  \verti{\partial_t w^{(ii)}}_{k-2,\gamma-1}^2 +  \verti{w}_{k+2,\gamma+1}^2.
\end{eqnarray*}
Integrating this inequality, we immediately obtain \eqref{trace_est}.
\end{proof}

\begin{proof}[Proof of Proposition~\ref{prop:linear_existence}]
The arguments for proving \eqref{maxreg_ca_t} are analogous to those for proving \eqref{maxreg_ta_t}, so we concentrate on proving the latter. For simplicity we may further assume $T < \infty$. Take $J \in \N$, $\delta t := \frac{T}{J}$, and take piece-wise time averages of the right-hand side $f$ according to $f^{(j \delta t)} := \frac{1}{\delta t} \int_{(j-1) \delta t}^{j \delta t} f\left(t^\prime\right) \d t^\prime$ where $j \in \N$. Note that by approximation (cf.~Definition~\ref{def:function_spaces}) we can assume $f$ to be smooth and rapidly decaying in the sense of \eqref{decay_x_gg_1}. We discretize the linear problem \eqref{linear_u} as in \eqref{evo_u_discrete}, i.e.,
\begin{equation}\label{linear_u_j}
\frac{u^{(j \delta t)} - u^{((j-1) \delta t)}}{\delta t} + \AAA u^{(j \delta t)} = f^{(j \delta t)} \quad \mbox{for} \quad j \ge 1.
\end{equation}
Proposition~\ref{prop:resolvent} yields a smooth solution $u^{(j\delta t)} : \, [0,\infty) \to \R$ with decay as in \eqref{decay_x_gg_1}. In particular, estimates~\eqref{discrete_step} of Lemma~\ref{lem:discrete_step} are satisfied, so we start out with \eqref{discrete_step_t} at time step $j \delta t$, i.e.,
\begin{eqnarray*}
\lefteqn{\verti{D^k \widetilde{u^{(j \delta t)}}}_{\tilde\alpha}^2 + \tilde C \verti{\widetilde{u^{(j \delta t)}}}_{\tilde\alpha}^2 - \verti{D^k \widetilde{u^{((j-1) \delta t)}}}_{\tilde\alpha}^2 - \tilde C \verti{\widetilde{u^{((j-1) \delta t)}}}_{\tilde\alpha}^2} \\
&& + \delta t \left(\verti{\widetilde{u^{(j \delta t)}}}_{k+2,\tilde\alpha+\frac 1 2}^2 + \verti{\widetilde{u^{(j \delta t)}}}_{k+2,\tilde\alpha+1}^2\right) \lesssim_{k,\tilde\alpha} \delta t \left(\verti{\widetilde{x \underline f^{(j \delta t)}}}_{k-2,\tilde\alpha-\frac 1 2}^2 + \verti{\widetilde{\underline f^{(j \delta t)}}}_{k-2,\tilde\alpha-1}^2\right).
\end{eqnarray*}
Summation over $j = 1,\ldots,J^\prime$, where $J^\prime \in \{1,\ldots,J\}$, gives
\begin{eqnarray}\nonumber
\lefteqn{\verti{D^k \widetilde{u^{(J^\prime \delta t)}}}_{\tilde\alpha}^2 + \tilde C \verti{\widetilde{u^{(J^\prime \delta t)}}}_{\tilde\alpha}^2  + \sum_{j = 1}^{J^\prime} \delta t \left(\verti{\widetilde{u^{(j \delta t)}}}_{k+2,\tilde\alpha+\frac 1 2}^2 + \verti{\widetilde{u^{(j \delta t)}}}_{k+2,\tilde\alpha+1}^2\right)} \\
&\lesssim_{k,\tilde\alpha}& \verti{D^k \widetilde{u^{(0)}}}_{\tilde\alpha}^2 + \tilde C \verti{\widetilde{u^{(0)}}}_{\tilde\alpha}^2 + \sum_{j = 1}^{J^\prime} \delta t \left(\verti{\widetilde{x \underline f^{(j \delta t)}}}_{k-2,\tilde\alpha-\frac 1 2}^2 + \verti{\widetilde{\underline f^{(j \delta t)}}}_{k-2,\tilde\alpha-1}^2\right). \qquad \label{discrete_summed}
\end{eqnarray}
By interpolation and noting that the second line of \eqref{discrete_summed} is increasing in $J^\prime$, we have
\begin{eqnarray}\nonumber
\lefteqn{\max_{j = 1,\ldots,J}\verti{\widetilde{u^{(j \delta t)}}}_{k,\tilde\alpha}^2 + \sum_{j = 1}^J \delta t \left(\verti{\widetilde{u^{(j \delta t)}}}_{k+2,\tilde\alpha+\frac 1 2}^2 + \verti{\widetilde{u^{(j \delta t)}}}_{k+2,\tilde\alpha+1}^2\right)} \\
&\lesssim_{k,\tilde\alpha}& \verti{\widetilde{u^{(0)}}}_{k,\tilde\alpha}^2 + \sum_{j = 1}^J \delta t \left(\verti{\widetilde{f^{(j \delta t,1)}}}_{k-2,\tilde\alpha-\frac 1 2}^2 + \verti{\widetilde{f^{(j \delta t,2)}}}_{k-2,\tilde\alpha-1}^2\right). \label{discrete_mr}
\end{eqnarray}
Now, we define a piece-wise in time constant right-hand side $\phi_J$ and interpolate linearly in time for our approximate solution $\psi_J$, that is,
\begin{subequations}\label{cont_approx}
\begin{align}
\phi_J(t,x) &:= \sum_{j = 1}^J f^{(j \delta t)}(x) \ind_{[(j-1) \delta t, j \delta t)}(t) = \sum_{j = 1}^J \frac{1}{\delta t} \int_{(j-1) \delta t}^{j \delta t} f\left(t^\prime,x\right) \d t^\prime \ind_{[(j-1) \delta t, j \delta t)}(t), \label{cont_approx_phi} \\
\psi_J(t,x) &:= \sum_{j = 1}^J \left(\frac{t - (j-1) \delta t}{\delta t} u^{(j \delta t)}(x) + \frac{j \delta t - t}{\delta t} u^{((j-1) \delta t)}\right) \ind_{[(j-1) \delta t, j \delta t)}(t). \label{cont_approx_psi}
\end{align}
\end{subequations}
Then we note that
\begin{subequations}\label{simplify_discrete_sum}
\begin{equation}
\sup_{t \in [0,T)} \verti{\psi_J}_{k,\tilde\alpha}^2 \le 2 \max_{j = 1,\ldots,J}\verti{\widetilde{u^{(j \delta t)}}}_{k,\tilde\alpha}^2
\end{equation}
and
\begin{equation}
\int_0^T \left(\verti{\widetilde{\psi_J}}_{k+2,\tilde\alpha+\frac 1 2}^2 + \verti{\widetilde{\psi_J}}_{k+2,\tilde\alpha+1}^2\right) \d t \le 2 \sum_{j = 1}^J \delta t \left(\verti{\widetilde{u^{(j \delta t)}}}_{k+2,\tilde\alpha+\frac 1 2}^2 + \verti{\widetilde{u^{(j \delta t)}}}_{k+2,\tilde\alpha+1}^2\right)
\end{equation}
as well as
\begin{equation}
\int_0^T \left(\verti{\widetilde{\phi_J}}_{k-2,\tilde\alpha-\frac 1 2}^2 + \verti{\widetilde{\phi_J}}_{k-2,\tilde\alpha-1}^2\right) \d t = \sum_{j = 1}^J \delta t \left(\verti{\widetilde{x \underline f^{(j \delta t)}}}_{k-2,\tilde\alpha-\frac 1 2}^2 + \verti{\widetilde{\underline f^{(j \delta t)}}}_{k-2,\tilde\alpha-1}^2\right).
\end{equation}
\end{subequations}
Putting \eqref{simplify_discrete_sum} in \eqref{discrete_mr}, we get
\begin{eqnarray}\nonumber
\lefteqn{\sup_{t \in [0,T)} \verti{\widetilde{\psi_J}}_{k,\tilde\alpha}^2 + \int_0^T \left(\verti{\widetilde{\psi_J}}_{k+2,\tilde\alpha+\frac 1 2}^2 + \verti{\widetilde{\psi_J}}_{k+2,\tilde\alpha+1}^2\right) \d t} \\
&\lesssim_{k,\tilde\alpha}& \verti{\widetilde{u^{(0)}}}_{k,\tilde\alpha}^2 + \int_0^T \left(\verti{\widetilde{x \underline \phi_J}}_{k-2,\tilde\alpha-\frac 1 2}^2 + \verti{\widetilde{\underline \phi_J}}_{k-2,\tilde\alpha-1}^2\right) \d t. \label{discrete_mr_2}
\end{eqnarray}
For getting control of the time derivative $\partial_t \psi_J$ (defined almost everywhere in $[0,T)$ by virtue of \eqref{cont_approx_psi}), observe that for $j \in \{1,\ldots,J\}$ and $t \in \left( (j-1) \delta t, j \delta t\right)$ we have
\[
\partial_t \psi_J = \frac{u^{(j \delta t)} - u^{((j-1) \delta t)}}{\delta t} \stackrel{\eqref{linear_u_j}}{=} - \AAA u^{(j \delta t)} + f^{(j \delta t)},
\]
so that with \eqref{cont_approx} we obtain
\begin{equation}\label{discrete_eq}
\partial_t \psi_J + \AAA \psi_J = \phi_J + \AAA \left(\psi_J - u^{(j\delta t)}\right) \quad \mbox{where} \quad j \in \{1,\ldots,J\}, \quad t \in \left((j-1) \delta t, j \delta t\right).
\end{equation}
In exactly the same manner as in the time-continuous case (cf.~\eqref{time_tu1}, \eqref{time_tu2}), we obtain control on $\verti{\partial_t \widetilde{x \underline \psi_J}}_{k-2,\tilde\alpha-\frac 1 2}$ and $\verti{\partial_t \widetilde{\underline \psi_J}}_{k-2,\tilde\alpha-1}$, that is, \eqref{discrete_mr_2} upgrades to
\begin{eqnarray*}
\lefteqn{\sup_{t \in [0,T)} \verti{\widetilde{\psi_J}}_{k,\tilde\alpha}^2 + \int_0^T \left(\verti{\partial_t \widetilde{x \underline\psi_J}}_{k-2,\tilde\alpha-\frac 1 2}^2 +  \verti{\partial_t \widetilde{\underline\psi_J}}_{k-2,\tilde\alpha-1}^2\right) \d t} \\
&& + \int_0^T \left(\verti{\widetilde{\psi_J}}_{k+2,\tilde\alpha+\frac 1 2}^2 + \verti{\widetilde{\psi_J}}_{k+2,\tilde\alpha+1}^2\right) \d t \\
&\lesssim_{k,\tilde\alpha}& \verti{\widetilde{u^{(0)}}}_{k,\tilde\alpha}^2 + \int_0^T \left(\verti{\widetilde{x \underline\phi_J}}_{k-2,\tilde\alpha-\frac 1 2}^2 + \verti{\widetilde{\underline \phi_J}}_{k-2,\tilde\alpha-1}^2\right) \d t.
\end{eqnarray*}
Furthermore, observe that by Jensen's and H\"older's inequality
\begin{eqnarray*}
\lefteqn{\int_0^T \left(\verti{\widetilde{x \underline\phi_J}}_{k-2,\tilde\alpha-\frac 1 2}^2 + \verti{\widetilde{\underline\phi_J}}_{k-2,\tilde\alpha-1}^2\right) \d t} \\
&\stackrel{\eqref{cont_approx_phi}}{=}& \sum_{j = 1}^J \int_{(j-1) \delta t}^{j \delta t} \frac{1}{(\delta t)^2} \left(\verti{\int_{(j-1) \delta t}^{j \delta t} \widetilde{x \underline f}\left(t^\prime\right) \d t^\prime}_{k-2,\tilde\alpha - \frac 1 2}^2 + \verti{\int_{(j-1) \delta t}^{j \delta t} \widetilde{\underline f}\left(t^\prime\right) \d t^\prime}_{k-2,\tilde\alpha - 1}^2\right) \d t \\
&\le& \sum_{j = 1}^J \left(\frac{1}{\delta t} \left(\int_{(j-1) \delta t}^{j \delta t} \verti{\widetilde{x \underline f}\left(t\right)}_{k-2,\tilde\alpha - \frac 1 2} \d t\right)^2 + \frac{1}{\delta t} \left(\int_{(j-1) \delta t}^{j \delta t} \verti{\widetilde{\underline f}\left(t\right)}_{k-2,\tilde\alpha - 1} \d t\right)^2 \right) \\
&\le& \sum_{j = 1}^J \left(\int_{(j-1) \delta t}^{j \delta t} \verti{\widetilde{x \underline f}\left(t\right)}_{k-2,\tilde\alpha - \frac 1 2}^2 \d t + \int_{(j-1) \delta t}^{j \delta t} \verti{\widetilde{\underline f}\left(t\right)}_{k-2,\tilde\alpha - 1}^2 \d t\right) \\
&=& \int_0^T \left(\verti{\widetilde{x \underline f}\left(t\right)}_{k-2,\tilde\alpha - \frac 1 2}^2 + \verti{\widetilde{\underline f}\left(t\right)}_{k-2,\tilde\alpha - 1}^2\right) \d t
\end{eqnarray*}
and therefore
\begin{eqnarray}\nonumber
\lefteqn{\sup_{t \in [0,T)} \verti{\widetilde{\psi_J}}_{k,\tilde\alpha}^2 + \int_0^T \left(\verti{\partial_t \widetilde{x \underline \psi_J}}_{k-2,\tilde\alpha-\frac 1 2}^2 +  \verti{\partial_t \widetilde{\underline \psi_J}}_{k-2,\tilde\alpha-1}^2\right) \d t} \\
&& + \int_0^T \left(\verti{\widetilde{\psi_J}}_{k+2,\tilde\alpha+\frac 1 2}^2 + \verti{\widetilde{\psi_J}}_{k+2,\tilde\alpha+1}^2\right) \d t \nonumber \\
&\lesssim_{k,\tilde\alpha}& \verti{\widetilde{u^{(0)}}}_{k,\tilde\alpha}^2 + \int_0^T \left(\verti{\widetilde{x \underline f}\left(t\right)}_{k-2,\tilde\alpha - \frac 1 2}^2 + \verti{\widetilde{\underline f}\left(t\right)}_{k-2,\tilde\alpha - \frac 1 2}^2\right) \d t. \label{discrete_mr_3}
\end{eqnarray}
Note that the root of the first two lines of \eqref{discrete_mr_3} defines a norm for $\psi_J$ up to adding $a x$, where $a$ is constant in time, which is fixed by the initial datum $u^{(0)}$. Taking the limit $J \to \infty$ we infer that a subsequence of $\psi_J$ weak-$*$-converges to a locally integrable function $u : \, (0,T) \times (0,\infty) \to \R$. By weak lower semi-continuity, estimate~\eqref{discrete_mr_3} turns into \eqref{maxreg_ta_t} in the limit $J \to \infty$. Because of Lemma~\ref{lem:trace}, necessarily $u_{|t = 0} = u^{(0)}$ holds true. Furthermore, due to \eqref{discrete_eq}, equation~\eqref{lin_pde}, i.e., $\partial_t u + \AAA u = f$, is satisfied distributionally.
\end{proof}

\begin{proof}[Proof of Proposition~\ref{prop:linear_uniqueness}]
We concentrate on proving Proposition~\ref{prop:linear_uniqueness} under the assumption \eqref{num:case_t}. We take a test function $\eta : \R \to [0,1]$ with $\eta_{|[-1,1]} = 1$ and $\supp \eta \subset [-2,2]$, and define $\eta_n := \eta(s/n)$, where $s := \ln x$ and $n \in \N$. Next, we apply $\eta_n (D-1)$ to equation~\eqref{lin_pde} and test the resulting equation with $\eta_n \tilde u$ in the inner product $\left(\cdot,\cdot\right)_{\tilde\alpha}$ to the result
\begin{equation}\label{test_unique}
\left(\eta_n \partial_t \tilde u, \eta_n \tilde u\right)_{\tilde\alpha} + \left(\eta_n \tilde\AAA \tilde u, \eta_n \tilde u\right)_{\tilde\alpha} = 0.
\end{equation}
Now employing coercivity of $\tilde\AAA$ (cf.~Lemma~\ref{lem:coerc_ta}) is not directly possible, since we need to commute $\eta_n$ with the operator $\tilde\AAA$. Note that every term in the commutator $\left[\eta_n,\tilde\AAA\right]$ must contain at least one derivative $D=\partial_s$ acting on $\eta_n$, giving a factor $n^{-1}$. Hence, we can conclude that there exists a constant $\tilde c = \tilde c\left(\tilde\alpha\right)$ such that
\begin{equation}\label{unique_simplify_1}
\left(\tilde\AAA \tilde u, \tilde u\right)_{\tilde\alpha} \ge \tilde c \left(\verti{\tilde u}_{2,\tilde\alpha + \frac 1 2}^2 + \verti{\tilde u}_{2,\tilde\alpha+1}^2\right) - R_n,
\end{equation}
where $R_n = R_n(t)$ is a uniformly integrable remainder (one may recognize that through integration by parts it is up to a multiplicative constant dominated by $\verti{\tilde u}_{2,\tilde\alpha + \frac 1 2}^2 + \verti{\tilde u}_{2,\tilde\alpha+1}^2$) with $R_n(t) \to 0$ as $n \to \infty$ almost everywhere in $t \in (0,T)$. By arguments analogous to those in the context of \eqref{time_tu1} and \eqref{time_tu2}, we have
\[
\int_0^T \left(\verti{\partial_t\widetilde{x \underline u}}_{k-2,\tilde\alpha-\frac 1 2}^2 + \verti{\partial_t\widetilde{\underline u}}_{\tilde\alpha-1}^2\right) \d t \lesssim \int_0^\infty \left(\verti{\tilde u}_{4,\tilde\alpha+\frac 1 2}^2 + \verti{\tilde u}_{4,\tilde\alpha+\frac 1 2}^2\right) \d t < \infty,
\]
that is, we have $\tilde u \in W^{1,2}\left((0,T); L^2_\mathrm{loc}((0,\infty))\right)$. In particular, $u_{|t = 0}$ is well-defined and we arrive at
\begin{equation}\label{unique_simplify_2}
\left(\eta_n \partial_t \tilde u, \eta_n \partial_t \tilde u\right)_{\tilde\alpha} = \frac 1 2 \frac{\d}{\d t} \verti{\eta_n \tilde u}_{\tilde\alpha}^2.
\end{equation}
Putting \eqref{unique_simplify_1} and \eqref{unique_simplify_2} in \eqref{test_unique} gives after integration in time
\begin{equation}\label{unique_weak_est}
\sup_{t \in (0,T)} \verti{\eta_n \tilde u}_{\tilde\alpha}^2 + \tilde c \int_0^T \left(\verti{\eta_n \tilde u}_{2,\tilde\alpha-\frac 1 2}^2 + \verti{\eta_n \tilde u}_{2,\tilde\alpha-1}^2\right) \d t \le \int_0^T R_n \, \d t.
\end{equation}
Taking $n \to \infty$ in \eqref{unique_weak_est}, we note that by dominated convergence the right-hand side vanishes and therefore $\sup_{t \in (0,T)} \verti{\tilde u}_{\tilde\alpha} = 0$, i.e., $(D-1) u = \tilde u = 0$. Hence, $u \in \ker\{D-1\}$, which implies $u(t,x) = u_1(t) x$. Using this in \eqref{lin_pde} gives
\[
0 = \partial_t u_1 x + \AAA u_1 x \stackrel{\eqref{defa}}{=} \frac{\d u_1}{\d t} x \quad \mbox{for} \quad (t,x) \in (0,T) \times (0,\infty)
\]
whence
\[
\frac{\d u_1}{\d t} = 0 \quad \mbox{for} \quad t \in (0,T),
\]
which together with the initial condition $\left.u_1\right|_{t = 0} = 0$ amounts to $u_1 \equiv 0$.
\end{proof}

\section{Nonlinear theory\label{sec:nonlin}}
\subsection{Main results\label{sec:nonlin_main}}
Our aim is to prove that the nonlinearity $\NN(u)$ (defined in \eqref{def_nu} and with a structure detailed in \eqref{term_cond_non}) fulfills the following nonlinear estimate:
\begin{proposition}\label{prop:non_est}
Suppose $N \ge 1$, $k \ge 3$, and $0 < \delta \ll_{k, N} 1$. Then for $\vertiii{\cdot}_{\mathrm{sol}}$ and $\vertiii{\cdot}_{\mathrm{rhs}}$ defined as in \eqref{norm_sol} and \eqref{norm_rhs}, we have
\begin{equation}\label{estimate_nonlinear_main}
\vertiii{\NN(u) - \NN\left(\tilde u\right)}_{\mathrm{rhs}} \lesssim_{k, N, \delta} \left(\vertiii{u}_{\mathrm{sol}} + \vertiii{\tilde u}_{\mathrm{sol}}\right) \vertiii{u - \tilde u}_{\mathrm{sol}}
\end{equation}
for all smooth $u, \tilde u \in U$ with $\vertiii{u}_{\mathrm{sol}} \ll_{k,N,\delta} 1$, $\vertiii{\tilde u}_{\mathrm{sol}} \ll_{k,N,\delta} 1$.
\end{proposition}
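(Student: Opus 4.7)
The plan is to reduce \eqref{estimate_nonlinear_main} to a collection of multilinear weighted product estimates, exploiting the explicit algebraic form of $\NN(u)$ given in \eqref{term_non}--\eqref{cond_coeff_non}. First I would telescope the difference $\NN(u) - \NN(\tilde u)$: for each individual term $T(u)$ of the shape \eqref{term_non}, the difference $T(u) - T(\tilde u)$ is rewritten as a sum in which exactly one factor is replaced by the corresponding $u$-vs-$\tilde u$ difference (with the convention that $(1+v_x)^{-3-s^\prime} - (1+\tilde v_x)^{-3-s^\prime}$ is expanded via the identity $a^{-p}-b^{-p} = (b-a)\int_0^1 p\,(a+\tau(b-a))^{-p-1}\d\tau$ to produce a $v_x - \tilde v_x$ factor), while the remaining $n-1 \ge 1$ factors involve either only $v$ or only $\tilde v$. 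Since $v = u/(3x^2+2x)$ and similarly for $\tilde v$, each such difference factor will eventually be absorbed into $\vertiii{u - \tilde u}_{\mathrm{sol}}$ and the remaining factors into $\vertiii{u}_{\mathrm{sol}}$ or $\vertiii{\tilde u}_{\mathrm{sol}}$.

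Second, I would use the smallness hypothesis together with the $C^0$-control provided by \S\ref{sec:lip_control}, Lemma~\ref{lem:lipschitz} and Lemma~\ref{lem:sup_control}, to obtain $\sup_{t,x}|v_x|, \sup_{t,x}|\tilde v_x| < \tfrac12$, which makes $(1+v_x)^{-3-s^\prime}$ and all its derivatives up to any fixed finite order smooth bounded functions of $v_x$ expandable by Faà di Bruno. Combined with the estimates on coefficients in Lemma~\ref{lem:est_coeff}, this reduces the analysis to weighted Leibniz-type inequalities of the shape
\[
\verti{f_1 \cdots f_{n+2}}_{k,\gamma} \lesssim \sum_{i} \|f_i\|_{L^\infty} \prod_{j \neq i} (\mbox{weighted norm of } f_j),
\]
with interpolation between the $L^\infty$ and weighted-Sobolev norms allowed by the additional regularity $k \ge 3$. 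The two polynomial factors $\partial_x^{s_0^\prime+1}(x^3+x^2)$ and $\partial_x^{s_0}(x^3+x^2)$ provide the weight-counting room: they scale as $x^2+x$ near the contact line and as $x^2$ at infinity, and the constraint $s_0^\prime + s_0 + s_1 + \ldots + s_n = 3$ together with $n \ge 2$ guarantees that the product of $n$ copies of factors like $\partial_x^{s_j+1} v$ together with these polynomials matches the weight prescribed on the right-hand side of \eqref{norm_rhs}, taking into account the decomposition $\underline f = f/(x+1)$ from \eqref{decomp_w}.

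Third, I would carry out the time bookkeeping separately for the sup-in-time and $L^2_t$ ingredients of $\vertiii{\cdot}_{\mathrm{rhs}}$: for the $L^2_t$ line of \eqref{norm_rhs}, place the distinguished difference factor in $L^2_t$ and all others in $L^\infty_t$ using sup-in-time embeddings from \eqref{norm_sol}; for the sup-in-time line, place all factors in $L^\infty_t$ using the same embeddings; time derivatives $\partial_t^\ell$ are distributed by Leibniz, and at most one such derivative lands on the difference factor, the remaining $\partial_t$'s being absorbed again by $L^\infty_t$ estimates on $\partial_t^j v$ coming from $\vertiii{u}_{\mathrm{sol}}+\vertiii{\tilde u}_{\mathrm{sol}}$. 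The main obstacle is the bookkeeping for the Taylor subtractions $\sum_{j\le \floor\alpha+\ldots} (\ldots)_j x^j$ appearing in both the solution and right-hand side norms: one has to verify that, for every admissible multi-index in $\II_{N-1,\delta}$ or $\JJ_{N,\delta}$ and every expansion order $r$, the Taylor truncation of the product can be distributed across the factors of \eqref{term_non} and matched with the truncations provided by $\vertiii{u}_{\mathrm{sol}}, \vertiii{\tilde u}_{\mathrm{sol}}, \vertiii{u-\tilde u}_{\mathrm{sol}}$. Once this matching is done, the resulting inequalities follow by the elementary weighted Leibniz/interpolation bounds together with Lemma~\ref{lem:est_coeff}, and smallness of $\delta$ is used only to keep all weights off the forbidden set $\frac12\Z$ flagged after \eqref{hardy_crit}.
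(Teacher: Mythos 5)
Your high-level outline is in the right spirit --- multilinear decomposition, Taylor-polynomial distribution, $BC^0$ control via Lemma~\ref{lem:lipschitz} and Lemma~\ref{lem:sup_control}, and Lemma~\ref{lem:est_coeff} for the coefficients --- but two of the steps that make the weight-counting close are stated the wrong way around or not identified at all.

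First, the schematic inequality
\begin{equation*}
\verti{f_1 \cdots f_{n+2}}_{k,\gamma} \lesssim \sum_{i} \|f_i\|_{L^\infty} \prod_{j \neq i} \left(\mbox{weighted norm of } f_j\right)
\end{equation*}
cannot hold as written when the ``weighted norms'' on the right are the $L^2$-based norms $\verti{\cdot}_{k,\gamma}$ of \eqref{def_base_norm}: a product of two generic $L^2$-functions is not in $L^2$. The usable distribution is the \emph{opposite} one: at most one factor lands in a weighted $L^2$-Sobolev norm, and all the other $n+1$ factors must be placed in $L^\infty$-type norms (with Taylor remainders subtracted, cf.~\eqref{def_sup_norm}). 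That is exactly how the paper proceeds: in \eqref{bound_sup_lip}--\eqref{bound_sup_normsol}, $n-2$ factors are bounded in $BC^0$ via Lemma~\ref{lem:lipschitz}, one more in $BC^0$ via Lemma~\ref{lem:sup_control}, and only the single remaining factor is measured in the weighted $L^2$-Sobolev norm that matches $\vertiii{\cdot}_{\mathrm{sol}}$.

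Second, your claim that the two polynomial factors together with the constraint $s_0'+s_0+s_1+\ldots+s_n=3$ already make the weight of a product of $n$ copies of $\partial_x^{s_j+1}v$ ``match'' $\vertiii{\cdot}_{\mathrm{rhs}}$ for all $n\ge 2$ is incorrect: each $v$-factor costs roughly a factor $x^{-1}$ near the contact line (since $v = u/(3x^2+2x)$), while the two polynomial prefactors only provide two powers of $x$, so for $n>4$ the raw weight-count goes negative near $x = 0$. The missing ingredient is the algebraic absorption of the two polynomial factors $\partial_x^{s_0'+1}(x^3+x^2)$ and $\partial_x^{s_0}(x^3+x^2)$ into \emph{exactly two} of the $v$-factors, producing $u$-factors via the commutation identities \eqref{moeb1}--\eqref{moeb3} and the normal form \eqref{term_non_power_2}: two factors in $u$, $n-2$ factors in $v_x$, with a bounded smooth coefficient $c(x)$. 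This is what makes the estimate genuinely bilinear in $\vertiii{\cdot}_{\mathrm{sol}}$ regardless of $n$; the residual $n-2$ factors of $v_x$ are precisely the ones controlled in $BC^0$.

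Third, a smaller divergence: you replace $(1+v_x)^{-3-s'}-(1+\tilde v_x)^{-3-s'}$ by an integral remainder and would handle the Taylor subtraction in $x$ (needed for $\vertiii{\cdot}_{\mathrm{rhs}}$) by Fa\`a di Bruno. The paper instead expands $(1+v_x)^{-3-s'}$ as a power series in $v_x$, which turns each term into an explicit polynomial in the factors and makes the $O(N)$-order Taylor subtraction around $x=0$ transparent via the decomposition principle of \S\ref{sec:power_product}. The price is that the number of factors $n$ becomes unbounded, so one must check that the constants in \eqref{bound_sup_normsol} and \eqref{main_non_est_l2} are $n$-independent to ensure convergence of the series --- a point that neither appears in your sketch nor would arise in your finite-$n$ version, but that must be faced if the power series route is taken. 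Either route requires the bilinear normal form from \eqref{term_non_power_2}; without it the proof does not close.
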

The combination of Proposition~\ref{prop:mr_full}, Proposition~\ref{prop:non_est}, and a standard fixed-point argument yields existence, uniqueness, and stability for the nonlinear problem \eqref{nonlinear_u}, i.e.,
\begin{align*}
\partial_t u + \AAA u &= \NN(u) \quad \mbox{for} \quad t,x > 0, \\
u &= 0 \quad \mbox{for} \quad t > 0, x = 0,
\end{align*}
subject to the initial condition $u_{|t = 0} = u^{(0)}$, where $\AAA$ and $\NN(u)$ are given through \eqref{defa}, \eqref{def_nu}, and \eqref{term_cond_non}. The main theorem reads as follows:
\begin{theorem}\label{th:main}
Suppose that $N \ge 1$, $k \ge 3$, and $0 < \delta < \frac 1 2$. Then there exists $\eps > 0$ such that for initial data $u^{(0)} \in U_0$ with $\vertiii{u^{(0)}}_{\mathrm{init}} \le \eps$, the nonlinear problem \eqref{nonlinear_u} with initial condition $u_{|t = 0} = u^{(0)}$ has exactly one locally integrable solution $u \in U$. This solution fulfills the a-priori estimate $\vertiii{u}_{\mathrm{sol}} \lesssim_{k, N, \delta} \vertiii{u^{(0)}}_{\mathrm{init}}$. Furthermore, we have $\vertiii{u(t)}_{\mathrm{init}} \to 0$ as $t \to \infty$, that is, the traveling wave \eqref{tw} is asymptotically stable.
\end{theorem}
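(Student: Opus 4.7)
The plan is to recast \eqref{nonlinear_u} as a Banach fixed-point problem in the space $U$, with Proposition~\ref{prop:mr_full} supplying the linear maximal-regularity engine and Proposition~\ref{prop:non_est} furnishing the quadratic Lipschitz structure of the nonlinearity. For $R > 0$, set $B_R := \{w \in U : \vertiii{w}_{\mathrm{sol}} \le R\}$ and define $\Phi : B_R \to U$ by $\Phi(w) := v$, where $v$ is the unique solution (guaranteed by Proposition~\ref{prop:mr_full}) of the linear Cauchy problem
\begin{equation*}
\partial_t v + \AAA v = \NN(w), \qquad v|_{t = 0} = u^{(0)}, \qquad v|_{x = 0} = 0.
\end{equation*}
Inspection of the structural representation \eqref{term_cond_non}---in which every summand carries at least $n \ge 2$ factors of the form $\partial_x^{s_j + 1} v$---shows $\NN(0) = 0$, so applying Proposition~\ref{prop:non_est} with $\tilde u = 0$ yields the bounds $\vertiii{\NN(w)}_{\mathrm{rhs}} \le C R^2$ and $\vertiii{\NN(w_1) - \NN(w_2)}_{\mathrm{rhs}} \le 2 C R \, \vertiii{w_1 - w_2}_{\mathrm{sol}}$ on $B_R$, for some $C = C(k, N, \delta)$.

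Composing these estimates with \eqref{maxreg_full} gives
\begin{align*}
\vertiii{\Phi(w)}_{\mathrm{sol}} &\le C \bigl(\vertiii{u^{(0)}}_{\mathrm{init}} + \vertiii{\NN(w)}_{\mathrm{rhs}}\bigr) \le C \eps + C^2 R^2,\\
\vertiii{\Phi(w_1) - \Phi(w_2)}_{\mathrm{sol}} &\le 2 C^2 R \, \vertiii{w_1 - w_2}_{\mathrm{sol}},
\end{align*}
after enlarging $C$ if necessary. Choosing $R := 2 C \vertiii{u^{(0)}}_{\mathrm{init}}$ and demanding $\eps \ll_{k,N,\delta} 1$ so that $4 C^3 \eps \le \tfrac{1}{2}$ forces both $\Phi(B_R) \subset B_R$ and a contraction factor $\le \tfrac{1}{2}$; Banach's theorem yields a unique $u \in B_R$ with $\Phi(u) = u$, which is the sought solution of \eqref{nonlinear_u} and satisfies $\vertiii{u}_{\mathrm{sol}} \lesssim \vertiii{u^{(0)}}_{\mathrm{init}}$. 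To upgrade uniqueness from $B_R$ to all of $U$, I would argue that for any other $u' \in U$ with the same initial data, $\vertiii{u'}_{\mathrm{sol}} < \infty$, so on a sufficiently small initial interval $[0,\tau]$ both sol-norms fall below the smallness threshold of Proposition~\ref{prop:non_est}; the contraction estimate applied to $u - u'$ (which solves a linear equation with zero initial data and source $\NN(u) - \NN(u')$) then forces $u \equiv u'$ on $[0,\tau]$, and an iteration via restart propagates the coincidence to $[0,\infty)$.

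For the asymptotic-stability claim $\vertiii{u(t)}_{\mathrm{init}} \to 0$ as $t \to \infty$, I would restart the nonlinear problem at time $t_0$: since $\sup_{t \ge 0} \vertiii{u(t)}_{\mathrm{init}} \le \vertiii{u}_{\mathrm{sol}} \lesssim \eps$ remains within the smallness regime for all $t_0$, the existence-and-uniqueness statement just proved applies with initial data $u(t_0)$ and, together with the a-priori bound, gives
\begin{equation*}
\vertiii{u|_{[t_0, \infty)}}_{\mathrm{sol}} \lesssim \vertiii{u(t_0)}_{\mathrm{init}}.
\end{equation*}
Each $L^2_t$-integrand appearing in \eqref{norm_sol} involves strictly stronger spatial norms than those in $\vertiii{\cdot}_{\mathrm{init}}$ (two more $D$-derivatives or one higher weight), and its integral over $(0,\infty)$ is finite, so the tail integrals on $[t_0,\infty)$ vanish as $t_0 \to \infty$. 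The main technical obstacle is then the last step of extracting \emph{pointwise}-in-time decay at a single slice from this \emph{integrated}-in-time decay, because the $L^\infty_t$- and $L^2_t$-parts of the sol-norm sit at different regularity levels. A clean route is to combine the $L^2_t$-integrability of $\partial_t u$ (present among the maximal-regularity pieces in \eqref{norm_sol}) with that of $u$ via Cauchy--Schwarz to deduce that $t \mapsto \vertiii{u(t)}_{\mathrm{init}}^2$ is of bounded variation on $[0,\infty)$, so its limit exists; integrability of $\vertiii{u(t)}_{\mathrm{init}}^2$ along an interpolated norm then forces the limit to be zero.
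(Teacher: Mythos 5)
Your Banach fixed-point set-up, the self-mapping and contraction estimates via Proposition~\ref{prop:mr_full} and Proposition~\ref{prop:non_est}, the resulting a-priori bound, and the local-to-global uniqueness upgrade (restriction to a small time interval so that the sol-norm falls below the smallness threshold, which implicitly uses the continuity of the time-trace) all match the paper's argument; this part is essentially the same.

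Where you diverge is the proof of $\vertiii{u(t)}_{\mathrm{init}} \to 0$. The paper uses a density argument: the first ($L^\infty_t$) block of $\vertiii{\cdot}_{\mathrm{sol}}^2$ with $\ell = 0$ is exactly $\sup_{t\ge 0}\vertiii{u(t)}_{\mathrm{init}}^2$, and by the trace estimate (Lemma~\ref{lem:trace}) one can approximate any $u \in U$ by smooth functions compactly supported in time in $\vertiii{\cdot}_{\mathrm{sol}}$; for such approximants the init-norm at large $t$ vanishes, and the triangle inequality finishes. Your BV route has a sound first half: the derivative $\tfrac{\d}{\d t}\vertiii{u(t)}_{\mathrm{init}}^2$ is indeed $L^1_t$ by pairing the $\partial_t u$ and $u$ blocks of the sol-norm with the appropriate weight shifts (the same mechanism as in the proof of Lemma~\ref{lem:trace}), so the limit exists. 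The gap is in the final step: the $L^2_t$-integrable norms of $u$ appearing in the third block of \eqref{norm_sol} carry weights $\alpha + m + r + 1 \ge \tfrac 1 2 + \delta$ (and for $\partial_t \underline u$ in the second block one gets $\alpha+m+r-1$, but those involve a different quantity), whereas the init-norm contains a piece at weight $\delta$. Since the weighted $L^2_x$-norms $\verti{\cdot}_\gamma$ do not embed monotonically in $\gamma$ (higher weight is stronger near $x = 0$, weaker near $x = \infty$), a higher-weight $L^2_t$-integrable norm does not control a lower-weight one pointwise, and logarithmic-convexity interpolation of weighted norms would require access to a weight strictly \emph{below} $\delta$, which the sol-norm does not supply. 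Thus the claim that $\vertiii{u(t)}_{\mathrm{init}}^2$ (or anything dominating it) lies in $L^1_t$ "along an interpolated norm" is unjustified as stated, and the limit is not pinned to zero by that route. The paper's density argument avoids the weight-mismatch entirely.
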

Note that by estimates~\eqref{coeff_est_u_l2} and \eqref{coeff_est_u_sup} of Lemma~\ref{lem:est_coeff} also regularity in time and a-priori estimates for the coefficients $u_j$ ($j = 1,\ldots,2N+2$) follow, that is, we have:
\begin{corollary}\label{cor:main}
In the situation of Theorem~\ref{th:main} it holds
\begin{equation}\label{exp_u_rigorous}
u(t,x) = u_1(t) x + u_2(t) x^2 + \ldots + u_{2N}(t) x^{2N} + R_N(t,x) x^{2N+1} \quad \mbox{as} \quad x \searrow 0,
\end{equation}
where the remainder $R_N(t,x)$ and the coefficients $u_j$ fulfill
\begin{enumerate}[(a)]
\item\label{num:rn} $R_N \in L^2\left((0,\infty);BC^0\left([0,\infty)\right)\right)$ with
\[
\int_0^\infty \sup_{x \ge 0} \verti{R_N(t,x)}^2 \d t \lesssim_{k,N,\delta} \vertiii{u^{(0)}}_{\mathrm{init}}^2,
\]
\item\label{num:c0} $\frac{\d^\ell u_j}{\d t^\ell} \in BC^\ell\left(\left[0,\infty\right)\right)$ for $j = 1,\ldots,2(N-\ell)$ and $\ell=0,\ldots,N$ such that
\[
\sup_{t \ge 0} \verti{\frac{\d^\ell u_j}{\d t^\ell}} \lesssim_{k,N,\delta} \vertiii{u^{(0)}}_{\mathrm{init}},
\]
\item\label{num:l2} $\frac{\d^\ell u_j}{\d t^\ell} \in L^2\left(\left[0,\infty\right)\right)$ for $j = 1,\ldots,2(N-\ell)+2$ and $\ell=0,\ldots,N$ such that
\[
\int_0^\infty \verti{\frac{\d^\ell u_j}{\d t^\ell}}^2 \d t \lesssim_{k,N,\delta} \vertiii{u^{(0)}}_{\mathrm{init}}^2.
\]
\end{enumerate}
\end{corollary}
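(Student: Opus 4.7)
The plan is to deduce the corollary directly from Theorem~\ref{th:main} via Lemma~\ref{lem:est_coeff}, with one additional Sobolev-type embedding to handle the uniform-in-$x$ bound on $R_N$. First I would invoke Theorem~\ref{th:main} to obtain the unique solution $u \in U = U(k,N,\delta)$ satisfying $\vertiii{u}_{\mathrm{sol}} \lesssim_{k,N,\delta} \vertiii{u^{(0)}}_{\mathrm{init}}$; by Definition~\ref{def:function_spaces} and the ``$H = W$'' remark following it, each coefficient $u_j(t)$ of the formal expansion \eqref{formal_exp_u} is a well-defined locally integrable function of $t$, and \eqref{exp_u_rigorous} holds with $R_N(t,x)\, x^{2N+1} := u(t,x) - \sum_{j=1}^{2N} u_j(t) x^j$.

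For parts \ref{num:c0} and \ref{num:l2}, the quantitative bounds are precisely \eqref{coeff_est_u_sup} and \eqref{coeff_est_u_l2} of Lemma~\ref{lem:est_coeff} applied to $u \in U$, combined with the a-priori bound of Theorem~\ref{th:main}. The $BC^\ell$ continuity then follows from two inputs: the existence of $\frac{\d^{\ell+1} u_j}{\d t^{\ell+1}}$ in $L^2((0,\infty))$ for the admissible range renders $\frac{\d^\ell u_j}{\d t^\ell}$ absolutely continuous, and $u$ is smooth in the interior $\{t, x > 0\}$ by parabolic regularity, with the initial value attained continuously via the trace estimate Lemma~\ref{lem:trace}. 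The extended range $j = 2(N-\ell)+2$ in part \ref{num:l2} is reached through the polynomial equation \eqref{poly_eq} (encoded in Lemma~\ref{lem:poly_eq}), expressing the borderline coefficient in terms of lower-order $u$- and $\NN(u)$-coefficients, the latter being controlled via Proposition~\ref{prop:non_est}.

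For part \ref{num:rn} I would split
\[
R_N(t,x) \;=\; \frac{u(t,x) - \sum_{j=1}^{2N+1} u_j(t)\, x^j}{x^{2N+1}} \;+\; u_{2N+1}(t).
\]
The second summand is in $L^2((0,\infty))$ by Lemma~\ref{lem:est_coeff}. For the first, the third line of \eqref{norm_sol} applied with $(\alpha,\ell,m,r) = (\delta,0,N,N) \in \JJ_{N,\delta}$ yields
\[
\int_0^\infty \verti{u - \sum_{j=1}^{2N+1} u_j x^j}_{k+4N+3,\, 2N+1+\delta}^2 \, \d t \;\lesssim_{k,N,\delta}\; \vertiii{u}_{\mathrm{sol}}^2.
\]
Via the identification $\verti{w}_{k,\alpha} \sim \vertii{e^{-\alpha s} w(e^s)}_{W^{k,2}(\R)}$ recorded in \S\ref{sec:coerc} and the standard Sobolev embedding $W^{1,2}(\R) \hookrightarrow L^\infty(\R)$ (available since $k+4N+3 \ge 1$), this controls $\sup_{x \in (0,1]} x^{-2N-1}|u - \sum_{j=1}^{2N+1} u_j x^j|$ pointwise in $t$; for $x \ge 1$ the same weighted estimate applied with a second admissible weight in $\JJ_{N,\delta}$ provides the complementary bound. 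Integrating in time and combining with $u_{2N+1} \in L^2$ then yields part \ref{num:rn}.

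The principal technical obstacle is this last step: the natural Sobolev embedding produces an $x^{-2N-1-\delta}$-weighted $\sup_x$ bound, which is tight near $x = 0$ but too loose as $x \to \infty$, so a bootstrap between several admissible weights in $\JJ_{N,\delta}$, analogous to the patching used in \eqref{est_coeff_u1}--\eqref{est_coeff_u2}, is required to recover an unweighted $\sup_{x \ge 0}$ control on $R_N$.
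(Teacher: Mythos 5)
Your argument tracks the paper's actual proof almost line by line. For part~(\ref{num:rn}) the paper makes exactly the split you describe: it bounds $\verti{R_N}_{1,-1+\delta}$ (coming from the third line of~\eqref{norm_sol} with weight $2N+\delta$, effective as $x\to\infty$) and $\verti{R_N-u_{2N+1}}_{1,\delta}$ (from the same line with weight $2N+1+\delta$, effective as $x\searrow 0$), then passes through the logarithmic change of variables and the Sobolev embedding $W^{1,2}(\R)\hookrightarrow L^\infty(\R)$, and patches the two regimes with a cut-off and the $L^2_t$ control of $u_{2N+1}$ from Lemma~\ref{lem:est_coeff}. Your ``bootstrap between admissible weights'' is this cut-off patching; the idea is the same, and it works. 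For parts~(\ref{num:c0}) and~(\ref{num:l2}) the paper's proof is a one-sentence citation of Lemma~\ref{lem:est_coeff} and the a-priori estimate; your elaboration about continuity in time via the $L^2$ control of the next time derivative is consistent but not written out in the paper.

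The one place where you go beyond the paper is the polynomial-equation argument for the endpoint $j=2(N-\ell)+2$ of part~(\ref{num:l2}), and unfortunately this step does not close as sketched. Solving~\eqref{poly_eq} for $u_{j+2}$ with $j=2(N-\ell)$ gives $u_{2(N-\ell)+2}$ in terms of $f_{2(N-\ell)}$, $\partial_t u_{2(N-\ell)}$ and $u_{2(N-\ell)+1}$. To put $\partial_t^\ell u_{2(N-\ell)+2}$ in $L^2_t$ you would therefore need $\partial_t^\ell f_{2(N-\ell)}\in L^2_t$ (but the norm $\vertiii{\cdot}_{\mathrm{rhs}}$ only controls $f_j$ for $j\le 2(N-\ell)-1$) and $\partial_t^{\ell+1} u_{2(N-\ell)}\in L^2_t$ (but \eqref{coeff_est_u_l2} only reaches $j\le 2(N-\ell-1)+1=2(N-\ell)-1$ for $\ell+1$ time derivatives). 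Both terms fall outside the available range, so the polynomial equation does not extend the index by one. Note that the paper's own proof just cites \eqref{coeff_est_u_l2}, which reaches only $j\le 2(N-\ell)+1$; the corollary's stated range $j\le 2(N-\ell)+2$ is therefore not actually covered by the cited argument, and one should suspect an off-by-one discrepancy in the statement rather than try to recover the extra coefficient by the route you propose.
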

\begin{proof}[Proof of Theorem~\ref{th:main}]
Denote by $\SSS: \, U_0 \times F \to U$ the linear solution operator constructed in Proposition~\ref{prop:mr_full}. Then the nonlinear problem \eqref{nonlinear_u} turns into the fixed-point equation
\begin{equation}\label{fixed_non_u}
u = \TT[u] := \SSS \left(u^{(0)}, \NN(u)\right),
\end{equation}
where $u^{(0)} \in U_0$ with $\vertiii{u^{(0)}}_{\mathrm{init}} \le \eps$ and $\eps > 0$ will be determined in what follows. For $0 < \omega \ll_{k,N,\delta} 1$ and $u, u^{(1)}, u^{(2)} \in U$ with $u^{(1)}_{|t = 0} = u^{(0)}$, $u^{(2)}_{|t = 0} = u^{(0)}$, $\vertiii{u}_{\mathrm{sol}} \le \omega$, $\vertiii{u^{(1)}}_{\mathrm{sol}} \le \omega$, and $\vertiii{u^{(2)}}_{\mathrm{sol}} \le \omega$, we have by Proposition~\ref{lem:est_coeff} and Proposition~\ref{prop:non_est}
\begin{equation}\label{main_self}
\vertiii{\TT[u]}_{\mathrm{sol}} \stackrel{\eqref{maxreg_full},\eqref{fixed_non_u}}{\lesssim_{k,N,\delta}} \vertiii{u^{(0)}}_{\mathrm{init}} + \vertiii{\NN(u)}_{\mathrm{rhs}} \stackrel{\eqref{estimate_nonlinear_main}}{\lesssim_{k,N,\delta}} \eps + \omega^2
\end{equation}
and
\begin{eqnarray}\nonumber
\vertiii{\TT\left[u^{(1)}\right] - \TT\left[u^{(2)}\right]}_{\mathrm{sol}} &\stackrel{\eqref{fixed_non_u}}{=}& \vertiii{\SSS \left(0,\NN\left(u^{(1)}\right) - \NN\left(u^{(2)}\right)\right)}_{\mathrm{sol}} \\
&\stackrel{\eqref{maxreg_full}}{\lesssim_{k,N,\delta}}& \vertiii{\NN\left(u^{(1)}\right) - \NN\left(u^{(2)}\right)}_{\mathrm{rhs}} \nonumber \\
&\stackrel{\eqref{estimate_nonlinear_main}}{\lesssim_{k,N,\delta}}& \omega \vertiii{u^{(1)} - u^{(2)}}_{\mathrm{sol}}. \label{main_contract}
\end{eqnarray}
Under the assumptions $0 < \eps \ll_{k,N,\delta} \omega \ll_{k,N,\delta} 1$, estimates \eqref{main_self} and \eqref{main_contract} imply existence of a unique fixed-point to \eqref{fixed_non_u}.

\medskip

Note that this does not imply the uniqueness assertion of Theorem~\ref{th:main} up to now as uniqueness of $u$ only holds in the ball $\{u: \, \vertiii{u}_{\mathrm{sol}} \le \omega\}$. Therefore observe that by approximation with smooth functions (cf.~Definition~\ref{def:function_spaces}) we need to have $\vertiii{u}_{\mathrm{sol},[0,T]} \to \vertiii{u^{(0)}}_{\mathrm{init}}$, where $\vertiii{u}_{\mathrm{sol},[0,T]}$ is the analogue of \eqref{norm_sol} on a fixed time interval $[0,T]$ instead of $[0,\infty)$. This implies in particular $\vertiii{u}_{\mathrm{sol},[0,T]} \le \omega$ if we choose $T > 0$ sufficiently small. As the above fixed-point argument also holds on a fixed time interval $[0,T]$, we infer that $u$ is unique on the interval $[0,T]$. A contradiction argument then yields global uniqueness.

\medskip

Finally, the trace estimate, Lemma~\ref{lem:trace}, allows to approximate $u$ by smooth and compactly supported functions in the norm $\vertiii{\cdot}_{\mathrm{sol}}$, which by \eqref{norm_sol} and \eqref{norm_init} controls also $\vertiii{\cdot}_{\mathrm{init}}$. Therefore, we necessarily have $\vertiii{u(t)}_{\mathrm{init}} \to 0$ as $t \to \infty$.
\end{proof}
\begin{proof}[Proof of Corollary~\ref{cor:main}]
As already noted, parts~\eqref{num:c0} and \eqref{num:l2} follow by combining estimates~\eqref{coeff_est_u_sup} and \eqref{coeff_est_u_l2} of Lemma~\ref{lem:est_coeff} with the a-priori estimate
\[
\vertiii{u}_{\mathrm{sol}} \lesssim_{k,N,\delta} \vertiii{u^{(0)}}_{\mathrm{init}}
\]
of Theorem~\ref{th:main}. 

\medskip

For proving the bound on $R_N$, observe that by a standard embedding
\begin{subequations}\label{est_rn}
\begin{equation}
\vertiii{u}_{\mathrm{sol}}^2 \stackrel{\eqref{norm_sol}}{\ge} \int_0^\infty \verti{u - \sum_{j = 1}^{2 N} u_j x^j}_{1,2 N + \delta}^2 \d t \stackrel{\eqref{exp_u_rigorous}}{\gtrsim_{k,N,\delta}} \int_0^\infty \verti{R_N}_{1,-1+ \delta}^2 \d t \gtrsim \int_0^\infty \sup_{x \ge 0} x^{2 - 2 \delta} \verti{R_N}^2 \d t
\end{equation}
and
\begin{eqnarray}\nonumber
\vertiii{u}_{\mathrm{sol}}^2 &\stackrel{\eqref{norm_sol}}{\ge}& \int_0^\infty \verti{u - \sum_{j = 1}^{2 N+1} u_j x^j}_{1,2 N + 1 + \delta}^2 \d t \stackrel{\eqref{exp_u_rigorous}}{\gtrsim_{k,N,\delta}} \int_0^\infty \verti{R_N - u_{2N+1}}_{1,\delta}^2 \d t \\
&\gtrsim& \int_0^\infty \sup_{x \ge 0} x^{-2\delta} \verti{R_N - u_{2N+1}}^2 \d t.
\end{eqnarray}
\end{subequations}
Take $\eta \in C^\infty\left([0,\infty)\right)$ with $\eta_{|[0,1]} \equiv 1$ and $\eta_{|[2,\infty)} \equiv 0$. Then almost everywhere in $\{t > 0\}$ we have
\begin{eqnarray}\nonumber
\sup_{x \ge 0} \verti{R_N} &\le& \sup_{x \ge 0} \verti{\eta R_N} + \sup_{x \ge 0} \verti{(1-\eta) R_N} \\
&\lesssim& \sup_{x \ge 0} \verti{\eta \left(R_N - u_{2N+1} x\right)} + \sup_{x \ge 0} \verti{(1-\eta) R_N} + \verti{u_{2N+1}} \nonumber\\
&\lesssim_\delta& \sup_{x \ge 0} x^{-\delta} \verti{\eta \left(R_N - u_{2N+1} x\right)} + \sup_{x \ge 0} x^{1-\delta} \verti{(1-\eta) R_N} + \verti{u_{2N+1}} \qquad \label{est_rn_2}
\end{eqnarray}
Combining \eqref{est_rn} and \eqref{est_rn_2} with \eqref{coeff_est_u_l2} of Lemma~\ref{lem:est_coeff}, we have proved part~\eqref{num:rn}.
\end{proof}

The goal of the remaining sections (cf.~\S\ref{sec:nonlin_struct}--\S\ref{sec:nonlin_main_est}) is to prove Proposition~\ref{prop:non_est}.

\subsection{Observations on the structure of the nonlinearity\label{sec:nonlin_struct}}
In $D$-notation we derive from \eqref{term_cond_non} and the identity $\partial_x^s = x^{-s} D (D-1) \ldots (D-s+1)$ that $\NN(u)$ is a linear combination (with constant coefficients) of terms of the form\footnote{Note that here and in what follows, derivatives only act on the factors separated by $\times$.}
\begin{equation}\label{term_non_D}
\begin{aligned}
x^{- 4} \times (1+v_x)^{-3-s^\prime} &\times \left(\prod_{\sigma = 0}^{s_0^\prime} (D-\sigma)\right) \left(x^3 + x^2\right) \times \left(\prod_{\sigma = 0}^{s_0-1} (D-\sigma)\right) \left(x^3 + x^2\right) \\
&\times \bigtimes_{j = 1}^n \left(\prod_{\sigma = 0}^{s_j-1} (D-\sigma)\right) v_x,
\end{aligned}
\end{equation}
where $s_0^\prime, s_0, s_1, \ldots, s_n, n$ meet \eqref{cond_coeff_non}, i.e.,
\[
s_0^\prime + s_0 + s_1 + \cdots + s_n = 3, \quad s_0 \le 1, \quad n \in \{2,\cdots,6\}, \quad s^\prime := \# \{s_j : \, j \ge 1 \mbox{ and } s_j \ge 1\},
\]
and $u \stackrel{\eqref{def_u}}{=} (3 x^2 + 2 x) v$. These nonlinear terms need to be estimated in the norm $\vertiii{\cdot}_{\mathrm{rhs}}$ (cf.~\eqref{norm_rhs}), that is, terms $f$ of the above form \eqref{term_non_D} need to be estimated in
\begin{eqnarray*}
\vertiii{f}_{\mathrm{rhs}}^2 &:=& \sum_{\left(\alpha,\tilde\ell,\tilde m\right) \in \II_{N-1,\delta}} \sum_{\tilde r = 0}^{\tilde m} \sup_{t \ge 0} \verti{\partial_t^{\tilde \ell} f - \sum_{j = 1}^{\floor\alpha+\tilde m+\tilde r} \frac{\d^{\tilde \ell} f_j}{\d t^{\tilde \ell}} x^j}_{k+ 4 \left(N - \tilde \ell\right) -3,\alpha+ \tilde m + \tilde r}^2 \nonumber \\
&& + \sum_{\left(\alpha,\ell,m\right) \in \JJ_{N,\delta}} \sum_{r = 0}^m \int_0^\infty \verti{\partial_t^\ell \underline f - \sum_{j = 1}^{\floor{\alpha}+m+r-1} \frac{\d^\ell \underline f_j}{\d t^\ell} x^j}_{k+4 (N - \ell)-1,\alpha+m+r -1}^2 \d t.
\end{eqnarray*}
Notice that the notation $\underline f = \frac{1}{x + 1} f$ has been introduced in \eqref{decomp_w}. Since we need to subtract the power series expansion of $f$ to arbitrary orders in the norm $\vertiii{\cdot}_{\mathrm{rhs}}$, it appears more favorable to expand $(1+v_x)^{-3-s^\prime}$ in a power series in $v_x$, so that $\NN(u)$ can be written as a convergent series of terms of the form
\begin{subequations}\label{term_non_power}
\begin{equation}\label{term_non_power_a}
c \, x^{-4} \times \left(\prod_{\sigma = 0}^{s_0^\prime} (D-\sigma)\right) \left(x^3 + x^2\right) \times \left(\prod_{\sigma = 0}^{s_0-1} (D-\sigma)\right) \left(x^3 + x^2\right) \times \bigtimes_{j = 1}^n \left(\prod_{\sigma = 0}^{s_j-1} (D-\sigma)\right) v_x,
\end{equation}
where $c = c(s_0^\prime, s_0, s_1, \ldots, s_n)$ is a real constant and $s_0^\prime, s_0, s_1, \ldots, s_n, n$ fulfill
\begin{equation}
s_0^\prime + s_0 + s_1 + \cdots + s_n = 3, \quad s_0 \le 1, \quad \mbox{and} \quad n \in \N \quad \mbox{with} \quad n \ge 2.
\end{equation}
\end{subequations}
%

\subsubsection{The structure of the $\sup$-parts\label{sec:sup_struct}}
What follows is tailored to the first line of \eqref{norm_rhs}. Without loss of generality assume $s_1 \ge \ldots \ge s_n$ (so that in particular $s_j = 0$ for $j \ge 4$). Then we notice that the factors
\[
x^{-1} \left(\prod_{\sigma = 0}^{s_0^\prime} (D-\sigma)\right) \left(x^3 + x^2\right) \quad \mbox{and} \quad x^{-1} \left(\prod_{\sigma = 0}^{s_0-1} (D-\sigma)\right) \left(x^3 + x^2\right)
\]
can be combined with
\[
\left(\prod_{\sigma = 0}^{s_1-1} (D-\sigma)\right) v_x \quad \mbox{and} \quad \left(\prod_{\sigma = 0}^{s_2-1} (D-\sigma)\right) v_x,
\]
respectively, using $u \stackrel{\eqref{def_u}}{=} (3 x^2 + 2 x) v$. To this end, observe
\begin{equation}\label{moeb1}
\begin{aligned}
& x^{-1} \left(\prod_{\sigma = 0}^{s_0^\prime} (D-\sigma)\right) (x^3 + x^2) \times \left(\prod_{\sigma = 0}^{s_1-1} (D-\sigma)\right) v_x \\
& \quad = \frac{\left(\prod_{\sigma = 0}^{s_0^\prime} (3-\sigma)\right) x + \left(\prod_{\sigma = 0}^{s_0^\prime} (2-\sigma)\right)}{3 x + 2} (3 x^2 + 2 x) \left(\prod_{\sigma = 0}^{s_1-1} (D-\sigma)\right) v_x,
\end{aligned}
\end{equation}
where $\frac{\left(\prod_{\sigma = 0}^{s_0^\prime} (3-\sigma)\right) x + \left(\prod_{\sigma = 0}^{s_0^\prime} (2-\sigma)\right)}{3 x + 2}$ is smooth function and any number of $D$-derivatives of it is bounded. Next we observe that the following operator identity holds true:
\begin{align*}
\left(D-\sigma\right) (3 x^2 + 2 x)^{-1} &= (3 x^2 + 2 x)^{-1} \left(D - \sigma - \frac{6 x^2 + 2 x}{3 x^2 + 2 x}\right) \\
&= (3 x^2 + 2 x)^{-1} \left(D - \sigma - 1 - \frac{3 x}{3 x + 2}\right)
\end{align*}
Combining this with \eqref{def_u} and \eqref{moeb1} and iterating the procedure, we note that
\begin{equation}\label{moeb2}
\begin{aligned}
(3 x^2 + 2 x) \left(\prod_{\sigma = 0}^{s_1-1} (D-\sigma)\right) v_x &= x^{-1} (3 x^2 + 2 x) \left(\prod_{\sigma = 0}^{s_1} (D-\sigma)\right) v \\
&= x^{-1} \left(\prod_{\sigma = 1}^{s_1+1} \left(D-\sigma - \frac{3 x}{3 x+2}\right)\right) u.
\end{aligned}
\end{equation}
Now notice that
\begin{equation}\label{moeb3}
D \frac{3 x}{3 x + 2} = \frac{3 x}{3 x + 2} \left(D+1\right) - \left(\frac{3 x}{3 x + 2}\right)^2,
\end{equation}
so that \eqref{term_non_power}--\eqref{moeb3} and an analogous reasoning for the second pair of factors $x^{-1} D (D-1) \ldots (D-s_0+1) \left(x^3 + x^2\right)$ and $D (D-1) \ldots (D - s_2+1) v_x$ yield that the nonlinearity $\NN(u)$ can be written as a convergent series
\begin{subequations}\label{term_non_power_2}
\begin{equation}\label{term_non_power_2a}
c(x) \, x^{- 4} \times \left(\prod_{\sigma = 1}^{s_1} (D-\sigma)\right) u \times \left(\prod_{\sigma = 1}^{s_2} (D-\sigma)\right)  u \times \bigtimes_{j = 3}^n \left(\prod_{\sigma = 0}^{s_j-1} (D-\sigma)\right) v_x,
\end{equation}
where $c(x) = c(x, s_1, \ldots, s_n)$ is a smooth real-valued function in $x \in (0,\infty)$ such that $\verti{D^j c}$ is bounded for every $j \in \N_0$, and $s_1, \ldots, s_n, n$ fulfill
\begin{equation}\label{term_non_power_2b}
s_1 + \ldots + s_n \le 5 \quad \mbox{and} \quad n \in \N \quad \mbox{with} \quad n \ge 2.
\end{equation}
Here we have renumbered the two factors with $u$ appearing, as in view of \eqref{moeb2} and \eqref{moeb3} we need to allow for cases in which no derivatives are acting on them. Note that in view of the operations in \eqref{moeb1} and \eqref{moeb2} not necessarily $s_1 \ge s_2 \ge \ldots \ge s_n$ but we still need to have
\begin{equation}\label{term_non_power_2c}
s_1 \le 4, \quad s_2 \le 2, \quad s_3 \le 1, \quad \mbox{and} \quad s_j = 0 \quad \mbox{if} \quad j \ge 4.
\end{equation}
\end{subequations}
Note that $\verti{D^j \left(x^{-\tau}c(x)\right)}$ in \eqref{term_non_power_2} is bounded, where we have introduced the notation
\begin{equation}\label{def_tau}
\tau := \max\left\{0,2-s_1\right\} + \max\left\{0,1-s_2\right\}.
\end{equation}
The fact that $x^\tau$ can be factored out from $c(x)$ has two reasons:
\begin{enumerate}[(a)]
\item\label{num:factor_1} If $s_1 + \ldots + s_n = 0$ in \eqref{term_non_power}, then the term in \eqref{term_non_power_a} is only non-vanishing if $\left(s_0^\prime,s_0\right) = (2,1)$. In this case, we have
\[
\frac{\left(\prod_{\sigma = 0}^{s_0^\prime} (3-\sigma)\right) x + \left(\prod_{\sigma = 0}^{s_0^\prime} (2-\sigma)\right)}{3 x + 2} = \frac{6 x}{3 x + 2}
\]
in \eqref{moeb1} and we can factor $x$ from $c(x)$.
\item\label{num:factor_2} Commuting $\frac{3 x}{3 x + 2}$ as done in \eqref{moeb3} leads to an additional factor $x$ in $c(x)$ as well.
\end{enumerate}
One may easily verify that the combination of \eqref{num:factor_1} and \eqref{num:factor_2} leads to the definition of $\tau$ as in \eqref{def_tau}. Since from \eqref{bc_lin} and sufficient regularity of $u$ at $x = 0$ we have that the nonlinear term in \eqref{term_non_power_2} behaves as $c(x) O\left(x^\nu\right)$ as $x \searrow 0$ where $\nu := \sum_{j = 1}^n s_j - 2$, this shows that indeed the boundary condition \eqref{bc_rhs} is satisfied individually for each term in \eqref{term_non_power_2a}. 

\subsubsection{A general argument for estimating products involving Taylor polynomials\label{sec:power_product}}
In the norm $\vertiii{\NN(u)}_{\mathrm{rhs}}^2$ (cf.~\eqref{norm_rhs}) we are dealing with the subtraction of the Taylor polynomial of a product to a certain order that we will denote by $J$ in this section. Suppose that $f, g: \, [0,\infty) \to \R$ are smooth and consider the term
\[
f g - \sum_{j = 0}^J (f g)_j x^j, \quad \mbox{where} \quad (f g)_j = \frac{1}{j !} \frac{\d^j}{\d x^j} (f g)_{|x = 0}.
\]
Observe
\begin{eqnarray}\nonumber
f g - \sum_{j = 0}^J (f g)_j x^j &= \left(f - \sum_{j^\prime = 0}^J f_{j^\prime} x^{j^\prime}\right) g + \left(\sum_{j^\prime = 0}^J f_{j^\prime} x^{j^\prime} g - \sum_{j = 0}^J (f g)_j x^j\right) \\
&= \left(f - \sum_{j^\prime = 0}^J f_{j^\prime} x^{j^\prime}\right) g + \sum_{j^\prime = 0}^J f_{j^\prime} x^{j^\prime} \left(g - \sum_{j^{\prime\prime} = 0}^{J-j^\prime} g_{j^{\prime\prime}} x^{j^{\prime\prime}}\right), \label{power_product}
\end{eqnarray}
where $\left(f - \sum_{j^\prime = 0}^J f_{j^\prime} x^{j^\prime}\right) g = O\left(x^{J+1}\right)$ and $f_{j^\prime} x^{j^\prime} \left(g - \sum_{j^{\prime\prime} = 0}^{J-j^\prime} g_{j^{\prime\prime}} x^{j^{\prime\prime}}\right) = O\left(x^{J+1}\right)$. This simple mechanism will be used in what follows to reduce the subtraction of the Taylor polynomial to the subtraction of the Taylor polynomial from the individual factors.

\subsubsection{The structure of the $\sup$-parts (continued)\label{sec:sup_struct_2}}
We continue with \S\ref{sec:sup_struct} and concentrate on the structure of the terms in the first line of the squared norm $\vertiii{\NN(u)}_1^2$ (cf.~\eqref{norm_rhs}). We first consider the expression
\[
\sup_{t \ge 0} \verti{\partial_t^{\tilde \ell} f - \sum_{j = 1}^{\floor\alpha+\tilde m+\tilde r} \frac{\d^{\tilde \ell} f_j}{\d t^{\tilde \ell}} x^j}_{k+4 \left(N - \tilde \ell\right) -3,\alpha+ \tilde m + \tilde r}^2,
\]
where $\alpha \in \left\{\delta, 1 + \delta\right\}$, $0 \le \tilde\ell + \tilde m \le N - 1 - \floor{\alpha}$, $0 \le \tilde r \le \tilde m$, and $f$ is to be replaced by \eqref{term_non_power_2a}. Distributing $\tilde \ell$ time derivatives onto the factors in \eqref{term_non_power_2a}, we need to estimate a term of the form
\begin{equation}\label{term_non_sup_0}
f := c(x) \, x^{- 4} \times \left(\prod_{\sigma = 1}^{s_1} (D-\sigma)\right) \partial_t^{\tilde \ell_1} u \times \left(\prod_{\sigma = 1}^{s_2} (D-\sigma)\right) \partial_t^{\tilde \ell_2} u \times \bigtimes_{j = 3}^n \left(\prod_{\sigma = 0}^{s_j-1} (D-\sigma)\right) \partial_t^{\tilde \ell_j} v_x
\end{equation}
in
\begin{equation}\label{term_sup_rhs}
\sup_{t \ge 0} \verti{f - \sum_{j = 1}^{\floor\alpha+\tilde m+\tilde r} f_j x^j}_{k+4 \left(N - \tilde \ell\right) -3,\alpha+ \tilde m + \tilde r}^2,
\end{equation}
with $\tilde\ell_1+\ldots+\tilde\ell_n = \tilde\ell$, and where $s_1,\ldots,s_n$ and $n$ meet \eqref{term_non_power_2b} and \eqref{term_non_power_2c}. Applying the product decomposition of \S\ref{sec:power_product} (cf.~\eqref{power_product}), we note that we can factor out the finite terms
\[
\vertii{c - \sum_{j = 0}^{\floor\alpha+\tilde m+\tilde r+4} c_j x^j}_{k+4 \left(N - \tilde \ell\right) - 3,\floor\alpha+ \tilde m + \tilde r + 4}^2 \quad \mbox{and} \quad \max_{j = \tau,\ldots,\floor\alpha+\tilde m+\tilde r+4} \verti{c_j}^2,
\]
where
\begin{equation}\label{def_sup_norm}
\vertii{w}_{\kappa,\gamma} := \max_{j = 0,\ldots,\kappa} \vertii{D^j w}_\gamma \quad \mbox{and} \quad \vertii{w}_\gamma := \sup_{x \in (0,\infty)} \verti{x^{-\gamma} w(x)}.
\end{equation}
Hence, in this case we are left with estimating a term of the form \eqref{term_non_sup_0} with $c(x) = x^\iota$ and $\iota = \tau,\ldots,\floor\alpha+\tilde m + \tilde r$ (where $\tau$ is defined as in \eqref{def_tau}) in \eqref{term_sup_rhs}, that is, we need to estimate terms of the form
\begin{subequations}\label{term_non_sup}
\begin{equation}\label{term_non_sup_a}
f := \left(\prod_{\sigma = 1}^{s_1} (D-\sigma)\right) \partial_t^{\tilde \ell_1} u \times \left(\prod_{\sigma = 1}^{s_2} (D-\sigma)\right) \partial_t^{\tilde \ell_2} u \times \bigtimes_{j = 3}^n \left(\prod_{\sigma = 0}^{s_j-1} (D-\sigma)\right) \partial_t^{\tilde \ell_j} v_x
\end{equation}
in
\begin{equation}\label{term_non_sup_b}
\sup_{t \ge 0} \verti{f - \sum_{j = 1}^{\floor\alpha+\tilde m+\tilde r+4-\iota} f_j x^j}_{k+ 4 \left(N - \tilde \ell\right) -3,\alpha+ \tilde m + \tilde r + 4 - \iota}^2,
\end{equation}
where $s_1 \le 4$, $s_2 \le 2$, $s_3 \le 1$, $s_j = 0$ for $j \ge 4$, and
\begin{equation}\label{term_non_sup_c}
s_1 + \cdots + s_n \le 3, \quad n \in \N \quad \mbox{with} \quad n \ge 2, \quad \mbox{and} \quad \iota = \tau,\ldots,\floor\alpha+\tilde m+\tilde r+4,
\end{equation}
\end{subequations}
where $\tau$ is defined in \eqref{def_tau}. In order to obtain a factorization of the norm in terms of $\vertiii{u}_{\mathrm{sol}}$, we will derive control of factors $\vertii{\partial_t^{\tilde \ell_j} v_x - \sum_{j = 0}^{\mu} \frac{\d^{\tilde\ell} (v_x)_j}{\d t^{\tilde\ell}} x^j}_{s_j,\mu}$ and $\verti{\frac{\d^{\tilde\ell} (v_x)_j}{\d t^{\tilde\ell}}}$ by $\vertiii{u}_{\mathrm{sol}}$. Note that for $\tilde\ell_j = 0$, $s_j = 0$, and $\mu = 0$ we have that $\vertii{\partial_t^{\tilde \ell_j} v_x}_{s_j,0} = \vertii{v_x}_{BC^0([0,\infty))}$ controls the Lipschitz constant of $v$. Smallness of the Lipschitz constant is crucial for invertibility of the von Mises transform \eqref{vonmises}. Lipschitz control will be the objective of \S\ref{sec:lip_control}. As a result, we can essentially reduce the situation to the bilinear case of two factors $u$.

\subsubsection{The structure of the $L^2$-parts\label{sec:l2_struct}}
We turn our attention to the second line of \eqref{norm_rhs}, i.e.,
\begin{equation}\label{typical_norm_l2}
\int_0^\infty \verti{\partial_t^\ell \underline f - \sum_{j = 0}^{\floor{\alpha}+m+r-1} \frac{\d^\ell \underline f_j}{\d t^\ell} x^j}_{k+4 (N - \ell)-1,\alpha+m+r-1}^2 \d t,
\end{equation}
where $\alpha \in \left\{-\frac 1 2 + \delta, \delta, \frac 1 2 + \delta, 1 + \delta\right\}$, $0 \le \ell+m \le N - \floor{\frac 12 + \alpha}$, and $0 \le r \le m$. Here, $\underline f$ needs to be replaced by the nonlinearity $\underline{\NN(u)}$, so that the terms \eqref{term_non_power} obtain an additional factor $(x+1)^{-1}$. In comparison to the reasoning in \S\ref{sec:sup_struct}, we may already distribute the $D$-derivatives of \eqref{typical_norm_l2} at this stage. Then we apply the transformation $u \stackrel{\eqref{def_u}}{=} (3 x^2 + 2 x) v$ to a factor $D v_x$ on which the maximal number of $D$-derivatives acts and otherwise take $v_x$ itself. By a reasoning analogous to the one in \S\ref{sec:sup_struct} leading to \eqref{term_non_power_2}, we conclude that the nonlinearity can be written as a convergent series of terms of the form
\begin{subequations}\label{typical_power_l2}
\begin{equation}\label{typical_power_l2_a}
c(x) x^{-2} \times D^{r_1} \left(\prod_{\sigma = 1}^{s_1} (D-\sigma)\right) u \times \bigtimes_{j = 2}^n D^{r_j} \left(\prod_{\sigma = 0}^{s_j-1} (D-\sigma) \right) v_x,
\end{equation}
where $c = c\left(x,s_1,\ldots,s_n, r_1, \ldots, r_n\right)$ is smooth in $x \in [0,\infty)$, $\verti{D^j c}$ is bounded on $[0,\infty)$, and
\begin{equation}\label{typical_power_l2_b}
s_1 + \ldots + s_n \le 4, \quad r_1 + \ldots + r_n \le k + 4 (N - \ell) - 1, \quad \mbox{and} \quad n \in \N \quad \mbox{with} \quad n \ge 2.
\end{equation}
By the above choice of $u$, we additionally have
\begin{equation}\label{typical_power_l2_d}
s_j + r_j \le \max\left\{3+\floor{\frac{k+4(N-\ell)-1}{2}},k+4(N-\ell)-1\right\} \quad \mbox{for} \quad j \ge 2.
\end{equation}
\end{subequations}
As done in the context of \eqref{term_non_power_2} (cf.~items \eqref{num:factor_1} and \eqref{num:factor_2} there), we infer that $\verti{D^j \left(x^{-\tau^\prime}c(x)\right)}$ is bounded, where
\begin{equation}\label{def_taup}
\tau^\prime := \max\left\{0,1-s_1\right\} + \max\left\{0,1-\max\{0,s_1-1\}-\sum_{j=2}^n s_j\right\}.
\end{equation}
Thus it can be verified that each term in \eqref{typical_power_l2_a} satisfies the boundary condition \eqref{bc_rhs} individually.

\medskip

Now factoring out $c$ as done in \S\ref{sec:sup_struct_2} and distributing time derivatives onto the individual factors in \eqref{typical_power_l2_a}, we are left with estimating a term of the form
\begin{subequations}\label{term_non_l2}
\begin{equation}\label{term_non_l2_a}
f := D^{r_1} \left(\prod_{\sigma = 1}^{s_1} (D-\sigma)\right) \partial_t^{\ell_1} u \times \bigtimes_{j = 2}^n D^{r_j} \left(\prod_{\sigma = 0}^{s_j-1} (D-\sigma)\right) \partial_t^{\ell_j} v_x
\end{equation}
in
\begin{equation}\label{term_non_l2_b}
\int_0^\infty \verti{f - \sum_{j = 0}^{\floor{\alpha}+m+r+1-\iota} f_j x^j}_{\alpha +m+r+1 - \iota}^2 \d t,
\end{equation}
where \eqref{typical_power_l2_b}, \eqref{typical_power_l2_d}, and
\begin{equation}\label{term_non_l2_c}
\iota = \tau^\prime,\ldots,\floor\alpha+m+r+1
\end{equation}
\end{subequations}
with $\tau^\prime$ as in \eqref{def_taup}, hold true.

\subsection{Lipschitz control and supremum bounds\label{sec:lip_control}}
The following two estimates are essential in order to factorize the nonlinearity.

\begin{lemma}\label{lem:lipschitz}
Throughout the proof, estimates depend on $k$, $N$, and $\delta$. Suppose $N \ge 1$, $k \ge 3$, and $0 < \delta < 1$. Then for $\tilde\ell \in \{0,\ldots,N\}$, $\kappa \in \left\{0,\ldots,k+4\left(N - \tilde\ell\right) - 1\right\}$, and $\mu \in \left\{0, \ldots, 2 \left(N - \tilde \ell -1\right)\right\}$ we have the estimate
\begin{equation}\label{est_lip}
\max\left\{ \sup_{t \ge 0} \vertii{\partial_t^{\tilde\ell} v_x - \sum_{j = 0}^{\mu} \frac{\d^{\tilde\ell} (v_x)_j}{\d t^{\tilde\ell}} x^j}_{\kappa,\mu}, \max_{j = 0, \ldots, \mu} \sup_{t \ge 0} \verti{\frac{\d^{\tilde\ell} (v_x)_j}{\d t^{\tilde\ell}}} \right\} \lesssim_{k,N,\delta} \vertiii{u}_{\mathrm{sol}}
\end{equation}
for every locally integrable $u: (0,\infty)^2 \to \R$, where $v \stackrel{\eqref{def_u}}{=} (3 x^2 + 2 x)^{-1} u$ and $\vertii{\cdot}_{k,\mu}$ was defined in \eqref{def_sup_norm}.
\end{lemma}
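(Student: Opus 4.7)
The plan is to reduce \eqref{est_lip} to bounds on $u$ already encoded in $\vertiii{u}_{\mathrm{sol}}$ by exploiting the algebraic identity $v = u/(x(3x+2))$. The multiplier $(3x+2)^{-1}$ is smooth on $[0,\infty)$ with uniformly bounded $D$-derivatives, so only the factor $1/x$ together with the extra $\partial_x$ in $v_x$ cost two orders of regularity. Matching power series in $u = (3x^2 + 2x) v$ gives the recursion $u_j = 2 v_{j-1} + 3 v_{j-2}$ (with $v_{-1} := 0$), so each $v_j$---and hence $(v_x)_j = (j+1) v_{j+1}$---is a fixed, $N$-independent linear combination of $u_1,\ldots,u_{j+2}$. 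Since $j+2 \le \mu + 2 \le 2(N - \tilde\ell)$ under our hypotheses, applying $\partial_t^{\tilde\ell}$ and invoking the coefficient estimate \eqref{coeff_est_u_sup} of Lemma~\ref{lem:est_coeff} yields the second bound in \eqref{est_lip}.

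For the first bound, set
\[
\tilde v := v - \sum_{j=0}^{\mu+1} v_j x^j, \qquad w := u - \sum_{j=1}^{\mu+2} u_j x^j,
\]
so that $\tilde v_x = v_x - \sum_{j=0}^\mu (v_x)_j x^j$ is exactly the quantity whose $\vertii{\cdot}_{\kappa,\mu}$-norm is to be estimated (the operator $\partial_t^{\tilde\ell}$ commutes with the whole construction). A direct computation using $v = u/(x(3x+2))$ and the recursion above produces the identity $x(3x+2)\tilde v = w - 3 v_{\mu+1} x^{\mu+3}$. Writing $\partial_x = x^{-1} D$ and applying the Leibniz rule for $D$ against the smooth-with-bounded-$D$-derivatives factor $(3x+2)^{-1}$, one expresses $D^j \partial_t^{\tilde\ell} \tilde v_x$ (for $j \le \kappa$) as a finite linear combination of terms $x^{-2} (3x+2)^{-r} D^{j'} \partial_t^{\tilde\ell} w$ with $j' \le j+1$ and uniformly bounded prefactors, plus a correction of the form $\partial_t^{\tilde\ell} v_{\mu+1}$ times a rational function $r(x)$ that satisfies $\sup_{x > 0} x^{-\mu} \verti{D^j r(x)} < \infty$; this correction is already controlled by the first step.

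It remains to translate a weighted $L^2$ bound on $\partial_t^{\tilde\ell} w$ into the required weighted sup bound. After the substitution $s = \ln x$, the norm $\verti{w}_{k,\gamma}$ coincides with the unweighted $W^{k,2}(\R)$-norm of $e^{-\gamma s} w(e^s)$, whence the one-dimensional Sobolev embedding $W^{1,2}(\R) \hookrightarrow BC^0(\R)$ yields
\[
\sup_{x > 0} x^{-\gamma} |D^{j'} w(x)| \;\lesssim_\gamma\; \verti{w}_{j'+1,\gamma} \qquad \text{for every } \gamma \in \R.
\]
Choosing $\gamma = \mu + 2 + \delta$ absorbs the explicit factor $x^{-2}$ while leaving the desired outer weight $x^{-\mu}$, and, with $j' \le \kappa+1$, the resulting bound $\sup_t \verti{\partial_t^{\tilde\ell} w}_{\kappa+2,\mu+2+\delta}$ is precisely the $\sup_t$-summand of $\vertiii{u}_{\mathrm{sol}}$ indexed by $(\alpha, \ell, m+r) = (\delta, \tilde\ell, \mu+2)$.

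The main point to verify is that this triple is admissible. Membership $(\delta, \tilde\ell, m) \in \II_{N,\delta}$ together with $r \le m$ and $m + r = \mu + 2$ forces $\mu + 2 \le 2m \le 2(N - \tilde\ell)$, matching the hypothesis $\mu \le 2(N - \tilde\ell - 1)$ exactly; one may take $m = \min\{\mu+2, N-\tilde\ell\}$ and $r = \mu+2-m$. The required regularity constraint $\kappa + 2 \le k + 4(N - \tilde\ell) + 1$ follows from $\kappa \le k + 4(N - \tilde\ell) - 1$. Once these index inequalities are checked, the remainder of the argument is elementary; the real work is the bookkeeping to make sure the weights, Taylor-subtraction order, and regularity all land simultaneously inside one of the summands defining $\vertiii{u}_{\mathrm{sol}}$.
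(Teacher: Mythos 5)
Your recursion for the coefficients, the identity $x(3x+2)\tilde v = w - 3v_{\mu+1}x^{\mu+3}$, and the index admissibility check are correct, and this explicit-identity route is a clean variant of the paper's use of the product decomposition principle \eqref{power_product}. There is, however, a real gap in the Sobolev-embedding step. You invoke $\sup_x x^{-\gamma}\verti{D^{j'}w}\lesssim\verti{w}_{j'+1,\gamma}$ with the single weight $\gamma=\mu+2+\delta$, but what must be controlled is $\sup_{x>0}x^{-\mu-2}(3x+2)^{-r}\verti{D^{j'}w}$: your choice of $\gamma$ gives $\sup_x x^{-\mu-2-\delta}\verti{D^{j'}w}$, which dominates the required quantity only for $x\le 1$; for $x>1$ the inequality $x^{-\mu-2}\lesssim x^{-\mu-2-\delta}$ fails, so the supremum over all of $(0,\infty)$ is not controlled. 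The paper resolves exactly this point via the cutoff at the start of its proof of Lemma~\ref{lem:lipschitz}, cf.~\eqref{est_linf1}: it uses the weight $\mu+\delta$ (eventually $\mu+2+\delta$ in terms of $w$) for the near-origin piece and a strictly \emph{smaller} weight $\mu-1+\delta$ (eventually $\mu+1+\delta$) for the far-field piece, after subtracting one fewer Taylor coefficient there. Your argument could be repaired without the cutoff by observing that every term in your decomposition carries at least one factor $(3x+2)^{-1}$ (i.e.\ $r\ge1$), since differentiating $v=u/(x(3x+2))$ always produces one; this factor supplies the missing $x^{-\delta}$ decay at infinity. But as written you declare these factors merely ``uniformly bounded,'' which discards precisely the decay you need, so the single-weight estimate does not close.
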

\begin{proof}
We first observe that for a smooth cut off $\eta : [0,\infty) \to \R$ with $\eta_{|[0,1]} = 1$ and $\eta_{|[2,\infty)} = 0$ we have by a standard embedding
\begin{eqnarray}\nonumber
\lefteqn{\vertii{\partial_t^{\tilde\ell} v_x - \sum_{j = 0}^{\mu} \frac{\d^{\tilde\ell} (v_x)_j}{\d t^{\tilde\ell}} x^j}_{\kappa,\mu}} \\
&\lesssim& \vertii{\eta \left(\partial_t^{\tilde\ell} v_x - \sum_{j = 0}^{\mu} \frac{\d^{\tilde\ell} (v_x)_j}{\d t^{\tilde\ell}} x^j\right)}_{\kappa,\mu} + \verti{\frac{\d^{\tilde\ell} (v_x)_\mu}{\d t^{\tilde\ell}}} + \vertii{(1-\eta) \left(\partial_t^{\tilde\ell} v_x - \sum_{j = 0}^{\mu-1} \frac{\d^{\tilde\ell} (v_x)_j}{\d t^{\tilde\ell}} x^j\right)}_{\kappa,\mu} \nonumber \\
&\lesssim& \verti{\partial_t^{\tilde\ell} v_x - \sum_{j = 0}^{\mu} \frac{\d^{\tilde\ell} (v_x)_j}{\d t^{\tilde\ell}} x^j}_{\kappa+1,\mu+\delta} + \verti{\frac{\d^{\tilde\ell} (v_x)_\mu}{\d t^{\tilde\ell}}} + \verti{\partial_t^{\tilde\ell} v_x - \sum_{j = 0}^{\mu-1} \frac{\d^{\tilde\ell} (v_x)_j}{\d t^{\tilde\ell}} x^j}_{\kappa+1,\mu-1+\delta} \nonumber \\
&\lesssim& \verti{\partial_t^{\tilde\ell} v - \sum_{j = 0}^{\mu+1} \frac{\d^{\tilde\ell} v_j}{\d t^{\tilde\ell}} x^j}_{\kappa+2,\mu+1+\delta} + \verti{\partial_t^{\tilde\ell} v - \sum_{j = 0}^\mu \frac{\d^{\tilde\ell} v_j}{\d t^{\tilde\ell}} x^j}_{\kappa+2,\mu+\delta}, \label{est_linf1}
\end{eqnarray}
where we have used $(v_x)_j = (j+1) v_{j+1}$ and
\begin{eqnarray}\nonumber
\verti{\frac{\d^{\tilde\ell} (v_x)_{\mu+1}}{\d t^{\tilde\ell}}}^2 &\lesssim& \int_{\frac 1 2}^2 \left(\frac{\d^{\tilde\ell} (v_x)_{\mu+1}}{\d t^{\tilde\ell}}\right)^2 \frac{\d x}{x} \\
&\lesssim& \int_{\frac 1 2}^2 x^{-2 \mu-2 \delta} \left(\partial_t^{\tilde\ell} v_x - \sum_{j = 0}^{\mu} \frac{\d^{\tilde\ell} (v_x)_j}{\d t^{\tilde\ell}} x^j\right)^2 \frac{\d x}{x} \nonumber \\
&& + \int_{\frac 1 2}^2 x^{2 -2 \mu + 2 \delta} \left(\partial_t^{\tilde\ell} v_x - \sum_{j = 0}^{\mu-1} \frac{\d^{\tilde\ell} (v_x)_j}{\d t^{\tilde\ell}} x^j\right)^2 \frac{\d x}{x} \nonumber\\
&\lesssim& \verti{\partial_t^{\tilde\ell} v_x - \sum_{j = 0}^{\mu} \frac{\d^{\tilde\ell} (v_x)_j}{\d t^{\tilde\ell}} x^j}_{\kappa+1,\mu+\delta}^2 + \verti{\partial_t^{\tilde\ell} v_x - \sum_{j = 0}^{\mu-1} \frac{\d^{\tilde\ell} (v_x)_j}{\d t^{\tilde\ell}} x^j}_{\kappa+1,\mu-1+\delta}^2. \qquad \label{bound_der_vx}
\end{eqnarray}
We note that $v \stackrel{\eqref{def_u}}{=} (3 x^2 + 2 x)^{-1} u$, so that we have due to \eqref{est_linf1}
\begin{equation}\label{est_linf1_1}
\vertii{\partial_t^{\tilde\ell} v_x - \sum_{j = 0}^{\mu} \frac{\d^{\tilde\ell} (v_x)_j}{\d t^{\tilde\ell}} x^j}_{\kappa,\mu} \lesssim \verti{\partial_t^{\tilde\ell} w - \sum_{j = 0}^{\mu+2} \frac{\d^{\tilde\ell} w_j}{\d t^{\tilde\ell}} x^j}_{\kappa+2,\mu+2+\delta} + \verti{\partial_t^{\tilde\ell} w - \sum_{j = 0}^{\mu+1} \frac{\d^{\tilde\ell} w_j}{\d t^{\tilde\ell}} x^j}_{\kappa+2,\mu+1+\delta}
\end{equation}
where $w := (3 x + 2)^{-1} u = x v$ (in particular $v_j = w_{j+1}$). Applying the decomposition principle in \S\ref{sec:power_product} to the product $w = (3 x + 2)^{-1} u$, we have
\begin{equation}\label{est_linf2}
\verti{\partial_t^{\tilde\ell} w - \sum_{j = 0}^{\mu+2} \frac{\d^{\tilde\ell} w_j}{\d t^{\tilde\ell}} x^j}_{\kappa+2,\mu+2+\delta} \lesssim \sum_{\varrho = 0}^{\mu+2} \verti{\partial_t^{\tilde\ell} u - \sum_{j = 1}^\varrho \frac{\d^{\tilde\ell} u_j}{\d t^{\tilde\ell}} x^j}_{\kappa+2,\varrho+\delta}.
\end{equation}
Similarly, we obtain
\begin{equation}\label{est_linf3}
\verti{\partial_t^{\tilde\ell} w - \sum_{j = 0}^{\mu+1} \frac{\d^{\tilde\ell} w_j}{\d t^{\tilde\ell}} x^j}_{\kappa+2,\mu+2-\delta} \lesssim \sum_{\varrho = 0}^{\mu+1} \verti{\partial_t^{\tilde\ell} u - \sum_{j = 1}^\varrho \frac{\d^{\tilde\ell} u_j}{\d t^{\tilde\ell}} x^j}_{\kappa+2,\varrho+\delta}.
\end{equation}
Estimates~\eqref{est_linf2} and \eqref{est_linf3} in \eqref{est_linf1_1} yield
\begin{equation}\label{est_linf_main}
\vertii{\partial_t^{\tilde\ell} v_x - \sum_{j = 0}^{\mu} \frac{\d^{\tilde\ell} (v_x)_j}{\d t^{\tilde\ell}} x^j}_{\kappa,\mu} \lesssim \sum_{\varrho = 0}^{\mu+2} \verti{\partial_t^{\tilde\ell} u - \sum_{j = 1}^\varrho \frac{\d^{\tilde\ell} u_j}{\d t^{\tilde\ell}} x^j}_{\kappa+2,\varrho+\delta}.
\end{equation}
Notice that the right-hand side of \eqref{est_linf_main} also bounds the sum
\[
\sum_{j = 0}^{\mu} \verti{\frac{\d^{\tilde\ell} (v_x)_j}{\d t^{\tilde\ell}}} \stackrel{\eqref{def_u}}{\lesssim} \sum_{j = 1}^{\mu+2} \verti{\frac{\d^{\tilde\ell} u_j}{\d t^{\tilde\ell}}}
\]
by \eqref{bound_der_vx}. Hence it remains to prove that the right-hand side of \eqref{est_linf_main} can be controlled by the norm $\vertiii{u}_{\mathrm{sol}}$. This is an immediate consequence of
\begin{equation}\label{norm_sol_sup}
\vertiii{u}_{\mathrm{sol}}^2 \stackrel{\eqref{norm_sol}}{\ge} \sum_{\left(\alpha,\ell,m\right) \in \II_{N,\delta}} \sum_{r = 0}^m  \sup_{t \ge 0} \verti{\partial_t^\ell u - \sum_{j = 1}^{\floor{\alpha}+m+r} \frac{\d^\ell u_j}{\d t^\ell} x^j}_{k+4 (2 N - \ell)+1,\alpha+m+r}^2
\end{equation}
and the restrictions on $\kappa$ and $\mu$.
\end{proof}

In a similar way, we can also obtain $\sup$-control for $u$ instead of $v$:
\begin{lemma}\label{lem:sup_control}
Suppose $N \ge 1$, $k \ge 3$, and $0 < \delta < \frac 1 2$. Then for $\ell \in \{0,\ldots,N\}$, $\kappa \in \left\{0,\ldots,k+4\left(N - \ell\right)\right\}$, and $\mu \in \left\{1, \ldots, 2 \left(N - \ell\right)\right\}$ we have the estimate
\begin{equation}\label{est_sup_control}
\max\left\{ \sup_{t \ge 0} \vertii{\partial_t^\ell u - \sum_{j = 0}^{\mu} \frac{\d^\ell u_j}{\d t^\ell} x^j}_{\kappa,\mu}, \max_{j = 0, \ldots, \mu} \sup_{t \ge 0} \verti{\frac{\d^\ell u_j}{\d t^\ell}}\right\} \lesssim_{k,N,\delta} \vertiii{u}_{\mathrm{sol}}
\end{equation}
for every locally integrable $u: (0,\infty)^2 \to \R$.
\end{lemma}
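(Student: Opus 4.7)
The strategy mirrors that of Lemma~\ref{lem:lipschitz}, only simpler: there is no need to convert between $v$ and $u$ since the estimate is stated directly for $u$. The single analytic input is the one-dimensional Sobolev embedding $W^{1,2}(\R) \hookrightarrow L^\infty(\R)$, which in the logarithmic variable $s = \ln x$ (cf.~\eqref{def_prod}) and in our weighted scale yields the pointwise bound
\begin{equation*}
\vertii{w}_\gamma \lesssim \verti{w}_{1,\gamma}
\end{equation*}
whenever the right-hand side is finite, for any $\gamma \in \R$.

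First, introduce a smooth cutoff $\eta \in C^\infty([0,\infty))$ with $\eta_{|[0,1]} \equiv 1$ and $\eta_{|[2,\infty)} \equiv 0$, and decompose
\begin{equation*}
\partial_t^\ell u - \sum_{j=0}^{\mu} \frac{\d^\ell u_j}{\d t^\ell} x^j = \eta\Big(\partial_t^\ell u - \sum_{j=0}^{\mu} \frac{\d^\ell u_j}{\d t^\ell} x^j\Big) + (1-\eta)\Big(\partial_t^\ell u - \sum_{j=0}^{\mu-1} \frac{\d^\ell u_j}{\d t^\ell} x^j\Big) - (1-\eta) \frac{\d^\ell u_\mu}{\d t^\ell} x^\mu.
\end{equation*}
On $\supp\eta \subset [0,2]$ one has $x^{-\mu} \lesssim x^{-\mu-\delta}$, so applying the embedding with weight $\mu+\delta$ controls the inner piece by (a constant multiple of) $\verti{\partial_t^\ell u - \sum_{j=1}^{\mu} \frac{\d^\ell u_j}{\d t^\ell} x^j}_{\kappa+1,\mu+\delta}$, where I used that $u_0 = 0$ by the boundary condition \eqref{bc_lin}. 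On $\supp(1-\eta) \subset [1,\infty)$ one has $x^{-\mu} \le x^{-\mu+1-\delta}$ since $\delta < \tfrac{1}{2} < 1$, so the embedding applied with weight $\mu-1+\delta$ controls the outer piece by $\verti{\partial_t^\ell u - \sum_{j=1}^{\mu-1} \frac{\d^\ell u_j}{\d t^\ell} x^j}_{\kappa+1,\mu-1+\delta}$, plus the harmless remainder $|\frac{\d^\ell u_\mu}{\d t^\ell}| \cdot \vertii{(1-\eta) x^\mu}_\mu \lesssim |\frac{\d^\ell u_\mu}{\d t^\ell}|$.

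The scalar coefficient terms $\sup_t\verti{\frac{\d^\ell u_j}{\d t^\ell}}$ for $j = 1, \ldots, \mu$ are then bounded directly by $\vertiii{u}_{\mathrm{sol}}$ via estimate~\eqref{coeff_est_u_sup} of Lemma~\ref{lem:est_coeff}, since the hypothesis $\mu \le 2(N-\ell)$ places $j$ in the admissible range; the case $j=0$ is trivial. Finally, matching weights: the first line of \eqref{norm_sol} with $\alpha = \delta$ controls $\sup_t\verti{\partial_t^\ell u - \sum_{j=1}^{\varrho} \frac{\d^\ell u_j}{\d t^\ell} x^j}_{k+4(N-\ell)+1,\, \varrho+\delta}$ for every integer $\varrho \in \{0,\ldots,2(N-\ell)\}$, which covers both $\varrho=\mu$ and $\varrho = \mu-1$ appearing above. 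The Sobolev step costs exactly one $D$-derivative, so the constraint $\kappa+1 \le k+4(N-\ell)+1$ is met by the hypothesis $\kappa \le k+4(N-\ell)$. No step is genuinely delicate; the only bookkeeping to watch is the shift of the weight by $\pm\delta$ on the two complementary cutoff regions, the necessity of which is why the decomposition with the subtracted coefficient term $\frac{\d^\ell u_\mu}{\d t^\ell} x^\mu$ is introduced for the outer piece.
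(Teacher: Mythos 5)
Your proof is correct and follows essentially the same route as the paper's: a smooth cutoff splits the norm into an inner region where the weight can be raised to $\mu+\delta$ and an outer region where after peeling off the top Taylor coefficient the weight drops to $\mu-1+\delta$, and in both cases the Sobolev embedding in the logarithmic variable converts the sup-norm to a weighted $H^1$-norm at the cost of one $D$-derivative, which the hypotheses $\kappa \le k+4(N-\ell)$ and $\mu \le 2(N-\ell)$ accommodate. The only cosmetic difference is that you invoke Lemma~\ref{lem:est_coeff} for the scalar coefficients while the paper absorbs them into the same cutoff argument; the paper's terse reference to the proof of Lemma~\ref{lem:lipschitz} is doing exactly the bookkeeping you spell out.
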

\begin{proof}
As in the proof of Lemma~\ref{lem:lipschitz} we can show
\begin{eqnarray*}
\lefteqn{\max\left\{\vertii{\partial_t^\ell u - \sum_{j = 1}^\mu \frac{\d^\ell u_j}{\d t^\ell} x^j}_{\kappa,\mu}, \verti{\frac{\d^\ell u_\mu}{\d t^\ell}}\right\}} \\
&\lesssim_\delta& \verti{\partial_t^\ell u - \sum_{j = 1}^\mu \frac{\d^\ell u_j}{\d t^\ell} x^j}_{\kappa+1,\mu+\delta} + \verti{\partial_t^\ell u - \sum_{j = 1}^{\mu-1} \frac{\d^\ell u_j}{\d t^\ell} x^j}_{\kappa+1,\mu-1+\delta}.
\end{eqnarray*}
The terms on the right-hand side of this estimate can be bounded by $\vertiii{u}_{\mathrm{sol}}$ due to the restrictions on $\kappa$ and $\mu$ (cf.~\eqref{norm_sol}).
\end{proof}

\subsection{Proof of the main nonlinear estimate\label{sec:nonlin_main_est}}
We treat lines 1 and 2 of the squared norm $\vertiii{f}_{\mathrm{rhs}}^2$ (cf.~\eqref{norm_rhs}) separately:
\subsubsection{$\sup$-control\label{sec:sup_control}}
We continue with the term \eqref{term_non_sup} and apply the decomposition principle of \S\ref{sec:power_product} (cf.~\eqref{power_product}) and estimate~\eqref{est_lip} of Lemma~\ref{lem:lipschitz} with $\mu = 0$ to the product
\[
\bigtimes_{j = 3}^n \left(\prod_{\sigma = 0}^{s_j-1} (D-\sigma)\right) \partial_t^{\tilde \ell_j} v_x
\]
of \eqref{term_non_sup_a}. Observe that $j \ge 3$ and therefore $s_j \le 1$ which implies that in the norm \eqref{term_non_sup_b} at most $k + 4 \left(N - \tilde\ell\right) - 2$ $D$-derivatives act on $\partial_t^{\tilde\ell} v_x$ so that in particular $\kappa \le k + 4 \left(N - \tilde\ell\right) - 1$ is satisfied in Lemma~\ref{lem:lipschitz}.

\medskip

This implies that there exists a constant $K > 0$ only depending on $N$, $k$, and $\delta$ such that for $f$ as in \eqref{term_non_sup_a} we have the bound
\begin{equation}\label{bound_sup_lip}
\begin{aligned}
& \max_{\iota = \tau,\ldots,\floor\alpha+\tilde m+\tilde r} \sup_{t \ge 0} \verti{f - \sum_{j = 1}^{\floor\alpha+\tilde m+\tilde r + 4 -\iota} f_j x^j}_{k+ 4 \left(N - \tilde \ell\right) -3,\alpha+ \tilde m + \tilde r + 4 - \iota}^2 \\
& \quad \le \left(K \vertiii{u}_{\mathrm{sol}}\right)^{2(n-2)}  \times \max_{\iota = \tau,\ldots,\floor\alpha+\tilde m+\tilde r+4}\sup_{t \ge 0} \verti{g - \sum_{j = 1}^{\floor\alpha+\tilde m+\tilde r + 4 -\iota} g_j x^j}_{k+ 4 \left(N - \tilde \ell\right) -3,\alpha+ \tilde m + \tilde r + 4 - \iota}^2,
\end{aligned}
\end{equation}
where
\begin{equation}\label{def_non_g}
g := \left(\prod_{\sigma = 1}^{s_1} (D-\sigma)\right) \partial_t^{\tilde \ell_1} u \times \left(\prod_{\sigma = 1}^{s_2} (D-\sigma)\right) \partial_t^{\tilde \ell_2} u
\end{equation}
and all other quantities are as in \eqref{term_non_sup} (in particular, we have $s_1 \le 4$ and $s_2 \le 2$).

\medskip

Next we apply the decomposition principle of \S\ref{sec:power_product} and estimate~\eqref{est_sup_control} of Lemma~\ref{lem:sup_control} with $\mu = 2$ to the second factor in \eqref{def_non_g}. Note that
\begin{enumerate}[(a)]
\item\label{num:apply_lem_1} because of $s_2 \le 2$ at most $k + 4 \left(N - \tilde \ell\right) - 1$ $D$-derivatives act on $\partial_t^{\tilde\ell_2} u$,
\item\label{num:apply_lem_2} we have $\tilde\ell_2 \le N-1$, i.e., $\mu \in \{0,\ldots,2\}$ is always allowed.
\end{enumerate}
Items \eqref{num:apply_lem_1} and \eqref{num:apply_lem_2} imply that Lemma~\ref{lem:sup_control} can be applied, so that we arrive at
\begin{equation}\label{est_g_sup}
\begin{aligned}
& \max_{\iota = \tau,\ldots,\floor\alpha+\tilde m+\tilde r} \sup_{t \ge 0} \verti{g - \sum_{j = 1}^{\floor\alpha+\tilde m+\tilde r + 4 -\iota} g_j x^j}_{k+ 4 \left(N - \tilde \ell\right) -3,\alpha+ \tilde m + \tilde r + 4 - \iota}^2 \\
& \quad \le \left(K \vertiii{u}_{\mathrm{sol}}\right)^2 \\
& \qquad \times \left(K \max_{\iota = \tau+s_2+1,\ldots,\floor\alpha+\tilde m+\tilde r+4} \sup_{t \ge 0} \verti{\partial_t^{\tilde\ell_1} u - \sum_{j = 1}^{\floor\alpha+\tilde m+\tilde r + 4 -\iota} \frac{\d^{\tilde\ell_1} u_j}{\d t^{\tilde\ell_1}} x^j}_{k+ 4 \left(N - \tilde \ell\right) + 1,\alpha+ \tilde m + \tilde r + 4 - \iota} \right)^2
\end{aligned}
\end{equation}
upon enlarging $K = K(k,N,\delta) > 0$. Now recalling that $\left(\alpha,\tilde\ell,\tilde m\right) \in \II_{N-1}$ (cf.~\eqref{index_sets}), $\tau+s_2+1 \stackrel{\eqref{def_tau}}{\ge} 2$, and
\[
\vertiii{u}_{\mathrm{sol}}^2 \stackrel{\eqref{norm_sol}}{\ge} \sum_{\left(\alpha,\ell,m\right) \in \II_{N,\delta}} \sum_{r = 0}^m  \sup_{t \ge 0} \verti{\partial_t^\ell u - \sum_{j = 1}^{\floor{\alpha}+m+r} \frac{\d^\ell u_j}{\d t^\ell} x^j}_{k+4 (N - \ell)+1,\alpha+m+r}^2,
\]
we notice that the last factor
\[
\max_{\iota = \tau+s_2+1,\ldots,\floor\alpha+\tilde m+\tilde r+4} \sup_{t \ge 0} \verti{\partial_t^{\tilde\ell_1} u - \sum_{j = 1}^{\floor\alpha+\tilde m+\tilde r + 4 -\iota} \frac{\d^{\tilde\ell_1} u_j}{\d t^{\tilde\ell_1}} x^j}_{k+ 4 \left(N - \tilde \ell\right) + 1,\alpha+ \tilde m + \tilde r + 4 - \iota}^2
\]
in \eqref{est_g_sup} can be bounded by $\vertiii{u}_{\mathrm{sol}}^2$ as well, so that \eqref{est_g_sup} upgrades to
\begin{equation}\label{bound_g_sup}
\max_{\iota = \tau,\ldots,\floor\alpha+\tilde m+\tilde r} \sup_{t \ge 0} \verti{g - \sum_{j = 1}^{\floor\alpha+\tilde m+\tilde r + 4 -\iota} g_j x^j}_{k+ 4 \left(N - \tilde \ell\right) -3,\alpha+ \tilde m + \tilde r + 4 - \iota}^2 \le  \left(K \vertiii{u}_{\mathrm{sol}}\right)^4.
\end{equation}
The combination of \eqref{bound_sup_lip} and \eqref{bound_g_sup} yields
\begin{equation}\label{bound_sup_normsol}
\max_{\iota = \tau,\ldots,\floor\alpha+\tilde m+\tilde r} \sup_{t \ge 0} \verti{f - \sum_{j = 1}^{\floor\alpha+\tilde m+\tilde r + 4 -\iota} f_j x^j}_{k+ 4 \left(N - \tilde \ell\right) -3,\alpha+ \tilde m + \tilde r + 4 - \iota}^2 \le \left(K \vertiii{u}_{\mathrm{sol}}\right)^{2 n},
\end{equation}
where $f$ is as in \eqref{term_non_sup_a}. It is crucial to note that $K$ is independent of $n$, so that the series expansion of \S\ref{sec:nonlin_struct} is indeed convergent. Thus we obtain
\begin{equation}\label{main_est_sup}
\begin{aligned}
& \sum_{\substack{\alpha \in \{\frac 12 \pm \delta, 1 \pm \delta\}\\0 \le \tilde \ell + \tilde m \le N-1}} \sup_{t \ge 0} \sum_{\tilde r = 0}^{\tilde m} \verti{\partial_t^{\tilde \ell} f - \sum_{j = 1}^{\floor\alpha+\tilde m+\tilde r} \frac{\d^{\tilde \ell} f_j}{\d t^{\tilde \ell}} x^j}_{k+4 \left(N - \tilde \ell\right) -3,\alpha+ \tilde m + \tilde r}^2 \\
& \quad \lesssim_{k,N,\delta} \left(\vertiii{u}_{\mathrm{sol}}^{2 (n-1)} + \vertii{\tilde u}_{\mathrm{sol}}^{2 (n-1)}\right) \vertiii{u - \tilde u}_{\mathrm{sol}}^2,
\end{aligned}
\end{equation}
where $f = \NN(u) - \NN\left(\tilde u\right)$, $\vertiii{u}_{\mathrm{sol}} \ll_{k,N,\delta} 1$, and $\vertiii{\tilde u}_{\mathrm{sol}} \ll_{k,N,\delta} 1$, in the particular case $\tilde u \equiv 0$. In order to establish \eqref{main_est_sup} for the case in which $\tilde u \ne 0$, we may follow the reasoning of \S\ref{sec:nonlin_struct} up to the multilinear expression \eqref{term_non_sup_a}. For an $n$-linear form $\MM$ observe
\[
\MM(u,\ldots,u) - \MM\left(\tilde u,\ldots, \tilde u\right) = \MM\left(u - \tilde u, u, \ldots, u\right) + \ldots + \MM\left(\tilde u, \ldots, \tilde u, u - \tilde u\right),
\]
where each of the $n$ factors on the right-hand side can be treated in the same way as before, leading to a single $\vertiii{u - \tilde u}_{\mathrm{sol}}^2$ instead of $\vertiii{u}_{\mathrm{sol}}^2$ or $\vertiii{\tilde u}_{\mathrm{sol}}^2$ in \eqref{bound_sup_normsol} and $K$ needs to be replaced by $K n^{\frac 1 n} = K e^{\frac{\log n}{n}}$. Since the latter is bounded in $n$, we arrive at estimate~\eqref{main_est_sup} for $\tilde u \ne 0$ as well.

\subsubsection{$L^2$-control\label{sec:l2_control}}
We continue at the end of \S\ref{sec:l2_struct} and first apply the decomposition principle of \S\ref{sec:power_product} to $u$ in \eqref{term_non_l2}, noting that $D^{r_1} \left(\prod_{\sigma = 1}^{s_1} (D-\sigma)\right) \partial_t^{\ell_1} u = O\left(x^{s_1+1}\right)$ as $x \searrow 0$. Applying the $L^2$-bound in time on the factors containing $u$ (taking $s_1 + 1 + r_1 \le k + 4 (N - \ell) + 3$ into account and employing estimate~\eqref{coeff_est_u_l2} of Lemma~\ref{lem:est_coeff} for the coefficients) and the $\sup$-bound in time on those containing $v$, we can estimate as follows
\begin{eqnarray} \nonumber
\lefteqn{\max_{\iota = \tau^\prime,\ldots,\floor\alpha+m+r+1} \int_0^\infty \verti{f - \sum_{j = 0}^{\floor{\alpha}+m+r-1} f_j x^j}_{\alpha+m+r+1 - \iota}^2 \d t} \\
 &\le& \left(K \vertiii{u}_{\mathrm{sol}}\right)^2 \times \max_{\iota = \tau^\prime+s_1+1,\ldots,\floor\alpha+m+r+1} \sup_{t \ge 0} \vertii{g - \sum_{j = 0}^{\floor\alpha+m+r+1-\iota} g_j x^j}_{\floor\alpha+m+r+1-\iota}^2, \qquad \label{main_non_est_l2_1}
\end{eqnarray}
where $g := \prod_{j = 2}^n D^{r_j} \left(\prod_{\sigma = 0}^{s_j-1} (D-\sigma)\right) \partial_t^{\ell_j} v_x$ and $K = K(k,N,\delta) > 0$. For the last factor of \eqref{main_non_est_l2_1} we may use estimate~\eqref{est_lip} of Lemma~\ref{lem:lipschitz} and obtain
\begin{equation}\label{main_non_est_l2_2}
\max_{\iota = \tau^\prime+s_1+1,\ldots,\floor\alpha+m+r+1} \sup_{t \ge 0} \vertii{g - \sum_{j = 0}^{\floor\alpha+m+r+1-\iota} g_j x^j}_{\floor\alpha+m+r+1-\iota}^2 \le \left(K \vertiii{u}_{\mathrm{sol}}\right)^{2 (N-1)}
\end{equation}
upon enlarging $K = K(k,N,\delta)$. Note that indeed applicability of Lemma~\ref{lem:lipschitz} is guaranteed, because $\floor\alpha+m+r+1-\iota \le 2 (N-\ell-1)$ (cf.~\eqref{index_sets} and \eqref{def_taup}) and due to \eqref{typical_power_l2_d}, $\kappa$ in Lemma~\ref{lem:lipschitz} fulfills the bound
\[
\kappa \le  \max\left\{3+\floor{\frac{k+4(N-\ell)-1}{2}},k+4(N-\ell)-1\right\} \le k + 4 (N - \ell) -1,
\]
where we need to assume $k \ge 3$. Gathering \eqref{main_non_est_l2_1} and \eqref{main_non_est_l2_2}, we have
\begin{equation}\label{main_non_est_l2}
\max_{\iota = \tau^\prime,\ldots,\floor\alpha+m+r+1} \int_0^\infty \verti{f - \sum_{j = 0}^{\floor{\alpha}+m+r+1-\iota} f_j x^j}_{\alpha+m+r+1-\iota}^2 \d t \le (K \vertiii{u}_{\mathrm{sol}})^{2 n},
\end{equation}
Since the constant $K$ in estimate \eqref{main_non_est_l2} is independent of $n$, the series expansion of \S\ref{sec:nonlin_struct} is convergent and we have
\begin{equation}\label{main_est_l2}
\begin{aligned}
& \sum_{\substack{\alpha \in \{\pm \delta, \frac 12 \pm \delta, 1 \pm \delta\}\\0 \le \ell+m \le N}} \int_0^\infty \sum_{r = 0}^m  \verti{\partial_t^\ell \underline f - \sum_{j = 0}^{\floor{\alpha}+m+r-1} \frac{\d^\ell \underline f_j}{\d t^\ell} x^j}_{k+4 (2 N - \ell)-1,\alpha+m+r-1}^2 \d t \\
& \quad \lesssim_{k,N,\delta} \left(\vertiii{u}_{\mathrm{sol}}^{2 (n-1)} + \vertii{\tilde u}_{\mathrm{sol}}^{2 (n-1)}\right) \vertiii{u - \tilde u}_{\mathrm{sol}}^2,
\end{aligned}
\end{equation}
where $f = \NN(u) - \NN\left(\tilde u\right)$, $\vertiii{u}_{\mathrm{sol}} \ll_{k,N,\delta} 1$, and $\vertiii{\tilde u}_{\mathrm{sol}} \ll_{k,N,\delta} 1$, in the particular case $\tilde u \equiv 0$. Passing from this particular situation to the case of general $\tilde u$  follows the lines of the respective reasoning in \S\ref{sec:sup_control}, so that we do not need to discuss this once more.

\subsubsection{Conclusion}
The combination of \eqref{main_est_sup} and \eqref{main_est_l2} together with the definition of the norm $\vertiii{\cdot}_{\mathrm{rhs}}$ in \eqref{norm_rhs} proves estimate~\eqref{estimate_nonlinear_main} of Proposition~\ref{prop:non_est} and concludes the proof.

\bibliography{Gnann_Ibrahim_Masmoudi_2018}

\begin{thebibliography}{10}

\bibitem{b.1952}
G.~I. Barenblatt.
\newblock On self-similar motions of a compressible fluid in a porous medium.
\newblock {\em Akad. Nauk SSSR. Prikl. Mat. Meh.}, 16:679--698, 1952.

\bibitem{bgk.2016}
Fethi~Ben Belgacem, Manuel~V. Gnann, and Christian Kuehn.
\newblock A dynamical systems approach for the contact-line singularity in
  thin-film flows.
\newblock {\em Nonlinear Anal.}, 144:204--235, 2016.

\bibitem{bpw.1992}
F.~Bernis, L.~A. Peletier, and S.~M. Williams.
\newblock Source type solutions of a fourth order nonlinear degenerate
  parabolic equation.
\newblock {\em Nonlinear Anal.}, 18(3):217--234, 1992.

\bibitem{bw.2002}
A.~J. Bernoff and T.~P. Witelski.
\newblock Linear stability of source-type similarity solutions of the thin film
  equation.
\newblock {\em Appl. Math. Lett.}, 15(5):599--606, 2002.

\bibitem{bp.1998}
A.~L. Bertozzi and M.~C. Pugh.
\newblock Long-wave instabilities and saturation in thin film equations.
\newblock {\em Comm. Pure Appl. Math.}, 51(6):625--661, 1998.

\bibitem{beimr.2009}
D.~Bonn, J.~Eggers, J.~Indekeu, J.~Meunier, and E.~Rolley.
\newblock Wetting and spreading.
\newblock {\em Rev. Mod. Phys.}, 81:739--805, May 2009.

\bibitem{bgko.2016}
Bj{\"o}rn Bringmann, Lorenzo Giacomelli, Hans Kn{\"u}pfer, and Felix Otto.
\newblock Corrigendum to ``{S}mooth zero-contact-angle solutions to a thin-film
  equation around the steady state'' [{J}. {D}ifferential {E}quations 245
  (2008) 1454--1506].
\newblock {\em J. Differential Equations}, 261(2):1622--1635, 2016.

\bibitem{cu.2007}
Eric~A. Carlen and S\"uleyman Ulusoy.
\newblock Asymptotic equipartition and long time behavior of solutions of a
  thin-film equation.
\newblock {\em J. Differential Equations}, 241(2):279--292, 2007.

\bibitem{cu.2014}
Eric~A. Carlen and S\"uleyman Ulusoy.
\newblock Localization, smoothness, and convergence to equilibrium for a thin
  film equation.
\newblock {\em Discrete Contin. Dyn. Syst.}, 34(11):4537--4553, 2014.

\bibitem{ct.2000}
J.~A. Carrillo and G.~Toscani.
\newblock Asymptotic {$L^1$}-decay of solutions of the porous medium equation
  to self-similarity.
\newblock {\em Indiana Univ. Math. J.}, 49(1):113--142, 2000.

\bibitem{ct.2002}
J.~A. Carrillo and G.~Toscani.
\newblock Long-time asymptotics for strong solutions of the thin film equation.
\newblock {\em Comm. Math. Phys.}, 225(3):551--571, 2002.

\bibitem{ck.2007}
Kai-Seng Chou and Ying-Chuen Kwong.
\newblock Finite time rupture for thin films under van der {W}aals forces.
\newblock {\em Nonlinearity}, 20(2):299--317, 2007.

\bibitem{d.1985}
P.~G. de~Gennes.
\newblock Wetting: statics and dynamics.
\newblock {\em Rev. Mod. Phys.}, 57:827--863, Jul 1985.

\bibitem{dd.1974}
Elizabeth~B. Dussan~V. and Stephen~H. Davis.
\newblock On the motion of a fluid-fluid interface along a solid surface.
\newblock {\em J. Fluid Mech.}, 65:71--95, 8 1974.

\bibitem{go.2003}
L.~Giacomelli and F.~Otto.
\newblock Rigorous lubrication approximation.
\newblock {\em Interfaces Free Bound.}, 5(4):483--529, 2003.

\bibitem{ggko.2014}
Lorenzo Giacomelli, Manuel~V. Gnann, Hans Kn{\"u}pfer, and Felix Otto.
\newblock Well-posedness for the {N}avier-slip thin-film equation in the case
  of complete wetting.
\newblock {\em J. Differential Equations}, 257(1):15--81, 2014.

\bibitem{ggo.2013}
Lorenzo Giacomelli, Manuel~V. Gnann, and Felix Otto.
\newblock Regularity of source-type solutions to the thin-film equation with
  zero contact angle and mobility exponent between {$3/2$} and 3.
\newblock {\em European J. Appl. Math.}, 24(5):735--760, 2013.

\bibitem{gk.2010}
Lorenzo Giacomelli and Hans Kn\"upfer.
\newblock A free boundary problem of fourth order: classical solutions in
  weighted {H}\"older spaces.
\newblock {\em Comm. Partial Differential Equations}, 35(11):2059--2091, 2010.

\bibitem{gko.2008}
Lorenzo Giacomelli, Hans Kn{\"u}pfer, and Felix Otto.
\newblock Smooth zero-contact-angle solutions to a thin-film equation around
  the steady state.
\newblock {\em J. Differential Equations}, 245(6):1454--1506, 2008.

\bibitem{g.2015}
Manuel~V. Gnann.
\newblock Well-posedness and self-similar asymptotics for a thin-film equation.
\newblock {\em SIAM J. Math. Anal.}, 47(4):2868--2902, 2015.

\bibitem{g.2016}
Manuel~V. Gnann.
\newblock On the regularity for the {N}avier-slip thin-film equation in the
  perfect wetting regime.
\newblock {\em Arch. Ration. Mech. Anal.}, 222(3):1285--1337, 2016.

\bibitem{gp.2018}
Manuel~V. Gnann and Mircea Petrache.
\newblock The {N}avier-slip thin-film equation for 3{D} fluid films: Existence
  and uniqueness.
\newblock {\em J. Differential Equations}, 265(11):5832 -- 5958, 2018.

\bibitem{hs.1971}
Chun Huh and L.E. Scriven.
\newblock Hydrodynamic model of steady movement of a solid/liquid/fluid contact
  line.
\newblock {\em J. Colloid Interface Sci.}, 35(1):85 -- 101, 1971.

\bibitem{j.2015}
Dominik John.
\newblock On uniqueness of weak solutions for the thin-film equation.
\newblock {\em J. Differential Equations}, 259(8):4122--4171, 2015.

\bibitem{k.2016}
Clemens Kienzler.
\newblock Flat fronts and stability for the porous medium equation.
\newblock {\em Comm. Partial Differential Equations}, 41(12):1793--1838, 2016.

\bibitem{k.2011}
Hans Kn{\"u}pfer.
\newblock Well-posedness for the {N}avier slip thin-film equation in the case
  of partial wetting.
\newblock {\em Comm. Pure Appl. Math.}, 64(9):1263--1296, 2011.

\bibitem{k.2015}
Hans Kn\"upfer.
\newblock Well-posedness for a class of thin-film equations with general
  mobility in the regime of partial wetting.
\newblock {\em Arch. Ration. Mech. Anal.}, 218(2):1083--1130, 2015.

\bibitem{k.2017}
Hans Kn\"upfer.
\newblock Erratum to: Well-posedness for a class of thin-film equations with
  general mobility in the regime of partial wetting.
\newblock {\em Arch. Ration. Mech. Anal.}, submitted, 2017.

\bibitem{km.2013}
Hans Kn{\"u}pfer and Nader Masmoudi.
\newblock Well-posedness and uniform bounds for a nonlocal third order
  evolution operator on an infinite wedge.
\newblock {\em Comm. Math. Phys.}, 320(2):395--424, 2013.

\bibitem{km.2015}
Hans Kn{\"u}pfer and Nader Masmoudi.
\newblock Darcy's flow with prescribed contact angle: well-posedness and
  lubrication approximation.
\newblock {\em Arch. Ration. Mech. Anal.}, 218(2):589--646, 2015.

\bibitem{k.1999}
Herbert Koch.
\newblock {\em Non-Euclidean Singular Intergrals and the Porous Medium
  Equation}.
\newblock Habilitation thesis, Ruprecht-Karls-Universit{\"a}t Heidelberg,
  Heidelberg, 1999.

\bibitem{mms.2009}
Daniel Matthes, Robert~J. McCann, and Giuseppe Savar\'e.
\newblock A family of nonlinear fourth order equations of gradient flow type.
\newblock {\em Comm. Partial Differential Equations}, 34(10-12):1352--1397,
  2009.

\bibitem{ms.2015}
Robert~J. McCann and Christian Seis.
\newblock The spectrum of a family of fourth-order nonlinear diffusions near
  the global attractor.
\newblock {\em Comm. Partial Differential Equations}, 40(2):191--218, 2015.

\bibitem{m.1964}
H.~K. Moffatt.
\newblock Viscous and resistive eddies near a sharp corner.
\newblock {\em J. Fluid Mech.}, 18:1--18, 1 1964.

\bibitem{odb.1997}
Alexander Oron, Stephen~H. Davis, and S.~George Bankoff.
\newblock Long-scale evolution of thin liquid films.
\newblock {\em Rev. Mod. Phys.}, 69:931--980, Jul 1997.

\bibitem{p.1959}
R.~E. Pattle.
\newblock Diffusion from an instantaneous point source with a
  concentration-dependent coefficient.
\newblock {\em Quart. J. Mech. Appl. Math.}, 12:407--409, 1959.

\bibitem{s.2014}
Christian Seis.
\newblock Long-time asymptotics for the porous medium equation: the spectrum of
  the linearized operator.
\newblock {\em J. Differential Equations}, 256(3):1191--1223, 2014.

\bibitem{sh.1988}
N.~F. Smyth and J.~M. Hill.
\newblock High-order nonlinear diffusion.
\newblock {\em IMA J. Appl. Math.}, 40(2):73--86, 1988.

\bibitem{bw.1998}
Thomas~P. Witelski and Andrew~J. Bernoff.
\newblock Self-similar asymptotics for linear and nonlinear diffusion
  equations.
\newblock {\em Stud. Appl. Math.}, 100(2):153--193, 1998.

\end{thebibliography}
\bibliographystyle{plain}

\end{document}